\numberwithin{equation}{section}
\newtheorem{token}{Token}[section]
\theoremstyle{definition}
\newtheorem{defn}[token]{Definition}
\newtheorem{xmpl}[token]{Example}
\newtheorem{rmk}[token]{Remark}
\theoremstyle{plain}
\newtheorem{lemma}[token]{Lemma}
\newtheorem{cor}[token]{Corollary}
\newtheorem{thm}[token]{Theorem}
\newtheorem{prop}[token]{Proposition}
\newtheorem*{thm*}{Theorem}
\newcommand{\multa}{\mathcal{MA}}
\DeclareMathOperator{\id}{id}
\DeclareMathOperator{\Hilb}{\mathsf{Hilb}\textsf{ -}}
\DeclareMathOperator{\KHilb}{\mathcal{K}\mathsf{Hilb}\textsf{ -}}
\DeclareMathOperator{\sHilb}{\mathsf{Hilb}^{\textsf{sm}}\textsf{ -}}
\DeclareMathOperator{\sKHilb}{\mathsf{KHilb}^{\textsf{sm}}\textsf{ -}}
\DeclareMathOperator{\fgProj}{\mathsf{fgProj}\textsf{ -}}
\DeclareMathOperator{\Ob}{\mathsf{Ob}}
\DeclareMathOperator{\Mat}{\mathrm{Mat -}}
\DeclareMathOperator{\Span}{\mathrm{span}}
\DeclareMathOperator{\Hcat}{\mathsf{C^*-HCat}}
\DeclareMathOperator{\Bimod}{\mathsf{C^*-Bimod}}
\DeclareMathOperator{\A}{\mathcal{A}}
\DeclareMathOperator{\C}{\mathcal{C}}
\DeclareMathOperator{\B}{\mathcal{B}}
\DeclareMathOperator{\D}{\mathcal{D}}
\DeclareMathOperator{\MA}{\mathcal{MA}}
\DeclareMathOperator{\siota}{\iota^{\mathsf{sm}}_{\mathcal{A}}}
\DeclareMathOperator{\CMax}{C_{\textrm{max}}^*}
\title{Hilbert modules over $C^*$-categories}
\author{Arthur Pander Maat}
\date{\today}
\begin{document}

\maketitle
Hilbert modules over a $C^*$-category were first defined in \cite{MitchenerCcat}, where it was also proved that they form a $C^*$-category. We prove an Eilenberg-Watts theorem for Hilbert modules over $C^*$-categories, inspired by techniques from \cite{BlecherModules}. We follow by characterizing equivalences of categories of Hilbert modules as being given by tensoring with imprimitivity bimodules, and employ our results to build a reflective localization of the category of $C^*$-categories at the Morita equivalences.
\tableofcontents
\section{Introduction}

A pervasive theme in algebra is to consider some appropriately defined category of modules over a fixed type of object, and ask how we might see that two objects share equivalent categories of modules, i.e. that they are \textit{Morita equivalent}. 
This philosophy of Morita equivalence gets its name from the work of Kiiti Morita who first solved this problem in the case of rings \cite{MoritaDuality}. We refer the reader to \cite{MeyerMorita} for an exposition of results of this kind in the cases of rings, $C^*$- and $W^*$-algebras, and topological groupoids.

A common step in such a proof is a result that states that certain functors from the category of $A$-modules to that of $B$-modules are determined by their precomposition with some embedding of $A$ into its own module category, or more precisely, are always equivalent to tensoring with the $A-B$ bimodule given by this composition. This sort of result is usually called an \textit{Eilenberg-Watts theorem} after simultaneous discoveries by Eilenberg, Watts and Zisman in the ring case: see \cite{WattsIntrinsic} for Watts' contribution. Blecher proved an Eilenberg-Watts theorem for Hilbert modules over $C^*$-algebras in \cite{BlecherModules}. For an exposition of several more such results, including in homotopical algebra and enriched categories, see \cite{EW}. With an Eilenberg-Watts theorem in hand, the problem of Morita invariance reduces to determining \textit{which} bimodules give equivalences upon tensoring.

Finally, a particularly well-behaved modern set-up for this theory is a functor from the usual category of these objects into a \textit{Morita homotopy category}, which has the same objects but isomorphisms between exactly those objects that are Morita equivalent. We might ask moreover that this functor is the universal one inverting those morphisms between objects which give Morita equivalences (i.e. is a \textit{localization} at the Morita equivalences), and hope that there is a concrete description of the hom-sets of the category. This has been achieved in the case of unital $C^*$-categories (\cite{IvoGoncalo}), dg-categories (see \cite{TabuadaAdditive}, \cite{ToenMorita}), as well as in the case of $(\infty,1)$-categories (\cite{MoritaHtopyInfty}).

This paper solves all three of the above problems in the case of (not necessarily unital) $C^*$-categories, which are semicategories enriched over complex Banach spaces with a well-behaved involution on the hom-spaces, built to model the topology and involution on spaces of operators between Hilbert spaces. Proving results on their module theory is considerably harder than the unital case treated in \cite{IvoGoncalo}, since two Morita equivalent unital $C^*$-categories must be Morita equivalent as additive categories\footnote{To be precise, they must have the same closure under subobjects and direct sums: in \cite{IvoGoncalo} this was taken to be the \textit{definition} of Morita equivalence, and we show in \Cref{unitalmorita} that this coincides with our definition.}, mirroring the fact that unital $C^*$-algebras are Morita equivalent if and only if they're Morita equivalent \textit{as rings}. For non-unital $C^*$-categories there is no hope of such a result, and we must instead explore the theory of Hilbert modules over $C^*$-categories.

 $C^*$-categories were first defined in \cite{GLR}, and we will see they share many essential properties with $C^*$-algebras: still there are many subtleties in $C^*$-categories that do not arise in the theory of $C^*$-algebras. Section 2 is devoted to clarifying these subtleties, and in particular defines for a $C^*$-category $\A$ its additive closure $\A_{\oplus}$ (\Cref{sumclosuredefn}) and multiplier category $\MA$ (\Cref{multiplierdefn}), as well as clarifying their universal properties (\Cref{multiplierprop}, \Cref{multclosure}).

Section 3 focuses on our notion of module over a $C^*$-category $\A$, namely right Hilbert $\A$-modules. We define adjointable and compact operators between these, prove a Yoneda lemma (\Cref{Yoneda}), and in particular we identify morphisms in $\mathcal{A}$ with compact operators between representable modules over $\mathcal{A}$. We also characterize a `strong' topology of pointwise convergence on operators (\Cref{strongtopdefn}) and prove that the multiplier category of the compact operators returns the bounded operators (\Cref{MKA}). We finish by generalizing a result of Blecher's (\cite[Theorem 3.1]{BlecherModules}) which is crucial in his proof of the $C^*$-algebra Eilenberg-Watts theorem, namely that Hilbert modules can asymptotically be viewed as a direct summand in a net of finitely generated free modules: this is the content of \Cref{approxff}.

Section 4 brings the promised results on functors between module categories, beginning with the Eilenberg-Watts theorem:

\begin{thm*}[\Cref{EW}]
    Suppose $\mathcal{A}$ and $\mathcal{B}$ are $C^*$-categories, $F:\Hilb\mathcal{A}\rightarrow\Hilb\mathcal{B}$ is a unital $C^*$-functor which is strongly continuous on bounded subsets, and $E:\mathcal{A}\rightarrow\Hilb\mathcal{B}$ is the bimodule obtained by precomposing $F$ with the Yoneda embedding $\iota_\mathcal{A}:\mathcal{A}\rightarrow\Hilb\mathcal{A}$. Then there exists a unitary isomorphism $F\cong-\bar{\otimes}_\mathcal{A}E$ of functors.
\end{thm*}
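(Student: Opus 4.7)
The plan is to construct a natural unitary transformation $\Phi \colon -\bar{\otimes}_\mathcal{A} E \Rightarrow F$ in three stages: first on representable modules, then on finitely generated free modules by additivity, and finally on arbitrary Hilbert modules using the approximation result \Cref{approxff} together with the strong continuity hypothesis on $F$.

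On a representable $\iota_\mathcal{A}(a) = \mathcal{A}(-,a)$ there is a canonical unitary isomorphism $\iota_\mathcal{A}(a) \bar{\otimes}_\mathcal{A} E \cong E(a) = F(\iota_\mathcal{A}(a))$, since representables act as units for the bimodule tensor product over $\mathcal{A}$ and the right-hand equality is the definition of $E$. This yields $\Phi_{\iota_\mathcal{A}(a)}$, whose naturality in morphisms of $\mathcal{A}$ follows from functoriality of $F$; the Yoneda lemma (\Cref{Yoneda}) identifies such morphisms with the compact operators between representables, giving naturality for those. Extending to a finite orthogonal direct sum $\bigoplus_{i=1}^n \iota_\mathcal{A}(a_i)$ is then automatic: both $F$ and $-\bar{\otimes}_\mathcal{A} E$ are unital $C^*$-functors, hence preserve finite direct sums, so we set $\Phi_{\bigoplus_i \iota_\mathcal{A}(a_i)} := \bigoplus_i \Phi_{\iota_\mathcal{A}(a_i)}$. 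Naturality with respect to adjointable operators between such finite sums is checked entry-by-entry using the matrix decomposition of such operators, reducing to the previous case.

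For an arbitrary Hilbert $\mathcal{A}$-module $M$, \Cref{approxff} supplies a net of compact projections $P_\lambda \in \mathcal{K}(M)$ with $P_\lambda \to \id_M$ strongly, each realising $P_\lambda M$ as a direct summand of a finitely generated free module $F_\lambda$ via inclusions $v_\lambda \colon P_\lambda M \hookrightarrow F_\lambda$. Applying $\Phi_{F_\lambda}$ and then the projection $F(v_\lambda^* \circ -)$ gives a uniformly bounded net of isometries $\Phi_\lambda \colon P_\lambda M \bar{\otimes}_\mathcal{A} E \to F(P_\lambda M)$. Because $F$ is strongly continuous on bounded sets, $F(P_\lambda) \to \id_{F(M)}$ strongly, and similarly $\id \bar\otimes P_\lambda \to \id_{M\bar{\otimes}_\mathcal{A} E}$; passing to the strong limit then produces the candidate $\Phi_M$. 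Since $F$ is a $*$-functor, the analogous procedure applied to $\Phi_M^*$ yields its adjoint, proving unitarity.

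The main obstacle will be verifying compatibility: showing that the isometries $\Phi_\lambda$ form a genuine Cauchy net in the strong topology, that the resulting limit $\Phi_M$ is independent of the approximating system furnished by \Cref{approxff}, and that naturality survives the limit. The first two points require that the isomorphisms constructed on finite free modules in stage two glue coherently along the inclusions $P_\lambda \le P_\mu$, which in turn rests on the matrix naturality already established. Naturality of $\Phi$ in a general adjointable operator $T \colon M \to N$ follows by sandwiching $T$ between approximating projections, whereupon $P_\mu^{N} T P_\lambda^{M}$ is an operator between finitely free approximants and the previous stages apply; strong continuity of $F$ then transports the identity to the limit.
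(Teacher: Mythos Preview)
Your overall three-stage strategy matches the paper's, but the execution in the third stage has a genuine gap that the paper avoids by a different device.

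First, you misread \Cref{approxff}. That theorem does \emph{not} supply compact projections $P_\lambda$ with $P_\lambda M$ a direct summand of a finitely generated free module. It only gives operators $\phi_\lambda\colon \bigoplus_{x\in S_\lambda} h_x\to M$ with $\phi_\lambda\phi_\lambda^*\to\id_M$ strongly; the positive operators $\phi_\lambda\phi_\lambda^*$ are not idempotent, and their ranges are not complemented submodules in general. Moreover, the paper explicitly warns (in the remark following \Cref{approxff}) that there is no nesting $S_\lambda\subset S_{\lambda'}$ and no restriction relation among the $\phi_\lambda$. So your ``gluing along inclusions $P_\lambda\le P_\mu$'' and the Cauchy/independence arguments built on it have nothing to stand on.

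The paper sidesteps this entirely by defining the comparison map globally and uniformly, \emph{before} any approximation is invoked. For every $M$ one sets
\[
\psi_M(m\otimes e)\;:=\;F(\epsilon_m)(e),
\]
where $\epsilon_m\in\mathcal{K}(h_x,M)$ is the compact operator corresponding to $m\in M(x)$ under the Yoneda lemma (\Cref{contrayoneda}). A direct calculation with the identity $\epsilon_{m'}^*\epsilon_m=\iota_{\mathcal{A}}(\langle m',m\rangle)$ shows $\psi_M$ preserves inner products, hence is an isometry on all of $M\bar{\otimes}_{\mathcal{A}}E$; naturality in $M$ follows from $\phi\circ\epsilon_m=\epsilon_{\phi(m)}$. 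Because $\psi_M$ already exists as a well-defined isometry for arbitrary $M$, the only remaining task is surjectivity, and this is where \Cref{approxff} is used: one only needs that $F(\phi_\lambda\phi_\lambda^*)\to\id_{F(M)}$ strongly to see that the image of $\psi_M$ is dense, hence everything. No coherence between different $\lambda$'s is required. The moral: build the map once via Yoneda and use the approximation only to prove denseness of its range, rather than trying to assemble the map itself as a limit.
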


Here the strong topology is the topology of pointwise convergence mentioned earlier (see \Cref{strongtopdefn}) and $\bar{\otimes}_\mathcal{A}$ is the tensor product of Hilbert modules, see \Cref{tensordefn}.

We build on this result characterize which $C^*$-categories are Morita equivalent, and find that the answer is completely analogous to the $C^*$-algebra case:

\begin{thm*}[\Cref{moritathm}]
If $\mathcal{A}$ and $\mathcal{B}$ are $C^*$-categories, the following are equivalent:
\begin{itemize}
    \item There is a $C^*$-functor $F:\Hilb\mathcal{A}\rightarrow\Hilb\mathcal{B}$ which is an equivalence and strongly continuous on bounded subsets.
    \item There exists a full, non-degenerate right Hilbert $\mathcal{A}-\mathcal{B}$ bimodule $E:\mathcal{A}\rightarrow\Hilb\mathcal{B}$ which is an isometry onto the compact operators between modules in its image.
\item There exists a bi-Hilbert $\mathcal{A}-\mathcal{B}$ bimodule $E$ whose inner products are both full and satisfy $_\mathcal{A}\langle e,f\rangle\cdot g=e\cdot\langle f,g\rangle_\mathcal{B}$ for all elements $e\in E(x)(y), f\in E(x')(y)$ and $g\in E(x')(y')$.
\end{itemize} 
\end{thm*}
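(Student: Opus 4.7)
My plan is a cyclic proof $(1)\Rightarrow(2)\Rightarrow(3)\Rightarrow(1)$, with each implication relying on a tool already developed in the paper: the Eilenberg--Watts theorem, the Yoneda identification of $\mathcal{A}(x,y)$ with compact operators between representable modules, and the tensor product of Hilbert bimodules.

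For $(1)\Rightarrow(2)$, I would apply the Eilenberg--Watts theorem just stated to obtain a unitary $F\cong -\bar{\otimes}_\mathcal{A} E$ with $E=F\circ\iota_\mathcal{A}$. It then remains to translate the properties of $F$ as an equivalence into the three claimed properties of $E$. Isometry onto compacts comes from full faithfulness of $F$ on hom-spaces between representables combined with the Yoneda lemma identifying $\mathcal{A}(x,y)$ with the compact operators $\iota_\mathcal{A}(x)\to\iota_\mathcal{A}(y)$; since $F$ is a $C^*$-functor, this restriction is isometric, and $F$ sends compacts to compacts between the modules $E(x), E(y)$. Non-degeneracy should drop out of $F$ being unital (preserving the strict action of approximate units) together with strong continuity on bounded sets. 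Fullness of $E$ will follow from essential surjectivity: every Hilbert $\mathcal{B}$-module is unitarily a summand of some $F(M)$, and by the approximation result \Cref{approxff} every such $M$ is approximable by finitely generated frees, whose images under $F$ are direct sums of $E(x)$'s.

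For $(2)\Rightarrow(3)$, I would use the isometry onto compact operators to define the second inner product. For $e\in E(x)(y)$ and $f\in E(x')(y)$, the rank-one compact operator $|e\rangle\langle f|:E(x')\to E(x)$ is, by hypothesis, the image under $E$ of a unique element of $\mathcal{A}(x',x)$, which I define to be $_\mathcal{A}\langle e,f\rangle$. The imprimitivity identity $_\mathcal{A}\langle e,f\rangle\cdot g=e\cdot\langle f,g\rangle_\mathcal{B}$ is then simply the defining equation of a rank-one operator applied to $g\in E(x')(y')$, and conjugate-bilinearity, positivity, and adjointness transfer from the known properties of rank-one operators via the isometric $^*$-identification. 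Fullness of this left inner product is the statement that the norm-closed span of compacts of the form $|e\rangle\langle f|$ on $E(x)$ contains $\id$ in a multiplier sense, which is exactly non-degeneracy of $E$ from (2).

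For $(3)\Rightarrow(1)$, I would take $F:=-\bar{\otimes}_\mathcal{A} E$; this is a $C^*$-functor which is strongly continuous on bounded sets by the general properties of the tensor product from \Cref{tensordefn}. The conjugate bimodule $\bar{E}$ is naturally a bi-Hilbert $\mathcal{B}$-$\mathcal{A}$ bimodule, and the imprimitivity identity together with fullness of both inner products lets one build unit and counit natural unitaries
\[
M\bar{\otimes}_\mathcal{A} E\bar{\otimes}_\mathcal{B}\bar{E}\xrightarrow{\sim} M,\qquad N\bar{\otimes}_\mathcal{B}\bar{E}\bar{\otimes}_\mathcal{A} E\xrightarrow{\sim} N
\]
out of the two inner products, by the standard Rieffel construction adapted to $C^*$-categories. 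This exhibits $F$ as an equivalence with quasi-inverse $-\bar{\otimes}_\mathcal{B}\bar{E}$.

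The main obstacle I anticipate is $(1)\Rightarrow(2)$: establishing that the bimodule $E$ is non-degenerate and a global isometry onto compacts in the non-unital setting requires combining the Yoneda isometry, which only sees representables, with the strong continuity and unitality of $F$ through approximate identities, and I expect this combination is where the passage from the unital case of \cite{IvoGoncalo} truly needs the strong-topology and multiplier-category machinery from Sections 2 and 3.
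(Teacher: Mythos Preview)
Your cycle $(2)\Rightarrow(3)\Rightarrow(1)$ matches the paper almost exactly: defining $_\mathcal{A}\langle e,f\rangle$ as the preimage of $\theta^{e,f}$ under the isometric isomorphism onto compacts is precisely \Cref{imprimchar}, and the Rieffel-style construction of inverse bimodules is \Cref{imprimisinvertible}. One small correction: left-fullness in $(2)\Rightarrow(3)$ follows from $E$ being \emph{onto} the compacts (since finite-rank operators are dense in compacts, their preimages are dense in $\mathcal{A}$), not from non-degeneracy as you wrote.

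The genuine gap is in $(1)\Rightarrow(2)$, and it is exactly where you sensed trouble. Full faithfulness of $F$ gives you an isometric isomorphism $\mathcal{L}(h_x,h_y)\to\mathcal{L}(E(x),E(y))$, but you assert without justification that this restricts to an isomorphism $\mathcal{K}(h_x,h_y)\to\mathcal{K}(E(x),E(y))$. Nothing in the abstract $C^*$-category structure of $\Hilb\mathcal{B}$ singles out the compact ideal, so a $C^*$-equivalence need not preserve it \emph{a priori}; indeed, the paper deduces that equivalences preserve compacts only as a \emph{corollary} of this theorem (\Cref{compactdetermine}), not as an input. The paper's route is quite different from yours here: rather than analysing $F$ directly, it applies Eilenberg--Watts to the inverse $G$ as well, obtaining a second bimodule $E'$ with $E\bar\otimes_\mathcal{B}E'\cong\iota_\mathcal{A}$ and $E'\bar\otimes_\mathcal{A}E\cong\iota_\mathcal{B}$, and then proves the substantial \Cref{inversesareimprim} showing that any such pair of mutually inverse bimodules must consist of imprimitivity bimodules. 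The hardest part of that lemma is precisely the surjectivity of $E$ onto compacts, established via an auxiliary map $\delta:F(y)(x)\to\mathcal{L}(E(x),h_y)$ and a chain of density arguments; your sketch does not contain a substitute for this. Your proposed route to fullness via essential surjectivity and \Cref{approxff} is also more circuitous than the paper's one-line observation $\langle E,E\rangle\supseteq\langle E'\bar\otimes_\mathcal{A}E,E'\bar\otimes_\mathcal{A}E\rangle=\langle\iota_\mathcal{B},\iota_\mathcal{B}\rangle=\mathcal{B}$.
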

Here `full' signifies, as in the algebra case, that the inner products on the bimodule span a dense ideal of $\mathcal{B}$ (\Cref{fulldefn}). The definition of a bi-Hilbert bimodule is another direct generalization of the algebra case (see \Cref{Bihilbdefn}).

This result allows us to considerably relax the size and unitality restrictions on a result of Joachim's \cite[Proposition 3.2]{JoachimKHom}, which says that a unital $C^*$-category with countably many objects is always Morita equivalent to a $C^*$-algebra, and show that in fact \textit{every $C^*$-category} is Morita equivalent to a $C^*$-algebra, modulo some size concerns (see \Cref{catalgequiv} and the remarks below).

Section 5 contains the promised categorical framework for our results. First, we use the Eilenberg-Watts theorem and \Cref{catalgequiv} to construct biequivalences between four bicategories of $C^*$-categories and $C^*$-algebras (\Cref{4bicatequiv}). Finally, after taking some set-theoretical caution by limiting ourselves to \textit{small} Hilbert modules (those generated by a set of elements), we exhibit the localization of the category of locally small $C^*$-categories at the Morita equivalences:

\begin{thm*}[\Cref{MoritaLocalization}]
    Let $\mathbf{C^*Cat}$ be the 1-category whose objects are locally small $C^*$-categories, and whose morphisms from $\A$ to $\B$ are natural isomorphism classes of non-degenerate functors from $\A$ to the multiplier category $\mathcal{MB}$ of $\B$. Let $\sKHilb: \mathbf{C^*Cat}\rightarrow \mathbf{C^*Cat}$ be the endofunctor that takes every locally small $C^*$-category to its $C^*$-category of small Hilbert modules and compact operators, and every non-degenerate functor to the tensor product with the bimodule obtained by composing with the embedding $\mathcal{MB}\rightarrow\sHilb\B$. Let $\mathbf{KHilb}$ be the full subcategory with objects those in the image of $\sKHilb$.

    Then $\sKHilb$ exhibits $\mathbf{KHilb}$ as the reflective localization of $\mathbf{C^*Cat}$ at the Morita equivalences, and two $C^*$-categories become isomorphic in $\mathbf{KHilb}$ if and only if they are Morita equivalent.
\end{thm*}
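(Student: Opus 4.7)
The plan is to exhibit $\sKHilb$ as the left adjoint to the fully faithful inclusion $\mathbf{KHilb}\hookrightarrow \mathbf{C^*Cat}$. This automatically makes $\mathbf{KHilb}$ a reflective subcategory, and hence a localization of $\mathbf{C^*Cat}$ at the class of morphisms sent to isomorphisms by $\sKHilb$. Identifying this class with the Morita equivalences will then follow from \Cref{moritathm}, and the final isomorphism characterization will be immediate. The unit of the adjunction is the Yoneda embedding $\iota_\A:\A\to\sHilb\A$ of $\A$ into its own small module category, viewed as a non-degenerate functor $\A\to\mathcal{M}(\sKHilb\A)$ via the identification $\mathcal{M}(\sKHilb\A)\cong \sHilb\A$ of \Cref{MKA}.

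The first step is to verify that $\sKHilb$ is well-defined as a functor: given a non-degenerate $F:\A\to\MB$, the associated bimodule $E$ is pointwise a small Hilbert $\B$-module, and the tensor-product functor $-\bar{\otimes}_\A E$ preserves smallness (since $\iota_\A(a)\bar{\otimes}_\A E\cong E(a)$ is small and small modules are generated from representables under quotients and direct summands) and sends compact operators to compact operators. Functoriality up to natural unitary isomorphism follows from associativity of the Hilbert module tensor product. The unit map $\iota_\A$ is non-degenerate since representables span small Hilbert modules densely.

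The adjunction bijection $\mathbf{C^*Cat}(\sKHilb\A,\sKHilb\B)\cong \mathbf{C^*Cat}(\A,\sKHilb\B)$ then goes as follows. By \Cref{MKA} the right-hand side is the set of natural isomorphism classes of non-degenerate functors $\A\to\sHilb\B$. Given such a $G$, the Eilenberg--Watts \Cref{EW} applied to the extension $-\bar{\otimes}_\A G:\sHilb\A\to\sHilb\B$ reconstructs a strongly continuous unital $C^*$-functor that recovers $G$ on representables up to unitary natural isomorphism; restricting to compact operators yields the required morphism $\sKHilb\A\to\sKHilb\B$ in $\mathbf{KHilb}$. Precomposition by $\iota_\A$ inverts this construction by \Cref{Yoneda}, and both directions respect natural isomorphism classes.

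A morphism $F$ in $\mathbf{C^*Cat}$ is inverted by $\sKHilb$ exactly when the tensor-product functor $-\bar{\otimes}_\A E$ is an equivalence $\sHilb\A\simeq\sHilb\B$, which by \Cref{moritathm} holds iff $\A$ and $\B$ are Morita equivalent via $E$; combined with the adjunction this identifies the reflective localization with the localization at Morita equivalences, and the isomorphism characterization follows. The main obstacle will be making the adjunction bijection precise at the level of natural isomorphism classes, in particular verifying that the compact-operator restriction of the functor reconstructed by \Cref{EW} is itself a morphism in $\mathbf{C^*Cat}$ (i.e.\ arises from a non-degenerate functor to a multiplier category) and that strong continuity on bounded sets is preserved under the translation between small Hilbert modules and compact operators, so that \Cref{EW} remains applicable throughout.
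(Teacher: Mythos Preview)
Your proposal follows essentially the same route as the paper: establish that $\sKHilb$ is left adjoint to the fully faithful inclusion $\mathbf{KHilb}\hookrightarrow\mathbf{C^*Cat}$ using the (Small) Eilenberg--Watts theorem for the hom-set bijection, take the Yoneda embedding as the unit via the identification $\mathcal{M}(\sKHilb\A)\cong\sHilb\A$, and then invoke the standard characterization of reflective localizations to identify the inverted class with the Morita equivalences. The paper packages the smallness and compactness verifications into a separate ``Small Eilenberg--Watts'' statement (\Cref{SmallEW}) and an abstract lemma on reflective localizations (\Cref{reflecloc}), but the content and order of ideas match yours.
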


The original results in this paper were obtained as part of my PhD thesis, supervised by Ivo Dell'Ambrogio and Behrang Noohi. We also thank Benjamin Duenzinger whose notion of small modules was the key to unlock the final result.  While this preprint was in its final stages of preparation, the author was made aware of a PhD thesis (\cite{Ferrier}) containing some overlapping results, defended in late 2022 and made public in April 2023. We have marked original results which also appear there.
\section{$C^\ast$-categories}
We begin with an investigation into the theory of $C^*$-categories. Readers familiar with the theory may skim or skip most of this section, but we include some results that have appeared in other publications since not all of them have proofs, and in order to fix notations.

Some notes on terminology: in this article, we will use the term `category' to refer to a semicategory (a category that doesn't necessarily have units), and `unital category' to refer to what's usually called a category. Similarly a `functor' will signify a semifunctor (i.e. a functorial mapping of semicategories that does not necessarily preserve units, even if they exist) and a functor between unital categories that preserves units will be called a `unital functor'. 

A note on size issues: we work with categories (termed $\A,\B,\C$, etc.) enriched over a bicomplete closed symmetric monoidal universe $\mathcal{U}$ of Banach spaces. We will also work with categories whose objects are functors from one $\mathcal{U}$-category into another $\mathcal{U}$-category, with the implicit understanding that these functor categories will be enriched over a universe $\mathcal{U}'$ larger than $\mathcal{U}$: see \cite[Section 2.6]{KellyEnriched} for details on this construction. This enlargement will not pose an issue since we will not rely on the choice of $\mathcal{U}$ or consider any category or collection of $C^*$-categories as a whole until section 5, where we apply restrictions to the size of our $C^*$-categories.

\subsection{$C^*$-categories and finite direct sums}
We build up to a definition of a $C^*$-category.

\begin{defn}
A \emph{Banach category} is a category $\mathcal{C}$ whose hom-sets live in a chosen universe $\mathcal{U}$ of complex Banach spaces, such that the composition is contractive: that is, for all composable $f,g$, we have $\|f\circ g\|\leq\|f\|\|g\|$.
\end{defn}

\begin{defn}
A Banach $\ast$\emph{-category} is a Banach category $\mathcal{C}$ equipped with a conjugate linear involution $(-)^\ast:\mathcal{C}\rightarrow\mathcal{C}^\text{op}$ that fixes objects and squares to the identity functor: that is, for all morphisms $f$, we have $(f^\ast)^\ast=f$.
\end{defn}

\begin{defn}
A $C^*$\emph{-category} is a Banach $\ast$-category $\mathcal{A}$ such that for every morphism $a\in\A(x,y)$: \begin{itemize}
    \item the morphism $a^{\ast}a\in\mathcal{A}(x,x)$ satisfies the $C^*$-identity $\|a^{\ast}a\|=\|a\|^2$, 
    \item and we have the inclusion $$\textrm{Spec}(a^\ast a):=\Bigg\{\lambda\in\mathbb{C}:\begin{array}{cc} & a^*a-\lambda\emph{ is not invertible in the} \\& \emph{minimal}\footnotemark \emph{ unitization } \mathcal{A}(x,x)^+\end{array}\Bigg\} \subseteq\mathbb{R}^{\geq0}.$$ 
\end{itemize}
\end{defn}
\footnotetext{See e.g. \cite[Definition 2.1.6]{Wegge} for details of this construction.}
%Notice that the final axiom about the spectrum of $a^*a$ only needs to be stipulated in the case $x\neq y$; the other axioms already guarantee for each $x\in\Ob\mathcal{A}$ that $\mathcal{A}(x,x)$ is a $C^*$-algebra, in which case the spectrum of $a^*a$ is already guaranteed to be positive. It is clear that the opposite of a $C^*$-category is again a $C^*$-category. 

%To elucidate the connection with algebras and for economy, henceforth we shall drop the objects where possibly and simply write `$a\in\mathcal{A}$' for the statement `$a\in\mathcal{A}(x,y)$ for some $x,y\in\textsf{Ob}\mathcal{A}$'.

%We record here for posterity a lemma that characterizes $C^*$-categories in terms slightly weaker than the axioms.

%\begin{lemma}\label{cstarshortcut}
%If $\mathcal{A}$ is a Banach $\ast$-category such that the involution is conjugate linear and for all $a\in\mathcal{A}$ we have $\text{Spec}(a^*a)\subseteq\mathbb{R}^{\geq0}$ and $\|a^*a\|\geq\|a\|^2$, then in fact $\|a^*a\|=\|a\|^2$ and $\mathcal{A}$ is a $C^*$-category.
%\end{lemma}
%\begin{proof}
%If $\|a\|=0$, then $a=a^*=0$ and the $C^*$-equality is obvious. Otherwise, note that by submultiplicativity we get $\|a^*\|\cdot\|a\|\geq\|a^*a\|\geq\|a\|^2$, so dividing out $\|a\|$ we obtain $\|a^*\|\geq\|a\|$. But then we also have $\|a\|=\|a^{**}\|\geq\|a^*\|$, so in fact $\|a^*\|=\|a\|$, and by submultiplicativity $\|a^*a\|\leq\|a\|^2$. Hence we get $\|a^*a\|=\|a\|^2$.
%\end{proof}

We give some examples to provide a flavour of the theory.
\begin{xmpl}
$C^*$-algebras are precisely those Banach $\ast$-algebras that occur as the hom-space of a single-object $C^*$-category.
\end{xmpl}
It is with this justification that $C^*$-categories can be regarded as `multi-object' $C^*$-algebras.
\begin{xmpl}
The category $\mathsf{Hilb}$ of Hilbert spaces and bounded operators between them is a $C^*$-category with the familiar operator norm, and involution given by taking adjoint operators.
\end{xmpl}

\begin{xmpl}[{\cite[Definition 5.10]{MitchenerCcat}}]
    If $\mathcal{G}$ is a discrete groupoid, then there is a \emph{maximal groupoid $C^*$-category} $C_{\textrm{max}}^*(\mathcal{G})$ with the same objects as $\mathcal{G}$, whose hom-spaces  $\CMax(\mathcal{G})(x,y)$ are given by taking the free complex vector space on $\mathcal{G}(x,y)$ and completing in the norm given by $\|a\|=\sup_{F:\mathcal{G}\rightarrow\mathsf{Hilb}}F(a)$ (where $F$ varies over unitary representations of $\mathcal{G}$). The involution is given by $(\lambda\cdot g)^*=\overline{\lambda}\cdot g^{-1}$.
\end{xmpl}

There is also a `reduced groupoid $C^*$-category' defined along similar lines in the same publication. For a more general perspective on the maximal groupoid construction, we refer the reader to the series of adjunctions presented on \cite[p. 66]{BunkeVorlesung}, which assemble to give a `free $C^*$-category' on any category with involution. 

A groupoid $C^*$-category in the case where $\mathcal{G}$ is \textit{not discrete} can be defined using the notion of a topological $C^*$-category, i.e. one with a topology on the objects: we do not investigate this construction here and refer the interested reader to \cite[Section 5.1]{SullivanThesis}.

An important example of a $C^*$-category is as follows:

\begin{xmpl}
For any $C^*$-algebra $B$, the category $\mathsf{Hilb}-B$ of right Hilbert $B$-modules and bounded adjointable operators is a $C^*$-category.
\end{xmpl}

This result is a special case of \cite[Proposition 9.4]{MitchenerCcat}. We refer the reader to \cite{LanceHilbert} for a thorough exposition of the theory of Hilbert modules over $C^*$-algebras.

It is in fact the case that much as every  $C^*$-algebra can be embedded into the algebra of all bounded operators on some Hilbert space, every \textit{small} $C^*$-category can be embedded into $\mathsf{Hilb}$: see \cite[Theorem 6.12]{MitchenerCcat}. The proof involves a direct analog of the Gelfand-Naimark-Segal construction. We won't use this result much here, and stick with an abstract rather than a concrete notion of $C^*$-categories.

Functors between $C^*$-categories get an adapted definition: 
\begin{defn}
If $\mathcal{A}$ and $\mathcal{B}$ are $C^*$-categories, a $C^*$\emph{-functor} from $\A$ to $\B$ is a $\mathbb{C}$-linear functor $F:\A\rightarrow\B$ that intertwines the involutions on $\mathcal{A}$ and $\mathcal{B}$.
\end{defn}
This is called a linear $*$-functor in parts of the literature, but we prefer `$C^*$-functor' for brevity.% An important non-example of a $C^*$-functor is the involution $\mathcal{A}\rightarrow\mathcal{A}^\mathrm{op}$, which is \textit{conjugate} linear.

One might ask whether there isn't a continuity requirement missing here, but as in the algebraic case, this is determined by the other structure:

\begin{prop}[{\cite[Proposition 2.14]{MitchenerCcat}}]\label{starfuncisbounded}
If $F:\mathcal{A}\rightarrow\mathcal{B}$ is a $C^*$-functor, its associated maps on hom-sets $F:\mathcal{A}(x,y)\rightarrow\mathcal{B}(F(x),F(y))$ are norm-decreasing, so \emph{a fortiori} continous. 

Furthermore, $C^*$-functors that give injective maps on hom-spaces are isometric.
\end{prop}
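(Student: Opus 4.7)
The plan is to reduce both claims to the classical statements about $*$-homomorphisms of (possibly non-unital) $C^*$-algebras. The key observation will be that every endomorphism hom-space $\A(x,x)$ is itself a $C^*$-algebra, so that a $C^*$-functor $F$ restricts to a $*$-homomorphism $F_x:\A(x,x)\to\B(F(x),F(x))$. Moreover, the $C^*$-identity gives $\|a\|^2=\|a^*a\|$ for $a\in\A(x,y)$, so both statements reduce to controlling $\|F_x(c)\|$ for positive elements $c=a^*a\in\A(x,x)$.

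For contractivity, I would extend $F_x$ to a unital $*$-homomorphism $F_x^+:\A(x,x)^+\to\B(F(x),F(x))^+$ of minimal unitizations by the usual formula $F_x^+(c+\lambda)=F_x(c)+\lambda$. Since $F_x^+$ is a unital ring homomorphism, it sends invertibles to invertibles, so $\mathrm{Spec}(F_x(c))\subseteq\mathrm{Spec}(c)$ for every $c\in\A(x,x)$. The self-adjoint case of the identity $\|c\|^{2^n}=\|c^{2^n}\|$ together with Gelfand's formula shows that for self-adjoint $c$ the norm equals the spectral radius, hence $\|F_x(c)\|\leq\|c\|$. Applying this to $c=a^*a$ and invoking the $C^*$-identity then yields $\|F(a)\|^2\leq\|a\|^2$, as desired.

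For the isometry claim, I would assume $F$ is injective on every hom-space and argue by contradiction. If $\|F(a)\|<\|a\|$ for some $a\in\A(x,y)$, then setting $c=a^*a$ (a positive element of $\A(x,x)$) one has $\|F_x(c)\|<\|c\|$, and positivity gives $\|c\|\in\mathrm{Spec}(c)$ while $\mathrm{Spec}(F_x(c))\subseteq[0,\|F_x(c)\|]$. I would then pick a continuous function $f$ on $[0,\|c\|]$ with $f(0)=0$, vanishing on $[0,\|F_x(c)\|]$, and satisfying $f(\|c\|)>0$. Continuous functional calculus (valid in the non-unital setting precisely because $f(0)=0$) produces $f(c)\in\A(x,x)$, which is nonzero since $\mathrm{Spec}(f(c))=f(\mathrm{Spec}(c))$ contains $f(\|c\|)>0$. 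But $F_x$ commutes with continuous functional calculus (by the contractivity established in the first part), so $F_x(f(c))=f(F_x(c))=0$, contradicting injectivity of $F_x$.

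The main obstacle is purely bookkeeping: because the endomorphism algebras $\A(x,x)$ need not be unital, every spectral statement must be taken in the minimal unitization. This is already built into the definition given here, so the classical $C^*$-algebraic arguments should transfer without modification once one has made the reduction to the endomorphism algebra.
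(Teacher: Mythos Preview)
The paper does not supply its own proof of this proposition: it simply cites \cite[Proposition 2.14]{MitchenerCcat} and moves on. Your argument is the standard one---reduce via the $C^*$-identity $\|a\|^2=\|a^*a\|$ to the endomorphism algebra $\A(x,x)$, and then invoke the classical facts that $*$-homomorphisms of $C^*$-algebras are contractive (spectrum shrinks, spectral radius equals norm for self-adjoints) and that injective $*$-homomorphisms are isometric (via functional calculus with a bump function supported above $\|F_x(c)\|$). This is correct and is essentially what Mitchener does as well.

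One small remark on the isometry half: your contradiction relies on injectivity of $F_x=F|_{\A(x,x)}$, not directly on injectivity of $F$ on $\A(x,y)$. Since the hypothesis is that $F$ is injective on \emph{all} hom-spaces (i.e.\ $F$ is faithful), this is fine, but it is worth being explicit that faithfulness on $\A(x,y)$ alone would not suffice---you genuinely need it on the endomorphism algebra $\A(x,x)$ to run the functional-calculus argument there.
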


%\begin{proof}
%Note by the $C^*$-identity that for every $a\in\mathcal{A}$ we have $\|a\|=\sqrt{\|a^*a\|}$, so it suffices to prove the lemma holds on (positive) endomorphisms $a\in\mathcal{A}(x,x)$. But clearly $F$ restricts to a $\ast$-homomorphism $\mathcal{A}(x,x)\rightarrow\mathcal{B}(F(x),F(x))$, and it is then a standard result of $C^*$-algebra theory that such a map is automatically norm-decreasing (see for example \cite[Theorem 2.1.7]{Murphy}). Furthermore, an injective $\ast$-homomorphism of $C^*$-algebras is isometric, proving the second result. 
%\end{proof}

\begin{cor}[{\cite[Corollary 2.16]{MitchenerCcat}}]\label{normunique}
The norm on any $C^*$-category is unique: that is, if $\|\cdot\|_1$ and $\|\cdot\|_2$ are two norms on a linear $\ast$-category $\mathcal{C}$ both turning it into a $C^*$-category, then actually $\|\cdot\|_1=\|\cdot\|_2$.
\end{cor}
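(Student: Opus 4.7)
The plan is to deduce this corollary immediately from \Cref{starfuncisbounded} applied to the identity map. Given a linear $*$-category $\mathcal{C}$ carrying two $C^*$-category structures coming from norms $\|\cdot\|_1$ and $\|\cdot\|_2$, I would consider the identity functor $\id : (\mathcal{C},\|\cdot\|_1)\to(\mathcal{C},\|\cdot\|_2)$. Because the underlying linear and involutive data are shared, this map is automatically $\mathbb{C}$-linear and intertwines the involutions, hence qualifies as a $C^*$-functor in the sense of the preceding definition. Since it is moreover injective (in fact bijective) on hom-spaces, the ``furthermore'' clause of \Cref{starfuncisbounded} yields at once that it is isometric, so $\|a\|_1=\|a\|_2$ for every morphism $a$.

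There is essentially no obstacle here: all the work is already contained in \Cref{starfuncisbounded}, whose proof rests on the standard fact that the norm of a self-adjoint element in a $C^*$-algebra is determined by the spectral radius (and hence is purely algebraic), combined with the $C^*$-identity $\|a\|^2=\|a^*a\|$ to extend this determination from $\mathcal{C}(x,x)$ to arbitrary morphisms $a\in\mathcal{C}(x,y)$. If one preferred a self-contained argument, one could simply run this reasoning directly: the spectrum of $a^*a\in\mathcal{C}(x,x)^+$ is defined purely in terms of the linear and multiplicative structure and is therefore the same under both norms, so $\|a^*a\|_1=r(a^*a)=\|a^*a\|_2$, and then the $C^*$-identity gives $\|a\|_1=\|a\|_2$. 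Either route disposes of the statement in a few lines.
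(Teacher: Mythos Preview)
Your proposal is correct and matches the paper's approach: the corollary is stated immediately after \Cref{starfuncisbounded} with no explicit proof, the intended deduction being precisely the one you give via the identity functor. Your supplementary spectral-radius remark is also a valid self-contained alternative.
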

%\begin{proof}
%We reproduce the proof from \cite{MitchenerCcat}: let $\mathcal{C}_1$ and $\mathcal{C}_2$ be the two $C^*$-categories associated to $\mathcal{C}$ by the norms: then the identity functor $\mathcal{C}_1\rightarrow\mathcal{C}_2$ is injective, so it must be isometric, and in fact $\|a\|_1=\|a\|_2$ for all $a\in\mathcal{A}$.
%\end{proof}

There are several interesting types of subcategory of a $C^*$-category:

\begin{defn}\label{subcatideal}
If $\mathcal{A}$ is a $C^*$-category, a $\mathbb{C}$-linear subcategory $\mathcal{B}\subseteq\mathcal{A}$ is a \emph{sub-$C^*$-category} if it is norm and involution closed. $\mathcal{B}$ is a \emph{$C^*$-ideal} (or simply an \emph{ideal}) if in addition for all composable $f\in\mathcal{A},g\in\mathcal{B},h\in\mathcal{A}$ we have $fg\in\mathcal{B},gh\in\mathcal{B}$. $\mathcal{B}$ is an \emph{essential ideal} if for all other ideals $\mathcal{C}\subseteq\mathcal{A}$, we have that $\mathcal{C}=0$ whenever $\mathcal{C}\cap\mathcal{B}=0$.
\end{defn}

\begin{xmpl}\label{minimalunit}
Every $C^*$-category $\mathcal{A}$ is an essential ideal in the category $\mathcal{A}^+$, which has hom-sets
\begin{equation*}
\begin{split}
    \mathcal{A}^+(x,x)&= \mathcal{A}(x,x)\oplus\mathbb{C}\emph{, the minimal unitization}\emph{ of } \mathcal{A}(x,x),\\ &\emph{\hspace{3mm} whenever } \mathcal{A}(x,x)\emph{ is non-unital.} \\ \mathcal{A}^+(x,y) &= \mathcal{A}(x,y) \emph{ for } x\neq y \emph{ or when } x=y \emph{ and } \mathcal{A}(x,x) \emph{ is unital.}
\end{split}
\end{equation*}
%\footnotetext{See e.g. \cite[Definition 2.1.6]{Wegge}.}
and composition defined in the obvious way.
\end{xmpl}

Note that since there is at least one morphism (the zero morphism) between any two objects in a $C^*$-category, an ideal of $\A$ necessarily contains all objects of $\A$, i.e. it is necessarily a \textit{wide} subcategory.

Though we won't call on this construction much here, as the name suggests, an ideal is exactly the type of subcategory that we may \textit{quotient by}:

\begin{lemma}[{c.f. \cite[Section 4.4]{MitchenerCcat}}]
    If $\mathcal{A}\subseteq\mathcal{B}$ is a $C^*$-ideal in a $C^*$-category, there is a $C^*$-category $\mathcal{B}/\mathcal{A}$ whose hom-sets are given by $$\mathcal{B}/\mathcal{A}(x,y):=\mathcal{B}(x,y)/\mathcal{A}(x,y)$$ for any $x,y\in\Ob\mathcal{B}=\Ob\mathcal{A}$.
\end{lemma}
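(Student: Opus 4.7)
The plan is to verify the three axioms of a $C^*$-category on the quotient one hom-space at a time, treating the Banach $*$-structure as essentially formal and concentrating the work on the $C^*$-identity. The identification of $\mathcal{A}(x,x)$ as a closed two-sided $*$-ideal of the $C^*$-algebra $\mathcal{B}(x,x)$ will supply the approximate units needed to make the key estimate go through.

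First I would check that each $\mathcal{B}/\mathcal{A}(x,y)$ is a Banach space in the quotient norm $\|[b]\|=\inf_{a\in\mathcal{A}(x,y)}\|b-a\|$ (automatic since $\mathcal{A}(x,y)$ is closed in $\mathcal{B}(x,y)$), that composition descends well and contractively (the identity $ab-(ab'+a'b-a'b')=(a-a')(b-b')$ shows the ambiguity in representatives lies in $\mathcal{A}$ by the ideal property, and taking infima over $a'\in\mathcal{A}(y,z)$, $b'\in\mathcal{A}(x,y)$ yields submultiplicativity), and that the involution descends well and isometrically (by involution-closedness of $\mathcal{A}$, and by using the bijection $a\mapsto a^*$ between $\mathcal{A}(x,y)$ and $\mathcal{A}(y,x)$ together with the isometry of the involution on $\mathcal{B}$ to compare infima). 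These are all hom-space-at-a-time transcriptions of the $C^*$-algebra case.

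The real work is the $C^*$-identity, and this is where I expect the main obstacle. Fix a bounded approximate unit $(e_\lambda)_\lambda$ with $0\leq e_\lambda\leq 1$ in the $C^*$-algebra $\mathcal{A}(x,x)$. I would first establish the norm formula
$$\|[b]\|=\lim_\lambda\|b-be_\lambda\|\qquad\text{for all }b\in\mathcal{B}(x,y).$$
One direction is immediate since $be_\lambda\in\mathcal{A}(x,y)$ gives $\|[b]\|\leq\|b-be_\lambda\|$; the reverse comes from the decomposition $b-be_\lambda=(b-a)(1-e_\lambda)+(a-ae_\lambda)$ inside the minimal unitization $\mathcal{B}(x,x)^+$, using $\|1-e_\lambda\|\leq 1$ together with $\|a-ae_\lambda\|\to 0$ for any fixed $a\in\mathcal{A}(x,y)$. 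The $C^*$-identity then drops out of
$$\|b-be_\lambda\|^2=\|(1-e_\lambda)b^*b(1-e_\lambda)\|\leq\|b^*b-b^*be_\lambda\|$$
by letting $\lambda$ run, combined with the opposite inequality coming from submultiplicativity of the quotient norm and the isometry of the involution.

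Finally the spectrum condition is free: since we now recognise $\mathcal{B}/\mathcal{A}(x,x)=\mathcal{B}(x,x)/\mathcal{A}(x,x)$ as an ordinary $C^*$-algebra quotient, $\textrm{Spec}([b]^*[b])\subseteq\mathbb{R}^{\geq 0}$ inside its minimal unitization is inherited from the classical theorem. The only genuinely non-formal step is the approximate-unit computation; what makes it subtler than in the $C^*$-algebra case is that $\mathcal{B}(x,x)$ need not be unital, so all the estimates must be carried out inside $\mathcal{B}(x,x)^+$, but once this set-up is in place the argument transcribes the standard non-unital algebra proof onto each endomorphism space.
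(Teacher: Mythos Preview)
Your proof is correct. The paper itself does not supply a proof of this lemma; it simply states the result with a reference to \cite[Section 4.4]{MitchenerCcat} and moves on, so there is no in-paper argument to compare against. What you have written is exactly the standard transcription of the $C^*$-algebra quotient proof to the many-object setting, carried out hom-space by hom-space with the approximate unit for $\mathcal{A}(x,x)$ doing the work for the $C^*$-identity; this is also the approach in Mitchener's cited section.

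One minor remark: your closing comment that the non-unitality of $\mathcal{B}(x,x)$ is what makes things ``subtler than in the $C^*$-algebra case'' slightly overstates the novelty, since the algebra proof already has to cope with non-unital $B$ and does so in the same way (working in $B^+$). The genuinely new feature in the categorical setting is only that the approximate unit lives in $\mathcal{A}(x,x)$ while the morphism $b$ lives in $\mathcal{B}(x,y)$ for possibly distinct $x,y$; but as you correctly observe, the ideal property ensures $be_\lambda\in\mathcal{A}(x,y)$ and the estimate $\|a-ae_\lambda\|\to 0$ for $a\in\mathcal{A}(x,y)$ follows from $\|a-ae_\lambda\|^2=\|(1-e_\lambda)a^*a(1-e_\lambda)\|$ with $a^*a\in\mathcal{A}(x,x)$, so nothing beyond the endomorphism-algebra approximate unit is needed.
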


\begin{xmpl}
The category $\mathsf{Hilb}$ of Hilbert spaces and bounded operators has as an essential ideal the subcategory $\mathcal{K}\mathsf{Hilb}$ of Hilbert spaces and compact operators.
\end{xmpl}

There is an important alternative characterization of essential ideals, entirely analogous to a known $C^*$-algebraic result (see e.g. \cite[p.82]{Murphy}).

\begin{prop}\label{essentialcriterion}
An ideal $\mathcal{B}\subseteq\mathcal{A}$ is essential if and only if whenever $a\in\mathcal{A}(x,y)$ is such that $ab=0$ for all $b\in\mathcal{B}(w,x)$ (for all $w$), we have $a=0$.
\end{prop}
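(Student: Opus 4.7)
The plan is to recast the hypothesis as the vanishing of a suitable ideal of $\mathcal{A}$ associated to $\mathcal{B}$, so that essentiality can be applied. The backward direction will be immediate: if the annihilation condition holds and $\mathcal{C}\subseteq\mathcal{A}$ is any ideal with $\mathcal{C}\cap\mathcal{B}=0$, then for any $c\in\mathcal{C}(u,v)$ and $b\in\mathcal{B}(w,u)$ we have $cb\in\mathcal{C}\cap\mathcal{B}=0$ (since both $\mathcal{C}$ and $\mathcal{B}$ absorb multiplication by morphisms of the other), so the hypothesis forces $c=0$ and hence $\mathcal{C}=0$.

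For the forward direction, I expect the main obstacle to be that the naive candidate---the one-sided ``left annihilator'' $\{c\in\mathcal{A}:c\mathcal{B}=0\}$---is not a priori closed under involution, and so is not an ideal in the sense of \Cref{subcatideal}. My workaround is to define the symmetric \emph{two-sided annihilator}
\[
\mathcal{J}(u,v):=\{c\in\mathcal{A}(u,v):c\mathcal{B}(w,u)=0\text{ and }\mathcal{B}(v,w)c=0\text{ for all }w\}.
\]
A direct check will show $\mathcal{J}$ is an ideal of $\mathcal{A}$: it is clearly norm-closed and linear; it is involution-closed because the two defining conditions swap under $(-)^{*}$; and it is closed under left and right composition by $\mathcal{A}$ because $\mathcal{B}$ itself is (for example $\mathcal{B}(v',-)\cdot h\subseteq\mathcal{B}(v,-)$ for $h\in\mathcal{A}(v,v')$, so $\mathcal{B}(v',-)(hc)\subseteq\mathcal{B}(v,-)c=0$).

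The key step is to note that, although the hypothesis on $a$ does not place $a$ itself into $\mathcal{J}$, it does place the self-adjoint element $a^{*}a\in\mathcal{A}(x,x)$ there. Indeed, starting from $a\in\mathcal{A}(x,y)$ with $a\mathcal{B}(w,x)=0$ for all $w$, I take adjoints to get $\mathcal{B}(x,w)a^{*}=0$ for all $w$; combining these, $(a^{*}a)b=a^{*}(ab)=0$ for $b\in\mathcal{B}(-,x)$, and $b'(a^{*}a)=(b'a^{*})a=0$ for $b'\in\mathcal{B}(x,-)$, so $a^{*}a\in\mathcal{J}(x,x)$. I next verify $\mathcal{J}\cap\mathcal{B}=0$: any $c\in\mathcal{J}(u,v)\cap\mathcal{B}(u,v)$ has $c^{*}\in\mathcal{B}(v,u)$, which is a legitimate input to the left-annihilation condition, so $cc^{*}=0$ and the $C^{*}$-identity forces $c=0$. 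Essentiality of $\mathcal{B}$ then gives $\mathcal{J}=0$, whence $a^{*}a=0$, and one final application of the $C^{*}$-identity produces $\|a\|^{2}=\|a^{*}a\|=0$, so $a=0$.
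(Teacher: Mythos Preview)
Your proof is correct. The paper itself omits the proof, saying only that it ``proceeds similarly to the algebra case'' (referring to \cite[p.~82]{Murphy}), so there is nothing to compare directly.

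That said, it is worth noting where your argument diverges slightly from the textbook $C^*$-algebra proof. In the algebra setting one usually takes the one-sided annihilator $\{a:aI=0\}$, observes it is a closed two-sided ideal, and then invokes the general fact that closed two-sided ideals of $C^*$-algebras are automatically self-adjoint. In the $C^*$-categorical setting of \Cref{subcatideal}, involution-closure is part of the \emph{definition} of an ideal, and the analogue of ``closed two-sided implies self-adjoint'' is not established in the paper. Your workaround---passing to the manifestly $*$-closed two-sided annihilator $\mathcal{J}$ and showing that $a^*a$ (rather than $a$ itself) lands there---is exactly the right move to close this gap cleanly. The rest of the argument (that $\mathcal{J}\cap\mathcal{B}=0$ via $cc^*=0$, and the backward direction via $cb\in\mathcal{C}\cap\mathcal{B}$) is standard and carried out correctly.
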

We omit the proof as it proceeds similarly to the algebra case. Note that using the involution we can deduce an obvious opposite criterion: $\mathcal{B}$ is essential if and only if when $a\in\mathcal{A}$ is such that $ba=0$ for all composable $b\in\mathcal{B}$, we in fact have $a=0$.

Every $C^*$-ideal gives a new topology on  the $C^*$-category containing it; note that by a topology on a category we will always mean a topology on the \emph{hom-sets} of the category, such that composition is continuous.  

\begin{defn}\label{reltop}
For a $C^*$-ideal $\mathcal{B}\subseteq\mathcal{A}$, the $\mathcal{B}$\emph{-relative topology} on $\mathcal{A}(x,y)$ is generated by the seminorms of the form
\begin{align*}
g\mapsto\|fg\|: f\in\mathcal{B}(y,z)\\
g\mapsto\|gh\|:h\in\mathcal{B}(w,x)
\end{align*}
\end{defn}

It follows from the ideal property that multiplication is continuous in any such topology, and it follows from the submultiplicativity of the norm that a net which converges in the norm on $\mathcal{A}$ must converge in the $\mathcal{B}$-relative topology for all $\mathcal{B}\subseteq\mathcal{A}$.
Note that even when $\mathcal{B}=\mathcal{A}$, the $\mathcal{A}$-relative topology on $\mathcal{A}$ isn't necessarily the same as the norm topology, unless $\mathcal{A}$ is unital. 

We follow by defining an `internal hom' for $C^*$-categories:
\begin{prop}\label{functorcat}
If $\mathcal{A}$ and $\mathcal{B}$ are $C^*$-categories, there is a $C^*$-category $\mathcal{B}^\mathcal{A}$ whose objects are $C^*$-functors $F:\mathcal{A}\rightarrow\mathcal{B}$ and whose morphisms $\eta:F\rightarrow F'$ are the natural transformations $\eta:F\Rightarrow F'$ such that $\|\eta\|:=\sup_{x\in\Ob\mathcal{A}}\|\eta_x\|<\infty$ and whose involution is taken objectwise, i.e. by setting $(\eta^*)_x=\eta_x^*$. %If $\mathcal{A}$ is small and $\mathcal{B}$ is locally small, then $\mathcal{B}^\mathcal{A}$ is locally small.
\end{prop}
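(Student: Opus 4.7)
The plan is to verify the $C^*$-category axioms for $\mathcal{B}^\mathcal{A}$ directly, exploiting the fact that composition, norm, and involution are all defined objectwise so that most axioms reduce to their pointwise counterparts in $\mathcal{B}$.

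First I would show each hom-space $\mathcal{B}^\mathcal{A}(F, F')$ is a Banach space. It is a linear subspace of the $\ell^\infty$-product $\prod_{x\in\Ob\mathcal{A}}\mathcal{B}(F(x), F'(x))$, and for every $a\in\mathcal{A}(x, y)$ the naturality equation $\eta_y F(a) = F'(a)\eta_x$ cuts out a closed subspace (composition being continuous in $\mathcal{B}$), so $\mathcal{B}^\mathcal{A}(F, F')$ is closed in a Banach space. Composition is pointwise and satisfies $\|\eta\zeta\| = \sup_x\|\eta_x\zeta_x\| \leq \|\eta\|\|\zeta\|$, while naturality of the composite follows by chaining the naturality squares of $\eta$ and $\zeta$. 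For the involution, applying $(-)^*$ to the naturality square of $\eta : F\Rightarrow F'$ at a morphism $a$ and using that $F, F'$ are $C^*$-functors yields $F(a^*)\eta_y^* = \eta_x^* F'(a^*)$; since $a^*$ sweeps out all of $\mathcal{A}(y, x)$, this is exactly naturality of $\eta^* : F'\Rightarrow F$. Conjugate linearity, involutivity, contravariance under composition, and norm preservation are all pointwise, and the $C^*$-identity is immediate: $\|\eta^*\eta\| = \sup_x\|\eta_x^*\eta_x\| = \sup_x\|\eta_x\|^2 = \|\eta\|^2$.

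The main obstacle is the spectral condition: for $\eta\in\mathcal{B}^\mathcal{A}(F, F)$ I must show $\mathrm{Spec}(\eta^*\eta)\subseteq\mathbb{R}^{\geq 0}$, where the spectrum is computed in the unitization $(\mathcal{B}^\mathcal{A}(F, F))^+$. I would do this by directly constructing resolvents. For $\lambda\in\mathbb{C}\setminus\mathbb{R}^{\geq 0}$, the spectral axiom on $\mathcal{B}$ makes $\eta_x^*\eta_x - \lambda\cdot 1$ invertible in $\mathcal{B}(F(x), F(x))^+$ for each $x$; solving the identity $(\eta_x^*\eta_x - \lambda)(\gamma_x - \tfrac{1}{\lambda}) = 1$ yields the ansatz $\gamma_x = \tfrac{1}{\lambda}(\eta_x^*\eta_x - \lambda)^{-1}\eta_x^*\eta_x \in \mathcal{B}(F(x), F(x))$. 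These components have uniformly bounded norm, because $\|\eta_x^*\eta_x\| \leq \|\eta\|^2$ and $\|(\eta_x^*\eta_x - \lambda)^{-1}\| \leq \mathrm{dist}(\lambda, \mathbb{R}^{\geq 0})^{-1}$ hold for all $x$. Naturality of $\gamma := (\gamma_x)_x$ follows by propagating naturality of $\eta^*\eta$ through the resolvent: from $F(a)(\eta_x^*\eta_x - \lambda) = (\eta_y^*\eta_y - \lambda) F(a)$ one deduces $F(a)(\eta_x^*\eta_x - \lambda)^{-1} = (\eta_y^*\eta_y - \lambda)^{-1} F(a)$, hence $F(a)\gamma_x = \gamma_y F(a)$. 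Thus $\gamma\in\mathcal{B}^\mathcal{A}(F, F)$, and $-\tfrac{1}{\lambda}\cdot 1 + \gamma$ is the desired inverse of $\eta^*\eta - \lambda\cdot 1$ in $(\mathcal{B}^\mathcal{A}(F, F))^+$, completing the verification.
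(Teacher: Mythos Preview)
Your proof is correct and follows essentially the same objectwise strategy as the paper. The only notable difference is in the spectral condition: the paper argues by contrapositive (``if $\eta^*\eta-\lambda$ is not invertible then some component $\eta_x^*\eta_x-\lambda$ is not invertible, else the array of inverses would constitute an inverse natural transformation''), leaving the uniform boundedness and naturality of that inverse array implicit, whereas you construct the resolvent $\gamma_x$ directly and verify both properties explicitly---this is the same argument with the details filled in.
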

\begin{proof}
Note first that the involution is well-defined: if $\eta$ is a natural transformation, then for any morphism $a\in\mathcal{A}(x,x')$ we have $$\eta^*_{x'}\circ F(a)=(F(a^*)\circ\eta_{x'})^*=(\eta_x\circ F'(a^*))^*=F'(a)\circ\eta_{x}^*$$ by naturality of $\eta$, so we can apply the involution at every object and get another natural transformation $\eta^*$. The submultiplicativity of the norm follows directly from that on $\mathcal{B}$. Completeness on the hom-spaces follows as a Cauchy sequence of natural transformations must by definition be Cauchy in every component, and converge to another natural transformation by submultiplicativity.

To prove the $C^*$-identity, note simply that $$\|\eta^*\eta\|:=\sup_{x\in\Ob\mathcal{A}}\|\eta_x^*\eta\|=\sup_{x\in\Ob\mathcal{A}}\|\eta_x\|^2=(\sup_{x\in\Ob\mathcal{A}}\|\eta_x\|)^2=\|\eta\|^2.$$

To show finally that $\eta^*\eta$ has positive spectrum, suppose $\eta^*\eta-\lambda$ is not invertible in $\mathcal{B}^\mathcal{A}(F,F)^+$. Then at least one of the morphisms $\eta_x^*\eta_x-\lambda\in\mathcal{B}(F(x),F(x))^+$ is not invertible, or else the array of their inverses would easily be seen to constitute an inverse natural transformation to $\eta$. But the morphisms $\eta_x^*\eta_x$ each have positive spectrum, so $\lambda$ must be positive and real. %The claim about the sizes follows as this category is a subcategory of the classical functor category.
\end{proof}

%A result we won't be using much but which may comfort our reader is that every $C^*$-category can be represented as a category of Hilbert spaces and bounded maps:

%\begin{thm}

%\cite[Proposition 1.14]{GLR} For every $C^*$-category $\mathcal{A}$ there's an injective (hence isometric) functor $\mathcal{A}\rightarrow\mathsf{Hilb}-\mathcal{A}$

%\end{thm}
%\begin{proof}

%\end{proof}
%\textbf{do i cite GLR twice? once in the thm and once in the proof?}

It is useful to study morphisms whose involution is also a (right or left) inverse.

\begin{defn}
In a $C^*$-category $\mathcal{A}$, a $\ast$\emph{-monomorphism} or \textit{isometry} is a morphism $a\in\mathcal{A}(x,y)$ such that $a^\ast a=\id_x$. A \emph{unitary isomorphism} is a morphism $a\in\mathcal{A}(x,y)$ such that $a^*a=\id_y$ and $aa^*=\id_x$: in other words, such that $a$ and $a^*$ are both isometries.
\end{defn}
\begin{xmpl}
If $B$ is a $C^*$-algebra, a morphism in the $C^*$-category of right Hilbert $B$-modules is a unitary equivalence if and only if it is a bijection and preserves all inner products.
\end{xmpl}
These special isomorphisms allow us to define an appropriate type of equivalence of $C^*$-categories:
\begin{defn}\label{equivdefn}
If $\mathcal{A}$ and $\mathcal{B}$ are two unital $C^*$-categories, $F:\mathcal{A}\rightarrow\mathcal{B}$ and $G:\mathcal{B}\rightarrow\mathcal{A}$ are $C^*$-functors, and  $\eta:FG\Rightarrow\id_\mathcal{B}$ and $\epsilon:\id_\mathcal{A}\Rightarrow GF$ are natural isomorphisms, 
this data is called a \emph{unitary equivalence} between $\mathcal{A}$ and $\mathcal{B}$ if $\eta$ and $\epsilon$ are both not just isomorphisms but unitary isomorphisms at every object.
\end{defn}

An important fact about $C^*$-categories is that all isomorphic objects are unitarily isomorphic:

\begin{lemma}[{\cite[Proposition 2.6]{zbMATH06107958}}]\label{isoisunitary}
If $a\in\mathcal{A}(x,y)$ is an isomorphism, then there is a unitary isomorphism from $x$ to $y$ given by $a(a^*a)^{-\frac{1}{2}}$.
\end{lemma}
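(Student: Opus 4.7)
The plan is to imitate the polar decomposition argument used for invertible elements of a $C^*$-algebra. First I would observe that, because $a$ is an isomorphism with inverse $a^{-1}\in\A(y,x)$, the hom-algebra $\A(x,x)$ is unital with unit $\id_x=a^{-1}a$, and similarly $\A(y,y)$ is unital with unit $\id_y=aa^{-1}$. So for the purposes of functional calculus we may work inside the unital $C^*$-algebra $\A(x,x)$ without passing to the artificial minimal unitization $\A(x,x)^+$.

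Next I would pin down the spectrum of $a^*a\in\A(x,x)$. By the $C^*$-axiom, $\textrm{Spec}(a^*a)\subseteq\mathbb{R}^{\geq 0}$. I would then check invertibility of $a^*a$ in $\A(x,x)$: using $(a^*)^{-1}=(a^{-1})^*$ (a standard consequence of the involution being contravariant and squaring to the identity), the element $a^{-1}(a^{-1})^*$ is a two-sided inverse of $a^*a$. Hence $0\notin\textrm{Spec}(a^*a)$, so $\textrm{Spec}(a^*a)$ is a compact subset of $(0,\infty)$. Consequently the continuous function $t\mapsto t^{-1/2}$ is well-defined on $\textrm{Spec}(a^*a)$, and continuous functional calculus in $\A(x,x)$ yields a self-adjoint element $(a^*a)^{-1/2}\in\A(x,x)$ which commutes with $a^*a$ and satisfies $((a^*a)^{-1/2})^2=(a^*a)^{-1}$.

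Finally, setting $u:=a(a^*a)^{-1/2}\in\A(x,y)$, the proof becomes two direct computations. For $u^*u$, using $((a^*a)^{-1/2})^*=(a^*a)^{-1/2}$ and the fact that $(a^*a)^{-1/2}$ and $a^*a$ commute (both lying in the commutative $C^*$-subalgebra generated by $a^*a$), one gets
\[
u^*u=(a^*a)^{-1/2}(a^*a)(a^*a)^{-1/2}=(a^*a)^{-1}(a^*a)=\id_x.
\]
For $uu^*$, one computes
\[
uu^*=a(a^*a)^{-1/2}(a^*a)^{-1/2}a^*=a(a^*a)^{-1}a^*=a\bigl(a^{-1}(a^{-1})^*\bigr)a^*=\id_y,
\]
so $u$ is unitary as claimed.

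The only genuinely non-formal step is the application of continuous functional calculus, which requires both that $\A(x,x)$ actually be unital and that $\textrm{Spec}(a^*a)$ miss zero; both are secured by the observation in the first paragraph, so there is no real obstacle beyond being careful about what lives in which hom-space.
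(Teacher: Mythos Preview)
Your proof is correct. The paper does not actually prove this lemma; it simply cites \cite[Proposition 2.6]{zbMATH06107958}, so there is no in-paper argument to compare against, but your polar-decomposition approach is exactly the standard one and all the bookkeeping about which hom-space each element lives in is handled properly.
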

\begin{cor}
    Two unital $C^*$-categories are unitarily equivalent if and only if they are equivalent.
\end{cor}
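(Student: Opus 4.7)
The ``only if'' direction is trivial: a unitary equivalence is by definition an equivalence. For the converse, suppose $(F,G,\eta,\epsilon)$ witnesses an equivalence of unital $C^*$-categories $\A\simeq\B$. My plan is to keep $F$ and $G$ fixed and replace $\eta$ and $\epsilon$ componentwise by the unitary parts of their polar decompositions, then argue that naturality survives the replacement.

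First, for each $y\in\Ob\B$ I will apply \Cref{isoisunitary} to the isomorphism $\eta_y$ to obtain a unitary isomorphism
\[
\tilde{\eta}_y := \eta_y(\eta_y^*\eta_y)^{-1/2}\colon FG(y)\to y,
\]
and note that $\eta_y^*\eta_y$ is positive and invertible in the unital $C^*$-algebra $\B(FG(y),FG(y))$ precisely because $\eta_y$ is an isomorphism in a unital $C^*$-category. Define $\tilde{\epsilon}$ componentwise in the same way. This gives unitary morphisms at every object; what remains is to verify that $\tilde\eta$ and $\tilde\epsilon$ are natural.

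The main step is the following. Fix $b\colon x\to y$ in $\B$. Naturality of $\eta$ reads $\eta_y\circ FG(b)=b\circ\eta_x$, and taking adjoints gives $FG(b)\circ\eta_x^*=\eta_y^*\circ b$; combining these yields the intertwining
\[
(\eta_y^*\eta_y)\circ FG(b)\;=\;FG(b)\circ(\eta_x^*\eta_x).
\]
By induction this propagates to any real polynomial applied to these positive invertible elements. Since $\eta_y^*\eta_y$ and $\eta_x^*\eta_x$ have spectra contained in a common compact subset of $(0,\infty)$, I can uniformly approximate $t\mapsto t^{-1/2}$ by such polynomials on that set; passing to the norm limit gives
\[
(\eta_y^*\eta_y)^{-1/2}\circ FG(b)\;=\;FG(b)\circ(\eta_x^*\eta_x)^{-1/2}.
\]
Multiplying on the left by $\eta_y$ and invoking naturality of $\eta$ once more yields $\tilde{\eta}_y\circ FG(b)=b\circ\tilde{\eta}_x$, as required, with the same argument applied to $\tilde{\epsilon}$.

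The hard part will be justifying the functional-calculus step that passes from polynomial intertwinings to the intertwining with $(-)^{-1/2}$; the crucial input is the invertibility of $\eta_y^*\eta_y$, which makes $t^{-1/2}$ continuous on the relevant spectrum and depends on $\B$ being unital so that the notion of isomorphism is the usual one. Everything else---unitarity, compatibility of the construction with the $C^*$-structure, and the symmetric treatment of $\tilde\epsilon$---is then direct.
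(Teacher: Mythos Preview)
Your proof is correct. The paper takes a slicker route: rather than working componentwise, it observes that $\eta$ is an isomorphism in the functor $C^*$-category $\mathcal{B}^{\mathcal{B}}$ (\Cref{functorcat}) and applies \Cref{isoisunitary} \emph{once} in that category to obtain the unitary $\eta(\eta^*\eta)^{-1/2}$; since the involution and functional calculus in $\mathcal{B}^{\mathcal{B}}$ are computed objectwise, this is automatically natural and objectwise unitary, so the entire intertwining-plus-functional-calculus argument you carry out by hand is absorbed into the statement that $\mathcal{B}^{\mathcal{B}}$ is a $C^*$-category. What your approach buys is that it does not rely on $\eta$ being a bounded natural transformation (i.e.\ $\sup_y\|\eta_y\|<\infty$), which the paper's argument tacitly needs in order to regard $\eta$ as a morphism in $\mathcal{B}^{\mathcal{B}}$; since you check naturality one arrow at a time and each $\tilde\eta_y$ is unitary, no uniform bound on the original $\eta$ is required. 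One small remark: your line ``taking adjoints gives $FG(b)\circ\eta_x^*=\eta_y^*\circ b$'' is really naturality of $\eta$ applied to $b^*$ together with $FG(b^*)=FG(b)^*$, which uses that $F$ and $G$ are $C^*$-functors---worth saying explicitly.
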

\begin{proof}
    Simply note that in \Cref{equivdefn}, the transformation $\eta$ is an isomorphism in $\mathcal{A}^\mathcal{A}$, which is a $C^*$-category by \Cref{functorcat}. Hence $\eta$ can be modified to a unitary isomorphism, which will clearly give a unitary isomorphism at every object. One can proceed similarly for $\epsilon$. 
\end{proof}

We turn now to the matter of direct sums in $C^*$-categories. A standard result about additive categories is that finite products and coproducts coincide. In a $C^*$-category we ask in addition that their structure maps are related by the involution:
\begin{defn}
Given a $C^*$-category $\mathcal{A}$ and a finite (possibly repeating) list of objects $x_1,\dots,x_n$ of $\mathcal{A}$ which each have units, we say $\bigoplus_{i=1}^n x_i\in\mathsf{Ob}\mathcal{A}$ is a \emph{direct sum}\footnotemark\hspace{0.5mm}  of $x_1,\dots,x_n$ with structure maps $\iota_i:x_i\rightarrow\bigoplus_{i=1}^n x_i$ if \begin{itemize}
    \item All $\iota_i$ are isometries.
    \item $\sum_{i=1}^n\iota_i\iota_i^*=\id_{\bigoplus_{i=1}^n x_i}$.
\end{itemize} 
\footnotetext{As this is a more specific definition than a direct sum in a general additive category, it might be more appropriate to name this a $*$-direct sum, but since the more general sums do not appear in this article we stick with `direct sum' for economy. Readers versed in dagger categories may be interested to know that this notion of direct sum is an example of a dagger limit as defined in \cite{Heunen}.}
\end{defn}

It follows from the definition that direct sums are unique up to unitary isomorphism. It is possible to add direct sums of objects `synthetically': %Moreover, for two direct sums $\bigoplus_{i=1}^n x_i,\bigoplus_{j=1}^k y_j$ with structure maps $\iota_i,\kappa_j$, the set of morphisms $a\in\mathcal{A}(\bigoplus_{i=1}^n x_i,\bigoplus_{j=1}^k y_j)$, is isometric to the space of $k\times n$ matrices with entries $a_{ij}\in\mathcal{A}(x_i,y_j)$. Involution in this formulation then acts as transposition followed by entrywise involution, and composition with other such morphisms acts as matrix multiplication. To see this, set $a_{i,j}=\kappa^*_j\circ a\circ\iota_i$. We defer discussion of the norm on the matrix spaces to the theorem below.

%Clearly in order to be closed under finite direct sums, a $C^*$-category must be unital, and unital $C^*$-functors between unital $C^*$-categories preserve finite direct sums. Fortunately, using the characterization of the hom sets above, we can create a `closure of $\mathcal{A}$ under finite direct sums' even if $\mathcal{A}$ has no units. The result will not have actual direct sums, but any functors into categories that \textit{do} have direct sums will factor through the embedding into this closure.

\begin{prop}\label{sumclosuredefn}
If $\mathcal{A}$ is a $C^*$-category, there is a $C^*$-category $\mathcal{A}_{\oplus}$, termed the \emph{additive hull} or \emph{additive closure} of $\mathcal{A}$, where: \begin{itemize}
    \item Objects of $\mathcal{A}_{\oplus}$ are finite lists of $\mathcal{A}$-objects.
    \item $\mathcal{A}_{\oplus}(\{x_1,..,x_n\},\{y_1,..,y_k\}):=[\hom_\mathcal{A}(x_i,y_j)]_{ij}$, that is $k\times n$ matrix arrays of morphisms $x_i\rightarrow y_j$, which compose by matrix multiplication.
    \item The norm on $\mathcal{A}_{\oplus}(\{x_1,..,x_n\},\{y_1,..,y_k\})$ defined by $$\|[f_{ij}]_{i,j}\|:=\emph{sup}\{\|[f_{ij}][b_i]\|:b_i\in\mathcal{A}(w,x_i),w\in\mathsf{Ob}(\mathcal{A}), \|[b_i]\| \leq 1  \}$$where the norms of columns of morphisms are calculated in Hermitian fashion: $\|[b_i]\|:=\sqrt{\|\sum_i b^*_ib_i\|}$.
    \item For any $f=[f_{ij}]_{i,j}\in\mathcal{A}_{\oplus}(\{x_1,..,x_n\},\{y_1,..,y_k\})$ set $f^\ast:=[f^\ast_{ij}]_{j,i}$.
\end{itemize}
\end{prop}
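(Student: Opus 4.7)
The plan is to verify the $C^\ast$-category axioms for $\A_\oplus$ one by one. The underlying $\mathbb{C}$-linear $\ast$-category structure---composition by matrix multiplication, involution by conjugate transpose---is a routine check: composition is $\mathbb{C}$-bilinear because composition in $\A$ is, the involution reverses composition and is conjugate-linear, and $((f^\ast)^\ast)_{ij} = (f^\ast_{ji})^\ast = f_{ij}$ shows it squares to the identity. The substance of the proposition lies in the norm.

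The first thing I would establish is that for each $w \in \Ob\A$, the column space $\bigoplus_i \A(w,x_i)$ with the norm $\|[b_i]\| := \sqrt{\|\sum_i b_i^\ast b_i\|_{\A(w,w)}}$ is a Banach space, and admits a Cauchy-Schwarz-type inequality $\|[c_j]^\ast\, g\, [b_i]\| \leq \|g\|\|[b_i]\|\|[c_j]\|$ for any matrix $g \in \A_\oplus(X,Y)$. These are precisely the standard norm and inner-product inequality making $\bigoplus_i \A(w,x_i)$ a Hilbert $\A(w,w)$-module, so they follow from the classical theory of Hilbert $C^\ast$-modules over the single $C^\ast$-algebra $\A(w,w)$ (which is available at this stage without circularity, since Hilbert modules over $C^\ast$-algebras are classical).

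With columns in hand, the proposed norm on $\A_\oplus(X,Y)$ is the supremum over $w \in \Ob\A$ of the operator norm of left multiplication by $f$ on the column space. Finiteness is immediate from the crude bound $\|f\| \leq \sum_{i,j}\|f_{ij}\|$, and submultiplicativity $\|fg\|\leq\|f\|\|g\|$ along with the other norm axioms follow from the operator-norm interpretation. For the $C^\ast$-identity, a direct expansion shows
\[
\|f[b_i]\|^2 = \Bigl\|[b_i]^\ast (f^\ast f)[b_i]\Bigr\|_{\A(w,w)},
\]
since both sides unfold to $\|\sum_{i,i'}b_i^\ast (f^\ast f)_{i,i'}b_{i'}\|$. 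Taking suprema and combining with the Cauchy-Schwarz inequality gives $\|f\|^2 \leq \|f^\ast f\|$; combined with submultiplicativity this forces $\|f\| = \|f^\ast\|$ and hence $\|f^\ast f\| = \|f\|^2$. Completeness is handled by checking that the stated norm is equivalent to the max-entry norm $\sup_{i,j}\|f_{ij}\|$ (with constants depending on the fixed sizes $n, k$ of the lists $X, Y$), which reduces Cauchy sequences of matrices to entrywise Cauchy sequences in the already-complete spaces $\A(x_i, y_j)$.

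Finally, positivity of the spectrum of $f^\ast f$ reduces to the analogous fact in the hom-algebra $\A_\oplus(X,X)$: the previous steps exhibit it as a complete Banach $\ast$-algebra with the $C^\ast$-identity, i.e.\ a $C^\ast$-algebra, and positivity of spectra of elements of the form $f^\ast f$ is then a classical result. The main obstacle I anticipate is the Cauchy-Schwarz bookkeeping on the column spaces---one has to establish the inequality without presupposing the $C^\ast$-norm on $\A_\oplus$ itself, which is why I would factor the argument through the classical Hilbert-module theory over the unit algebras $\A(w,w)$ rather than trying to set it up ab initio inside $\A_\oplus$. Every other step is a direct generalization of the familiar matrix-algebra construction over a $C^\ast$-algebra to the multi-object setting.
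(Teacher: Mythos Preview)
Your argument is essentially correct and is in fact more detailed than what the paper does: the paper \emph{omits} the proof entirely, pointing instead to the existing construction in \cite{AntounVoigt} (which proceeds via an embedding into $\mathsf{Hilb}$) and observing that since the underlying $\ast$-category data agree, \Cref{normunique} forces the norms to coincide. Your route---realising the column spaces $\bigoplus_i \A(w,x_i)$ as Hilbert $\A(w,w)$-modules and reading the norm as an operator norm---is the natural \emph{internal} verification that the paper alludes to but does not carry out. It buys self-containment at the cost of some bookkeeping; the paper's deferral buys brevity at the cost of relying on an external construction plus the uniqueness-of-norm trick.

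One point deserves tightening. In your final step you say positivity of $f^\ast f$ is ``a classical result'' once $\A_\oplus(X,X)$ is known to be a $C^\ast$-algebra. The classical result says $a^\ast a \geq 0$ for $a$ \emph{in} the algebra, but here $f \in \A_\oplus(X,Y)$ with possibly $X \neq Y$, so $f$ is not an element of $\A_\oplus(X,X)$. This is easily repaired with the machinery you have already built: the assignment $g \mapsto (L_g^w)_w$ embeds $\A_\oplus(X,X)$ isometrically as a $\ast$-subalgebra of $\prod_w \mathcal{L}\bigl(\bigoplus_i \A(w,x_i)\bigr)$, and under this embedding $f^\ast f$ goes to $\prod_w (L_f^w)^\ast L_f^w$, which is manifestly positive. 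Alternatively one can pass to the concatenated list $Z = X \sqcup Y$, view $f$ as an off-diagonal block of $\A_\oplus(Z,Z)$, and then invoke the classical result there. Either way the gap is minor, but as stated the sentence does not quite close.
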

We omit the proof since this construction was also characterized, for instance, in \cite{AntounVoigt}. Our construction differs slightly from existing ones in the sense that it is internal to the $C^*$-category and doesn't depend on an embedding into $\mathsf{Hilb}$, but it's equivalent to older constructions since the hom-spaces, composition and involution are manifestly the same and the $C^*$-norm is unique by \Cref{normunique}.

%If $\mathcal{A}$ is unital, $\mathcal{A}_{\oplus}$ has finite direct sums given on objects by concatenating lists, and whose action on morphisms works by arranging matrices (horizontally under source direct sums, vertically under target direct sums) and whose structure maps $\iota_i$ are rows of morphisms zero everywhere, except for an identity in the $i$-th place. 

For all $\mathcal{A}$, there is an evident faithful functor $\mathcal{A}\xhookrightarrow{}\mathcal{A_{\oplus}}$: it sends each object to its corresponding one-object list and each morphism to a $1\times1$ matrix. It has the following property:
\begin{lemma}\label{addextendfunc}
For every $C^*$-category $\mathcal{A}$, every $\mathbb{C}$-linear functor $F$ from $\mathcal{A}$ into a $\mathbb{C}$-linear category $\mathcal{B}$ that admits all finite direct sums factors through the embedding $\mathcal{A}\xhookrightarrow{}\mathcal{A}_{\oplus}$ to a unique extension $F_{\oplus}:\mathcal{A}_{\oplus}\rightarrow\mathcal{B}$. If $\mathcal{B}$ is a $*$-category and $F$ is a $*$-functor, then so is $F_{\oplus}$.
\end{lemma}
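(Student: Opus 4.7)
The plan is to construct $F_\oplus$ explicitly using chosen biproducts in $\mathcal{B}$, then deduce uniqueness from the universal property of biproducts. First, for every object $\{x_1,\ldots,x_n\}$ of $\mathcal{A}_\oplus$ I would choose a biproduct $\bigoplus_i F(x_i)$ in $\mathcal{B}$, with injections $\kappa_i$ and projections $\pi_i$ satisfying the biproduct identities $\pi_i\kappa_j=\delta_{ij}\id$ and $\sum_i\kappa_i\pi_i=\id$, and set $F_\oplus(\{x_1,\ldots,x_n\}):=\bigoplus_i F(x_i)$. For a morphism $[f_{ij}]\in\mathcal{A}_\oplus(\{x_i\},\{y_j\})$ with $f_{ij}\in\mathcal{A}(x_i,y_j)$, set $F_\oplus([f_{ij}]):=\sum_{i,j}\kappa_j^y\circ F(f_{ij})\circ\pi_i^x$. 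Linearity is immediate, and functoriality is a short calculation: composing two such sums and collapsing the middle factor via $\pi_{j'}^y\kappa_j^y=\delta_{j'j}\id$ recovers $F_\oplus$ applied to the matrix product. Choosing the trivial biproduct on singletons (with $\kappa_1=\pi_1=\id$) shows that $F_\oplus$ really extends $F$ along $\mathcal{A}\hookrightarrow\mathcal{A}_\oplus$.

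For uniqueness, I would observe that the object $\{x_1,\ldots,x_n\}$ in $\mathcal{A}_\oplus$ is tautologically itself a biproduct of the singletons $\{x_i\}$: the injection $\iota_i:\{x_i\}\to\{x_1,\ldots,x_n\}$ is the column matrix with $\id_{x_i}$ in slot $i$ and zero elsewhere, its transpose $\iota_i^t$ plays the role of the projection, and these satisfy $\iota_i^t\iota_j=\delta_{ij}\id$ and $\sum_i\iota_i\iota_i^t=\id$. Since these equations are purely $\mathbb{C}$-linear, any extension $G:\mathcal{A}_\oplus\to\mathcal{B}$ of $F$ automatically exhibits $G(\{x_1,\ldots,x_n\})$ as a biproduct of the $F(x_i)$ in $\mathcal{B}$. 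There is then a unique biproduct isomorphism to $F_\oplus(\{x_1,\ldots,x_n\})$, naturality in the matrix morphisms is a direct check from the formula, and one may even achieve strict equality by choosing the biproducts used to define $F_\oplus$ to be the ones furnished by $G$.

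For the $*$-functor clause, I would additionally insist the chosen biproducts in $\mathcal{B}$ be $*$-direct sums, so $\pi_i=\kappa_i^*$. Since the involution on $\mathcal{A}_\oplus$ is the conjugate transpose, if $f^*$ has $(j,i)$-entry $f_{ij}^*$, then $F_\oplus([f_{ij}]^*)=\sum_{i,j}\kappa_i^x\circ F(f_{ij}^*)\circ\pi_j^y=\sum_{i,j}(\pi_i^x)^*\circ F(f_{ij})^*\circ(\kappa_j^y)^*$, which by antimultiplicativity of the involution equals $F_\oplus([f_{ij}])^*$, as required.

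The main obstacle I foresee is not conceptual but bookkeeping: tracking indices and the placement of injections versus projections, and being transparent about the dependence of $F_\oplus$ on the choice of biproducts in $\mathcal{B}$. A secondary wrinkle in the $*$-functor case is the implicit need to work with $*$-direct sums rather than arbitrary biproducts, which must either be assumed present in $\mathcal{B}$ or be obtained by replacing projections with the adjoints of the injections (and checking this is consistent). The conceptual content beyond this is entirely the universal property of biproducts combined with the observation that $\mathcal{A}_\oplus$ is designed precisely to make its biproduct structure tautological.
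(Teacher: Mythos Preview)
Your construction and the $*$-functor verification match the paper's approach exactly; the paper gives essentially the same formula $[f_{ij}]\mapsto\sum_{i,j}\kappa_j\circ F(f_{ij})\circ\pi_i$ and leaves functoriality, uniqueness, and the $*$-clause to the reader, so your write-up is if anything more complete than the original.

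One caveat on uniqueness: your argument relies on the injections $\iota_i\in\mathcal{A}_\oplus(\{x_i\},\{x_1,\ldots,x_n\})$ having $\id_{x_i}$ in the $i$th slot, but the paper's $C^*$-categories are not assumed unital, so these morphisms need not exist in $\mathcal{A}_\oplus$ itself (they live only in the multiplier category $\mathcal{M}(\mathcal{A}_\oplus)$). The paper does not address this either, and in practice the extension is only unique up to the choice of direct sums in $\mathcal{B}$ anyway; if you want a clean statement in the non-unital case you should either phrase uniqueness as ``unique up to canonical natural isomorphism'' or pass to multipliers to get genuine structure maps.
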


\begin{proof}
There's an obvious extension $F_{\oplus}$ on objects that sends a list of $\mathcal{A}$-objects $\{x_1,\dots,x_n\}=\mathbf{x}$ to $F(x_1)\oplus\cdots\oplus F(x_n)$. As to the morphisms from $\mathbf{x}$ to $\mathbf{y}=\{y_1,..,y_k\}$, we define the action $$F_{\oplus}:\mathcal{A}_{\oplus}(\mathbf{x},\mathbf{y})\rightarrow \mathcal{B}(F(x_1)\oplus\cdots\oplus F(x_n),F(y_1)\oplus\cdots\oplus F(y_k))$$  by $[f_{ij}]\mapsto\sum_{i,j}\kappa_j\kappa^*_j\circ F(f_{ij})\circ\iota_i\iota_i^*$, where $\iota_i$ and $\kappa_j$ are the structure maps of the two direct sums in $\mathcal{B}$. This map is clearly linear and intertwines the involutions - the uniqueness and the final statement are left to the reader.
\end{proof}

 It follows from \Cref{sumclosuredefn} that in particular, for every finite list $\{x_1,..,x_n\}$ there exists a $C^*$-algebra $$M_{x_1,..,x_n}(\mathcal{A}):=\mathcal{A}_{\oplus}(\{x_1,..,x_n\},\{x_1,..,x_n\}).$$ It follows immediately from the definition of the matrix norm that these algebras embed isometrically into each other along list inclusions, so we can form a direct limit of all these algebras:
 
 %, so it's possible to obtain their limit $\text{M}_X(\mathcal{A})$ along the partially ordered set of all finite lists of objects in $\mathcal{A}$.
 
% \begin{defn}\label{matrixalgdefn}
% For a $C^*$-category $\mathcal{A}$, its \emph{repeating matrix algebra} $\Matrep\mathcal{A}$ is the $C^*$-algebra obtained as the direct limit\footnotemark \hspace{0pt} of the $C^*$-algebras $M_{x_1,..,x_n}(\mathcal{A})$ along all list inclusions $inc:\{x_1,..,x_n\}\xhookrightarrow{}\{y_1,\dots y_{n+k}\}$ of the following form: $inc$ does not necessarily preserve the ordering of objects in the list, but on the sublist consisting of all $\ell$ copies of the same object $x$, the map $inc$ acts as an ordered inclusion of the first $\ell$ copies of the element in the second list.  
% \end{defn}
%\footnotetext{See e.g. \cite[Section 6.1]{Murphy} for an exposition of the theory of direct limits of $C^*$-algebras.}
% Note that if $\mathcal{A}$ is a single-object $C^*$-category with endomorphism algebra $A$, we have $\Matrep\mathcal{A}\cong A\otimes\mathbb{K}$, so that the $\Matrep$ construction in effect is a generalization of stabilization.

\begin{defn}\label{matrixalg}
    For a $C^*$-category, its \emph{matrix algebra} $\Mat\mathcal{A}$ is the $C^*$-algebra obtained as the direct limit\footnotemark\hspace{0pt} of the $C^*$-algebras $M_{x_1,..,x_n}(\mathcal{A})$ along all (not necessarily order-preserving) inclusions $\{x_1,..,x_n\}\xhookrightarrow{}\{y_1,\dots y_{n+k}\}$ of \textit{non-repeating finite lists}.
    \end{defn}
\footnotetext{See e.g. \cite[Section 6.1]{Murphy} for an exposition of the theory of direct limits of $C^*$-algebras.}
We note that $\Mat\A$ is unital if and only if $\A$ has a finite set of objects, all with unital endomorphism algebras.
\begin{xmpl}
    If $\A=\CMax(\mathcal{G})$ is the maximal groupoid $C^*$-category on a discrete groupoid $\mathcal{G}$, then $\Mat\A$ is the classical \emph{full groupoid $C^*$-algebra} of $\mathcal{G}$. 
\end{xmpl}

Since the inclusions between finite-dimensional matrix algebras are isometric, the inclusions $M_{x_1,..,x_n}(\mathcal{A})\xhookrightarrow{}\Mat\mathcal{A}$ are isometric too: it is elementary to prove that $\Mat\mathcal{A}$ is isomorphic as a $C^*$-algebra to the Banach space direct sum $\bigoplus_{x,y\in\Ob\mathcal{A}}\mathcal{A}(x,y)$, where this space has multiplication and involution given by `coordinate-wise' composition and involution, and the obvious $\ell^2$-norm. This algebra is described as $A_\mathcal{A}$ in \cite{JoachimKHom} and named the `category algebra' in \cite{Ferrier}. We will return to these algebras in Subsection 5.1 and caution for the moment that unless $\A$ has a \textit{set} of objects, we cannot expect the Banach space $\Mat\A$ to live in the same universe as the hom-sets of $\A$.

We finish the subsection by using the $2\times2$ matrix $C^*$-algebras $M_{x,y}(\mathcal{A})$ to generalize a lemma about $C^*$-algebras to the context of $C^*$-categories.

\begin{lemma}\label{factorlemma}
For any $u\in\mathcal{A}(x,y)$, there exist morphisms  $v\in\mathcal{A}(x,y),w\in\mathcal{A}(x,x)$ such that $u=vw$, as well as morphisms $s\in\mathcal{A}(y,y),t\in\mathcal{A}(x,y)$ such that $u=st$.
\end{lemma}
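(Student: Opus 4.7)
The plan is to reduce the statement to the $C^*$-algebra factorization lemma, which says that every element $a$ of a $C^*$-algebra $B$ admits a factorization $a = b \cdot (a^*a)^{1/4}$ with $b \in B$. I would prove this by forming $b_n := a(a^*a + 1/n)^{-1/4}$, where the inverse is taken in the unitization $B^+$ (since the spectrum of $a^*a + 1/n$ is bounded away from $0$); the fact that $b_n$ actually lives in $B$ is immediate from writing $(a^*a+1/n)^{-1/4}$ as a scalar plus an element of $B$. Using the $C^*$-identity and continuous functional calculus, one computes $(b_n - b_m)^*(b_n - b_m) = f_{n,m}(a^*a)$ where $f_{n,m}(t) = t\bigl((t+1/n)^{-1/4} - (t+1/m)^{-1/4}\bigr)^2$, and the exponent $\tfrac14 < \tfrac12$ is just small enough that $f_{n,m}$ vanishes uniformly on the compact spectrum of $a^*a$. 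So $(b_n)$ is norm-Cauchy in $B$, and a similar functional-calculus estimate on $b_n (a^*a)^{1/4} - a$ shows that the limit $b$ satisfies $b(a^*a)^{1/4} = a$.

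To promote this to a statement about $C^*$-categories, I would work inside the $C^*$-algebra $M_{x,y}(\A) = \A_\oplus(\{x,y\},\{x,y\})$ from \Cref{sumclosuredefn} and \Cref{matrixalg}, and encode $u$ as the off-diagonal matrix
\[
\tilde u := \begin{pmatrix} 0 & 0 \\ u & 0 \end{pmatrix} \in M_{x,y}(\A),
\]
with rows and columns indexed by $(x,y)$. Applying the algebra lemma above yields $\tilde u = \tilde v \tilde w$ with $\tilde w := (\tilde u^* \tilde u)^{1/4}$. Since $\tilde u^* \tilde u = \begin{pmatrix} u^*u & 0 \\ 0 & 0 \end{pmatrix}$ is block-diagonal and $t \mapsto t^{1/4}$ vanishes at zero, continuous functional calculus keeps the result inside $M_{x,y}(\A)$ and gives $\tilde w = \begin{pmatrix} (u^*u)^{1/4} & 0 \\ 0 & 0 \end{pmatrix}$. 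Reading off the $(2,1)$-entry of the identity $\tilde u = \tilde v \tilde w$ then produces the desired decomposition $u = v w$ where $v := \tilde v_{21} \in \A(x,y)$ and $w := (u^*u)^{1/4} \in \A(x,x)$. The second factorization $u = s t$ comes from applying the first to $u^* \in \A(y,x)$ and taking adjoints.

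The main technical nuisance will be keeping everything inside the non-unital algebra $M_{x,y}(\A)$: functional calculus must be applied via functions vanishing at $0$, which is precisely the same phenomenon that forces the approximation $b_n \to b$ in the algebra proof rather than a direct algebraic inversion by $(a^*a)^{-1/4}$.
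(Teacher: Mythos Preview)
Your proposal is correct and follows essentially the same route as the paper: embed $u$ as an off-diagonal element of the matrix algebra $M_{x,y}(\A)$, invoke the standard $C^*$-algebra factorization $a=v(a^*a)^{1/4}$ (which the paper cites from Pedersen, while you sketch its proof), and read off the $(2,1)$-entry. Your additional remarks on the block-diagonal shape of $(\tilde u^*\tilde u)^{1/4}$ and on keeping functional calculus inside the non-unital algebra are accurate and make explicit details the paper leaves to the reader.
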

\begin{proof}
Consider $u$ as $\begin{bmatrix}
0 & 0 \\
u & 0
\end{bmatrix} \in M_{x,y}(\mathcal{A})$: then a standard result\footnotemark\vspace{0pt} about $C^*$-algebras states that there's an element $v=\begin{bmatrix}
v_{11} & v_{12} \\
v_{21} & v_{22}
\end{bmatrix}\in M_{x,y}(\mathcal{A})$ such that $u=v(u^*u)^{\frac{1}{4}}$. The bottom-left corner of this matrix equation reads $u=v_{21}(u^*u)^{\frac{1}{4}}$ where $v_{21}\in\mathcal{A}(x,y)$. Hence setting $v:=v_{21},w:=(u^*u)^{\frac{1}{4}}$ proves the first part of the lemma. Applying this procedure instead to $u^*\in\mathcal{A}(y,x)$ proves the second part.\end{proof}
\footnotetext{See e.g. \cite[1.4.6]{Pedersen} where we set $\alpha=\frac{1}{2}$.}

%There's a very useful corollary we can take from this last lemma, but first we remind the reader that an \textit{approximate unit} in a $C^*$-algebra $A$ is a net $(a_\lambda)_{\lambda\in\Lambda}$ of self-adjoint elements of norm $\leq1$ such that for every $b\in A$, we have $a_\lambda b\xrightarrow[]{\lambda} b$. Note that by applying the involution (which is isometric) we also get $ba_\lambda\xrightarrow[]{\lambda}b$.  Every $C^*$-algebra has an approximate identity, see e.g. \cite[Chapter 3, Section 1]{Murphy}.

We deduce a useful fact about the behaviour of approximate units in $C^*$-categories:

\begin{cor}\label{approxunit}
If $(e_\lambda)$ is an approximate unit for the $C^*$-algebra $\mathcal{A}(x,x)$, then for all $a\in\mathcal{A}(x,y)$ we have $\|ae_\lambda-a\|\rightarrow0$ and for all $b\in\mathcal{A}(w,x)$ we have $\|e_\lambda b-b\|\rightarrow0$.
\end{cor}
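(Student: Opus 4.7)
The plan is to reduce both convergences to the classical $C^*$-algebra fact that an approximate unit $(e_\lambda)$ in a $C^*$-algebra $B$ satisfies $\|be_\lambda-b\|\to 0$ for every $b\in B$. The obstacle is that neither $a\in\A(x,y)$ nor $b\in\A(w,x)$ lies in $\A(x,x)$, so that statement does not apply directly. The idea is to use \Cref{factorlemma} to peel off a factor that \emph{does} live in $\A(x,x)$, and then control the remainder by submultiplicativity of the norm.

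For the first limit, I would invoke the first half of \Cref{factorlemma} to write $a=vw'$ with $v\in\A(x,y)$ and $w'\in\A(x,x)$ (renaming to avoid the notational clash with the object $w$ appearing in the second claim). Then
\[ ae_\lambda - a = v(w'e_\lambda - w'), \]
so by submultiplicativity $\|ae_\lambda-a\|\le\|v\|\cdot\|w'e_\lambda-w'\|$. Since $w'$ lies in the $C^*$-algebra $\A(x,x)$ and $(e_\lambda)$ is an approximate unit for it, the right-hand side tends to $0$.

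For the second limit, I would apply the second half of \Cref{factorlemma} to $b\in\A(w,x)$, producing a factorization $b=s't$ with $s'\in\A(x,x)$ and $t\in\A(w,x)$. Then $e_\lambda b - b = (e_\lambda s' - s')t$, so $\|e_\lambda b - b\|\le\|e_\lambda s' - s'\|\cdot\|t\|\to 0$ for the same reason. There is no genuine obstacle here: the substantive content is already contained in \Cref{factorlemma}, and once one has the factorizations the corollary falls out immediately.
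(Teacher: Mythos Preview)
Your proof is correct and follows essentially the same approach as the paper: both arguments use \Cref{factorlemma} to peel off a factor in $\A(x,x)$ and then apply submultiplicativity. The only cosmetic difference is that for the second claim the paper reduces to the first via the involution (using $b^*\in\A(x,w)$ and $e_\lambda^*=e_\lambda$), whereas you invoke the second factorization of \Cref{factorlemma} directly; both are equally short.
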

\begin{proof}
Factorizing $a=vw$ as in \Cref{factorlemma}, we see $$\|ae_\lambda-a\|\leq\|vwe_\lambda-vw\|\leq\|v\|\|we_\lambda-w\|\xrightarrow[]{\lambda}0$$ The other case follows immediately from the first by setting $y=w$ and applying the involution.
\end{proof}

We treat here briefly a sort of Cauchy completion for $C^*$-categories 

\begin{defn}
    If $\A$ is a $C^*$-category, a \emph{projection} is a morphism $p\in\A(x,x)$ such that $p=p^*$ and $p^2=p$. We say $p$ \emph{splits} if there is an isomorphism $u:x\cong y\oplus z$ such that $upu^{-1}=\begin{bmatrix}
\id_y & 0 \\
0 & 0
\end{bmatrix}$, in which case $y$ is termed the \emph{image} of $p$. We say $\A$ is \emph{idempotent complete} if every projection splits.
\end{defn}

\begin{prop}\label{idempotentcompletion}
    For any unital $C^*$-category $\A$ there is a unital $C^*$-category $\A^{\natural}$ termed the \emph{idempotent completion} of $\A$,  whose objects are pairs $(x,p)$ where $p\in\A(x,x)$ is a projection, and whose morphisms from $(x,p)$ to $(y,q)$ are those morphisms $a$ in $\A(x,y)$ such that $qa=a=pa$.

    $\A^{\natural}$ is idempotent complete, contains the isometric image of $\A$ by sending $x$ to $(x,\id_x)$, and if $\B$ is an idempotent complete $C^*$-category, any unital $C^*$-functor $\A\rightarrow\B$ extends to a unital $C^*$-functor $\A^{\natural}\rightarrow\B$.
\end{prop}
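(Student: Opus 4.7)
The plan is to verify, in order, that (i) $\A^{\natural}$ is a well-defined unital $C^*$-category, (ii) $x \mapsto (x,\id_x)$ is an isometric embedding, (iii) every projection in $\A^{\natural}$ splits, and (iv) the extension property holds. In the definition of morphisms I interpret ``$pa$'' as $ap$ (the only composable arrangement, since $p \in \A(x,x)$ and $a \in \A(x,y)$). For (i), each hom-space $\A^{\natural}((x,p),(y,q)) = \{a \in \A(x,y) : qa = a = ap\}$ is a norm-closed subspace of $\A(x,y)$ stable under composition and involution, with candidate unit $\id_{(x,p)} = p$; the $C^*$-identity $\|a^*a\| = \|a\|^2$ is inherited verbatim from $\A$. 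The delicate check is positivity of spectrum, which reduces to observing that the endomorphism algebra $\A^{\natural}((x,p),(x,p))$ is the corner $p\A(x,x)p$, a norm- and $*$-closed subalgebra of the unital $C^*$-algebra $\A(x,x)$ and hence itself a unital $C^*$-algebra with its own unit $p$; in particular any $b^*b$ there has non-negative spectrum.

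Claim (ii) is immediate since $\A^{\natural}((x,\id_x),(y,\id_y)) = \A(x,y)$ as normed spaces. For (iii), given a projection $p \in \A^{\natural}((x,q),(x,q))$, the hom-set conditions force $qp = pq = p$, and $p^* = p$, $p^2 = p$ combine to make $q - p$ a self-adjoint idempotent in $\A(x,x)$, so $(x,q-p)$ is a legitimate object of $\A^{\natural}$. Setting $\iota_1 := p : (x,p) \to (x,q)$ and $\iota_2 := q - p : (x,q-p) \to (x,q)$ yields two isometries with $\iota_1 \iota_1^* + \iota_2 \iota_2^* = p + (q-p) = q = \id_{(x,q)}$, so $(x,q) \cong (x,p) \oplus (x,q-p)$ inside $\A^{\natural}$; the identity isomorphism conjugates $p$ into $\iota_1 \iota_1^*$, the projection onto the first summand, which is exactly the required splitting.

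For (iv), given an idempotent-complete $\B$ and a unital $C^*$-functor $F : \A \to \B$, each projection $F(p)$ splits in $\B$, providing for every $(x,p)$ a choice of isometry $i_{(x,p)} : y_{(x,p)} \hookrightarrow F(x)$ with $i_{(x,p)} i_{(x,p)}^* = F(p)$; taking $i_{(x,\id_x)} := \id_{F(x)}$ ensures the extension strictly restricts to $F$ along the embedding. Defining $F^{\natural}(x,p) := y_{(x,p)}$ and $F^{\natural}(a) := i_{(y,q)}^* \, F(a) \, i_{(x,p)}$, functoriality, unitality, and $*$-preservation all follow routinely from $i^* i = \id$, $i i^* = F(p)$, and the identity $F(a) = F(q)F(a)F(p)$ (which holds because $qa = a = ap$). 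I expect the main obstacle to be the positivity-of-spectrum verification in (i), which requires recognising the endomorphism algebras as corners and handling the fact that the corner's intrinsic unit $p$ differs from $\id_x$, so that spectra computed in the two relevant unitizations do not literally coincide (though positivity is preserved); the direct-sum computation in (iii) is the next most delicate step, crucially exploiting the $*$-categorical definition of direct sum whose structure maps are isometries summing to the identity via $\sum \iota_i \iota_i^* = \id$.
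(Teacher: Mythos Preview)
Your proposal is correct. The paper itself does not give a proof of this proposition but simply refers the reader to \cite[Section 2.4]{IvoGoncalo}; your argument is the standard direct verification one finds there, so there is nothing substantive to compare.
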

We refer the reader to \cite[Section 2.4]{IvoGoncalo} for a proof.
\subsection{The multiplier category and non-degenerate functors}

In this subsection we introduce a generalization of the multiplier algebra construction to $C^*$-categories. This `multiplier category' was first defined in \cite{KandelakiMultiplier} using different techniques than the ones we use here. It was first formulated in a form alike to the one presented here in \cite[Section 2]{VasselliMultipliers}.

We begin by defining the morphisms in the multiplier category.

\begin{defn}\label{singlevarmultmorph}
If $\mathcal{A}$ is a $C^*$-category and $x,y\in\Ob\mathcal{A}$, a \emph{multiplier morphism} from $x$ to $y$ is a pair of maps $(L,R)$, where:
\begin{itemize}
    \item 
$L:\mathcal{A}(x,x)\rightarrow\mathcal{A}(x,y)$ is a map of right $\mathcal{A}(x,x)$-modules.
\item $R:\mathcal{A}(y,y)\rightarrow\mathcal{A}(x,y)$ is a map of left $\mathcal{A}(y,y)$-modules.
\item For all $f\in\mathcal{A}(x,x)$, $g\in\mathcal{A}(y,y)$, we have $R(g)\circ f=g\circ L(f)$.
\end{itemize}
\end{defn}
It follows from standard arguments using approximate units that $L$ and $R$ are automatically complex linear and bounded: see \cite[Proposition 2.2.8]{Wegge} for the $C^*$-algebra case. We also note multiplier morphisms from $x$ to $y$ form a complex Banach space, which lives in the same universe as the hom-sets of $\A$ as we've assumed the universe is closed and complete: the space of multipliers from $x$ to $y$ is easily exhibited as the limit of a functor whose values are hom-spaces.

It will be useful for some purposes to have an alternative definition of multiplier morphisms.

\begin{lemma}\label{altmultiplier}
A multiplier morphism from $x$ to $y$ can equivalently be defined as two arrays of bounded maps $\{L_w:\mathcal{A}(w,x)\rightarrow\mathcal{A}(w,y)$\} and $\{R_z:\mathcal{A}(y,z)\rightarrow\mathcal{A}(x,z)\}$, where $w$ and $z$ range over all objects of $\mathcal{A}$, and such that:
\begin{itemize}
    \item For $f\in\mathcal{A}(w,x),h\in\mathcal{A}(w',w)$, we always have $ L_w(f)\circ h=L_{w'}(f\circ h)$.
    \item For $f\in\mathcal{A}(y,z),h\in\mathcal{A}(z,z')$, we always have $ h\circ R_z(f)=R_{z'}(h\circ f)$.
    \item For all $f\in\mathcal{A}(w,x)$, $g\in\mathcal{A}(y,z)$, we have $R_w(g)\circ f=g\circ L_z(f)$.
\end{itemize}
In addition, for all such arrays we have $\sup_{z}\|L_z\|=\|L_x\|=\|R_y\|=\sup_{w}\|R_w\|$.
\end{lemma}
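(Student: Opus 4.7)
The plan is to exhibit a bijection between the two descriptions and then verify the norm equalities, with \Cref{factorlemma} and \Cref{approxunit} as the main tools throughout.

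Starting from single-variable data $(L,R)$, I would first extend $L$ to the claimed array: for $f \in \A(w,x)$, apply \Cref{factorlemma} to write $f = g h$ with $g \in \A(x,x)$ and $h \in \A(w,x)$, and set $L_w(f) := L(g) h$. Well-definedness requires care: if $f = g_1 h_1 = g_2 h_2$, then picking an approximate unit $(e_\lambda)$ for $\A(x,x)$ and combining right-linearity of $L$ with \Cref{approxunit} and continuity gives $L(g_i) h_i = \lim_\lambda L(e_\lambda) f$ independently of $i$. I would construct $R_z$ symmetrically, factorizing $g \in \A(y,z)$ as $g = v w$ with $w \in \A(y,y)$ and setting $R_z(g) := v R(w)$. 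The well-definedness here is more delicate: I would invoke the compatibility $R(w') f = w' L(f)$ to rewrite $v R(w) = \lim_\lambda g L(e_\lambda)$, a form manifestly independent of the factorization.

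The three conditions in the lemma then follow routinely from these closed-form expressions: the module-like associativities are immediate, while the cross-condition reduces, via the same expressions, to its single-variable avatar. Conversely, given array data I would simply take $L := L_x$ and $R := R_y$; the first two array conditions then specialize to the required right- and left-linearity, and the cross-condition specializes to its single-variable form. A final approximate-unit computation, $L_x(a) = \lim_\lambda L(e_\lambda) a = \lim_\lambda L(e_\lambda a) = L(a)$, together with its symmetric version for $R$, shows these two constructions are mutual inverses.

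For the norm equalities, the expressions $L_w(f) = \lim_\lambda L(e_\lambda) f$ and $R_z(g) = \lim_\lambda g L(e_\lambda)$ yield $\|L_w\| \leq \|L\|$ and $\|R_z\| \leq \|L\|$ for all $w,z$, so in particular $\|R\| = \|R_y\| \leq \|L\|$; the mirror construction starting from $R$ gives the reverse inequality, and together these force all four quantities to agree. The main obstacle I anticipate is the well-definedness of $R_z$, which needs the single-variable compatibility to eliminate $R$ from the defining formula before it reduces to something recognizable; once this rewriting is in place, everything else is essentially bookkeeping.
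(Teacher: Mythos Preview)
Your proposal is correct and follows essentially the same route as the paper: extend $L$ and $R$ to arrays via \Cref{factorlemma}, establish well-definedness through approximate units, and obtain the inverse by restriction. One minor technical divergence worth noting: for the well-definedness of $L_w$ the paper composes with an approximate unit for $\A(y,y)$ on the left and invokes the cross-compatibility with $R$, whereas you use an approximate unit for $\A(x,x)$ on the right together with the right-module property of $L$ alone; your choice has the pleasant side effect of yielding the uniform closed-form expressions $L_w(f)=\lim_\lambda L(e_\lambda)f$ and $R_z(g)=\lim_\lambda g\,L(e_\lambda)$, from which the norm equalities drop out immediately rather than being left (as in the paper) to the reader.
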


\begin{proof}
    To go from $L:\A(x,x)\rightarrow\A(x,y)$ to an array $\{L_w:w\in\Ob\A\}$, use \Cref{factorlemma} on a morphism $f\in\A(w,x)$ to get $f=e\circ g$ for $e\in\A(x,x)$: then set $L_w(f)=L(e)\circ f'$. To show this is well-defined, suppose $e\circ g=e'\circ g'$ are two such factorizations of $f$: then for all $u\in\A(y,y)$ we have $$u\circ L_x(e)\circ g=R_y(u)\circ e\circ g=R_y(u)\circ e'\circ g'=u\circ L_x(e')\circ g'$$ so letting $u$ vary over an approximate unit we see that $L_x(e)\circ g=L_x(e')\circ g'$. Hence $\{L_w\}$ is well-defined, and it is easily shown to satisfy the axioms above since $L_x$ does. We can similarly go from $R$ to an array $\{R_z\}$.
    
    For the inverse map, clearly an array of multipliers as above gives a multiplier morphism by restricting to $w=x$ and $z=y$.  Using the factorization lemma as above then tells us $\{L_w\}$ is determined by $L_x$, and similarly $\{R_z\}$ is determined by $R_z$: this gives a bijection between arrays and single multipliers.

   It is easy to use approximate units and \Cref{factorlemma} to show the equations on the norms: we leave these verifications to the reader.
\end{proof}

\begin{prop}\label{multiplierdefn}
For a $C^*$-category $\mathcal{A}$, there is a $C^*$-category $\mathcal{MA}$ called the \emph{multiplier category} which has:

\begin{itemize}
\item Objects identical to those of $\mathcal{A}$.
\item The hom-space $\mathcal{MA}(x,y)$ being the set of multiplier morphisms $(L,R)$ from $x$ to $y$.
\item The norm on $\mathcal{MA}(x,y)$ defined by $\|(L,R)\|:=\|L_x\|=\|R_y\|$.
\item The involution $\mathcal{MA}(x,y)\rightarrow\mathcal{MA}(y,x)$ defined on multipliers in their first characterization by sending $(L,R)\in\mathcal{MA}(x,y)$ to $(L^*,R^*)\in\mathcal{MA}(y,x)$ where $$L^*:\begin{array}{l}\mathcal{A}(y,y)\rightarrow\mathcal{A}(y,x)\\g\mapsto R(g^*)^*\end{array}\text{ and }\begin{array}{l}R^*:\mathcal{A}(x,x)\rightarrow\mathcal{A}(y,x)\\f\mapsto L(g^*)^*.\end{array}$$
\item Composition law defined on multipliers in the form from \Cref{altmultiplier} by setting for $(\{L_u\},\{R_z\})\in\multa(x,y)$ and $(\{L'_u\},\{R'_z\})\in\multa(w,x)$: $$(\{L_u\},\{R_z\})\circ(\{L'_u\},\{R'_z\}):=(\{L_u\circ L'_u\},\{R'_z\circ R_z\}).$$
\end{itemize}
\end{prop}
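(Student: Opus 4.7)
The plan is to verify the axioms of a $C^*$-category directly, relying on the array formulation of \Cref{altmultiplier} for manipulations and reducing to classical $C^*$-algebra multiplier theory wherever possible. The bulk of the algebraic structure—well-definedness and associativity of the given composition, $\mathbb{C}$-linearity, and the basic properties of the involution (squaring to identity, reversing composition, conjugate linearity)—is routine: each condition reduces to the corresponding condition in $\A$ after applying the multiplier axioms to the arrays $\{L_u\}$ and $\{R_z\}$. In particular, the third multiplier axiom for the composite follows by nesting the compatibility $R_z(g)\circ f=g\circ L_w(f)$ for each factor.

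For the Banach $\ast$-category structure, the map $(L,R)\mapsto L_x$ embeds $\MA(x,y)$ isometrically into the Banach space of bounded linear maps $\A(x,x)\to\A(x,y)$, with image cut out by pointwise-closed multiplier conditions, so each hom-space is complete. Submultiplicativity is immediate from operator-norm submultiplicativity of the composite $L$-arrays. Crucially, the definition restricted to $x=y$ recovers the classical multiplier algebra of $\A(x,x)$; standard $C^*$-algebra theory then tells us each $\MA(x,x)$ is a unital $C^*$-algebra with unit the identity multiplier, so $\MA$ is a unital category.

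The $C^*$-identity is the crux. The canonical $\ast$-functor $\iota:\A\hookrightarrow\MA$ sending $a$ to the multiplier of left and right composition with $a$ is isometric (verified by applying the induced $L_x$ to an approximate unit for $\A(x,x)$ and invoking \Cref{approxunit}) and realizes $\A$ as a two-sided ideal in $\MA$. Unpacking the composition formula together with the multiplier axiom $L_u(f)\circ h=L_{u'}(f\circ h)$, one shows that for $m=(L,R)\in\MA(x,y)$ and $f\in\A(x,x)$, the composite $m\circ\iota(f)$ in $\MA(x,y)$ agrees with $\iota(L_x(f))$. Invoking the $C^*$-identity in $\A$ and the isometry of $\iota$ then gives
\[
\|L_x(f)\|^2=\|L_x(f)^\ast L_x(f)\|_{\A(x,x)}=\|f^\ast m^\ast m\, f\|_{\MA(x,x)}\leq\|f\|^2\,\|m^\ast m\|,
\]
so taking supremum over $\|f\|\le 1$ yields $\|m\|^2\leq\|m^\ast m\|$. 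Combined with submultiplicativity, this forces $\|m^\ast m\|=\|m\|^2$ and $\|m^\ast\|=\|m\|$.

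For positivity of $\mathrm{Spec}(m^\ast m)$, note that $\MA(x,x)$ is already unital, so its minimal unitization coincides with $\MA(x,x)$ itself. One then embeds $m$ as the $(1,2)$-entry of a $2\times 2$ matrix in the classical multiplier algebra of the genuine $C^*$-algebra $M_{x,y}(\A)=\A_{\oplus}(\{x,y\},\{x,y\})$ from \Cref{sumclosuredefn}; there $m^\ast m$ sits as a bona fide element of the form $b^\ast b$ in a unital $C^*$-algebra and hence has positive spectrum, which transfers to the hereditary corner $\MA(x,x)$. The main obstacle is the identification $m\circ\iota(f)=\iota(L_x(f))$ at the heart of the $C^*$-identity, since it requires careful use of the composition formula of \Cref{altmultiplier} with one factor coming from $\iota(\A)$; once this and the isometric ideal inclusion $\iota$ are in hand, the remaining steps reduce cleanly to classical $C^*$-algebra facts.
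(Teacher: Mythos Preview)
The paper omits the proof entirely, deferring to the equivalence with the construction in \cite{VasselliMultipliers}. Your proposal therefore supplies a genuine argument where the paper supplies none, and the outline is correct: the algebraic axioms are routine, completeness follows from the closed-graph description, the identification $\MA(x,x)=M(\A(x,x))$ with the classical multiplier algebra is immediate from the definitions, and your derivation of the $C^*$-identity via the ideal embedding $\iota$ and the key relation $m\circ\iota(f)=\iota(L_x(f))$ is the standard and clean way to proceed. The positivity step via the $2\times 2$ matrix embedding into $M(M_{x,y}(\A))$ also works; just be careful that you are using only \Cref{sumclosuredefn} (which precedes this proposition) and the classical multiplier construction for the single $C^*$-algebra $M_{x,y}(\A)$, not \Cref{addmult}, which comes later and would be circular. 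One small point: to transfer positivity back you need that the corner embedding $\MA(x,x)\hookrightarrow M(M_{x,y}(\A))$ is a unital injective $\ast$-homomorphism, so that spectra agree---this is routine but worth stating explicitly, since ``hereditary corner'' alone does not automatically give spectral permanence without unitality.
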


We omit the proof since it's fairly straightforward and this category is equivalent to the construction in \cite{VasselliMultipliers}. There is a $C^*$-functor $\kappa_{\A}:\mathcal{A}\xhookrightarrow{}\mathcal{MA}$ sending each morphism $a$ to the pair of multipliers $\kappa(a)=(\ell_a,r_a)$ given by post and pre-composition by that morphism, which is easily seen to be an isometric functor embedding $\mathcal{A}$ as an essential ideal in $\mathcal{MA}$. Furthermore, if $\mathcal{A}$ is unital, $\kappa_{\A}$ is also full: for  any $(L,R)\in\mathcal{MA}(x,y)$, we have $(L,R)=\kappa(a)$ where $a=L(\id_x)=R(\id_y)$.

%\begin{lemma}
%This $C^*$-functor is an isometry on all hom-spaces and embeds $\mathcal{A}$ as an essential $C^*$-ideal in $\mathcal{MA}$. \end{lemma} 
%\begin{proof}
%Since every non-zero element $a$ in a $C^*$-category multiplies with $a^*$ to give another non-zero element, we have that  if $(l_a,r_a)=(0,0)$ then $a=0$, so the mapping is injective on hom spaces and hence isometric.
%Furthermore we have for all suitably composable multipliers $(L,R)$ that $(L,R)\circ(l_a,r_a)=(l_{L(a)},r_{R(a)})$ and $(l_a,r_a)\circ(L,R)=(l_{R(a)},r_{L(a)})$, so $\mathcal{A}$ is an ideal. Finally note that it satisfies the characterization of an essential ideal in \Cref{essentialcriterion}. \end{proof}

Much like in the $C^*$-algebraic case, the multiplier category is a completion of the original category in a specific way:

\begin{lemma}\label{strongdensity}
$\mathcal{A}$ is dense in $\mathcal{MA}$ in the $\mathcal{A}$-relative topology, as defined in \Cref{reltop}.
\end{lemma}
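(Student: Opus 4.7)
The plan is to construct an explicit net in $\kappa_{\A}(\mathcal{A}(x,y))$ converging to any given $(L,R)\in\MA(x,y)$ in the $\mathcal{A}$-relative topology. The natural candidate, mirroring the $C^*$-algebra case, is to apply $L$ to an approximate unit of the endomorphism algebra $\A(x,x)$.

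\textbf{Setup.} Fix $(L,R)\in\MA(x,y)$ and let $(e_\lambda)$ be an approximate unit for the $C^*$-algebra $\A(x,x)$. Set $a_\lambda:=L(e_\lambda)\in\mathcal{A}(x,y)$; I will show $\kappa_{\A}(a_\lambda)\to(L,R)$ in the $\A$-relative topology, i.e.\ that $\|\kappa_{\A}(a_\lambda)\cdot\kappa_{\A}(h)-(L,R)\cdot\kappa_{\A}(h)\|\to 0$ for every $h\in\A(w,x)$ and analogously for precomposition with $f\in\A(y,z)$.

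\textbf{Right seminorms.} Since $\A$ is an (essential) ideal in $\MA$ via $\kappa_{\A}$, the composite $(L,R)\cdot\kappa_{\A}(h)$ lies in $\kappa_{\A}(\A(w,y))$ and is identified via the array description of \Cref{altmultiplier} with $\kappa_{\A}(L_w(h))$. On the $\A$-side, $a_\lambda\cdot h=L_x(e_\lambda)\circ h$, and the first compatibility bullet of \Cref{altmultiplier} (with $f=e_\lambda\in\A(x,x)$) gives $L_x(e_\lambda)\circ h=L_w(e_\lambda\circ h)$. By \Cref{approxunit} we have $e_\lambda\circ h\to h$ in $\A(w,x)$, and $L_w$ is bounded, hence $L_w(e_\lambda\circ h)\to L_w(h)$. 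This handles the first family of seminorms.

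\textbf{Left seminorms.} For $f\in\A(y,z)$, the composite $\kappa_{\A}(f)\cdot(L,R)$ equals $\kappa_{\A}(R_z(f))$. On the $\A$-side, $f\cdot a_\lambda=f\circ L_x(e_\lambda)$, and the third compatibility bullet of \Cref{altmultiplier} (applied with $g=f\in\A(y,z)$ and the argument $e_\lambda\in\A(x,x)$) yields $f\circ L_x(e_\lambda)=R_z(f)\circ e_\lambda$. Since $R_z(f)\in\A(x,z)$ and $(e_\lambda)$ is an approximate unit for $\A(x,x)$, \Cref{approxunit} gives $R_z(f)\circ e_\lambda\to R_z(f)$, as required.

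\textbf{Obstacle.} There is no serious technical obstacle here; the only delicate point is making sure the multiplier compatibilities are used in the correct index range, which is why the array reformulation \Cref{altmultiplier} and \Cref{approxunit} (generalizing the algebra fact $ae_\lambda\to a$ across all hom-spaces meeting $\A(x,x)$) are essential. Both items come directly from the previous subsection, so the proof reduces to the two short calculations above.
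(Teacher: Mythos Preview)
Your proof is correct and follows essentially the same approach as the paper: both take an approximate unit $(e_\lambda)$ for $\A(x,x)$, use $L(e_\lambda)$ as the approximating net, and verify convergence in both families of seminorms via \Cref{approxunit} together with the multiplier compatibility $fL(e_\lambda)=R(f)e_\lambda$. The only cosmetic difference is that you route the argument explicitly through the array description of \Cref{altmultiplier}, whereas the paper works directly with the single-variable multiplier $(L,R)$.
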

\begin{proof}
We want to show that for any multiplier $T=(L,R)\in\mathcal{MA}(x,y)$, there is a net of morphisms in $\mathcal{A}(x,y)\subseteq\mathcal{MA}(x,y)$ converging to $(L,R)$ in the $\mathcal{A}$-relative topology. Let $\{e_\lambda:\lambda\in\Lambda\}$ be an approximate unit for $\mathcal{A}(x,x)$. We're going to show that $\kappa(L(e_\lambda))\xrightarrow{\lambda}T$ in the $\mathcal{A}$-relative topology. We note that for all $w\in\Ob\mathcal{A}$ and $a\in\mathcal{A}(w,x)$, we have $L(e_\lambda)a=L(e_\lambda a)\xrightarrow[]{\lambda}L(a)$ by \Cref{approxunit} and the continuity of $L$. For the other direction note that for all $z\in\Ob\mathcal{A}$ and $b\in\mathcal{A}(y,z)$, we have $bL(e_\lambda)=R(b)e_\lambda\xrightarrow[]{\lambda}R(b)$, again by \Cref{approxunit}.
\end{proof}

The category $\mathcal{MA}$ also has a universal property akin to that of multiplier $C^*$-algebras:

\begin{lemma}\label{multiplierprop}
If $\mathcal{A}\xhookrightarrow{}\mathcal{D}$ is an embedding of $\mathcal{A}$  as an ideal of a unital $C^*$-category, there exists a unique extension $F:\mathcal{D}\rightarrow\mathcal{MA}$ to $\mathcal{D}$ of the embedding $\kappa_{\A}:\mathcal{A}\xhookrightarrow{}\mathcal{MA}$ which is faithful if and only if $\mathcal{A}$ is essential in $\mathcal{D}$. Furthermore, among all $C^*$-categories containing $\mathcal{A}$ as an ideal, $\mathcal{MA}$ is unique (up to equivalence)  with this property.
\end{lemma}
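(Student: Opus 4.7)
The plan is to first construct the extension $F: \mathcal{D} \to \mathcal{MA}$ concretely, then pin down its uniqueness via the factorization lemma, and finally deduce faithfulness and categorical uniqueness from standard yoga on terminal objects.

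For the construction, I would define $F$ on $d \in \mathcal{D}(x,y)$ by the pair $(L_d, R_d)$ where $L_d(a) := d \circ a$ for $a \in \mathcal{A}(x,x)$ and $R_d(b) := b \circ d$ for $b \in \mathcal{A}(y,y)$: both assignments land in $\mathcal{A}$ because $\A \subseteq \D$ is a two-sided ideal, and the multiplier identity $R_d(b) \circ a = b \circ L_d(a)$ is bare associativity in $\D$. Linearity in $d$ is clear, and a short calculation using the formula for the involution in \Cref{multiplierdefn} gives $F(d^*) = F(d)^*$, so $F$ is a $C^*$-functor (automatically norm-decreasing by \Cref{starfuncisbounded}); the restriction of $F$ to $\A$ reproduces $\kappa_\A$ by inspection.

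The crux is uniqueness of the extension. Suppose $F'$ is another extension. For any $d \in \D(x,y)$ and $a \in \A(x,x)$, the morphism $d \circ a$ lies in $\A$, so functoriality forces $F'(d) \circ \kappa_\A(a) = F'(d \circ a) = \kappa_\A(d \circ a)$. Unpacking this composition via the array form of \Cref{altmultiplier}, the $L$-component of $F'(d)$ at $w$ must satisfy $L^{F'(d)}_w(a \circ h) = d \circ a \circ h$ for all $a \in \A(x,x)$ and $h \in \A(w,x)$. By \Cref{factorlemma}, every morphism in $\A(w,x)$ can be written as such a product $a \circ h$, so $L^{F'(d)}_w$ is forced to equal $L^{F(d)}_w$ on all of $\A(w,x)$. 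A standard approximate-unit argument (using \Cref{approxunit} on $\A(x,x)$, precisely analogous to the well-known $C^*$-algebra fact) shows that the $L$-family of a multiplier morphism determines its $R$-family, hence $F'(d) = F(d)$. I expect this forced factorization plus the $L$-determines-$R$ fact to be the main technical obstacle.

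For faithfulness, the same $L$-determines-$R$ argument reduces $F(d) = 0$ to the condition $d \circ a = 0$ for all $a \in \A(w,x)$ and all $w$; by \Cref{essentialcriterion} this implies $d = 0$ exactly when $\A$ is essential in $\D$, and conversely the criterion produces a nonzero such $d$ whenever essentiality fails. Finally, for the uniqueness of $\mathcal{MA}$, the statement just proven exhibits $\mathcal{MA}$ as a terminal object in the category of unital $C^*$-categories containing $\A$ as an ideal, with morphisms the $C^*$-functors that restrict to the identity on $\A$. Given any other such terminal object $\D'$, the two universal properties produce mutual extensions between $\mathcal{MA}$ and $\D'$ over $\A$, whose composites extend the identity on $\A$ and therefore must equal the identity functors by the uniqueness clause; this gives an isomorphism $\mathcal{MA} \cong \D'$, a fortiori an equivalence.
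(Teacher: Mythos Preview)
Your proposal is correct and follows essentially the same approach as the paper: the construction of $F$ via $(L_d,R_d)$, the faithfulness argument via \Cref{essentialcriterion}, and the terminal-object yoga for uniqueness of $\mathcal{MA}$ all match. The one notable difference is that you spell out the uniqueness of the extension $F$ using \Cref{factorlemma} and the $L$-determines-$R$ fact, whereas the paper's proof does not address this point explicitly (it simply constructs $F$ and later invokes uniqueness without justification); your added detail here is a genuine improvement. For the forward direction of faithfulness the paper instead observes that a faithful $F$ isometrically embeds $\mathcal{D}$ into $\mathcal{MA}$, where $\mathcal{A}$ is already essential, so essentiality descends---a slightly slicker route than your contrapositive, but equivalent.
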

\begin{proof}
The $C^*$-functor $F:\mathcal{D}\rightarrow\mathcal{MA}$ is defined as $F(d)=(L_d,R_d)$ where we set $L_d(a)=da$ and $R_d(b)=bd$:  these elements are in $\mathcal{A}$ by the ideal property. $F$ is easily verified to be an extension of the embedding $\kappa_{\A}:\mathcal{A}\xhookrightarrow{}\mathcal{MA}$. 

If $F$ is faithful, then it acts isometrically on hom-spaces, and clearly $\mathcal{A}$ is essential in $\mathcal{D}$ since it's essential in $\mathcal{MA}$. Conversely, suppose $\mathcal{A}$ is essential in $\mathcal{D}$, and furthermore that two elements $d,d'\in\mathcal{D}(x,y)$ map to the same multiplier in $\mathcal{MA}$; then $d-d'$ satisfies the hypothesis of the characterization in \Cref{essentialcriterion}, so since $\mathcal{A}$ is essential in $\mathcal{D}$, we get $d-d'=0$, and hence see that the functor $\mathcal{D}\rightarrow\mathcal{MA}$ is faithful.

The uniqueness part is straightforward; if $\mathcal{C}$ is another category with the stated property, we get an embedding $\mathcal{MA}\xhookrightarrow{}\mathcal{C}$ since $\mathcal{A}$ is essential in $\mathcal{MA}$. But we also get a map $\mathcal{C}\rightarrow\mathcal{MA}$, which can easily be checked to be a left and right inverse by calling on the uniqueness.
\end{proof}

\begin{defn}\label{ultrastrongdefn} The $\mathcal{A}$-relative topology on $\mathcal{MA}$ is called the \textit{ultrastrong topology}.
\end{defn}

The intuition behind this name comes from the case where $\mathcal{A}$ is the category of Hilbert spaces and compact operators: in this case $\mathcal{MA}$ is the category of Hilbert spaces and bounded operators, as will be proven below. Our terminology diverges here from some parts of the literature, such as \cite{AntounVoigt}, where it is labelled the \textit{strict topology}.

%Like the multiplier algebra, $\mathcal{MA}$ is the \textit{maximal} unitization of $\mathcal{A}$ in a precise sense:

%\begin{lemma}
%If $\mathcal{A}\xhookrightarrow{}\mathcal{D}$ is an embedding of $\mathcal{A}$  as an ideal of a unital $C^*$-category, there exists a unique extension $F:\mathcal{D}\rightarrow\mathcal{MA}$ to $\mathcal{D}$ of the embedding $\mathcal{A}\xhookrightarrow{}\mathcal{MA}$ which is an isometry if and only if $\mathcal{A}$ is essential in $\mathcal{D}$
%\end{lemma}
%\begin{proof}
%The $C^*$-functor $F:\mathcal{D}\rightarrow\mathcal{MA}$ is defined as $F(d)=(L_d,R_d)$ where $L_d(a)=da,R_d(b)=bd$, where these elements are in $\mathcal{A}$ by the ideal property. If the map is isometric, clearly $\mathcal{A}$ is essential in $\mathcal{D}$ since it's essential in $\mathcal{MA}$. Conversely, suppose $\mathcal{A}$ is essential in $\mathcal{D}$, and furthermore that two elements $d,d'\in\mathcal{D}(x,y)$ map to the same multiplier in $\mathcal{MA}$; then $d-d'$ satisfies the hypothesis of the characterization in \Cref{essentialcriterion}, so since $\mathcal{A}$ is essential in $\mathcal{D}$, we get $d-d'=0$, and see that the functor $\mathcal{D}\rightarrow\mathcal{MA}$ is in fact an injection on all hom-spaces, so it's isometric.
%\end{proof}
We note finally that the multiplier operation $\mathcal{M}(-)$ commutes with the additive closure $(-)_{\oplus}$ defined in Section 2 of this chapter.
\begin{lemma}\label{addmult}
There is a canonical isomorphism of $C^*$-categories $(\mathcal{MA})_{\oplus}\cong\mathcal{M}(\mathcal{A}_{\oplus})$.
\end{lemma}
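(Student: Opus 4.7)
The approach is to construct the isomorphism explicitly as a pair of inverse $C^*$-functors, both acting as the identity on objects since both categories have as objects finite lists of $\A$-objects. I will work throughout with the array form of multipliers from \Cref{altmultiplier}, as it interacts most transparently with matrix composition.

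The forward functor $\Phi:(\multa)_{\oplus}\rightarrow\mathcal{M}(\A_{\oplus})$ sends a matrix of multipliers $M=[M_{ji}]$, with $M_{ji}=(\{L^u_{ji}\}_u,\{R^v_{ji}\}_v)\in\multa(x_i,y_j)$, to the multiplier of $\A_{\oplus}$ whose left component on $[a_{il}]\in\A_{\oplus}(\mathbf{w},\mathbf{x})$ returns the matrix with $(j,l)$-entry $\sum_i L^{w_l}_{ji}(a_{il})$, the right component being defined dually via the $R^v_{ji}$. That this satisfies the multiplier axioms reduces to checking the three compatibility relations of \Cref{altmultiplier} componentwise, where each follows from the corresponding axiom on the $M_{ji}$ combined with the explicit formula for matrix composition in $\A_{\oplus}$. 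Linearity, $\ast$-compatibility (comparing the involution formula from \Cref{multiplierdefn} with that on matrices from \Cref{sumclosuredefn}), and multiplicativity are all direct computations. Dually, the candidate inverse $\Psi:\mathcal{M}(\A_{\oplus})\rightarrow(\multa)_{\oplus}$ extracts a matrix from $(L,R)$ by letting its $(j,i)$ entry be the multiplier of $\A$ whose left component sends $a\in\A(w,x_i)$ to the $j$-th entry of $L_{\{w\}}$ applied to the column with $a$ in position $i$ and zero elsewhere, and dually for the right component; the multiplier axioms on each entry follow from the compatibility properties of $(L,R)$ restricted along the corner embeddings of $\A(x_i,x_i)$ and $\A(y_j,y_j)$ into $\A_{\oplus}$.

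The composition $\Psi\Phi=\id$ is immediate from the construction of $\Phi$. The main obstacle is $\Phi\Psi=\id$, which amounts to proving that any multiplier of $\A_{\oplus}$ is determined by its action on elementary one-entry column matrices. In the non-unital case one cannot directly decompose a general matrix as a sum of elementary pieces via identity morphisms; my plan is to apply \Cref{factorlemma} to each entry to write $a_{il}=b_{il}\,c_{il}$, and then use the right $\A_{\oplus}$-module compatibility of the multiplier (with respect to multiplication by a block-diagonal matrix assembled from the $c_{il}$'s) to reduce the computation to the elementary case. A cleaner alternative is to invoke the universal property of \Cref{multiplierprop}: one checks that $\A_{\oplus}\hookrightarrow(\multa)_{\oplus}$ is an essential ideal embedding (which follows from the essentiality of $\A$ in $\multa$ by applying \Cref{essentialcriterion} entry-wise) and that $(\multa)_{\oplus}$ is unital, which yields a unique faithful extension $(\multa)_{\oplus}\rightarrow\mathcal{M}(\A_{\oplus})$ that must coincide with $\Phi$ by uniqueness; the existence of $\Psi$ as a one-sided inverse then supplies fullness.
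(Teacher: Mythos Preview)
The paper omits this proof entirely (``We omit the fairly formulaic proof for brevity''), so there is nothing to compare against. Your proposal is essentially correct, and your concern about the non-unital case is well-founded: one genuinely cannot write an elementary matrix $E_{il}\in\A_{\oplus}(\mathbf{w},\mathbf{x})$ as a product through $\{w_l\}$ without an identity in position $l$, so the passage from $L_{\mathbf{w}}$ to the singleton components $L_{\{w_l\}}$ does require something like \Cref{factorlemma}. Your fix via factoring $a_{il}=b_{il}c_{il}$ and using the right-module property of $L$ works.

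One imprecision in your ``cleaner alternative'': the phrase \emph{the existence of $\Psi$ as a one-sided inverse then supplies fullness} is not quite right as stated. You have established $\Psi\Phi=\id$, which gives injectivity of $\Phi$---but that is faithfulness, which you already have from the universal property. Fullness requires $\Phi\Psi=\id$. The quickest way to obtain this from \Cref{multiplierprop} is to observe that $\Phi\Psi:\mathcal{M}(\A_{\oplus})\rightarrow\mathcal{M}(\A_{\oplus})$ is a $C^*$-functor which restricts to the canonical embedding $\kappa_{\A_{\oplus}}$ on the essential ideal $\A_{\oplus}$ (this is an easy check), and hence by the \emph{uniqueness} clause of \Cref{multiplierprop} must equal the identity. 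Alternatively one may show $\Psi$ is injective directly: if all extracted entries vanish then $L_{\{w\}}=0$ for every singleton; the compatibility $L_{\mathbf{w}}(A)\circ c=L_{\{v\}}(A c)=0$ for every column $c\in\A_{\oplus}(\{v\},\mathbf{w})$ then forces each entry of $L_{\mathbf{w}}(A)$ to annihilate all of $\A(v,w_l)$, whence it vanishes by \Cref{approxunit}. Either route closes the gap.
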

We omit the fairly formulaic proof for brevity.

As $C^*$-functors are often between non-unital categories, it is not always possible to ask for them to be unital. This throws up a variety of problems, which have to be fixed by requiring our $C^*$-functors to be `approximately unital'. One way of formulating this requirement is to ask that a $C^*$-functor $\mathcal{A}\rightarrow\mathcal{B}$ preserve approximate units of endomorphism algebras; it is useful, however, to generalize at this point instead to $C^*$-functors $\mathcal{A}\rightarrow\mathcal{MB}$.

\begin{defn}\label{nondegendefn}
A $C^*$-functor $F:\mathcal{A}\rightarrow\mathcal{MB}$ is said to be \emph{non-degenerate} when for every $x,y\in\mathsf{Ob}\mathcal{A}$, the subspace $F\mathcal{A}(y,y)\circ\mathcal{B}(Fx,Fy)$ is norm-dense in $\mathcal{B}(Fx,Fy)$. A $C^*$-functor $F:\mathcal{A}\rightarrow\mathcal{B}$ is called non-degenerate if its composition with the inclusion $\mathcal{B}\xhookrightarrow{}\mathcal{MB}$ is non-degenerate.
\end{defn}

We take this definition from \cite{AntounVoigt}, where non-degenerate functors form the 1-morphisms in the 2-category $C^*$-Lin.

%Note that using the involution, this is equivalent to the criterion that for all $x,y\in\Ob(\mathcal{A})$, the subspace $\mathcal{B}(Fx,Fy)\circ F\mathcal{A}(x,x)$ is norm-dense in $\mathcal{B}(Fx,Fy)$. 
We want to provide several equivalent formulations of non-degeneracy, but first we need a technical lemma to streamline the proof.

\begin{lemma}\label{limsuptrick}
If $V$ and $W$ are Banach spaces, $D$ is a dense subset of $V$, and $(T_\lambda:V\rightarrow W)$ is a uniformly bounded net of operators such that $\|T_\lambda(d)\|\xrightarrow[]{\lambda}0$ for each $d\in D$, then in fact $\|T_\lambda(v)\|\xrightarrow[]{\lambda}0$ for each $v\in V$.
\end{lemma}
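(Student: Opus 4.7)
The plan is to run the standard three-epsilon approximation argument that is familiar from the proof of the Banach--Steinhaus theorem's density reductions. Fix a uniform bound $M$ with $\|T_\lambda\|\leq M$ for all $\lambda$ (which exists by hypothesis), and fix $v\in V$ together with $\varepsilon>0$.

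First I would approximate: by density of $D$, choose some $d\in D$ with $\|v-d\|<\varepsilon/(2M+1)$ (the ``$+1$'' only to dodge the edge case $M=0$). Then for every index $\lambda$ the triangle inequality and uniform boundedness give
\[
\|T_\lambda(v)\|\;\leq\;\|T_\lambda(v-d)\|+\|T_\lambda(d)\|\;\leq\;M\cdot\tfrac{\varepsilon}{2M+1}+\|T_\lambda(d)\|\;<\;\tfrac{\varepsilon}{2}+\|T_\lambda(d)\|.
\]
Now use the hypothesis on $D$: since $\|T_\lambda(d)\|\to0$, there is an index $\lambda_0$ such that $\|T_\lambda(d)\|<\varepsilon/2$ for all $\lambda\geq\lambda_0$. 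Combining these bounds yields $\|T_\lambda(v)\|<\varepsilon$ for all $\lambda\geq\lambda_0$, which is exactly the required convergence.

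There is no real obstacle here; the only point worth noting is that uniform boundedness of the net is essential, because without it the error term $\|T_\lambda(v-d)\|$ cannot be controlled independently of $\lambda$, and the argument collapses. Since uniform boundedness is given as a hypothesis (rather than derived from pointwise bounds via Banach--Steinhaus), the proof really is just the direct triangle-inequality estimate above.
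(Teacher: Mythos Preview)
Your proof is correct and follows essentially the same approach as the paper: both use the triangle inequality to split $\|T_\lambda(v)\|$ into an approximation term controlled by the uniform bound and a term on the dense element that goes to zero. The only cosmetic differences are that the paper phrases the conclusion via $\limsup_\lambda$ rather than explicitly choosing $\lambda_0$, and your version is slightly more careful about the degenerate case $M=0$.
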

\begin{proof}
Let $K>0$ be a uniform bound on the net, and for any $v\in V$ pick a sequence $(d_n)$ converging to $v$. For an arbitrary $\epsilon>0$ pick an integer $n$ such that $\|d_n-v\|\leq\frac{\epsilon}{K}$. Then for each $\lambda$ we have $$\begin{array}{rl}\|T_\lambda(v)\|&\leq \|T_\lambda(v-d_n)\|+\|T_\lambda(d_n)\|\\&\leq\epsilon+\|T_\lambda(d_n)\|.\end{array}$$
Hence taking $\limsup_\lambda$ on both sides we obtain that $\limsup_\lambda\|T_\lambda(v)\|\leq\epsilon$, hence $\lim_\lambda\|T_\lambda(v)\|=0$.
\end{proof}
\begin{cor}\label{limsupid}

Suppose $V$ is a Banach space, $D$ is a dense subspace of $V$, and $(U_\lambda:V\rightarrow V)$ is a uniformly bounded net of operators such that $U_\lambda(d)\xrightarrow[]{\lambda}d$ for each $d\in D$. Then we have in fact $U_\lambda(v) \xrightarrow[]{\lambda}v$ for each $v\in V$.
\end{cor}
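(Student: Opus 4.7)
The plan is to derive this as a direct consequence of \Cref{limsuptrick} by considering the deviation net $T_\lambda := U_\lambda - \id_V$. First I would observe that this is a uniformly bounded net of operators $V\to V$: if $K$ is a uniform bound on $\|U_\lambda\|$, then $\|T_\lambda\| \leq K + 1$ for every $\lambda$. Next, the hypothesis that $U_\lambda(d) \to d$ for every $d\in D$ translates immediately into $\|T_\lambda(d)\| \to 0$ for every $d\in D$, so the hypotheses of \Cref{limsuptrick} are satisfied (with $W=V$).

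Applying \Cref{limsuptrick} then yields $\|T_\lambda(v)\| \to 0$ for every $v\in V$, which is precisely the statement that $U_\lambda(v)\to v$ for every $v\in V$.

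There is no real obstacle here; the corollary is essentially a restatement of the lemma phrased for approximate identities rather than vanishing nets, and the passage between the two is the trivial translation $U_\lambda \leftrightarrow U_\lambda - \id$.
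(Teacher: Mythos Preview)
Your proposal is correct and follows exactly the paper's own proof: set $T_\lambda = U_\lambda - \id_V$, note this net is uniformly bounded, and apply \Cref{limsuptrick} with $W=V$.
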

\begin{proof}
Simply take $W=V$, $T_\lambda=U_\lambda-\id_V$ and apply \Cref{limsuptrick}.
\end{proof}
We are now in a position to prove the several equivalent characterizations of non-degeneracy. These characterizations appear in the literature (\cite{AntounVoigt}) but since there is no published proof of their equivalence we produce it here.
\begin{thm}\label{nondegenstrongthm}
If $\mathcal{A}$ and $\mathcal{B}$ are $C^*$-categories, the following criteria on a $C^*$-functor $F:\mathcal{A}\rightarrow\mathcal{MB}$ are equivalent:
\begin{enumerate}

\item $F$ is non-degenerate.
\item For every $x,y\in\mathsf{Ob}\mathcal{A}$, the subspace $\mathcal{B}(Fx,Fy)\circ F\mathcal{A}(x,x)$ is norm-dense in $\mathcal{B}(Fx,Fy)$.
\item For every $x,y\in\mathsf{Ob}(\mathcal{A})$, approximate unit $(u_\lambda)_{\lambda\in\Lambda}$ for $\mathcal{A}(y,y)$, and morphism $b\in\mathcal{B}(Fx,Fy)$, we have $F(u_\lambda)\circ b\rightarrow b$.
\item For every $x,y\in\mathsf{Ob}(\mathcal{A})$, approximate unit $(u_\lambda)_{\lambda\in\Lambda}$ for $\mathcal{A}(x,x)$, and morphism $b\in\mathcal{B}(Fx,Fy)$, we have $b\circ F(u_\lambda)\rightarrow b$.
\item $F$ extends (uniquely) to a unital $C^*$-functor $\bar{F}:\mathcal{MA}\rightarrow\mathcal{MB}$, which on norm-bounded subsets is in addition continuous with respect to the ultrastrong topologies on $\mathcal{MA}$ and $\mathcal{MB}$, as defined in \Cref{ultrastrongdefn}.
\end{enumerate}
\end{thm}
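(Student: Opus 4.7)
The plan is to prove equivalence via the cycle (1) $\Leftrightarrow$ (3), (2) $\Leftrightarrow$ (4), (3) $\Leftrightarrow$ (4), and (3), (4) $\Leftrightarrow$ (5). The first four equivalences are essentially formal. For (3) $\Rightarrow$ (1), note each $F(u_\lambda)\circ b$ lies in $F\mathcal{A}(y,y)\circ \mathcal{B}(Fx,Fy)$, so $b$ is a norm limit of such elements. For (1) $\Rightarrow$ (3), I apply \Cref{limsupid} to the net of operators $U_\lambda := F(u_\lambda)\circ(-)$ on $\mathcal{B}(Fx,Fy)$: these are uniformly bounded (take $\|u_\lambda\|\leq 1$ and invoke \Cref{starfuncisbounded}), and on the dense subspace guaranteed by (1) the computation $F(u_\lambda)\circ F(a)\circ c = F(u_\lambda a)\circ c \to F(a)\circ c$ shows $U_\lambda \to \id$ pointwise. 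By symmetry (2) $\Leftrightarrow$ (4), and (3) $\Leftrightarrow$ (4) follows by involution, using self-adjointness of approximate units together with the fact that $F$ is a $*$-functor (apply (3) to $b^*$ with source and target swapped).

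The substantial direction is (3), (4) $\Rightarrow$ (5). Given $(L,R)\in\mathcal{MA}(x,y)$ together with approximate units $(u_\lambda)$ for $\mathcal{A}(x,x)$ and $(v_\mu)$ for $\mathcal{A}(y,y)$, I define $\bar F(L,R)\in\mathcal{MB}(Fx,Fy)$ to be the multiplier determined by
\[\bar L(b):=\lim_\lambda F(L(u_\lambda))\cdot b \quad\text{and}\quad \bar R(c):=\lim_\mu c\cdot F(R(v_\mu))\]
for $b\in\mathcal{B}(Fx,Fx)$ and $c\in\mathcal{B}(Fy,Fy)$. These limits exist on the dense subspaces from (1) and (2) by explicit computation (e.g., $F(L(u_\lambda))\cdot F(a)\cdot b' = F(L(u_\lambda a))\cdot b' \to F(L(a))\cdot b'$ using norm-continuity of $L$ and the approximate-unit property), and they extend to the full hom-spaces by \Cref{limsupid} thanks to the uniform bound $\|F(L(u_\lambda))\|\leq\|L\|$. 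I then check in sequence: independence from the choice of approximate unit (compare two such units via their product net), the multiplier compatibility $\bar R(c)\circ b = c\circ \bar L(b)$ (reducing to the corresponding axiom for $(L,R)$ on the dense subspaces), that $\bar F$ is a $C^*$-functor extending $\kappa_\mathcal{B}\circ F$ along $\kappa_\mathcal{A}$, and unitality (where $\bar F(\id_x)\cdot b = \lim_\lambda F(u_\lambda)\cdot b = b$ follows from (3) extended to arbitrary $b\in\mathcal{B}(Fx,Fx)$ via \Cref{factorlemma}). Finally, ultrastrong continuity on norm-bounded subsets follows because, if $\|m_\alpha\|$ is bounded and $m_\alpha\to m$ ultrastrongly in $\mathcal{MA}$, the same limsup argument on the dense subspace from (1) yields $\bar F(m_\alpha)\cdot b\to \bar F(m)\cdot b$. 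Uniqueness of $\bar F$ then follows from ultrastrong density of $\mathcal{A}$ in $\mathcal{MA}$ (\Cref{strongdensity}) combined with this continuity.

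The reverse implication (5) $\Rightarrow$ (3) is brief: by \Cref{strongdensity}, $\kappa_\mathcal{A}(u_\lambda)\to\id_y$ ultrastrongly in $\mathcal{MA}$; the net is norm-bounded, so $F(u_\lambda)=\bar F(\kappa_\mathcal{A}(u_\lambda))\to\id_{Fy}$ ultrastrongly in $\mathcal{MB}$, and specializing to the seminorm $m\mapsto\|m\circ b\|$ for $b\in\mathcal{B}(Fx,Fy)$ yields $\|F(u_\lambda)b-b\|\to 0$. The main obstacle is plainly the construction of $\bar F$: one must carefully arrange the approximate-unit limits so that both the $L$- and $R$-sides of the multiplier data agree on the compatibility axiom, and that the resulting assignment is well-defined, independent of all choices, and itself a $C^*$-functor. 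The limsup lemmas \Cref{limsuptrick}, \Cref{limsupid} and the factorization \Cref{factorlemma} do essentially all the heavy lifting, reducing each verification to an explicit density computation.
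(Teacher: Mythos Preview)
Your proposal is correct and follows essentially the same route as the paper: the formal equivalences among (1)--(4) are handled identically (involution plus \Cref{limsupid}), and for the substantial direction you build $\bar F$ from approximate-unit limits, which on the dense subspace $F\mathcal{A}(x,x)\circ\mathcal{B}(Fx,Fx)$ reproduces exactly the paper's formula $\bar L(F(a)\circ b)=F(L(a))\circ b$. The only cosmetic difference is that the paper defines $\bar L$ directly on that dense subspace and checks well-definedness via an approximate unit for $\mathcal{A}(y,y)$, whereas you phrase the definition as a limit over an approximate unit for $\mathcal{A}(x,x)$ and check convergence; the two are equivalent, and the remaining verifications (multiplier axiom, ultrastrong continuity via \Cref{limsuptrick}, uniqueness via \Cref{strongdensity}) match. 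Your appeal to \Cref{factorlemma} for unitality is unnecessary, since (3) with $x=y$ already gives $F(u_\lambda)\circ b\to b$ for $b\in\mathcal{B}(Fx,Fx)$, but it is harmless.
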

\begin{proof}
Criteria 1 and 2, as well 3 and 4 are obviously equivalent using the involution. Criterion 3 clearly implies 1, and using \Cref{limsupid} it is easy to show that 1 implies 3. Criterion 5 clearly implies 3, since approximate units are nets ultrastrongly converging to the identity. It remains finally to show that criteria 1 through 4 together imply 5:

Take any multiplier $T=(L,R)\in\mathcal{MA}(x,y)$, where we take the definition from \Cref{singlevarmultmorph}. We define the left component of $\Bar{F}(T)=(\bar{L},\bar{R})$ as follows. For any morphism $F(a)\circ b\in F\mathcal{A}(x,x)\circ\mathcal{B}(Fx,Fx)$, we set $$\bar{L}(F(a)\circ b)=F(L(a))\circ b\in\mathcal{B}(Fx,Fy)$$. To show this gives a well-defined map $F\mathcal{A}(x,x)\circ\mathcal{B}(Fx,Fx)\rightarrow\mathcal{B}(Fx,Fy)$, suppose $F(a)\circ b=F(a')\circ b'$ and let $(u_\lambda)_{\lambda\in\Lambda}$ be an approximate unit for $\mathcal{A}(y,y)$. For all $\lambda\in\Lambda$ we have $$\begin{array}{rl}F(u_\lambda)\circ F(L(a))\circ b&=F(R(u_\lambda))\circ F(a)\circ b\\&=F(R(u_\lambda))\circ F(a')\circ b'\\&=F(u_\lambda)\circ F(L(a'))\circ b'\end{array}$$

Hence by criterion 3, taking limits we get $F(L(a))\circ b=F(L(a'))\circ b'$. The equality $\bar{L}(F(a)\circ b)=\lim_\lambda F(R(u_\lambda))\circ F(a)\circ b$ also gives $\|\bar{L}\|\leq\|R\|$. Hence $\bar{L}$ is a bounded map defined on a subset of $\mathcal{B}(Fx,Fx)$
which is norm-dense by criterion 1. So it extends uniquely to a map $\mathcal{B}(Fx,Fx)\rightarrow\mathcal{B}(Fx,Fy)$. The property defining right $\mathcal{B}(Fx,Fx)$-module morphisms clearly holds on the norm-dense subset $F\mathcal{A}(x,x)\circ\mathcal{B}(Fx,Fx)$, so again by uniform continuity it holds on all of $\mathcal{B}(Fx,Fx)$. Hence $\bar{L}$ gives a valid left multiplier $\mathcal{B}(Fx,Fx)\rightarrow\mathcal{B}(Fx,Fy)$. 

Similarly for $b\circ F(a)\in\mathcal{B}(Fy,Fy)\circ F\mathcal{A}(x,x)$, we set $\bar{R}(b)=b\circ F(R(a))$, and then by criteria 2 and 4 this extends to a right multiplier  $\bar{R}:\mathcal{B}(Fy,Fy)\rightarrow\mathcal{B}(Fx,Fy)$. The final axiom stating $\bar{R}(b)\circ b'=b\circ \bar{L}(b')$ for $b\in\mathcal{B}(Fy,Fy)$ and $b'\in\mathcal{B}(Fx,Fx)$ is again obtained by uniform continuity of $\bar{L},\bar{R}$, and of the composition operation. This concludes the verification that $(\bar{L},\bar{R})$ is a valid multiplier in $\mathcal{MB}(F(x),F(y))$

The assignment $(L,R)\mapsto(\bar{L},\bar{R})$ is easily checked to be complex linear and compatible with the involutions and compositions, hence it extends to a $C^*$-functor $\bar{F}:\mathcal{MA}\rightarrow\mathcal{MB}$, which clearly restricts to $F$ on $\mathcal{A}$. 

We show next that this extension is ultrastrongly continuous on norm-bounded sets. Suppose $(T_\gamma=(L_\gamma,R_\gamma))_{\gamma\in\Gamma}$ is a net of multipliers in $\mathcal{MA}(x,y)$ whose norm is bounded by $K$ and which ultrastrongly converge to 0; that is, for every $a\in\mathcal{A}(x,x)$ we have $L_\gamma(a)\xrightarrow[]{\gamma}0$ and for every $a\in\mathcal{A}(y,y)$ we have $R_\gamma(a)\xrightarrow[]{\gamma}0$. We have to show for any $b\in\mathcal{B}(Fx,Fx)$ that $\bar{L}_\gamma(b)\xrightarrow[]{\gamma}0$ and for any $b\in\mathcal{B}(Fy,Fy)$ that $\bar{R}_\gamma(b)\xrightarrow[]{\gamma}0$.

Note that for any element $b=F(a)\circ b'\in F\mathcal{A}(x,y)\circ\mathcal{B}(Fx,Fx)$, we have that $\bar{L}_\gamma(b)=\bar{L}_\gamma(F(a)\circ b')=F(L_\gamma(a))\circ b'\rightarrow0$, and by criterion 1, elements of this form are norm-dense in $\mathcal{B}(Fx,Fx)$. Furthermore we have already established that $\|\bar{L}_\gamma\|\leq\|L_\gamma\|$ for each $\gamma$, so as $(T_\lambda)$ is uniformly bounded, so is the net $\bar{L}_\gamma$. Hence we can apply \Cref{limsuptrick} and see that $\bar{L}_\gamma(b)\xrightarrow[]{\gamma}0$.

%non-degeneracy, for any $b\in\mathcal{B}(Fx,Fx),\epsilon>0$ we can find $a\in\mathcal{A}(x,x),b'\in\mathcal{B}(Fx,Fx)$ such that $\|b-F(a)\circ b'\|<\frac{\epsilon}{K}$. But then $$\begin{array}{rl}\bar{L}_\gamma(b)&\leq\|\bar{L}_\gamma(F(a)\circ b')\|+\|\bar{L}_\gamma(b-F(a)\circ b')\| \\&\leq\|F(L_\gamma(a))\circ b'\|+\|\bar{L}_\gamma(b-F(a)\circ b')\| \\&\leq\|F(L_\gamma(a))\circ b'\|+\epsilon\end{array}$$ Hence taking the limit over $\Gamma$ we see the first summand of the final part of this equality goes to 0, so as we can set $\epsilon$ arbitrarily close to 0 

The case of the right multiplier follows similarly from criterion 2. Hence we see that $\bar{F}(T_\gamma)$ goes to 0 in the ultrastrong topology. By linearity this proves the case for nets converging ultrastrongly to an arbitrary limit and we conclude $\bar{F}$ is ultrastrongly continuous on bounded subsets.

Finally, to show this extension is unique, note that by \Cref{strongdensity}, any multiplier in $\mathcal{MA}(x,y)$ can be ultrastrongly approximated by a bounded net of elements in $\mathcal{A}$, so any other extension of $F$ to $\mathcal{MA}$ which is ultrastrongly continuous on bounded subsets must be equal to $\bar{F}$.
\end{proof}
\begin{rmk}It follows from the final criterion that if $\mathcal{A}$ is unital, a non-degenerate functor $\mathcal{A}\rightarrow\mathcal{MB}$ is simply a unital functor $\A\rightarrow\mathcal{MB}$. To see this, note that the ultrastrong topology on $\MA\cong\A$ is just the norm topology,  any $C^*$-functor is norm-continuous, and norm convergence in $\mathcal{MB}$ 
implies ultrastrong convergence. 
\end{rmk}

\begin{rmk}
It is clear from the third criterion that if $G:\mathcal{A}\rightarrow\mathcal{MB}$ and $F:\mathcal{B}\rightarrow\mathcal{MC}$ are two non-degenerate functors, we can compose their lifts to get $\bar{F}\circ\bar{G}:\mathcal{MA}\rightarrow\mathcal{MC}$. This composition is again strongly continuous on bounded subsets as certainly $\bar{F}$ sends bounded subsets to bounded subsets (it's a $C^*$-functor so norm-decreasing), hence it restricts to a non-degenerate functor $\bar{F}\circ G$.
\end{rmk}
\begin{xmpl}
The embedding $\kappa_{\A}:\mathcal{A}\xhookrightarrow{}\mathcal{MA}$ is non-degenerate, as is obvious from criterion 3 in \Cref{nondegenstrongthm} and \Cref{approxunit}.
\end{xmpl}
\begin{xmpl}
Any full and faithful $C^*$-functor $F:\mathcal{A}\rightarrow\mathcal{B}$ is non-degenerate; it preserves approximate units of endomorphism algebras so criterion 3 is satisfied. For example, the embedding $\mathcal{A}\xhookrightarrow{}\mathcal{A}_{\oplus}$ is non-degenerate.
\end{xmpl}

This theory allows us to talk about unitary equivalences between non-unital $C^*$-categories:

\begin{defn}\label{multequiv}
A non-degenerate $C^*$-functor $F:\mathcal{A}\rightarrow\mathcal{MB}$ is termed a \emph{(unitary) multiplier equivalence} if there is a $C^*$-functor $G:\mathcal{B}\rightarrow\mathcal{MA}$ such that $\bar{F}$ and $\bar{G}$ are inverse (unitary) equivalences between $\mathcal{MA}$ and $\mathcal{MB}$. A $C^*$-functor $F:\mathcal{A}\rightarrow\mathcal{B}$ is termed a (unitary) multiplier equivalence if its composition with the embedding $\mathcal{B}\xhookrightarrow{}\mathcal{MB}$ is non-degenerate and a (unitary) multiplier equivalence.
\end{defn}

We end with a discussion of multiplier direct sums. This construction solves several problems at once: the first is that the usual definition of direct sums involves identities on objects, which may not exist in a $C^*$-category.  The second is that it allows us to sum an infinite number of objects using the topological structure of a $C^*$-category. The definition below is taken from \cite{AntounVoigt}, where it is called simply a `direct sum'.

\begin{defn}\label{multiplierdirectsumdefn}
Given a $C^*$-category $\mathcal{A}$ and a multiset of objects $\{x_i:i\in I\}$ of $\mathcal{A}$, we say $\bigoplus_{i\in I} x_i\in\Ob\mathcal{A}$ is a \emph{multiplier direct sum} of $\{x_i:i\in I\}$ with structure maps $\iota_i\in\mathcal{MA}(x_i,\bigoplus_{i\in I} x_i)$ if: \begin{itemize}
    \item All maps $\iota_i$ are isometries.
    \item The net of partial sums $\sum_{j\in J}\iota_j\iota_j^*$ for finite sub-multisets $J\subseteq I$ converges to $\id_{\bigoplus_{i\in I} x_i}$ in the ultrastrong topology, as defined in \Cref{reltop}.
\end{itemize}
\end{defn}
Notice firstly that in the case that $I$ is a finite multiset, the above data (together with a choice of total order on the set) just defines a finite direct sum in $\mathcal{MA}$.

%Note also that if $\mathcal{A}$ is unital, $\mathcal{MA}\cong\mathcal{A}$ and the $\mathcal{A}$-relative topology is in fact the one given by the norm, and the second requirement can be true only if\footnote{See e.g. \cite[p.136]{AppliedHunter} for a discussion of convergence of possibly uncountable infinite sums in normed vector spaces.} all but countably many of the morphisms $\iota_i\iota^*_i$ are zero, meaning all but countably many of the objects are zero. Furthermore, even countable infinite direct sums of Hilbert spaces as classically conceived, for example $\ell^2(\mathbb{N})$, are not multiplier direct sums in $\mathsf{Hilb}$: to see this, note simply that the net of partial sums in \cref{multiplierdirectsumdefn} is the standard net of $n$-rank projections in $\mathcal{K}(\ell^2(\mathbb{N}),\ell^2(\mathbb{N}))$, which converges to the identity of $\mathcal{L}(\ell^2(\mathbb{N}),\ell^2(\mathbb{N}))$ in the strong or pointwise topology, but not in the norm. They are, however multiplier direct sums in the category $\mathcal{K}$ of Hilbert spaces with compact operators as morphisms.

It's easy to see that a non-degenerate functor $F:\mathcal{A}\rightarrow\mathcal{MB}$ preserves multiplier direct sums. We close on a series of results that relate the additive hull to finite multiplier direct sums.
%\begin{prop}
%If $F:\mathcal{A}\rightarrow\mathcal{MB}$ is a non-degenerate functor, it sends multiplier direct sums in $\mathcal{A}$ to multiplier direct sums in $\mathcal{B}$: to be precise, its underlying map $\Ob\mathcal{A}\rightarrow\Ob\mathcal{B}$ preserves multiplier direct sums objectwise and its unique, unital, extension which is ultrastrongly continuous on bounded subsets $\bar{F}:\mathcal{MA}\rightarrow\mathcal{MB}$ preserves their structure maps. 
%\end{prop}
%\begin{proof}
%This follows immediately from \Cref{multiplierdirectsumdefn} and \Cref{nondegenstrongthm}.
%\end{proof}

\begin{lemma}
$\mathcal{A}_{\oplus}$ admits all finite multiplier direct sums.
\end{lemma}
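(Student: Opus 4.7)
My plan is to reduce the claim to the already-established fact that the additive hull construction furnishes finite (ordinary) direct sums, applied to the unital $C^*$-category $\mathcal{MA}$, and then to transport the result across the isomorphism from \Cref{addmult}.

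First, I would observe that in the finite case, the condition in \Cref{multiplierdirectsumdefn} reduces (as the excerpt notes immediately after the definition) to asking for an ordinary direct sum in $\mathcal{M}(\mathcal{A}_{\oplus})$ whose components and structure maps happen to lie in $\mathcal{A}_{\oplus}$ and $\mathcal{M}(\mathcal{A}_{\oplus})$ respectively. Since $\mathcal{A}_{\oplus}$ and $\mathcal{M}(\mathcal{A}_{\oplus})$ share the same underlying collection of objects (finite lists of $\mathcal{A}$-objects), it suffices to verify that $\mathcal{M}(\mathcal{A}_{\oplus})$ admits all finite direct sums of such lists.

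Second, I would invoke the isomorphism $\mathcal{M}(\mathcal{A}_{\oplus}) \cong (\mathcal{MA})_{\oplus}$ from \Cref{addmult}. Because $\mathcal{MA}$ is unital (by the discussion following \Cref{strongdensity}, every object of $\mathcal{A}$ acquires an identity multiplier), every object in $(\mathcal{MA})_{\oplus}$ has a unit. Consequently, given finite lists $\mathbf{x}_1=\{x_{1,1},\dots,x_{1,n_1}\},\dots,\mathbf{x}_k=\{x_{k,1},\dots,x_{k,n_k}\}$ of $\mathcal{A}$-objects, their concatenation $\mathbf{x}:=\mathbf{x}_1\sqcup\cdots\sqcup\mathbf{x}_k$ is again a finite list, hence an object of $(\mathcal{MA})_{\oplus}$. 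The structure maps $\iota_i\in(\mathcal{MA})_{\oplus}(\mathbf{x}_i,\mathbf{x})$ are the matrices whose entries are identity multipliers on the block corresponding to $\mathbf{x}_i$ and zero elsewhere; by construction $\iota_i^*\iota_i=\id_{\mathbf{x}_i}$ and $\sum_i\iota_i\iota_i^*=\id_{\mathbf{x}}$ as a straightforward matrix computation. This exhibits $\mathbf{x}$ as a finite direct sum in $(\mathcal{MA})_{\oplus}$.

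Finally, I would transport this data back through the isomorphism of \Cref{addmult} to obtain structure maps in $\mathcal{M}(\mathcal{A}_{\oplus})$, with the direct sum object $\mathbf{x}$ lying in $\mathcal{A}_{\oplus}$, which is what is required. The only point that demands any care is checking that the isomorphism $\mathcal{M}(\mathcal{A}_{\oplus})\cong(\mathcal{MA})_{\oplus}$ really does send identity multipliers and isometry relations to their counterparts on the other side; since that isomorphism is induced by identifying matrices of multipliers with multipliers of matrices entrywise, this verification is essentially formal, and I expect it to be the only mild obstacle in writing the argument out in detail.
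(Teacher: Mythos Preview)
Your proposal is correct and amounts to the same construction as the paper's: the structure maps in both cases are the block-inclusion matrices, with the only difference that the paper writes them out explicitly as multiplier pairs in $\mathcal{M}(\mathcal{A}_\oplus)$, whereas you first pass through the identification $\mathcal{M}(\mathcal{A}_\oplus)\cong(\mathcal{MA})_\oplus$ of \Cref{addmult} and then read them off as matrices over the unital category $\mathcal{MA}$. This is a harmless repackaging rather than a genuinely different argument.
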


\begin{proof}
The structure map $(L_i,R_i)=\iota_i\in\mathcal{M(A_{\oplus})}(\{x_i\},\{x_1,\dots,x_n\})$ is given on one coordinate by the map $L_i:\mathcal{A}_{\oplus}(\{x_i\},\{x_i\})\rightarrow\mathcal{A}_{\oplus}(\{x_i\},\{x_1,\dots,x_n\})$ that sends a morphism $f$ to the column with $f$ in the $i$-th place and zeros elsewhere. The right multiplier works similarly. It is elementary to verify that these multipliers exhibit $\{x_1,\dots,x_n\}$ as the direct sum of the $x_i$.
\end{proof}

This allows us to formulate a kind of universal property for $\mathcal{A}_{\oplus}$:

\begin{lemma}\label{multclosure}
If $\mathcal{B}$ is a $C^*$-category closed under finite multiplier direct sums, any non-degenerate $C^*$-functor $F:\mathcal{A}\rightarrow\mathcal{MB}$ extends uniquely to a non-degenerate functor $F_{\oplus}:\mathcal{A}_{\oplus}\rightarrow\mathcal{MB}$.
\end{lemma}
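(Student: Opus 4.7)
The plan is to combine Lemma~\ref{addextendfunc} with criterion~(3) of Theorem~\ref{nondegenstrongthm}. First I note that the hypothesis that $\mathcal{B}$ is closed under finite multiplier direct sums means precisely that $\mathcal{MB}$ admits finite direct sums in the ordinary sense: when the indexing multiset in Definition~\ref{multiplierdirectsumdefn} is finite, the ultrastrong convergence of the partial sums reduces to the equation $\sum_i\iota_i\iota_i^*=\id$ in $\mathcal{MB}$. Therefore Lemma~\ref{addextendfunc} applies to the $C^*$-functor $F:\mathcal{A}\to\mathcal{MB}$ and yields a unique $\mathbb{C}$-linear $*$-functor $F_\oplus:\mathcal{A}_\oplus\to\mathcal{MB}$ extending $F$, which on objects sends $\{x_1,\ldots,x_n\}$ to a chosen direct sum $F(x_1)\oplus\cdots\oplus F(x_n)$ in $\mathcal{MB}$ with structure maps $\kappa_j$.

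The uniqueness clause in the statement follows immediately, since any non-degenerate $C^*$-functor is in particular $\mathbb{C}$-linear and involution-preserving, so the uniqueness of $F_\oplus$ among non-degenerate extensions is already the uniqueness guaranteed by Lemma~\ref{addextendfunc}.

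It remains to check that $F_\oplus$ is non-degenerate, which I will verify via criterion~(3) of Theorem~\ref{nondegenstrongthm}. Fix lists $\mathbf{x}=\{x_1,\ldots,x_n\}$ and $\mathbf{y}=\{y_1,\ldots,y_m\}$ in $\mathcal{A}_\oplus$ and pick approximate units $(u^{(j)}_\lambda)$ for each $\mathcal{A}(y_j,y_j)$; a standard check shows that the diagonal matrices $e_\lambda:=\mathrm{diag}(u^{(1)}_\lambda,\ldots,u^{(m)}_\lambda)$ form an approximate unit for the matrix $C^*$-algebra $\mathcal{A}_\oplus(\mathbf{y},\mathbf{y})$, and the formula from the proof of Lemma~\ref{addextendfunc} gives $F_\oplus(e_\lambda)=\sum_{j=1}^m\kappa_jF(u^{(j)}_\lambda)\kappa_j^*$. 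For any $b\in\mathcal{B}(F_\oplus\mathbf{x},F_\oplus\mathbf{y})$ one decomposes $b=\sum_j\kappa_j\kappa_j^*b$ and observes that each $\kappa_j^*b$ lies in $\mathcal{B}(F_\oplus\mathbf{x},Fy_j)$ because $\mathcal{B}$ is an ideal in $\mathcal{MB}$. Applying criterion~(3) to $F$ itself then gives $F(u^{(j)}_\lambda)\circ\kappa_j^*b\to\kappa_j^*b$ for each $j$, and summing the finitely many pieces yields $F_\oplus(e_\lambda)\circ b\to b$, as required. The only mild obstacle is bookkeeping: the direct sum lives in $\mathcal{MB}$ rather than $\mathcal{B}$, so the structure maps $\kappa_j$ are a priori only multipliers; once one uses that $\mathcal{B}$ is an essential ideal in $\mathcal{MB}$ to move between the two, the argument is routine.
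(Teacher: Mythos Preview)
Your argument is essentially correct and takes a more direct route than the paper's. The paper first invokes criterion~(5) of Theorem~\ref{nondegenstrongthm} to obtain the unital ultrastrongly-continuous extension $\bar F:\mathcal{MA}\to\mathcal{MB}$, then applies Lemma~\ref{addextendfunc} to $\bar F$ (rather than to $F$) to get $\bar F_\oplus:(\mathcal{MA})_\oplus\to\mathcal{MB}$, and finally appeals to the isomorphism $(\mathcal{MA})_\oplus\cong\mathcal{M}(\mathcal{A}_\oplus)$ of Lemma~\ref{addmult} to view this as a unital ultrastrongly-continuous functor on $\mathcal{M}(\mathcal{A}_\oplus)$, whence it restricts to the desired non-degenerate $F_\oplus$. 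Your approach bypasses Lemma~\ref{addmult} and the ultrastrong-continuity deduction entirely, extending $F$ directly and checking non-degeneracy by hand via an explicit approximate unit; this is more elementary, while the paper's route is more conceptual and makes the extension to $\mathcal{M}(\mathcal{A}_\oplus)$ manifest.

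One small point to tighten: when you write ``Applying criterion~(3) to $F$ itself then gives $F(u^{(j)}_\lambda)\circ\kappa_j^*b\to\kappa_j^*b$'', note that criterion~(3) in Theorem~\ref{nondegenstrongthm} is only stated for morphisms whose source has the form $Fx$ with $x\in\Ob\mathcal{A}$, whereas $\kappa_j^*b$ has source $F_\oplus\mathbf{x}$, which need not lie in the image of $F$. The fix is immediate: decompose once more using the structure maps $\iota_i:Fx_i\to F_\oplus\mathbf{x}$ on the source side, writing $\kappa_j^*b=\sum_i(\kappa_j^*b\,\iota_i)\iota_i^*$ with each $\kappa_j^*b\,\iota_i\in\mathcal{B}(Fx_i,Fy_j)$; now criterion~(3) applies to each piece, and the finite sum reassembles to give the claimed convergence. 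After this adjustment your proof is complete.
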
 
\begin{proof}

  To define $F_{\oplus}$, consider the unique unital and ultrastrongly continuous  extension $\bar{F}:\mathcal{MA}\rightarrow\mathcal{MB}$. As $\mathcal{MB}$ is closed under finite direct sums, this extends to a $C^*$-functor $\bar{F}_{\oplus}:(\mathcal{MA})_{\oplus}\rightarrow\mathcal{MB}$ by \Cref{addextendfunc}. By \Cref{addmult} we can also write this as a functor $\bar{F}_{\oplus}:\mathcal{M}(\mathcal{A}_{\oplus})\rightarrow\mathcal{MB}$. It is easy to deduce that $\bar{F}_{\oplus}$ is unital and ultrastrongly continuous from the fact that $\bar{F}$ is, hence it restricts to a non-degenerate functor $F_{\oplus}:\mathcal{A}_{\oplus}\rightarrow\mathcal{MB}$.
  \end{proof}

\section{Hilbert modules over $C^*$-categories}
%Over $C^*$-algebras, Hilbert modules are a noncommutative generalization of vector bundles equipped with an inner product. They have applications in quantum group theory, form the `correct' definition of a module over a $C^*$-algebra and are crucial to the framework of $KK$-theory. They admit a well-behaved generalization to the theory of $C^*$-categories. The earliest definitions and results on Hilbert modules over $C^*$-categories are in \cite{MitchenerKK} and \cite{JoachimKHom}. There has not been much historical interest in them: instead, the above references have defined them, then quickly constructed algebras with isomorphic $K$-theory or, given certain size assumptions, equivalent categories of Hilbert modules.

%the (non-functorial) assignment of a $C^*$-algebra $\mathcal{A}^f$ to a $C^*$-category $\mathcal{A}$ defined in \cite{MitchenerKK} is probably the reason for this, as there's a unitary equivalence of $C^*$-categories $\Hilb\mathcal{A}\cong\Hilb \mathcal{A}^f$. 

%In this thesis we study Hilbert modules over $C^*$-categories in their own right, and find that several concepts and results in this thesis (additive closures, bimodules over $C^*$-categories, approximate free generation) have to be studied in their own right. There is some comfort for the algebra-minded, however; ultimately, these results will allow us to find for each $C^*$-category a certain algebra which has the same Hilbert module category, without the size restrictions in the existing literature.
We develop the theory of Hilbert modules over $C^*$-categories, first investigated in \cite{MitchenerCcat} and \cite{JoachimKHom}. %and recently also explored in \cite{Ferrier}.
\subsection{Hilbert modules and bounded adjointable operators}

\begin{defn}\label{hilbmoddefn}
If $\mathcal{A}$ is a $C^*$-category, a \emph{right Hilbert }$\mathcal{A}$\emph{-module} is a $\mathbb{C}$-linear functor $E:\mathcal{A}^{op}\rightarrow\mathsf{Vect}_\mathbb{C}$ equipped with sesquilinear\footnotemark\hspace{0pt}  `inner products' $$\langle-,-\rangle_E:E(y)\times E(x)\rightarrow\mathcal{A}(x,y)$$ such that for all $e\in E(y), f\in E(x)$ and $g\in\mathcal{A}(w,x)$ we have \begin{itemize}
    \item $\langle e,f\rangle=\langle f,e\rangle^*$.
    \item $\langle e,f\rangle\circ g=\langle  f,e\cdot g\rangle$, where $e\cdot g:=E(g)(e)$.
    \item $\langle e,e\rangle$ is a positive element of the $C^*$-algebra $\mathcal{A}(x,x)$.
    \item $\langle e,e\rangle=0$ only if $e=0$.
    \item For each $x\in\Ob\mathcal{A}$, the space $E(x)$ is complete in the norm $$\|e\|:=\sqrt{\|\langle e,e\rangle\|_{\mathcal{A}(x,x)}}.$$
\end{itemize}
\end{defn}
\footnotetext{Linear in the second argument and conjugate linear in the first.}
Note that for any Hilbert module, a sort of conjugate linearity in the first argument follows by combining the first and second axioms: $$\langle f\cdot g, e\rangle= \langle e, f\cdot g\rangle^*=(\langle e, f\rangle\circ g)^*=g^*\circ\langle e,f\rangle^*=g^*\circ\langle f,e\rangle.$$

We will not be concerned for the moment with left Hilbert $\mathcal{A}$-modules, though if desired these can easily be defined as right Hilbert $\mathcal{A}^{\mathrm{op}}$-modules; alternatively, one can modify the above axioms by making $E$ a covariant functor (so that elements of $\mathcal{A}$ act on the left), requiring that the product $\langle-,-\rangle_E$ be linear in the first variable and conjugate linear in the second, and that $\langle a\cdot e,f\rangle=a\circ\langle e,f\rangle$.
\begin{xmpl}
If $\mathcal{A}$ has only one object $x$, the Hilbert modules over $\mathcal{A}$ are exactly the right Hilbert $C^*$-modules over the $C^*$-algebra $\mathcal{A}(x,x)$. In particular, complex Hilbert spaces are Hilbert modules over the one-object $C^*$-category with hom-space $\mathbb{C}$.
\end{xmpl}
\begin{xmpl}\label{repdefn}
For any $C^*$-category $\mathcal{A}$ and any object $x$ of $\mathcal{A}$, there is a `representable' Hilbert module $$h_x:\begin{array}{l}\mathcal{A}\rightarrow\mathsf{Vect}_\mathbb{C}\\y\mapsto\mathcal{A}(y,x)\end{array}$$ such that for all morphisms $a\in h_x(y),b\in h_x(z)$, we have $\langle a,b\rangle_{h_x}:=a^*b\in\mathcal{A}(z,y)$.
\end{xmpl}

The action of $\A$ on $E$ is automatically continuous:

\begin{lemma}\label{continuity}
If $E$ is a right Hilbert $\mathcal{A}$-module, then for all $e\in E(x), a\in\mathcal{A}(w,x)$ the inequality $$\|e\cdot a\|_{E(w)}\leq\|e\|_{E(x)}\|a\|_{\mathcal{A}(w,x)}$$ holds. In particular, the action of $\mathcal{A}$ on $E$ is continuous.
\end{lemma}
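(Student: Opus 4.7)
The plan is to reduce everything to the $C^*$-identity via the defining formula for the Hilbert module norm, exactly as in the classical $C^*$-algebra case. By definition we have
\[
\|e\cdot a\|_{E(w)}^2 = \bigl\|\langle e\cdot a,\,e\cdot a\rangle\bigr\|_{\mathcal{A}(w,w)},
\]
so the whole statement will follow if I can show that $\langle e\cdot a,\,e\cdot a\rangle = a^*\circ\langle e,e\rangle\circ a$ and then bound the norm of this product.

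First I would manipulate the inner product using the two compatibility axioms in \Cref{hilbmoddefn}. Applying the ``conjugate-linearity in the first slot'' identity noted immediately after the definition (with $e$ in place of $f$ and $a$ in place of $g$) gives
\[
\langle e\cdot a,\,e\cdot a\rangle = a^*\circ\langle e,\,e\cdot a\rangle,
\]
and then the second axiom $\langle e,e\rangle\circ a = \langle e,\,e\cdot a\rangle$ rewrites this as $a^*\circ\langle e,e\rangle\circ a$. This is the only algebraic step; it is immediate from the axioms.

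Next I bound the norm. Since $\mathcal{A}$ is a Banach category its composition is contractive, so
\[
\|a^*\circ\langle e,e\rangle\circ a\|_{\mathcal{A}(w,w)} \le \|a^*\|\,\|\langle e,e\rangle\|\,\|a\| = \|a\|^2\,\|e\|_{E(x)}^2,
\]
where I use $\|a^*\|=\|a\|$ (a one-line consequence of the $C^*$-identity together with submultiplicativity) and the definition $\|e\|_{E(x)}^2=\|\langle e,e\rangle\|$. Taking square roots gives the desired inequality $\|e\cdot a\|\le\|e\|\|a\|$, and continuity of the $\mathcal{A}$-action in both variables then follows from bilinearity in the usual $\varepsilon$-$\delta$ fashion.

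There is essentially no obstacle here; the only care required is in applying the module/inner-product axioms on the correct sides, because the objects $w,x$ differ and one must track which hom-spaces the various morphisms live in. No approximate units, positivity of $a^*\langle e,e\rangle a$, or embedding into $\mathsf{Hilb}$ is needed --- the proof is purely the Banach-category $C^*$-estimate applied to the identity $\langle e\cdot a,e\cdot a\rangle=a^*\langle e,e\rangle a$.
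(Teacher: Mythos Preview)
Your proof is correct and follows essentially the same approach as the paper: compute $\langle e\cdot a,\,e\cdot a\rangle = a^*\circ\langle e,e\rangle\circ a$ from the inner-product axioms, then apply submultiplicativity of the norm and take square roots. (In fact your version has the factors in the correct order; the paper's displayed formula $a\circ\langle e,e\rangle\circ a^*$ appears to be a typo, since with $a\in\mathcal{A}(w,x)$ that composite does not parse.)
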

\begin{proof}
Note that we have $$\|e\cdot a\|_{E(w)}^2=\|\langle e\cdot a,e\cdot a\rangle\|_{\mathcal{A}(w,w)}=\|a\circ \langle e,e\rangle\circ a^*\|_{\mathcal{A}(w,w)}$$ $$\leq \|a\|_{\mathcal{A}(x,w)} \|\langle e,e\rangle\|_{\mathcal{A}(x,x)}\|a^*\|_{\mathcal{A}(x,w)}=\|a\|^2\|e\|_{E(x)}^2.$$ Hence taking square roots we obtain our result.
\end{proof}
There is also a Cauchy-Schwartz lemma we can prove for these Hilbert modules.
\begin{prop}\label{CauchyS}
If $E$ is a right Hilbert $\mathcal{A}$-module, $e\in E(x), f\in E(y)$, then the following Cauchy-Schwartz inequality holds in the $C^*$-algebra $\mathcal{A}(x,x)$: $$\langle e,f\rangle\circ\langle f,e\rangle\leq\|\langle f,f\rangle\|\langle e,e\rangle. $$
\end{prop}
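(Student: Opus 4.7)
The plan is to adapt the standard Hilbert $C^*$-module Cauchy--Schwarz argument to the $C^*$-category setting, with the only subtlety being the possibly non-unital endomorphism algebra $\mathcal{A}(y,y)$. The starting point is that, for any $a\in\mathcal{A}(x,y)$, the element $e-f\cdot a\in E(x)$ is well-defined (since $E$ is contravariant), and the axioms give
$$\langle e-f\cdot a,\,e-f\cdot a\rangle\ \geq\ 0\quad\text{in }\mathcal{A}(x,x).$$
Expanding with sesquilinearity and the identities $\langle g,h\cdot b\rangle=\langle g,h\rangle\circ b$ and $\langle h\cdot b,g\rangle=b^*\circ\langle h,g\rangle$ yields
$$\langle e,e\rangle-\langle e,f\rangle\circ a-a^*\circ\langle f,e\rangle+a^*\circ\langle f,f\rangle\circ a\ \geq\ 0.$$

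Next I would specialise to $a=t\langle f,e\rangle\in\mathcal{A}(x,y)$ for a real parameter $t>0$. The two cross terms combine and the inequality becomes the scalar quadratic
$$\langle e,e\rangle-2t\,\langle e,f\rangle\circ\langle f,e\rangle+t^{2}\,\langle e,f\rangle\circ\langle f,f\rangle\circ\langle f,e\rangle\ \geq\ 0.$$

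The key step is to bound the $t^2$ term using the $C^*$-algebra estimate $c^*bc\leq\|b\|\,c^*c$ applied to $b=\langle f,f\rangle\in\mathcal{A}(y,y)^+$ and $c=\langle f,e\rangle\in\mathcal{A}(x,y)$. This is the main obstacle, because $\mathcal{A}(y,y)$ need not be unital; the cleanest resolution is to pass to the multiplier category $\mathcal{MA}$ (in which $\id_y$ exists, by \Cref{multiplierdefn} and the discussion after it). Continuous functional calculus in $\mathcal{MA}(y,y)$ produces $h=(\|b\|\cdot\id_y-b)^{1/2}\in\mathcal{MA}(y,y)$, and then
$$\|b\|\,c^{*}c-c^{*}bc=(hc)^{*}(hc)\ \geq\ 0\quad\text{in }\mathcal{MA}(x,x),$$
which, since both sides actually lie in $\mathcal{A}(x,x)$, gives the desired inequality there. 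Feeding this in we obtain
$$\langle e,e\rangle-2t\,\langle e,f\rangle\circ\langle f,e\rangle+t^{2}\|\langle f,f\rangle\|\,\langle e,f\rangle\circ\langle f,e\rangle\ \geq\ 0.$$

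Finally I would optimise in $t$. Assuming $\langle f,f\rangle\neq 0$, set $t=1/\|\langle f,f\rangle\|$; the quadratic attains the form
$$\langle e,e\rangle-\tfrac{1}{\|\langle f,f\rangle\|}\,\langle e,f\rangle\circ\langle f,e\rangle\ \geq\ 0,$$
which rearranges to the claim. The degenerate case $\langle f,f\rangle=0$ forces $f=0$ by the fourth axiom on the inner product, whereupon $\langle e,f\rangle=0$ as well and both sides of the inequality vanish. Thus the only non-routine ingredient is the multiplier-category detour used to legitimise the standard positivity estimate in the possibly non-unital setting.
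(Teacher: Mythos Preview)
Your proof is correct and is essentially the standard argument from \cite[Proposition~1.1]{LanceHilbert} that the paper explicitly defers to (the paper omits the proof entirely, saying it is ``more or less a verbatim transcription'' of Lance). Your introduction of the real parameter $t$ and subsequent optimisation is a cosmetic variant of Lance's normalisation $\|\langle f,f\rangle\|=1$ followed by the choice $a=\langle f,e\rangle$; both arrive at the same inequality via the same positivity expansion.

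One small remark: invoking the multiplier category $\mathcal{MA}$ to legitimise $\|b\|\cdot\id_y-b\geq 0$ is heavier than necessary. The paper already has the minimal unitization $\mathcal{A}^+$ at hand (\Cref{minimalunit}), and the spectrum axiom in the very definition of a $C^*$-category is phrased in terms of $\mathcal{A}(y,y)^+$, so working there directly would keep the argument self-contained and closer in spirit to Lance's original.
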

The proof of this proposition, which we omit here, is more or less a verbatim transcription of \cite[Proposition 1.1]{LanceHilbert}.
%\begin{proof}
%We proceed more or less as in the proof of \cite[Proposition 1.1]{LanceHilbert}. Note that we can assume without loss of generality that $\|\langle f,f\rangle\|=1$, as then for any $f$ applying the lemma to $f'=\frac{f}{\sqrt{\|\langle f,f\rangle\|}}$ will do the trick. Now, for any $a\in\mathcal{A}(x,y)$ note that by positivity, 

%\begin{align*}
%0  &   \leq \langle e-f\cdot a,e-f\cdot a\rangle  \\
 %    & = \langle e,e\rangle-\langle e,f\rangle\circ a -a^*\circ\langle f,e\rangle + a^*\circ\langle f,f\rangle\circ a \\
  %  &  \leq \langle e,e\rangle-\langle e,f\rangle\circ a -a^*\circ\langle f,e\rangle + a^*\circ a
%\end{align*}

%For the last inequality, Mitchener quotes \cite[Lemma 1.6.8]{Dixmier} that if $b$ is a positive element of a $C^*$-algebra and $a$ is any other element, then we have $a^*ba\leq\|b\|a^*a$. Applying this lemma to the elements $\begin{bmatrix}
%0 & 0 \\
%a & 0
%\end{bmatrix},\begin{bmatrix}
%0 & 0 \\
%0 & b
%\end{bmatrix}$ of the algebra $M_{x,y}(\mathcal{A})$ we get the same inequality for $b\in\mathcal{A}(y,y),a\in\mathcal{A}(x,y)$.

%Then setting $a=\langle f,e\rangle$, the final two terms cancel out and we get that $0 \leq\langle e,e\rangle-\langle e,f\rangle\circ\langle f,e\rangle=\|\langle f,f\rangle\|\langle e,e\rangle-\langle e,f\rangle\circ\langle f,e\rangle$, proving the required inequality.
%\end{proof}

\begin{cor}\label{CSLemma}
If $E$ is a right Hilbert $\mathcal{A}$-module, $e\in E(x), f\in E(y)$, then the following Cauchy-Schwartz inequality holds in $\mathbb{R}$: $$\|\langle f,e\rangle\|\leq\|f\|\|e\|.$$
\end{cor}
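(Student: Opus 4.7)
The plan is to deduce this from the operator-valued Cauchy-Schwartz inequality of \Cref{CauchyS} by applying the $C^*$-norm to both sides.

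First I would use the $C^*$-identity in $\mathcal{A}(x,x)$ together with the first axiom of \Cref{hilbmoddefn}, which gives $\langle e,f\rangle = \langle f,e\rangle^*$, to rewrite
\[
\|\langle f,e\rangle\|^2 = \|\langle f,e\rangle^* \circ \langle f,e\rangle\| = \|\langle e,f\rangle \circ \langle f,e\rangle\|.
\]
This reduces the problem to bounding the norm of the positive element $\langle e,f\rangle\circ\langle f,e\rangle \in \mathcal{A}(x,x)$.

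Next I would invoke \Cref{CauchyS}, which yields the inequality $\langle e,f\rangle\circ\langle f,e\rangle \leq \|\langle f,f\rangle\|\cdot\langle e,e\rangle$ in the $C^*$-algebra $\mathcal{A}(x,x)$. Since the norm on a $C^*$-algebra is monotone on positive elements, this passes to
\[
\|\langle e,f\rangle\circ\langle f,e\rangle\| \leq \|\langle f,f\rangle\|\cdot\|\langle e,e\rangle\| = \|f\|^2\|e\|^2,
\]
where the last equality is the defining formula for the norm on the modules in \Cref{hilbmoddefn}. Combining with the first step and taking square roots yields $\|\langle f,e\rangle\|\leq\|f\|\|e\|$.

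The only non-routine input is the monotonicity of the $C^*$-norm on positive elements, but this is a standard fact about $C^*$-algebras (positive elements $0\leq a\leq b$ in a $C^*$-algebra satisfy $\|a\|\leq\|b\|$), so there is no real obstacle — the corollary is an essentially immediate consequence of \Cref{CauchyS} once one recognizes that the right-hand side of the asserted inequality is precisely the square root of the norm of the right-hand side of the operator-valued inequality.
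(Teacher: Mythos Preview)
Your proposal is correct and essentially identical to the paper's proof: both apply the $C^*$-identity to write $\|\langle f,e\rangle\|^2=\|\langle e,f\rangle\circ\langle f,e\rangle\|$, then take norms in \Cref{CauchyS} to bound this by $\|\langle f,f\rangle\|\|\langle e,e\rangle\|=\|f\|^2\|e\|^2$. The only difference is that you make the use of monotonicity of the norm on positive elements explicit, which the paper leaves implicit.
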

\begin{proof}
Applying norms to \Cref{CauchyS}, we get $$\|\langle f,e\rangle\|^2=\|\langle f,e\rangle^*\circ\langle f,e\rangle\|=\|\langle e,f\rangle\circ\langle f,e\rangle\|\leq\|\langle f,f\rangle\|\|\langle e,e\rangle\|=\|f\|^2\|e\|^2.$$
\end{proof}

We prove a few basic density results that will come in handy later.

\begin{lemma}\label{density}
If $E$ is a right Hilbert $\mathcal{A}$-module, then for each object $x$ of $\mathcal{A}$, the subspace $E(x)\cdot\langle E(x),E(x)\rangle$ is dense in $E(x)$, where  $\langle E(x),E(x)\rangle$ is the linear span of all inner products of elements of $E(x)$.
\end{lemma}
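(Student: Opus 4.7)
The plan is to reduce the statement to a standard approximate-unit argument, by constructing a suitable $C^*$-subalgebra of $\mathcal{A}(x,x)$ inside which the inner products of elements of $E(x)$ live densely.

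First I would verify that the linear span $S := \langle E(x), E(x)\rangle \subseteq \mathcal{A}(x,x)$ is closed under involution and multiplication. Self-adjointness is immediate from $\langle e,f\rangle^* = \langle f,e\rangle$. For closure under $\circ$, given $e,f,g,h\in E(x)$, the sesquilinearity/action axiom yields
\[
\langle e,f\rangle\circ\langle g,h\rangle = \langle e,\, f\cdot\langle g,h\rangle\rangle \in S,
\]
so the norm closure $B := \overline{S}$ is a $C^*$-subalgebra of $\mathcal{A}(x,x)$. In particular $B$ admits a (bounded) approximate unit $(u_\lambda)$.

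Next, fix $e\in E(x)$. Since $\langle e,e\rangle\in B$, I would expand
\[
\langle e - e\cdot u_\lambda,\, e - e\cdot u_\lambda\rangle = \langle e,e\rangle - \langle e,e\rangle u_\lambda - u_\lambda\langle e,e\rangle + u_\lambda\langle e,e\rangle u_\lambda,
\]
using that the $u_\lambda$ are self-adjoint and that $\langle e, e\cdot a\rangle = \langle e,e\rangle\circ a$ for $a\in\mathcal{A}(x,x)$. Each of the three corrective terms converges in norm to $\langle e,e\rangle$ by the standard approximate-unit estimates in a $C^*$-algebra, so the whole expression converges to $0$ in $\mathcal{A}(x,x)$. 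Taking norms and using the definition of $\|\cdot\|$ on $E(x)$, this yields $\|e - e\cdot u_\lambda\| \to 0$.

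Finally, each $u_\lambda\in B$ is a norm limit of finite sums $\sum_i \langle f_i, g_i\rangle$, and by \Cref{continuity} the action $a\mapsto e\cdot a$ is a bounded linear map $\mathcal{A}(x,x)\to E(x)$. Thus $e\cdot u_\lambda$ lies in the norm closure of $E(x)\cdot\langle E(x),E(x)\rangle$, and since $e\cdot u_\lambda\to e$, so does $e$ itself. The main (minor) obstacle is the initial sub-algebra check, which hinges on correctly reading the action axiom so that a product of two inner products is again a single inner product; once this is in place the argument reduces to the classical trick of approximating elements of a Hilbert $C^*$-module by acting by an approximate unit of the ideal generated by the inner products.
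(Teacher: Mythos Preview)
Your proof is correct and follows essentially the same approximate-unit argument as the paper: form the $C^*$-closure of $\langle E(x),E(x)\rangle$, pick an approximate unit $(u_\lambda)$, expand $\langle e-e\cdot u_\lambda,\,e-e\cdot u_\lambda\rangle$, and conclude $e\cdot u_\lambda\to e$ lies in the required closure. The only cosmetic difference is that the paper observes $\langle E(x),E(x)\rangle$ is in fact a two-sided ideal of $\mathcal{A}(x,x)$ (via $a\circ\langle e,f\rangle=\langle e\cdot a^*,f\rangle$), whereas you check only that it is a $*$-subalgebra; your weaker check is all that is actually needed here.
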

\begin{proof}
This is a well-known result for right Hilbert modules over $C^*$-algebras; we reproduce its brief proof here for completeness. Note first that $\langle E(x),E(x)\rangle$ is an algebraic ideal of $\mathcal{A}(x,x)$ as $\langle e,f\rangle\circ a=\langle e,f\cdot a^*\rangle$ and $a\circ\langle e,f\rangle=\langle e\cdot a^*,f\rangle$. It is also closed under the involution since $\langle e,f\rangle^*=\langle f,e\rangle$, so taking its norm closure we get a $C^*$-ideal $\overline{\langle E(x),E(x)\rangle}$.

Take an approximate unit $(u_\lambda)$ for $\overline{\langle E(x),E(x)\rangle}$: we are going to show for any $e\in E(x)$,
that $e\cdot u_\lambda\xrightarrow[]{\lambda}e$. To see this we simply note $$\langle e-e\cdot u_\lambda, e-e\cdot u_\lambda\rangle= \langle e,e\rangle - \langle e,e\rangle\circ u_\lambda-u_\lambda\circ(\langle e,e\rangle-\langle e,e\rangle\circ u_\lambda)\xrightarrow[]{\lambda} 0.$$

Hence since in addition $e\cdot u_\lambda\in E(x)\cdot\overline{\langle E(x),E(x)\rangle}\subseteq \overline{E(x)\cdot\langle E(x),E(x)\rangle}$ for each $\lambda$, we see that $e\in\overline{E(x)\cdot\langle E(x),E(x)\rangle}$.
\end{proof}
\begin{cor}\label{unitcor}If $(u_\lambda)$ is an approximate unit for $\mathcal{A}(x,x)$ and $e\in E(x)$, we have $e\cdot u_\lambda\rightarrow e$. 
\end{cor}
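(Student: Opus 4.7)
The plan is to bootstrap from \Cref{density}, which already supplies the key algebraic computation for a particular approximate unit (namely one for the closed ideal $\overline{\langle E(x),E(x)\rangle}$), to an arbitrary approximate unit for $\mathcal{A}(x,x)$ by a density and uniform boundedness argument.

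First I would fix an approximate unit $(u_\lambda)$ for $\mathcal{A}(x,x)$ and consider the family of operators $U_\lambda : E(x)\to E(x)$ defined by $U_\lambda(e) := e\cdot u_\lambda$. By \Cref{continuity} and the standard fact that approximate units in a $C^*$-algebra may be taken to be contractive (so $\|u_\lambda\|\le 1$), the net $(U_\lambda)$ is uniformly bounded, and hence so is $(U_\lambda-\id_{E(x)})$.

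Next I would verify pointwise convergence $U_\lambda(d)\to d$ on the dense subspace $D := E(x)\cdot\langle E(x),E(x)\rangle$ supplied by \Cref{density}. For a typical element $f\cdot a$ with $a\in\langle E(x),E(x)\rangle\subseteq\mathcal{A}(x,x)$, we have
$$\|f\cdot a\cdot u_\lambda - f\cdot a\|_{E(x)} = \|f\cdot(a u_\lambda - a)\|_{E(x)} \le \|f\|_{E(x)}\,\|a u_\lambda - a\|_{\mathcal{A}(x,x)},$$
by \Cref{continuity}, and the right-hand side tends to $0$ since $(u_\lambda)$ is an approximate unit.

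Finally I would invoke \Cref{limsupid} with $V = E(x)$, the dense subspace $D$ as above, and the operators $U_\lambda$, to conclude that $U_\lambda(e)\to e$ for every $e\in E(x)$, which is exactly the statement of the corollary. No step here is really an obstacle; the only subtlety is noting the uniform bound on $(u_\lambda)$ so that \Cref{limsupid} applies, but this is standard for $C^*$-algebraic approximate units.
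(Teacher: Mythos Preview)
Your proposal is correct and matches the paper's approach almost exactly: the paper also derives density of $E(x)\cdot\mathcal{A}(x,x)$ (a fortiori from \Cref{density}), checks convergence on elements of the form $f\cdot a$ via $\|f\cdot(au_\lambda-a)\|\le\|f\|\,\|au_\lambda-a\|$, and then invokes \Cref{limsuptrick} (equivalently \Cref{limsupid}) using the uniform bound on $(u_\lambda)$. The only cosmetic difference is that you work with the smaller dense subspace $E(x)\cdot\langle E(x),E(x)\rangle$ rather than $E(x)\cdot\mathcal{A}(x,x)$, which makes no difference to the argument.
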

\begin{proof}
\textit{A fortiori} from the above lemma we see that $E(x)\cdot\mathcal{A}(x,x)$ is always dense in $E(x)$, and we deduce the corollary using \Cref{limsuptrick}.
\end{proof}

We can now show that every element of a Hilbert module is in the image of the action of some $\A$-morphism; this is done by combining \Cref{unitcor} with the following classical result: 
\begin{thm}[{Cohen-Hewitt Theorem, see \cite[p. 108]{CJL}}]\label{CH}
Suppose $A$ is a Banach algebra with approximate unit $(u_\lambda)$, and $M$ is a right Banach $A$-module (that is, an $A$-module with a Banach norm making the action of $A$ continuous). Suppose $m\in M$ is such that $m\cdot u_\lambda\xrightarrow[]{\lambda}m$. Then there exist $n\in M,a\in A$ such that $m=n\cdot a$.
\end{thm}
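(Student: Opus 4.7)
The statement to be proved is the classical Cohen--Hewitt factorization theorem; I would just reproduce the standard argument, whose strategy I sketch below. Let $K = \sup_\lambda \|u_\lambda\| < \infty$ be the uniform bound on the approximate unit, and work inside the unitization $A^+$ with identity $e$.

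The plan is to construct an element $a \in A$ which is invertible as an element of $A^+$, together with an element $n \in M$, such that $m = n \cdot a$. One builds $a$ as the limit of a sequence $(a_k) \subseteq A^+$ of convex combinations of the form
\[
a_k \;=\; (1-c)^k\, e \;+\; c \sum_{j=1}^k (1-c)^{j-1}\, u_{\lambda_j},
\]
for a fixed small parameter $c \in (0,1)$ and a carefully chosen subsequence $(u_{\lambda_j})$ of the approximate unit. Because the scalar coefficient $(1-c)^k$ tends to zero, the limit $a := \lim_k a_k$ lies in $A$ rather than merely in $A^+$.

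At each stage $k$ the index $\lambda_k$ is chosen, using the directedness of the approximate identity together with the hypothesis $m \cdot u_\lambda \to m$, so that two things hold simultaneously: (i) $a_k$ is invertible in $A^+$ via a Neumann-type estimate whose input is that $u_{\lambda_k}$ acts approximately as an identity on the already-constructed finite data (not merely that $\|u_{\lambda_k}\| \leq K$), and (ii) the provisional factorizations $n_k := m \cdot a_k^{-1}$ form a Cauchy sequence in $M$ with summable increments $\|n_k - n_{k-1}\|$. Passing to the limits $a_k \to a \in A$ and $n_k \to n \in M$ and invoking joint continuity of the action of $A^+$ on $M$ then yields $m = n \cdot a$.

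The main obstacle is orchestrating the inductive selection of the $\lambda_k$'s so that the $A^+$-invertibility of each $a_k$ is maintained \emph{and} the provisional sequence $(n_k)$ converges in $M$. The intertwined estimates must exploit the hypothesis $m \cdot u_\lambda \to m$ at every stage, applied in a form adapted to the previously constructed $a_{k-1}^{-1}$. Everything else in the proof — verifying the limits, checking that the scalar term vanishes, and transporting the factorization through the continuity of multiplication — is routine bookkeeping.
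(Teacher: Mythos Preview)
The paper does not actually prove this theorem: it is stated with a citation to the literature (\cite[p.~108]{CJL}) and then immediately applied. So there is no ``paper's own proof'' to compare against; the paper treats Cohen--Hewitt as a black box.

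Your sketch is the standard proof and is correct in outline. One small point worth making explicit, since you write $n_k := m \cdot a_k^{-1}$ with $a_k^{-1} \in A^+$: the action of $A$ on $M$ must first be extended to an action of the unitization $A^+$ by setting $m \cdot (a + \lambda e) := m \cdot a + \lambda m$, and one should check this extension is still continuous. This is routine, but it is the step that lets the whole construction live inside $M$ rather than some larger space. Otherwise your description of the inductive choice of $\lambda_k$, the Neumann-series invertibility of $a_k$, and the Cauchy control on $(n_k)$ is exactly the classical argument.
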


\begin{cor}\label{CohHewCstar} 
If $\mathcal{A}$ is a $C^*$-category and $E$ is a right Hilbert $\mathcal{A}$-module, then for any $x\in\Ob\mathcal{A},e\in E(x)$ there exists $f\in E(x),a\in\mathcal{A}(x,x)$ such that $e=f\cdot a$.
\end{cor}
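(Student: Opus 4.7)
The plan is to apply the Cohen-Hewitt factorization theorem (\Cref{CH}) directly, using \Cref{unitcor} as the missing input hypothesis. The only work required is to check that for a fixed $x \in \Ob\mathcal{A}$, the space $E(x)$ together with the action of $\mathcal{A}(x,x)$ genuinely fits into the Cohen-Hewitt setup: namely, that $\mathcal{A}(x,x)$ is a Banach algebra with an approximate unit, that $E(x)$ is a right Banach module over it, and that $e \cdot u_\lambda \to e$ for some approximate unit.

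Concretely, I would proceed as follows. First, $\mathcal{A}(x,x)$ is a $C^*$-algebra (this is part of the definition of a $C^*$-category, or at worst follows from the $C^*$-axioms on $\mathcal{A}$), so in particular it is a Banach algebra and admits an approximate unit $(u_\lambda)$. Next, $E(x)$ is by definition a complex vector space with the right $\mathcal{A}(x,x)$-action given by $e \cdot a := E(a)(e)$, and it is complete in the norm $\|e\| = \sqrt{\|\langle e,e\rangle\|}$; continuity of the action is exactly \Cref{continuity}, which gives $\|e \cdot a\| \leq \|e\|\|a\|$, so $E(x)$ is a right Banach $\mathcal{A}(x,x)$-module. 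Finally, \Cref{unitcor} provides precisely the convergence $e \cdot u_\lambda \to e$ required by the hypothesis of Cohen-Hewitt.

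With all hypotheses verified, Cohen-Hewitt produces $f \in E(x)$ and $a \in \mathcal{A}(x,x)$ such that $e = f \cdot a$, which is the claim. There is essentially no obstacle here; the lemma is an assembly of pre-existing pieces, and the only conceptual point is recognizing that the $C^*$-categorical module structure at a single object $x$ reduces, after restricting attention, to the $C^*$-algebraic setting in which Cohen-Hewitt is classically stated.
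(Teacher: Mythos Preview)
Your proof is correct and follows exactly the approach the paper intends: the paper presents this corollary immediately after stating Cohen--Hewitt (\Cref{CH}) and explicitly says it is obtained by combining \Cref{unitcor} with that theorem, which is precisely what you do. Your write-up simply fills in the routine verifications (Banach module structure via \Cref{continuity}, existence of an approximate unit) that the paper leaves implicit.
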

%\begin{proof}
%It follows from \Cref{continuity} and the final axiom of \Cref{hilbmoddefn} that $E(x)$ is a Banach module over $\mathcal{A}(x,x)$. Now note that \Cref{CH} applies to any $e\in E(x)$, by \Cref{unitcor}.
%\end{proof}

%Theorem 3.2.1 (Cohen-Hewitt Theorem [6, III §1.15, p. 108]). Let A be a Banach algebra with an approximate right unit (uλ) ( i.e., a net in A with kuλk ≤ 1 ∀λ and such that kauλ − ak → 0 ∀a ∈ A). Let M be a right Banach A-module, and let m ∈ M be any element. Then there exists a factorization m = na for some a ∈ A and n ∈ M, if and only if kmuλ − mk → 0.

%It's easy to form the direct sum of a finite list of Hilbert modules:

%\begin{lemma}
%If $E$ is a right Hilbert $\mathcal{A}$-module, then for each 
%\end{lemma}
%what was this for?

We now aim to define the $C^*$-category of right Hilbert $\mathcal{A}$-modules. To do this we must define morphisms, an involution, and a norm. We also define in this section the subcategory of representable Hilbert modules and prove an original Yoneda lemma for $C^*$-categories.

\begin{defn}\label{opdefn}
If $E,F:\mathcal{A}^{\emph{op}}\rightarrow\mathsf{Vect}$ are two Hilbert modules, then an \emph{operator} $T:E\rightarrow F$ is simply a natural transformation $T:E\Rightarrow F$ of the underlying functors.  We say $T$ is \emph{bounded} if the set $\{\|T(e)\|:x\in\textsf{Ob}\mathcal{A}, e\in E(x), \|e\|=1\}$ is bounded, in which case we denote its supremum by $\|T\|$.
\end{defn}
We also ask that our operators are \textit{adjointable}:
%Between two Hilbert spaces $L$ and $K$, a bounded linear operator $B:L\rightarrow K$ automatically has an adjoint, which is to say a bounded linear operator $B^*:K\rightarrow L$ such that $\langle Tl,k\rangle_K=\langle l,T^*k\rangle_L$ for all $l\in L$, $k\in K$. For Hilbert modules over an arbitrary $C^*$-algebra this breaks down (see \cite[p. 8]{LanceHilbert} for a simple counterexample), and we have to \emph{ask} that our bounded operators have an adjoint. The case over $C^*$-categories works similarly:

\begin{defn}
An operator $T^*:F\rightarrow E$ of right Hilbert $\mathcal{A}$-modules is an \emph{adjoint} for the bounded operator $T:E\rightarrow F$ when we have $$\langle Te,f\rangle_F=\langle e,T^*f \rangle_E\in\mathcal{A}(x,y) \textrm{ for all } x,y\in\Ob\mathcal{A}, e\in E(x),f\in F(y).$$ We will denote the space of bounded adjointable operators from $E$ to $F$  by $\mathcal{L}(E,F)$, and shorten $\mathcal{L}(E,E)$ to $\mathcal{L}(E)$.
\end{defn}

%It's immediate from the positive definitivity of $E$ that if an adjoint for a given operator exists, is unique. Since we're not interested in either unbounded or non-adjointable operators between Hilbert modules, for the rest of this thesis, an `operator' between Hilbert modules will denote a bounded, adjointable operator unless otherwise specified.

\begin{prop}
The category $\Hilb\mathcal{A}$ of right Hilbert $\mathcal{A}$-modules and (bounded, adjointable) operators is a unital $C^*$-category with the norm given in \Cref{opdefn} and involution given by adjoint operators. 
\end{prop}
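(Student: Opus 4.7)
The plan is to follow the classical structure of the proof that bounded adjointable operators on Hilbert $C^*$-modules form a unital $C^*$-algebra, now adapted to the many-object setting. The list of items to check is: composition of bounded adjointable operators is bounded adjointable and contractive; each space $\mathcal{L}(E,F)$ is a complex Banach space; taking adjoints defines a conjugate-linear, contravariant, involutive operation; the identity operators serve as units; the $C^*$-identity $\|T^*T\|=\|T\|^2$ holds; and $T^*T$ has non-negative spectrum in $\mathcal{L}(E)$. First I would establish uniqueness of adjoints, which is immediate from the non-degeneracy of the inner product: if $T_1^*$ and $T_2^*$ both adjoin $T$, then $\langle e,(T_1^*-T_2^*)f\rangle = 0$ for all $e,f$; setting $e=(T_1^*-T_2^*)f$ and using the positivity axiom forces $T_1^*=T_2^*$. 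Linearity, conjugate linearity, and multiplicativity of the involution follow by standard manipulations of the defining identity, and identity operators are evidently self-adjoint. Contractivity $\|ST\|\leq\|S\|\|T\|$ is immediate from the pointwise estimate $\|ST(e)\|\leq\|S\|\|T(e)\|$.

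The key auxiliary estimate is $\|T\|=\|T^*\|$. Using the Cauchy--Schwartz inequality of \Cref{CSLemma}, for any $e\in E(x)$ with $\|e\|\leq 1$ we have
\[
\|Te\|^2 = \|\langle Te,Te\rangle\| = \|\langle e,T^*Te\rangle\| \leq \|e\|\,\|T^*Te\| \leq \|T^*\|\|T\|\|e\|^2,
\]
so $\|T\|^2\leq\|T^*\|\|T\|$ giving $\|T\|\leq\|T^*\|$, and the reverse by symmetry. This same chain yields the $C^*$-identity: $\|T\|^2\leq\|T^*T\|$ combined with submultiplicativity $\|T^*T\|\leq\|T^*\|\|T\|=\|T\|^2$ yields equality.

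Completeness of the hom-spaces is the place where care is needed. Given a Cauchy net $(T_n)$ in $\mathcal{L}(E,F)$, the equality $\|T_n-T_m\|=\|T_n^*-T_m^*\|$ makes $(T_n^*)$ Cauchy as well. For each $x\in\Ob\A$ and each $e\in E(x)$, the sequences $(T_n e)$ in $F(x)$ and $(T_n^* f)$ in $E(y)$ are Cauchy, hence converge to some $Te\in F(x)$ and $Sf\in E(y)$. Naturality in each variable passes to the limit by continuity of the action (\Cref{continuity}); boundedness passes to the limit from the uniform bound on $\|T_n\|$; and continuity of the inner product (via \Cref{CSLemma}) shows $\langle Te,f\rangle = \lim_n\langle T_n e,f\rangle = \lim_n\langle e,T_n^*f\rangle = \langle e,Sf\rangle$, so $S = T^*$ and $T\in\mathcal{L}(E,F)$. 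A standard $\epsilon/2$ argument then shows $T_n\to T$ in operator norm.

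Finally, for spectrum positivity, the verifications above show that each endomorphism space $\mathcal{L}(E)$ is a unital Banach $*$-algebra satisfying the $C^*$-identity, hence is a unital $C^*$-algebra in the classical sense, and positivity of $\mathrm{Spec}(T^*T)$ is then the standard result in $C^*$-algebra theory. The main subtlety I expect is simply the need to sequence these steps so as to avoid circularity: the completeness of the hom-spaces rests on the identity $\|T\|=\|T^*\|$, which itself depends only on the Cauchy--Schwartz inequality and not on any $C^*$-structure already existing on $\mathcal{L}(E,F)$, so the order \emph{(uniqueness of adjoints, norm estimate, completeness, $C^*$-identity, spectrum)} carries the argument through without loops.
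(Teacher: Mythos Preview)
Your argument is correct and follows the standard route for showing that bounded adjointable operators on Hilbert $C^*$-modules form a $C^*$-algebra, adapted componentwise to the many-object setting. The paper itself omits the proof entirely, citing \cite{MitchenerCcat}, so there is no approach to compare against; your write-up simply supplies the details that the paper defers to the literature, and the sequencing you describe (adjoint uniqueness, the norm identity $\|T\|=\|T^*\|$ via \Cref{CSLemma}, completeness, the $C^*$-identity, then spectrum positivity via the fact that each $\mathcal{L}(E)$ is already a unital $C^*$-algebra) is the natural one and avoids circularity.

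One cosmetic remark: you write ``Cauchy net $(T_n)$'' and then treat it as a sequence; since the hom-spaces are normed, sequences suffice for completeness, so this is harmless, but you may wish to align the language.
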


We omit the proof since this is in \cite{MitchenerCcat}. We record instead that $\Hilb\mathcal{A}$ is closed under finite direct sums: 
\begin{defn}\label{findirsumhilb}
If $\mathcal{A}$ is a $C^*$-category and $E_1,\dots,E_n$ is a finite list of right Hilbert $\mathcal{A}$-modules, their \emph{direct sum} $\oplus_{i=1}^nE_i$ is given by 
\begin{itemize}
    \item $(\oplus_{i=1}^nE_i)(x):=\oplus_{i=1}^nE_i(x)$
    \item $\langle (e_1,\dots,e_n),(f_1,\dots,f_n)\rangle_{\oplus_{i=1}^nE_i}:=\langle e_1,f_1\rangle_{E_1}+\dots+\langle e_n,f_n\rangle_{E_n}$
\item Structure maps $\iota_i:E_i\rightarrow\oplus_{i=1}^nE_i$ given by inserting an element into the $i^{\textrm{th}}$ coordinate, with adjoint given by projecting.
\end{itemize}

\end{defn}
We leave it to the reader to verify that $\oplus_{i=1}^nE_i$ is in fact a Hilbert module and that the structure maps exhibit it as the direct sum of the $E_i$.

\begin{prop}
$\Hilb\A$ is idempotent complete.
\end{prop}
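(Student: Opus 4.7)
The plan is to show that any projection $p \in \mathcal{L}(E)$ splits by constructing its image and kernel as honest Hilbert $\A$-modules and assembling them into a direct sum decomposition of $E$.

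First I would define the candidate image subfunctor $E_1 = pE$ and complementary subfunctor $E_2 = (1-p)E$, where $(pE)(x) := p_x(E(x)) \subseteq E(x)$ and $\A$ acts by restriction. Naturality of $p$ ensures both subspaces are stable under the action of $\A$: for $e \in (pE)(x)$ and $a \in \A(w,x)$, one has $p_w(e \cdot a) = p_x(e) \cdot a = e \cdot a$, so $e \cdot a \in (pE)(w)$. Since $p$ lies in the unital $C^*$-category $\Hilb\A$, each component $p_x$ is a bounded idempotent on the Banach space $E(x)$, hence has closed image; thus $(pE)(x)$ is complete in the restricted norm. The inner product on $E$ restricts to a well-defined $\A$-valued pairing on $E_1$, and positivity, non-degeneracy, and the module axioms are inherited from $E$. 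The same argument, applied to the projection $1-p \in \mathcal{L}(E)$ (which exists since $\Hilb\A$ is unital), shows $E_2$ is a Hilbert $\A$-module.

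Next, I would exhibit the splitting via the operator $u : E \to E_1 \oplus E_2$ defined componentwise by $u_x(e) = (p_x e,\, (1-p_x)e)$, using the finite direct sum from \Cref{findirsumhilb}. Naturality of $u$ follows from naturality of $p$. The candidate adjoint is $u^*_x(e_1, e_2) = e_1 + e_2$; using $p^* = p$, one checks
\[
\langle u(e), (e_1, e_2)\rangle = \langle pe, e_1\rangle + \langle (1-p)e, e_2\rangle = \langle e, pe_1\rangle + \langle e, (1-p)e_2\rangle = \langle e, e_1 + e_2\rangle,
\]
where the final equality uses $pe_1 = e_1$ and $(1-p)e_2 = e_2$. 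A direct computation then gives $u^*u = \id_E$ and $uu^* = \id_{E_1 \oplus E_2}$, so $u$ is a unitary isomorphism. Finally, $upu^{-1}(e_1, e_2) = u(pe_1 + pe_2) = u(e_1) = (e_1, 0)$, which is precisely the matrix $\begin{bmatrix} \id_{E_1} & 0 \\ 0 & 0\end{bmatrix}$ in the notation of the definition of a splitting projection.

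There is no serious obstacle: the only subtle point is verifying that $(pE)(x)$ is norm-closed in $E(x)$, which follows from boundedness of each $p_x$ as a component of an element of $\mathcal{L}(E)$. Everything else is bookkeeping with the projection identities $p^2 = p$ and $p = p^*$ together with naturality.
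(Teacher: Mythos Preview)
Your proposal is correct and follows essentially the same approach as the paper: the paper's proof also decomposes $E$ as $\ker P \oplus \operatorname{im} P$ objectwise, noting that both are Hilbert submodules, and leaves the verification to the reader. Your write-up simply supplies those details explicitly (and writes $\ker p$ as $(1-p)E$, which is the same thing for a projection).
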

\begin{proof}
    Consider a projection $P\in\mathcal{L}(E,E)$: that is, a bounded adjointable map such that $P^*=P$ and $P^2=P$. Then it is easily verified that  $\textrm{ker}P$ and $\textrm{im}P$, taken objectwise, are Hilbert submodules of $E$ and that $E\cong\textrm{ker}P\oplus\textrm{im}P $.
\end{proof}

\begin{defn}
    A right Hilbert $\A$-module $E$ is said to be \emph{finitely generated free} if there is a list of $\A$-objects $x_1,\dots,x_n$ such that $\oplus_{i=1}^n h_{x_i}\cong E$, where these are the representable $\A$-modules from \Cref{repdefn}. We say $E$ is \emph{finitely generated projective} (or \emph{f.g.p}) if it is a direct summand of a finitely generated free module.  
\end{defn}

We close with a characterization of the unitary isomorphisms in $\Hilb\mathcal{A}$. %To do this we will call on a (significantly stronger) characterization of the unitary isomorphisms in $\Hilb A$ where $A$ is a $C^*$-algebra:

%\begin{lemma}\label{unitaryalg}\cite[Theorem 3.5]{LanceHilbert} Let $A$ be a $C^*$-algebra, $E$ and $F$ be $A$-modules, and $u$ be a linear map from $E$ to $F$. Then the following are equivalent:
%\begin{itemize}
 %   \item $u$ is an isometric, surjective, $A$-linear map
%    \item $u$ is a unitary element of $\mathcal{L}(E,F)$
%\end{itemize}

%\end{lemma}
%It is not possible to adapt this criterion verbatim to our context by asking for a component-wise isometric transformation of $\mathcal{A}$-modules, because such a condition would say nothing about how the `bivariate' inner products which two right Hilbert $\mathcal{A}$-module have behave under this transformation. We provide a weaker, alternative characterization which will suffice for our purposes:
\begin{prop}\label{unitarycrit}
If $E$ and $F$ are right Hilbert $\mathcal{A}$-modules, and $T:E\Rightarrow F$ is a natural transformation from $E$ to $F$, ($E$ and $F$ considered purely as functors) then $T$ is a unitary isomorphism of right Hilbert $\mathcal{A}$-modules if and only if $T$ is surjective at every $x\in\Ob\mathcal{A}$ and $T$ preserves all inner products.
\end{prop}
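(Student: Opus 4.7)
The plan is to handle the two directions of the equivalence separately, with the forward direction being routine and the backward direction requiring a little more work to produce the adjoint.

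For the forward direction, suppose $T$ is a unitary isomorphism in $\Hilb\A$, so $T^*T = \id_E$ and $TT^* = \id_F$. Then $T$ is in particular a bijection on each object, hence surjective, and for any $e \in E(x)$, $f \in F(y)$ we compute
\[
\langle Te, Tf\rangle_F = \langle e, T^*Tf\rangle_E = \langle e, f\rangle_E,
\]
so inner products are preserved.

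For the backward direction, assume $T$ is surjective at every object and preserves all inner products. The first step is to note that $T$ is also injective: if $Te = 0$, then $\langle e,e\rangle_E = \langle Te, Te\rangle_F = 0$, so $e = 0$ by the positive-definiteness axiom in \Cref{hilbmoddefn}. Hence $T$ is a natural isomorphism of the underlying functors, and has a well-defined inverse natural transformation $T^{-1} : F \Rightarrow E$. The isometry property $\|Te\| = \|e\|$ is immediate from the preservation of inner products together with the definition of the norm on a Hilbert module, so $T$ is bounded with $\|T\| \le 1$, and symmetrically $T^{-1}$ is bounded with $\|T^{-1}\| \le 1$.

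The main point is then to show that $T$ is adjointable, with $T^{-1}$ serving as its adjoint. Given $e \in E(x)$ and $f \in F(y)$, use surjectivity of $T$ at $y$ to write $f = Tg$ for a unique $g \in E(y)$, which by definition is $g = T^{-1}f$. Then
\[
\langle Te, f\rangle_F = \langle Te, Tg\rangle_F = \langle e, g\rangle_E = \langle e, T^{-1}f\rangle_E,
\]
so $T^{-1}$ is an adjoint to $T$, and symmetrically $T$ is an adjoint to $T^{-1}$. Thus both $T$ and $T^{-1}$ are morphisms in $\Hilb\A$ with $T^* = T^{-1}$, giving $T^*T = \id_E$ and $TT^* = \id_F$, which is precisely the condition that $T$ is a unitary isomorphism. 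I do not expect a serious obstacle here; the only subtlety is remembering to use surjectivity to reduce checking the adjoint identity to pairs of the form $(Te, Tg)$, where preservation of inner products does all the work.
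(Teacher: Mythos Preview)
Your proof is correct and follows essentially the same approach as the paper: in the backward direction you invert $T$ objectwise using injectivity (from inner-product preservation) plus surjectivity, then verify $T^{-1}=T^*$ via the identity $\langle Te, Tg\rangle_F=\langle e,g\rangle_E$, exactly as the paper does. One small slip: in your forward-direction display you write $f\in F(y)$ but then apply $T$ to it and take $\langle e,f\rangle_E$; you mean $f\in E(y)$.
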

\begin{proof}
Note that if $T$ preserves direct products it must be an isometry on each single object, so as $T$ is also surjective on each object it must have an object-wise inverse $T^{-1}$, which is in addition natural since an array of isomorphisms is natural if and only if its inverse is. But then for each $e\in E(x), f\in F(y)$ we have $$\langle e,T^{-1}(f)\rangle_E=\langle T(e),TT^{-1}(y)\rangle_F=\langle T(e),f\rangle_F$$ showing that $T^{-1}=T^*$. Hence $T$ is a unitary isomorphism of $\mathcal{A}$-modules. The converse is obvious.
\end{proof}
\subsection{Compact operators on Hilbert modules}
Hilbert spaces have a notion of \textit{compact operators} between them; these are given by the norm-closure of the subspace of all operators with finite-dimensional range. In this section we define compact operators between two Hilbert modules over a $C^*$-category and characterize the compact-relative topology on the category of bounded operators.
\begin{defn}
If $E,F$ are right Hilbert $\mathcal{A}$-modules and $x\in\Ob\mathcal{A}$, then for module elements $e\in E(x)$ and $f\in F(x)$, the \emph{single-rank} operator $\theta_x^{f,e}$ is defined by setting for all $y\in\Ob\mathcal{A}, e'\in E(y):$ $$\theta_x^{f,e}(e')= f\cdot\langle e,e'\rangle_E\in F(y).$$ 
\end{defn}
It immediately follows that $\theta^{-,-}$ is linear in the first variable, conjugate linear in the second variable, and that
$$\textrm{if } e\in E(x), f\in F(y) \textrm{ and } a\in\mathcal{A}(x,y), \textrm{ then } \theta_y^{f,e\cdot a^*}=\theta_x^{f\cdot a,e}.$$ We will omit the indexing object $x$ where it is implied or not relevant. It follows from the definition that $\theta^{f,e}$ is bounded by $\|f\|\|e\|$ and has adjoint $\theta^{e,f}$.
%\begin{lemma}
%The single-rank operator $\theta_x^{f,e}$ is bounded and has adjoint $$\theta_x^{e,f}:F\rightarrow E$$
%\end{lemma}
%\begin{proof}
%Note that if $e'\in E(y)$ with $\|e'\|=1$, then $\theta_x^{f,e}(e')\leq\|e\|\|f\|$  by \Cref{CSLemma} and \Cref{continuity}. Now let $f'\in F(z)$ for some $z\in\Ob\mathcal{A}$. Then note $$\begin{array}{rl}\langle\theta_x^{f,e}(e'),f'\rangle&=\langle f\cdot\langle e,e'\rangle,f'\rangle\\&=\langle e,e'\rangle^*\circ\langle f,f'\rangle  \\&=\langle e',e\rangle\circ\langle f,f'\rangle\\&=\langle e',e\cdot\langle f,f' \rangle \rangle\\&=\langle e',\theta_x^{e,f}(f')\rangle\end{array}$$ hence $\theta^{e,f}_x=(\theta^{f,e}_x)^*$. 
%\end{proof}

\begin{defn}
A \emph{compact operator} $S:E\rightarrow F$ is a limit of finite-rank operators, i.e. a norm-limit of finite linear combinations of single rank operators $\theta^{e,f}_w$.
\end{defn}
Note that our definition of compact operators is (at least on the face of it) distinct from that in \cite[Definition 3.2]{MitchenerKK}, though it agrees with the definition in \cite[Definition 2.3.6]{Ferrier}.

\begin{prop}
The compact operators form an essential ideal in the $C^*$-category $\Hilb\mathcal{A}$.
\end{prop}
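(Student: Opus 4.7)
The plan is to verify the three defining properties of an essential ideal from \Cref{subcatideal} and \Cref{essentialcriterion}: that compact operators form a sub-$C^\ast$-category, that they absorb compositions with arbitrary bounded adjointable operators, and that they satisfy the essentiality criterion. Most of this reduces to simple manipulations on single-rank operators together with continuity arguments.

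First I would observe that the definition of compact operator is already as a norm-limit of finite rank operators, so the space $\mathcal{K}(E,F)$ is norm-closed by construction. For involution closedness, the single-rank operator $\theta^{f,e}$ has adjoint $\theta^{e,f}$, a fact already noted in the excerpt; thus involution preserves the dense subspace of finite-rank operators and, since involution is an isometry in $\Hilb\A$, it preserves the closure. Hence compact operators form a sub-$C^\ast$-category.

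Next I would verify the ideal property by checking behavior of single-rank operators under composition. Given $T\in\mathcal{L}(F,G)$, the naturality of $T$ yields $T\circ\theta^{f,e}_{x}=\theta^{T(f),e}_{x}$, because for any $e'\in E(y)$ we have $T(f\cdot\langle e,e'\rangle)=T(f)\cdot\langle e,e'\rangle$ by naturality of $T$ in the action of $\A$. Similarly, for $R\in\mathcal{L}(D,E)$, using adjointability one computes $\theta^{f,e}_{x}\circ R=\theta^{f,R^{\ast}e}_{x}$. Extending linearly and taking norm-limits (using that composition with bounded operators is contractive) shows $T\circ K$ and $K\circ R$ are compact whenever $K$ is.

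For essentiality I would apply the criterion of \Cref{essentialcriterion}: suppose $T\in\mathcal{L}(E,F)$ satisfies $T\circ K=0$ for every compact $K:G\to E$ and every Hilbert module $G$; I claim $T=0$. Applying the hypothesis to $K=\theta^{e,f}_x$ for arbitrary $x\in\Ob\A$ and $e,f\in E(x)$ yields $T(e\cdot\langle f,g\rangle)=0$ for every $g\in E(x)$. By \Cref{density}, the subspace spanned by elements of the form $e\cdot\langle f,g\rangle$ is norm-dense in $E(x)$, so the continuity of $T_x:E(x)\to F(x)$ forces $T_x=0$. Since $x$ was arbitrary, $T=0$, completing the verification.

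The only subtle step is the essentiality argument, and even there the work has been done in advance by \Cref{density}; the remaining verifications are routine manipulations with single-rank operators, so I do not anticipate any real obstacle.
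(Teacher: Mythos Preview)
Your proposal is correct and follows essentially the same route as the paper's own proof: the ideal property is established via the identities $T\circ\theta^{f,e}=\theta^{T(f),e}$ and $\theta^{f,e}\circ R=\theta^{f,R^*e}$, and essentiality is verified through \Cref{essentialcriterion} by specializing to $G=E$ and invoking \Cref{density}. You include slightly more detail on norm- and involution-closedness than the paper does, but the argument is the same.
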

\begin{proof}
Note that if $T$ is any other operator, $T\circ\theta_x^{f,e}=\theta_x^{T(f),e}$ and $\theta_x^{f,e}\circ T=\theta_x^{f,T^*e}$ so it is easy to see that the norm-closure of finite sums of such elements forms a $C^*$-ideal.

% so we see for a compact operator $S=\lim(S_n)$ where the $S_n$ are of finite rank, that $$\begin{array}{rl}\|S\circ T-S_n\circ T\|&=\|(S-S_n)\circ T\|\\&\leq\|S-S_n\|\|T\|\rightarrow0\end{array},$$ so $S\circ T=\lim_n(S_n\circ T)$, 

We use \Cref{essentialcriterion} to show this ideal is essential: suppose $T$ is a bounded operator $E\rightarrow F$ such that  $T\circ\theta_x^{e,d}=0$ for all $D\in \Hilb\mathcal{A},x\in\Ob(\mathcal{A}),d\in D(x),e\in E(x)$. Let $D=E$, then we have $T\circ\theta^{e,e'}_x(e'')=T(e)\cdot\langle e',e''\rangle=T(e\cdot\langle e',e''\rangle)=0$ for all $e,e'\in E(x), e''\in E(w),w\in\Ob(\mathcal{A})$. Setting now $w=x$, we see by \Cref{density} that $T=0$ on all of $E(x)$.
\end{proof}

\begin{defn}We denote the category of Hilbert modules and compact operators by $\KHilb\mathcal{A}$. When $\mathcal{A}$ is implicit we write $\mathcal{K}(E,F)$ rather than $\KHilb\mathcal{A}(E,F)$ for the compact operators from $E$ to $F$, and we further shorten $\mathcal{K}(E,E)$ to $\mathcal{K}(E)$.\end{defn}

\begin{lemma}\label{strongapprox}
If $U_\lambda$ is an approximate unit for the $C^*$-algebra $\mathcal{K}(E)$, then for any $x\in\Ob\mathcal{A}, e\in E(x)$, $U_\lambda(e)\xrightarrow[]{\lambda}e$.
\end{lemma}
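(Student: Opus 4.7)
The plan is to combine the density lemma \Cref{density} with the uniform-boundedness trick \Cref{limsupid}. Each $U_\lambda \in \mathcal{K}(E)$ is a natural transformation $E\Rightarrow E$, so it restricts at the object $x$ to a bounded linear operator $U_\lambda : E(x)\to E(x)$ whose norm is dominated by its $C^*$-norm in $\mathcal{K}(E)$. Since approximate units in a $C^*$-algebra can be chosen (and in any case are) uniformly bounded, the family $\{U_\lambda : E(x)\to E(x)\}$ is a uniformly bounded net of endomorphisms of the Banach space $E(x)$.

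By \Cref{density}, the subspace $D := E(x)\cdot\langle E(x), E(x)\rangle$ is norm-dense in $E(x)$. By \Cref{limsupid}, it therefore suffices to check that $U_\lambda(e)\to e$ for every $e$ in the linear span $D$, and by linearity it is enough to treat a single elementary tensor $e = f\cdot\langle g, h\rangle_E$ with $f,g,h\in E(x)$. The key observation is that this element is precisely $\theta^{f,g}_x(h)$, and $\theta^{f,g}_x\in\mathcal{K}(E)$. Since $(U_\lambda)$ is an approximate unit for the $C^*$-algebra $\mathcal{K}(E)$, we have $U_\lambda\circ\theta^{f,g}_x\to\theta^{f,g}_x$ in the $C^*$-norm of $\mathcal{K}(E)$. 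Evaluating at $h\in E(x)$ and bounding by the operator norm yields
\[ \|U_\lambda(e) - e\| = \|(U_\lambda\circ\theta^{f,g}_x - \theta^{f,g}_x)(h)\| \le \|U_\lambda\circ\theta^{f,g}_x - \theta^{f,g}_x\|\cdot\|h\| \xrightarrow[]{\lambda} 0, \]
completing the proof.

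There is no real obstacle here; the only thing to watch is the bookkeeping of where $U_\lambda$ is acting, i.e.\ distinguishing the $C^*$-norm on $\mathcal{K}(E)$ from the Banach norm on $E(x)$, and confirming that the former dominates the latter so that norm convergence in $\mathcal{K}(E)$ produces pointwise convergence on $E(x)$.
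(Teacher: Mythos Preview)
Your proof is correct and follows essentially the same approach as the paper: both reduce to the dense subspace $E(x)\cdot\langle E(x),E(x)\rangle$ via \Cref{density}, identify an element $f\cdot\langle g,h\rangle$ as $\theta^{f,g}_x(h)$, use that $U_\lambda\circ\theta^{f,g}_x\to\theta^{f,g}_x$ in norm, and then apply \Cref{limsupid} with the uniform bound $\|U_\lambda\|\leq 1$.
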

\begin{proof}
Notice for any $e_1,e_2,e_3\in E(x)$, that $$\begin{array}{rl}\|u_\lambda(e_1\cdot\langle e_2,e_3\rangle)-e_1\cdot\langle e_2, e_3\rangle\|&=\|(u_\lambda \circ\theta^{e_1,e_2}-\theta^{e_1,e_2})(e_3)\|\\&\leq\|u_\lambda \circ\theta^{e_1,e_2}-\theta^{e_1,e_2}\|\|e_3\|\xrightarrow[]{\lambda}0\end{array}$$
In other words, $u_\lambda(e_1\cdot\langle e_2,e_3\rangle)\xrightarrow[]{\lambda}e_1\cdot\langle e_2,e_3\rangle $. So since \Cref{density} tells us the elements of this form are dense in $E(x)$ and $\|u_\lambda\|\leq1$  we see by \Cref{limsupid} that $u_\lambda(e)\xrightarrow[]{\lambda}e$ for all $e\in E(x)$.
\end{proof}

We can use compact operators to define a sort of Yoneda embedding for $C^*$-categories:
\begin{lemma}\label{imageyoneda}
The assignment $x\mapsto h_x$ defined in \Cref{repdefn} extends to an isometric $C^*$-functor $\iota_\mathcal{A}:\mathcal{A}\rightarrow\Hilb\mathcal{A}$, where for any $a\in\mathcal{A}(x,y)$ we let $$\iota(a):h_x\rightarrow h_y$$ be the operator with components $\mathcal{A}(z,x)\rightarrow\mathcal{A}(z,y)$ given by postcomposition with $a$. Furthermore, this functor has image $\iota(\mathcal{A}(x,y))=\mathcal{K}(h_x,h_y)$.
\end{lemma}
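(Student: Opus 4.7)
The plan is to verify the four assertions in sequence: that $\iota(a)$ lands in $\mathcal{L}(h_x,h_y)$, that $\iota$ is a $C^*$-functor, that it is isometric, and that its image coincides with $\mathcal{K}(h_x,h_y)$.

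First I would check that $\iota(a)$ is a bounded adjointable natural transformation. Naturality amounts to associativity of composition in $\A$; boundedness is submultiplicativity of the $C^*$-norm, giving $\|\iota(a)(b)\|=\|ab\|\leq\|a\|\|b\|$ and so $\|\iota(a)\|\leq\|a\|$; and the adjoint of $\iota(a)$ is $\iota(a^*)$ by the one-line computation $\langle\iota(a)(b),c\rangle_{h_y}=(ab)^*c=b^*(a^*c)=\langle b,\iota(a^*)(c)\rangle_{h_x}$, using $\langle -,-\rangle_{h_x}=(-)^*(-)$. Linearity, functoriality and compatibility with the involution are then immediate from the corresponding operations in $\A$.

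For the isometry claim I would invoke \Cref{starfuncisbounded}, which reduces matters to showing $\iota$ is faithful on hom-spaces. If $\iota(a)=0$ then $ab=0$ for every $b\in\A(z,x)$ and every $z$; taking $z=x$ and letting $b=u_\lambda$ range over an approximate unit of $\A(x,x)$, \Cref{approxunit} gives $a=\lim_\lambda au_\lambda=0$.

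The only mildly non-routine part is the identification of the image with $\mathcal{K}(h_x,h_y)$. For one inclusion, I would compute single-rank operators directly: for $e\in\A(w,x),f\in\A(w,y)$ and $g\in h_x(z)=\A(z,x)$, unfolding the definitions gives $\theta^{f,e}_w(g)=f\cdot\langle e,g\rangle=fe^*g=\iota(fe^*)(g)$, so $\theta^{f,e}_w=\iota(fe^*)$ with $fe^*\in\A(x,y)$. Since $\iota$ is isometric and $\A(x,y)$ is norm-complete, its image is closed, hence contains the norm-closure of finite sums of single-rank operators. For the reverse inclusion, given $a\in\A(x,y)$, I would approximate $a$ by $au_\lambda$ using an approximate unit of $\A(x,x)$ and \Cref{approxunit}, and exploit the $C^*$-algebraic identity $u_\lambda=u_\lambda^{1/2}(u_\lambda^{1/2})^*$ (valid since $u_\lambda\geq 0$ and $u_\lambda^{1/2}$ is self-adjoint in $\A(x,x)$) to recognise $\iota(au_\lambda)=\theta^{au_\lambda^{1/2},\,u_\lambda^{1/2}}_x$ as single-rank; then $\iota(a)$ is a norm-limit of single-rank operators and hence compact.

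The main thing to get right is not really an obstacle but the key trick of the proof: the approximate-unit factorisation $u_\lambda=u_\lambda^{1/2}(u_\lambda^{1/2})^*$ lets each $\iota(au_\lambda)$ be realised as an honest single-rank operator, so that both the compactness of $\iota(a)$ and the surjectivity onto $\mathcal{K}(h_x,h_y)$ fall out of the same computation rather than requiring separate arguments.
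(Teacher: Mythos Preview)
Your proof is correct. The first three parts match the paper's reasoning (the paper is terser, simply asserting faithfulness, but your approximate-unit argument is a fine explicit justification), and your identification $\theta^{f,e}_w=\iota(fe^*)$ for the inclusion $\mathcal{K}(h_x,h_y)\subseteq\iota(\mathcal{A}(x,y))$ is exactly what the paper does.

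For the reverse inclusion $\iota(\mathcal{A}(x,y))\subseteq\mathcal{K}(h_x,h_y)$ you take a genuinely different route. The paper invokes the factorization lemma (\Cref{factorlemma}), which writes $a=v\,(a^*a)^{1/4}$ with $v\in\mathcal{A}(x,y)$ via a $2\times 2$ matrix-algebra trick, and then observes directly that $\iota(a)=\theta^{v,\,(a^*a)^{1/4}}_x$ is \emph{itself} single-rank. Your argument instead uses the approximate-unit factorisation $u_\lambda=u_\lambda^{1/2}(u_\lambda^{1/2})^*$ to exhibit $\iota(au_\lambda)$ as single-rank and then passes to the norm limit. Your approach is more elementary in that it avoids the matrix-algebra machinery behind \Cref{factorlemma}; the paper's approach buys the marginally sharper statement that every $\iota(a)$ is already single-rank rather than merely compact.
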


\begin{proof}
This functor is clearly linear, faithful and intertwines the involutions on the two categories, so it is an isometric $C^*$-functor. To show $\iota(\mathcal{A}(x,y))\supseteq\mathcal{K}(h_x,h_y)$, note that for $a\in h_x(z),b\in h_y(z)$, the rank one operator $\theta^{b,a}:h_x\rightarrow h_y$ is the one given by postcomposition with $b^*a$. To show that $\iota(\mathcal{A}(x,y))\subseteq\mathcal{K}(h_x,h_y)$, factorize any morphism $a\in\mathcal{A}(x,y)$ as in \Cref{factorlemma}: then we see that there exists $d\in\mathcal{A}(x,y)$ such that $\iota(a)=\theta^{(a^*a)^{1/4},d}$, so in fact each operator in the image of $\iota$ is single-rank.
\end{proof}

\begin{cor}\label{unitalyoneda}
If $\A$ is unital, the functor $\iota_{\A}$ is also full, and furthermore if $\fgProj\A$ is the category of finitely generated projective modules, $\iota$ extends to an equivalence $\A_{\oplus}^\natural\cong\fgProj\A$.
\end{cor}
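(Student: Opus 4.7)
The plan is to first dispatch the fullness claim and then bootstrap to the equivalence in two steps, using the additive closure and the idempotent completion in turn.

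For fullness of $\iota_\A$ when $\A$ is unital, I would use the unit $\id_x \in \A(x,x) = h_x(x)$ to read off any bounded adjointable operator from its value at $\id_x$. Given $T:h_x\to h_y$, set $a := T_x(\id_x) \in h_y(x) = \A(x,y)$. For any $b \in h_x(z) = \A(z,x)$, naturality of $T$ with respect to $b: z \to x$ applied to $\id_x$ gives $T_z(b) = T_z(\id_x \circ b) = T_x(\id_x)\circ b = a\circ b = \iota(a)_z(b)$, so $T = \iota(a)$. Combined with \Cref{imageyoneda}, this shows $\iota$ is full and furthermore that $\mathcal{L}(h_x,h_y) = \mathcal{K}(h_x,h_y) = \iota(\A(x,y))$, i.e.\ $\iota$ is fully faithful.

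For the equivalence, I would build the extension in two stages. Since $\A$ is unital so is $\A_{\oplus}$, and $\Hilb\A$ admits finite direct sums (\Cref{findirsumhilb}), so \Cref{addextendfunc} gives a unique unital $C^*$-functor $\iota_{\oplus}: \A_{\oplus} \to \Hilb\A$ extending $\iota$. Next, $\Hilb\A$ is idempotent complete by \Cref{idempotentcompletion}'s companion \Cref{findirsumhilb}-style result (Proposition 3.12), so by the universal property of the idempotent completion (\Cref{idempotentcompletion}), $\iota_{\oplus}$ extends uniquely to a unital $C^*$-functor $\widetilde{\iota}: \A_{\oplus}^{\natural} \to \Hilb\A$. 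I claim $\widetilde{\iota}$ factors through $\fgProj\A$ and gives the desired equivalence.

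To see $\widetilde{\iota}$ is fully faithful, I would proceed stepwise. First, $\iota_{\oplus}$ is fully faithful: morphisms in $\A_{\oplus}$ are matrices of $\A$-morphisms, while $\Hilb\A$-morphisms between finite direct sums of representables are matrices of morphisms between the summands, and the entry-wise bijection is exactly $\iota$ being fully faithful. Then $\widetilde{\iota}$ inherits fully faithfulness from $\iota_{\oplus}$: a morphism $(x,p) \to (y,q)$ in $\A_{\oplus}^{\natural}$ is of the form $q\circ a\circ p$ with $a \in \A_{\oplus}(x,y)$, while a morphism $\mathrm{im}(\iota_{\oplus}(p)) \to \mathrm{im}(\iota_{\oplus}(q))$ in $\Hilb\A$ is a compression $\iota_{\oplus}(q)\circ T\circ \iota_{\oplus}(p)$ of some $T$, and fully faithfulness of $\iota_{\oplus}$ matches these up. For essential surjectivity onto $\fgProj\A$: by definition an f.g.p. module is a direct summand of some $\oplus_i h_{x_i} = \iota_{\oplus}(\{x_i\})$, hence equals the image of some projection $P \in \mathcal{L}(\oplus h_{x_i})$. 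By fullness of $\iota_{\oplus}$, $P = \iota_{\oplus}(p)$ for a (necessarily self-adjoint idempotent) $p \in \A_{\oplus}$, and then $\widetilde{\iota}(\{x_i\}, p)$ is unitarily isomorphic to our module. Conversely, every object in the image of $\widetilde{\iota}$ is evidently a direct summand of a finitely generated free module.

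The only subtle point I anticipate is verifying that $p \in \A_{\oplus}$ arising from a self-adjoint idempotent $P \in \mathcal{L}(\oplus h_{x_i})$ via fullness is itself a projection (i.e.\ $p^*=p$ and $p^2=p$), but this is immediate from $\iota_{\oplus}$ being a fully faithful $C^*$-functor and hence reflecting the $\ast$-algebraic identities $P^*=P$ and $P^2=P$.
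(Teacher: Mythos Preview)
Your proof is correct and follows essentially the same approach as the paper. The paper's proof is terser: for fullness it invokes the identical ``standard Yoneda argument'' of evaluating at $\id_x$, and for the equivalence it simply observes that $\iota_\A$ gives an equivalence between $\A$ and the full subcategory of representable modules, then notes that $\fgProj\A$ is by definition the closure of the representables under finite direct sums and direct summands (while $\Hilb\A$ is idempotent complete), so the equivalence extends. Your two-step extension via the universal properties of $(-)_{\oplus}$ and $(-)^{\natural}$ spells out exactly how that closure argument is implemented, and your verification of fully faithfulness and essential surjectivity is the content the paper leaves implicit.
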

\begin{proof}
    If $\A(x,x)$ is unital a standard Yoneda argument shows that any transformation $T:h_x\rightarrow h_y$ is given by postcomposing with the morphism $T_x(\id_x)$, so $T$ is in fact in the image of $\iota_{\A}$. 

    The second equivalence follows since $\iota_{\A}$ is an equivalence between $\A$ and the category of representable right Hilbert $\A$-modules and since $\Hilb\A$ is idempotent complete, $\fgProj\A$ is by definition the closure of representable right Hilbert $\A$-modules under finite direct sums and direct summands.
\end{proof}

We can in fact obtain a Yoneda lemma for $C^*$-categories. Before we do this we recall a generic lemma about maps on Banach spaces.

\begin{lemma}\label{densextend}
If $M$ and $N$ are Banach spaces, $M'$ is a dense subspace of $M$ and $\phi':M'\rightarrow N$ is a  bounded linear map with $\|\phi'\|= L$, then $\phi'$ extends uniquely to a map $\phi:M\rightarrow N$ of Banach spaces with $\|\phi\|=L$. If $\phi'$ is an isometry then so is $\phi$, and if $\phi'$ in addition has dense image, then $\phi$ is an isometric isomorphism of Banach spaces.
\end{lemma}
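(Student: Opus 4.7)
The proof is a standard application of the bounded linear extension theorem, so the plan is to construct $\phi$ pointwise using density in $M$ and completeness of $N$, then verify the norm equality, uniqueness, and the final isomorphism claim.

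First, for each $m \in M$ I would choose a sequence $(m_n) \subseteq M'$ converging to $m$. Since $\phi'$ is bounded with operator norm $L$, the estimate $\|\phi'(m_n) - \phi'(m_k)\| \leq L\|m_n - m_k\|$ shows that $(\phi'(m_n))$ is Cauchy in $N$, so by completeness it converges to some limit in $N$, which I define to be $\phi(m)$. A standard interleaving argument shows this is independent of the chosen sequence, and linearity of $\phi$ transfers from linearity of $\phi'$ via limit laws.

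Next I would establish the norm equality. The estimate $\|\phi(m)\| = \lim_n\|\phi'(m_n)\| \leq L\lim_n\|m_n\| = L\|m\|$ gives $\|\phi\| \leq L$, and since $\phi$ restricts to $\phi'$ on $M'$, the reverse inequality $\|\phi\| \geq \|\phi'\| = L$ is automatic. If $\phi'$ is itself an isometry, then replacing each $\leq$ by an equality in the same chain yields $\|\phi(m)\| = \|m\|$, so $\phi$ is an isometry as well. Uniqueness of the extension follows because any two continuous maps $M \to N$ agreeing on the dense subspace $M'$ must coincide everywhere.

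For the final statement, assume $\phi'$ is an isometry with dense image. Then $\phi$ is an isometry, hence injective, and its image $\phi(M)$ is closed in $N$: any Cauchy sequence $(\phi(m_k))$ in $\phi(M)$ pulls back via the isometry to a Cauchy sequence $(m_k)$ in the complete space $M$, whose limit $m$ satisfies $\phi(m) = \lim_k \phi(m_k)$. Since $\phi(M)$ contains the dense subspace $\phi'(M')$ and is closed, $\phi(M) = N$, so $\phi$ is an isometric bijection of Banach spaces, i.e., an isometric isomorphism. There is no real obstacle here; the only small care is in verifying that the image of an isometry from a complete space is closed, which is immediate from the pullback argument just described.
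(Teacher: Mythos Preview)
Your proof is correct and follows essentially the same approach as the paper: the paper simply cites the continuous linear extension theorem (Reed--Simon) for the first part, performs the same limit computation for the isometry claim, and argues surjectivity from density plus closedness of the image. Your argument is in fact slightly more careful than the paper's on the last point, since you explicitly verify that the image of an isometry from a complete space is closed, whereas the paper's phrasing ``the image of a complete metric space under any bounded map is complete'' is literally false without the isometry hypothesis.
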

\begin{proof}
The fact that $\phi$ extends to a map on $M$ with the same norm is an instance of continuous linear extension, see e.g. \cite[Theorem I.7]{ReedSimon}. To show that $\phi$ is an isometry when $\phi'$ is, simply write any $m\in M$ as a limit of a sequence of elements $m_n\in M'$, and then note $$ \|\phi(m)\|=\|\phi(\lim_nm_n)\|=\lim_n\|\phi(m_n)\|=\lim_n\|\phi'(m_n)\|=\lim_n\|m_n\|=\|m\|.$$
Finally, if $\phi'$ has dense image, then $\phi$ must be surjective since its image contains that of $\phi'$, and the image of a complete metric space under any bounded map is complete. Hence $\phi$ is an isometric isomorphism of Banach spaces.
\end{proof}
%\begin{proof}
%For an arbitrary $m\in M$, take a sequence $(m_n)$ converging to $m$ and simply define $\phi(m)=\lim_n \phi'(m_n)$. This limit exists as an isometry must send the Cauchy sequence $(m_n)$ to a Cauchy sequence and $N$ is a Banach space. Furthermore, $\phi$ is well-defined as given another sequence $\mu_n$ converging to $m$, we have $\lim_n(\mu_n-m_n)=0$, hence $$\lim_n\phi'(\mu_n)-\lim_n\phi'(m_n)=\lim_n\phi'(\mu_n-m_n)=0.$$ Finally we note $\phi$ is obviously additive and an extension of $\phi'$, and is necessarily the unique such extension as any continuous map is determined by its values on any dense subset.

%Next note $\phi$ is an isometry as $$\|\phi(m)\|=\|\lim_n\phi'(m_n)\|=\lim_n\|\phi'(m_n)\|=\lim_n\|m_n\|=\|m\|.$$ 

%This proves the first part of the lemma. Suppose in addition that $\phi'$ has dense image. Then take an arbitrary $p\in N$ and approximate it as a limit of elements in the image of $\phi'$, so $p=\lim_n\phi'(l_n)$. Then $(\phi'(l_n))$ is Cauchy, so by the isometry so is $l_n$. Hence $l_n$ converges to $l\in M$ and we immediately get $\phi(l)=\lim_n\phi(l_n)=p$. 

%Hence $\phi$ is a surjective linear isometry, with an inverse $\phi^{-1}$ which must also be isometric and additive.
%\end{proof}
This puts us in a position to prove a Yoneda lemma.
\begin{prop}[Yoneda lemma for Hilbert modules]\label{Yoneda}
For each $x\in\Ob\mathcal{A}$ and right Hilbert $\mathcal{A}$-module $F$, there is an isometric isomorphism $$\eta:\mathcal{K}(h_x,F)\rightarrow F(x) .$$ 
\end{prop}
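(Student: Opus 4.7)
The plan is to construct an explicit inverse map $\Phi:F(x)\rightarrow\mathcal{K}(h_x,F)$ and show that it is an isometric linear bijection; the desired map $\eta$ will then be $\Phi^{-1}$. For each $e\in F(x)$ I set $\Phi(e)=T_e$, where $T_e:h_x\Rightarrow F$ is the natural transformation whose component at $z\in\Ob\mathcal{A}$ sends $g\in h_x(z)=\mathcal{A}(z,x)$ to $e\cdot g\in F(z)$. Naturality of $T_e$ is immediate from functoriality of $F$, and \Cref{continuity} gives $\|T_e\|\leq\|e\|$; a short calculation using the inner-product axioms of \Cref{hilbmoddefn} shows that $T_e$ is adjointable with $T_e^*(f)=\langle e,f\rangle_F$ for $f\in F(z)$, so $T_e\in\mathcal{L}(h_x,F)$.

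The heart of the argument is showing that $T_e$ is actually compact and that $\Phi$ is isometric, both of which reduce to a single approximate-unit argument. I would fix an approximate unit $(u_\lambda)$ for $\mathcal{A}(x,x)$; since the $u_\lambda$ are self-adjoint, unpacking the definitions shows that the single-rank operator $\theta_x^{e,u_\lambda}:h_x\rightarrow F$ acts by $g\mapsto (e\cdot u_\lambda)\cdot g$, and therefore equals $T_{e\cdot u_\lambda}$. Linearity of the assignment $v\mapsto T_v$ and the upper bound $\|T_v\|\leq\|v\|$ then yield $\|T_e-\theta_x^{e,u_\lambda}\|\leq\|e-e\cdot u_\lambda\|\to0$ by \Cref{unitcor}, so $T_e$ is a norm-limit of rank-one operators, hence compact. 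Evaluating $T_e$ at $u_\lambda$ and invoking \Cref{unitcor} once more gives $\|T_e\|\geq\|e\cdot u_\lambda\|\to\|e\|$, so $\Phi$ is an isometry.

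For surjectivity I would observe that an arbitrary single-rank operator $\theta_w^{f,a}:h_x\rightarrow F$ (with $a\in h_x(w)=\mathcal{A}(w,x)$ and $f\in F(w)$) can, by expanding the inner product on $h_x$ and applying contravariance of $F$, be rewritten as $T_{f\cdot a^*}$ with $f\cdot a^*\in F(x)$. Hence the image of $\Phi$ contains every single-rank operator, and since $\Phi$ is isometric with complete domain its image is closed in $\mathcal{K}(h_x,F)$; by the definition of a compact operator this image is all of $\mathcal{K}(h_x,F)$. Setting $\eta:=\Phi^{-1}$ then gives the desired isometric isomorphism. The only step that requires any real care is the identification $\theta_x^{e,u_\lambda}=T_{e\cdot u_\lambda}$, but once this is in hand, compactness, isometry and surjectivity all drop out of the approximate-unit machinery developed earlier in the paper.
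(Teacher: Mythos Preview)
Your proof is correct and uses the same essential ingredients as the paper---the identification $\theta_w^{f,a}=T_{f\cdot a^*}$ and the approximate-unit argument via \Cref{unitcor}---but you organize them in the opposite direction. The paper defines $\eta$ directly on single-rank operators by $\eta(\theta^{f,a}):=f\cdot a^*$, shows this partial map is isometric (again by testing on an approximate unit), and then invokes \Cref{densextend} to extend to all of $\mathcal{K}(h_x,F)$; the inverse $\epsilon_f(a)=f\cdot a$ (your $T_e$) is only constructed afterwards in a separate lemma. Your route, building $\Phi$ first and then reading off $\eta=\Phi^{-1}$, has the minor advantage that you never need to check well-definedness of a map initially specified only on single-rank operators, and compactness of $T_e$ falls out of the same approximate-unit computation that gives the isometry. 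The paper's route has the advantage that the formula for $\eta$ is explicit from the outset, which it then uses to verify naturality in the follow-up lemma.
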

\begin{proof}
We construct the map $\eta:\mathcal{K}(h_x,F)\rightarrow F(x)$ by defining it first on single-rank operators. For $f\in F(y)$ and $a\in\mathcal{A}(y,x)$, note that for each $b\in\mathcal{A}(z,x)$ we have $$\theta^{f,a}(b)=f\cdot\langle a,b\rangle=f\cdot a^*b=(f\cdot a^*)\cdot b.$$
Hence we can define $\eta(\theta^{f,a}):=f\cdot a^*\in F(x)$ and extend linearly to finite-rank operators. To show this partial map is an isometry, note that picking $x=y$ and letting $b$ vary over an approximate unit for $\mathcal{A}(x,x)$, we see by \Cref{unitcor} that $$\|\sum_{i=1}^n\theta^{f_i,a_i}(u_\lambda)\|=\|(\sum_{i=1}^nf_i\cdot a_i^*)\cdot u_\lambda\|\xrightarrow[]{\lambda}\|\sum_{i=1}^nf_i\cdot a_i^*\|$$ but conversely it follows from \Cref{continuity} that $$\|\sum_{i=1}^n\theta^{f_i,a_i}(b)\|=\|(\sum_{i=1}^nf_i\cdot a_i^*)\cdot b\|\leq\|(\sum_{i=1}^nf_i\cdot a_i^*)\|\| b\| $$ so we see $\|\eta(\sum_{i=1}^n\theta^{f_i,a_i})\|=\|\sum_{i=1}^n\theta^{f_i,a_i}\|$. But then by \Cref{densextend} there is a unique isometric extension of $\eta$ to all of $\mathcal{K}(h_x,F)$.

To show $\eta$ is surjective, again let $x=y$ and let $a$ vary over an approximate unit $(u_\lambda)$ for $x$, then \Cref{unitcor} shows that for any $f$, we have $\eta(\theta^{f,u_\lambda})=f\cdot u_\lambda\xrightarrow[]{\lambda}f$. Hence $\eta$ has dense image, so by \Cref{densextend} it must be surjective.           
\end{proof}

The map $\eta$ is natural in $F$ and $x$, but rather than proving this directly we find the inverse to the map and show that \textit{it} is natural:

\begin{lemma}\label{contrayoneda}
For each $x\in\Ob\mathcal{A}$ and right Hilbert $\mathcal{A}$-module $F$, the inverse $\epsilon:F(x)\rightarrow\mathcal{K}(h_x,F)$ to the map $\eta$ defined above is given by setting $\epsilon:f\mapsto \epsilon_f$ where $\epsilon_f(a)=f\cdot a$ for all $a\in h_x(y)$. The adjoint of $\epsilon_f$ is given for each $y\in\Ob\mathcal{A}$ and $f'\in F(y)$ by $\epsilon_f^*(f')=\langle f,f'\rangle$. 

This map is natural in both $F$ and $x$: to be precise, for any $\phi\in\mathcal{L}(F,F')$, we have $\phi\circ\epsilon_f=\epsilon_{\phi(f)}$, and for any $b\in\mathcal{A}(x',x)$, we have $\epsilon_m\circ\iota(b)=\epsilon_{m\cdot b}$
\end{lemma}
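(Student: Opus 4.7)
The plan is to check that $\epsilon_f$ defines a bounded adjointable operator $h_x\to F$, upgrade it to a compact operator, identify it as the inverse of $\eta$, and then read naturality off directly from the construction.

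First I would verify that $\epsilon_f$ is a natural transformation of underlying functors: for $a\in h_x(y)=\mathcal{A}(y,x)$ and $c\in\mathcal{A}(y',y)$, $\epsilon_f(a)\cdot c = (f\cdot a)\cdot c = f\cdot(a\circ c) = \epsilon_f(a\circ c)$ by contravariant functoriality of $F$. Boundedness $\|\epsilon_f\|\le\|f\|$ is immediate from \Cref{continuity}. For the adjoint, using the conjugate-linearity identity $\langle f\cdot a,f'\rangle_F = a^*\circ\langle f,f'\rangle_F$ noted immediately after \Cref{hilbmoddefn},
\[
\langle \epsilon_f(a),f'\rangle_F \;=\; a^*\circ\langle f,f'\rangle_F \;=\; \langle a,\langle f,f'\rangle_F\rangle_{h_x},
\]
so setting $\epsilon_f^*(f')\coloneqq\langle f,f'\rangle_F$ supplies a valid adjoint.

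The main obstacle is showing $\epsilon_f$ is actually compact rather than merely bounded adjointable. Here I would invoke the Cohen--Hewitt consequence \Cref{CohHewCstar}: writing $f = f'\cdot a$ for some $f'\in F(x)$ and $a\in\mathcal{A}(x,x)$, one computes for any $b\in h_x(y)$ that
\[
\epsilon_f(b) \;=\; (f'\cdot a)\cdot b \;=\; f'\cdot(a\circ b) \;=\; f'\cdot\langle a^*,b\rangle_{h_x} \;=\; \theta_x^{f',a^*}(b),
\]
so $\epsilon_f$ is in fact single-rank, hence a fortiori compact. The inverse relation now follows cleanly: the same calculation yields $\epsilon_{f'\cdot a} = \theta^{f',a^*}$, so $(\eta\circ\epsilon)(f'\cdot a) = \eta(\theta^{f',a^*}) = f'\cdot a$, and since by \Cref{CohHewCstar} every element of $F(x)$ has the form $f'\cdot a$, we obtain $\eta\circ\epsilon = \id_{F(x)}$. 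Combined with \Cref{Yoneda} already exhibiting $\eta$ as an isometric isomorphism, this forces $\epsilon = \eta^{-1}$.

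Naturality then falls out directly from the definitions. For $\phi\in\mathcal{L}(F,F')$, using the naturality of $\phi$ as a transformation of underlying functors, $\phi_y(f\cdot a) = \phi_x(f)\cdot a$, so $\phi\circ\epsilon_f = \epsilon_{\phi(f)}$. For $b\in\mathcal{A}(x',x)$ and $a\in h_{x'}(y) = \mathcal{A}(y,x')$, the definition of $\iota$ from \Cref{imageyoneda} gives $\iota(b)(a) = b\circ a$, whence
\[
(\epsilon_m\circ\iota(b))(a) \;=\; m\cdot(b\circ a) \;=\; (m\cdot b)\cdot a \;=\; \epsilon_{m\cdot b}(a),
\]
appealing once more to contravariant functoriality of $F$.
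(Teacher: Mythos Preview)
Your proof is correct and close in spirit to the paper's, but the key step is organized dually. The paper verifies the composite $\epsilon\circ\eta$ on single-rank operators, computing $(\epsilon\circ\eta)(\theta^{f,a})(b)=(f\cdot a^*)\cdot b=\theta^{f,a}(b)$ and then invoking density of finite-rank operators to conclude $\epsilon\circ\eta=\id$; compactness of $\epsilon_f$ then falls out implicitly from $\epsilon$ being the inverse of a bijection onto $\mathcal{K}(h_x,F)$. You instead invoke \Cref{CohHewCstar} to show directly that each $\epsilon_f$ is single-rank, and use that same factorization to compute $\eta\circ\epsilon=\id$ on all of $F(x)$. Your route has the mild advantage of making compactness of $\epsilon_f$ explicit rather than a consequence of the inverse relation; the paper's route avoids Cohen--Hewitt here by working on the other side of the composition. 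The naturality verifications are essentially identical in both.
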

\begin{proof}
Each map $\epsilon_f$ is evidently an $\mathcal{A}$-module map and is bounded by  \Cref{continuity}; furthermore, it is simple to verify that its adjoint is as stated.

Note next that for any $b\in h_x(z),\theta^{f,a}\in\mathcal{K}(h_x,F)$ we have $$(\epsilon\circ\eta)(\theta^{f,a})(b)=(f\cdot a^*)\cdot b=f\cdot(a^*b)=f\cdot\langle a,b\rangle_{h_x}=\theta^{f,a}(b).$$
Hence by the density of the finite-rank operators we see $\epsilon$ is the unique inverse for the map $\eta$.

We finish by showing the naturality: note for any $a\in h_x(y)$ that $$\epsilon_{\phi(f)}(a)=\phi(f)\cdot a^*=\phi(f\cdot a^*)=\phi(\epsilon_f(a)). $$
For the second part, simply note for any $b\in\mathcal{A}(x',x),a\in h_{x'}(y)$ we have $$(\epsilon_f\circ\iota(b))(a)=(\epsilon_f)(b\circ a)=f\cdot(ba)=(f\cdot b)(a)=\epsilon_{f\cdot b}(a).$$\end{proof}

This correspondence helps us understand single-rank operators better:

\begin{lemma}\label{singlerank}
If $E$ and $F$ are right Hilbert $\mathcal{A}$-modules and $x\in\Ob\mathcal{A}$, the single-rank operators $\theta^{f,e}_x$ are exactly those operators that factor as the composite $$E\rightarrow h_x\rightarrow F$$ of two compact operators.
\end{lemma}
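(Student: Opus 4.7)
The plan is to leverage the Yoneda lemma for Hilbert modules (\Cref{Yoneda}) together with its explicit inverse from \Cref{contrayoneda}. The key observation is that compact operators into and out of a representable module admit a clean parametrization: compact operators $h_x\to F$ are exactly the maps $\epsilon_f:a\mapsto f\cdot a$ for $f\in F(x)$, and (by taking adjoints, since compacts form an involution-closed ideal) compact operators $E\to h_x$ are exactly the maps $\epsilon_e^*:e'\mapsto \langle e,e'\rangle$ for $e\in E(x)$.

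With this identification in hand, both directions of the equivalence become a short computation. For the forward implication, given a single-rank operator $\theta^{f,e}_x:E\to F$ with $f\in F(x)$ and $e\in E(x)$, I would factor it as $\epsilon_f\circ\epsilon_e^*$: for any $y\in\Ob\A$ and $e'\in E(y)$,
\[
(\epsilon_f\circ\epsilon_e^*)(e')=\epsilon_f(\langle e,e'\rangle)=f\cdot\langle e,e'\rangle=\theta^{f,e}_x(e').
\]
Here $\epsilon_f\in\mathcal{K}(h_x,F)$ directly by \Cref{Yoneda,contrayoneda}, while $\epsilon_e^*\in\mathcal{K}(E,h_x)$ since $\epsilon_e\in\mathcal{K}(h_x,E)$ and the ideal of compact operators is $*$-closed.

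For the converse, take any compact operators $S\in\mathcal{K}(E,h_x)$ and $T\in\mathcal{K}(h_x,F)$. Applying \Cref{Yoneda} to $T$ yields a unique $f\in F(x)$ with $T=\epsilon_f$. Applying it instead to $S^*\in\mathcal{K}(h_x,E)$ yields a unique $e\in E(x)$ with $S^*=\epsilon_e$, hence $S=\epsilon_e^*$. The same identity as above then gives $T\circ S=\theta^{f,e}_x$, so the composite is a single-rank operator at $x$.

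There is essentially no obstacle here beyond bookkeeping: the statement is really a corollary of the Yoneda lemma plus the observation that the $*$-closedness of the compact ideal lets us pass freely between $\mathcal{K}(h_x,E)$ and $\mathcal{K}(E,h_x)$. The only small point to verify carefully is that the composition of an adjoint-Yoneda and a Yoneda map reproduces the formula $f\cdot\langle e,-\rangle$ on the nose, which follows at once from the explicit descriptions of $\epsilon_f$ and $\epsilon_e^*$ recorded in \Cref{contrayoneda}.
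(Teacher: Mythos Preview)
Your proof is correct and follows essentially the same approach as the paper: both directions rest on the factorization $\theta^{f,e}_x=\epsilon_f\circ\epsilon_e^*$ together with the Yoneda identification of $\mathcal{K}(h_x,F)$ with $F(x)$ (and, via adjoints, of $\mathcal{K}(E,h_x)$ with $E(x)$). The paper's version is simply more terse, compressing your converse into a single sentence that invokes \Cref{Yoneda} to parametrize both legs of the composite at once.
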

\begin{proof}
Note simply that for any $e'\in E(y)$, we have $$\theta^{f,e}_x(e')=f\cdot\langle e,e'\rangle=\epsilon_f\circ\epsilon^*_e(e').$$ Conversely, by \Cref{Yoneda} all compact sequences $E\rightarrow h_x\rightarrow F$ are described by an operator $\epsilon_f\epsilon_e^*=\theta_x^{f,e}$ which is single-rank by definition.
\end{proof}

It follows that the ideal $\KHilb\mathcal{A}$ is, in a precise sense, the one \textit{generated by} the image of $\iota_\mathcal{A}$.

\begin{prop}\label{yonedagenerate}
The ideal $\KHilb\mathcal{A}$ is the $C^*$-ideal generated by the image of $\iota$, that is: $\mathcal{K}(E,F)$ is the norm-closure of the linear span of all morphisms which factor as $E\rightarrow h_x\xrightarrow[]{\iota(a)}h_y\rightarrow F$ for some $x,y\in\Ob\mathcal{A},a\in\mathcal{A}(x,y)$.
\end{prop}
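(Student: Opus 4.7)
My plan is to prove the two inclusions separately. For the easy direction, I would observe that any operator factoring as $E \to h_x \xrightarrow{\iota(a)} h_y \to F$ is compact: by \Cref{imageyoneda}, $\iota(a) \in \mathcal{K}(h_x, h_y)$, so such a composite lies in the $C^*$-ideal $\mathcal{K}(E, F)$ since the compact operators are stable under pre- and post-composition with arbitrary bounded adjointable operators. Hence the norm-closed linear span of these composites is contained in $\mathcal{K}(E,F)$.

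For the reverse inclusion, since every compact operator is by definition a norm-limit of linear combinations of single-rank operators $\theta^{f,e}_x$, it suffices to show that each such $\theta^{f,e}_x$ lies in the norm closure of operators of the prescribed form. I would invoke \Cref{singlerank} to write $\theta^{f,e}_x = \epsilon_f \circ \epsilon_e^*$ with $\epsilon_e^*\colon E \to h_x$ and $\epsilon_f\colon h_x \to F$, and then interpose an element of $\iota(\A)$ by choosing an approximate unit $(u_\lambda)$ for $\mathcal{A}(x,x)$ and considering the operators $\epsilon_f \circ \iota(u_\lambda) \circ \epsilon_e^*$, each of which is a factorization of the required form with $y = x$ and $a = u_\lambda$.

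For norm convergence, unwinding definitions gives $\epsilon_f \circ \iota(u_\lambda) = \epsilon_{f \cdot u_\lambda}$, since $\iota(u_\lambda)$ acts by postcomposition with $u_\lambda$. As the assignment $g \mapsto \epsilon_g$ is isometric by \Cref{Yoneda}, I obtain $\|\epsilon_f \circ \iota(u_\lambda) - \epsilon_f\| = \|f \cdot u_\lambda - f\|$, which tends to zero by \Cref{unitcor}. Composing on the right with the bounded operator $\epsilon_e^*$ then yields $\epsilon_f \circ \iota(u_\lambda) \circ \epsilon_e^* \to \theta^{f,e}_x$ in operator norm, exhibiting $\theta^{f,e}_x$ in the desired closure and completing the proof. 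The only mild subtlety — really the main thing to spot — is that $\mathcal{A}(x,x)$ need not be unital, so one genuinely needs the approximate-unit argument rather than simply inserting an identity into the factorization through $h_x$.
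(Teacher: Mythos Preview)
Your proof is correct. The only substantive difference from the paper's argument lies in the reverse inclusion: you insert an approximate unit $(u_\lambda)$ for $\mathcal{A}(x,x)$ and show $\epsilon_f \circ \iota(u_\lambda) \circ \epsilon_e^* \to \theta^{f,e}_x$ in norm, whereas the paper invokes the Cohen--Hewitt theorem (via \Cref{CohHewCstar}) to factorize $f = f' \cdot a$ exactly for some $a \in \mathcal{A}(x,x)$, yielding $\theta^{f,e}_x = \epsilon_{f'} \circ \iota(a) \circ \epsilon_e^*$ on the nose. The paper's route therefore establishes the slightly stronger fact that every single-rank operator \emph{is} (not merely \emph{is approximated by}) an operator of the prescribed factored form; your approximate-unit argument is more elementary and sidesteps Cohen--Hewitt, but only places the single-rank operators in the norm closure --- which is precisely what the proposition asserts, so nothing is lost.
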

\begin{proof}
Note that every morphism that factors as described must be compact, as compact morphisms are an ideal and by \Cref{imageyoneda} all morphisms in the image of $\iota$ are compact; hence the ideal generated by the image of $\iota$ is contained in the compacts.

For the opposite inclusion, consider $\theta^{f,e}_x:E\rightarrow F$ for $e\in E(x),f\in F(x)$, and factorize $f=f'\cdot a$ for $a\in\mathcal{A}(x,x)$ using \Cref{CohHewCstar}. Then $$\theta^{f,e}=\epsilon_f\circ\epsilon^*_e=\epsilon_{f'}\circ\iota(a)\circ\epsilon^*_e.$$ Hence any single-rank operator can be factorized through a morphism in the image of $\iota$, and hence we see any compact operator is a norm-limit of sums of such operators.
\end{proof}

We close by describing a topology of `pointwise convergence' on $\Hilb\mathcal{A}$ that will come to play a significant role.
\begin{defn}\label{strongtopdefn}
The \emph{strong topology} on $\Hilb\mathcal{A}$ is the topology such that a net of operators $T_\lambda:E\rightarrow F$ converges strongly to $T$ when for every object $x\in\Ob\mathcal{A}$ and elements $e\in E(x), f\in F(x)$, we have $\|T_\lambda(e)-T(e)\|\xrightarrow[]{\lambda}0$ and $\|T_\lambda^*(f)-T^*(f)\|\xrightarrow[]{\lambda}0$. %This isometric extension is surjective by \Cref{density}
\end{defn}
As with our ultrastrong topology, this nomenclature coincides with that in the literature, except for the omission of an asterisk: if $\mathcal{A}=\mathbb{C}$ and $E=F$, our strong topology on $\mathcal{L}(E)$ is commonly called the strong* topology.

\begin{lemma}\label{2topologies}
The strong topology coincides with the $\KHilb\mathcal{A}$-relative topology on norm-bounded subsets.
\end{lemma}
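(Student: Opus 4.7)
The plan is to show both inclusions by reducing to single-rank compact operators and then using an $\epsilon/3$-argument on the norm-bounded net. Throughout I work on a norm-bounded net $(T_\lambda) \subseteq \mathcal{L}(E,F)$ and, by linearity of both topologies, reduce to showing $T_\lambda \to 0$ in one topology iff in the other. The $\KHilb\A$-relative topology on $\mathcal{L}(E,F)$ is generated by the seminorms $T \mapsto \|K T\|$ and $T \mapsto \|T K'\|$ for $K \in \mathcal{K}(F,G)$, $K' \in \mathcal{K}(D,E)$.

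First I would show the $\KHilb\A$-relative topology refines the strong topology (no boundedness needed here). Given $e \in E(x)$, the operator $\epsilon_e : h_x \to E$ is compact by \Cref{Yoneda}/\Cref{contrayoneda}, and the Yoneda isomorphism $\mathcal{K}(h_x, F) \cong F(x)$ is isometric, so
\[ \|T_\lambda e\|_{F(x)} = \|\epsilon_{T_\lambda e}\| = \|T_\lambda \circ \epsilon_e\|, \]
using the naturality identity $T_\lambda \circ \epsilon_e = \epsilon_{T_\lambda e}$ of \Cref{contrayoneda}. Convergence in the right-multiplication seminorms at $K' = \epsilon_e$ therefore yields $T_\lambda e \to 0$; applying the analogous argument with $\epsilon_f^* : F \to h_x$ for $f \in F(x)$ converts a left-multiplication seminorm into convergence of $T_\lambda^* f$, giving the strong convergence.

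For the converse direction, assume $T_\lambda \to 0$ strongly with $\|T_\lambda\| \leq M$ for all $\lambda$. I first check the assertion on single-rank $K = \theta^{f,e}_x$ with $f \in G(x)$, $e \in F(x)$: a short computation using adjoints gives $\theta^{f,e} \circ T_\lambda = \theta^{f, T_\lambda^* e}$, hence $\|K T_\lambda\| \leq \|f\| \cdot \|T_\lambda^* e\| \to 0$. By linearity the same holds for finite-rank $K$. For an arbitrary compact $K$, given $\epsilon > 0$, pick a finite-rank $K_0$ with $\|K - K_0\| < \epsilon/(2M)$, then choose $\lambda_0$ so that $\|K_0 T_\lambda\| < \epsilon/2$ for $\lambda \geq \lambda_0$; the triangle inequality gives $\|K T_\lambda\| < \epsilon$. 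The right-multiplication case is identical, using $T_\lambda \circ \theta^{e,d} = \theta^{T_\lambda e, d}$ and the strong convergence $T_\lambda e \to 0$.

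The only place where boundedness is genuinely used is in the $\epsilon/3$ passage from finite-rank to arbitrary compact operators; this is the main (mild) obstacle, and is precisely why the statement is restricted to bounded subsets. Everything else follows formally from the Yoneda-style identifications $\epsilon_{T e} = T \epsilon_e$ and the multiplicative behaviour of single-rank operators against $T$, both of which were established in the previous subsection.
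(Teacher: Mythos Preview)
Your proof is correct. The paper itself omits the proof entirely, simply citing \cite[Proposition~8.1]{LanceHilbert} and noting it is completely analogous to the $C^*$-algebra case; you have supplied exactly that standard argument, adapted to the $C^*$-categorical setting via the Yoneda identifications $\epsilon_e$ from \Cref{contrayoneda}. The one-direction identity $\|T e\| = \|T\circ\epsilon_e\|$ is a pleasant way to see that the strong seminorms are literally among the relative ones, and the bounded-net $\epsilon/3$ passage from finite-rank to arbitrary compacts is precisely the step that forces the restriction to bounded subsets, as you note.
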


We omit the proof as it is completely analogous to the case for $C^*$-algebras: see \cite[Proposition 8.1]{LanceHilbert}. To see that these topologies do not necessarily coincide on unbounded subsets, see \cite[p.76]{LanceHilbert} for an example of an unbounded net of operators on a Hilbert space (i.e. a Hilbert module over $\mathcal{A}=\mathbb{C}$), which goes to zero in the strong topology but not in the topology relative to the compact operators.

\subsection{Bimodules and the equivalence $\mathcal{M}(\KHilb\mathcal{A})\cong\Hilb\mathcal{A}$}

 We now seek to categorify the following classical result in two ways: if $A$ is a $C^*$-algebra and $E$ is a right Hilbert $A$-module, then $\mathcal{M}(\mathcal{K}(E))=\mathcal{L}(E)$ (see e.g. \cite[Theorem 2.4]{LanceHilbert}). The first categorification is that we allow $A$ to be a $C^*$-category $\mathcal{A}$, the second is that we replace the algebras $\mathcal{K}(E)$ and $\mathcal{L}(E)$ by the $C^*$-categories $\KHilb\mathcal{A}$ and $\Hilb\mathcal{A}$. 
\begin{defn}\label{nondegenbimoddefn}
A $C^*$-functor $F:\mathcal{A}\rightarrow\Hilb\mathcal{B}$ is also called a \emph{right Hilbert} $\mathcal{A}-\mathcal{B}$ \emph{bimodule}. $F$ is said to be \emph{non-degenerate} if the linear span of $\bigcup_{y\in\Ob\mathcal{A}}F\mathcal{A}(y,x)(F(y))$ is dense in $ F(x)$ is dense for all $x\in\Ob\mathcal{A}$; that is, dense in $F(x)(z)$ for each $z\in\Ob\mathcal{B}$.
\end{defn}
\begin{rmk}
The astute reader may be concerned that we have already defined what it means for a functor to be non-degenerate in \Cref{nondegendefn}, but by the end of this section we will see that the definitions coincide in a precise sense.
\end{rmk}

\begin{lemma}
The inclusion $\KHilb\mathcal{A}\xhookrightarrow{}\Hilb\mathcal{A}$ is non-degenerate.
\end{lemma}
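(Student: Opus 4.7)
The plan is to unfold \Cref{nondegenbimoddefn} as it applies to the inclusion $\iota: \KHilb\mathcal{A} \hookrightarrow \Hilb\mathcal{A}$, viewed as a $C^*$-functor from $\KHilb\mathcal{A}$ (the source, playing the role of ``$\mathcal{A}$'' in the definition) into $\Hilb\mathcal{A}$ (the Hilbert-module target, playing the role of ``$\Hilb\mathcal{B}$''). Since $\iota$ sends every $E \in \Ob(\KHilb\mathcal{A})$ to itself as a functor $\mathcal{A}^{\mathrm{op}} \to \mathsf{Vect}_{\mathbb{C}}$, and sends every compact operator to itself, unwinding the definition reduces the claim to the following: for each right Hilbert $\mathcal{A}$-module $E$ and each $z \in \Ob\mathcal{A}$, the linear span of
$$\bigcup_{E' \in \Ob(\Hilb\mathcal{A})} \mathcal{K}(E', E)\bigl(E'(z)\bigr)$$
is norm-dense in $E(z)$.

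The first (and essentially only) move I would make is to specialize to $E' = E$, so that it is enough to show that $\mathcal{K}(E) \cdot E(z)$ is norm-dense in $E(z)$. This is precisely the content of \Cref{strongapprox}: if $(U_\lambda)$ is an approximate unit for the $C^*$-algebra $\mathcal{K}(E)$, then $U_\lambda(e) \to e$ in norm for every $e \in E(z)$, and each $U_\lambda(e)$ manifestly lies in $\mathcal{K}(E)(E(z))$. Density follows at once.

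The whole argument thus reduces to one interpretive step plus one invocation of \Cref{strongapprox}, and I do not anticipate any real obstacle. The one thing worth flagging is the concern raised in the remark after \Cref{nondegenbimoddefn}: strictly speaking the conclusion here uses the bimodule notion of non-degeneracy, whereas the earlier \Cref{nondegendefn} demands a target of the form $\mathcal{MB}$. The two notions will only be seen to agree once the goal of the subsection, $\mathcal{M}(\KHilb\mathcal{A}) \cong \Hilb\mathcal{A}$, is established, so one should resist the temptation to deduce this lemma from, say, the non-degeneracy of $\kappa_{\KHilb\mathcal{A}}$; the direct argument above avoids any such circularity.
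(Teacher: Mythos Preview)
Your proposal is correct and follows essentially the same route as the paper: both reduce the claim to showing that $\mathcal{K}(E)\bigl(E(z)\bigr)$ is dense in $E(z)$ by restricting to $E'=E$. The only cosmetic difference is that the paper invokes \Cref{density} directly (observing that $E(z)\cdot\langle E(z),E(z)\rangle\subseteq\mathcal{K}(E)\bigl(E(z)\bigr)$), whereas you go through \Cref{strongapprox}, which is itself a consequence of \Cref{density}; the arguments are interchangeable.
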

\begin{proof}
Recall from \Cref{density} that for any right Hilbert $\mathcal{A}$-module $E$ and every $x\in\Ob\mathcal{A}$, we have that $E(x)\cdot\langle E(x),E(x)\rangle$ is dense in $E(x)$. Note that this is a subspace of $\KHilb\mathcal{A}(E,E)(E(x))$, which \textit{a fortiori} must then also be dense, satisfying \Cref{nondegenbimoddefn}. 
\end{proof}

We provide an alternate characterization of non-degeneracy, analogous to criteria 3 and 4 in \Cref{nondegendefn}.
\begin{lemma}\label{strongnondegenchar}
$F$ is non-degenerate if and only if for any approximate unit $(u_\lambda)$ for $\mathcal{A}(x,x)$, any $z\in\Ob\mathcal{B}$, and any $e\in F(x)(z)$, we have  $F(u_\lambda)(e)\xrightarrow[]{\lambda} e$.
\end{lemma}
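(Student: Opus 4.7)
The plan is a standard two-direction argument, with the forward implication handled by a density-plus-uniform-bound argument using \Cref{limsupid}, and the reverse implication being essentially immediate.

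For the easy direction, assume the approximate unit condition. Given any $e\in F(x)(z)$, note that $F(u_\lambda)\in F\mathcal{A}(x,x)$, so $F(u_\lambda)(e)$ lies in $F\mathcal{A}(x,x)(F(x))\subseteq\bigcup_{y}F\mathcal{A}(y,x)(F(y))$. The hypothesis $F(u_\lambda)(e)\to e$ then exhibits $e$ as a limit of elements from this union, giving density and hence non-degeneracy.

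For the harder direction, suppose $F$ is non-degenerate. Fix $x$, $z$, and consider the net of operators $F(u_\lambda)_z:F(x)(z)\to F(x)(z)$ (the $z$-components of the natural transformations $F(u_\lambda):F(x)\to F(x)$). These are uniformly bounded by $1$: since $F$ is a $C^*$-functor it is contractive by \Cref{starfuncisbounded}, so $\|F(u_\lambda)\|\leq\|u_\lambda\|\leq 1$, and the operator norm of a natural transformation between Hilbert modules dominates the norm of each component. The plan is then to verify the convergence $F(u_\lambda)(e)\to e$ on a dense subspace and invoke \Cref{limsupid} to extend it to all of $F(x)(z)$. For the dense subspace I take the linear span of elements of the form $F(a)(e')$ with $a\in\mathcal{A}(y,x)$ and $e'\in F(y)(z)$, which is dense by non-degeneracy. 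On such an element,
\[F(u_\lambda)(F(a)(e'))=F(u_\lambda\circ a)(e'),\]
and $\|u_\lambda a-a\|\to 0$ by \Cref{approxunit}; the contractivity of $F$ gives $\|F(u_\lambda a)-F(a)\|\to 0$ in $\Hilb\mathcal{B}(F(y),F(x))$, so evaluating at $e'$ yields $F(u_\lambda)(F(a)(e'))\to F(a)(e')$. Linearity extends this to the full dense subspace, and \Cref{limsupid} delivers the conclusion on all of $F(x)(z)$.

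I do not anticipate any real obstacle: the main thing to be careful about is tracking the two-level functorial structure (operators between Hilbert $\mathcal{B}$-modules, evaluated at objects of $\mathcal{B}$) and checking that the uniform bound needed for \Cref{limsupid} holds at the level of $z$-components, which follows from $\|T_z\|\leq\|T\|$ for any operator $T$ of Hilbert modules.
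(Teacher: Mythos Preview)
Your proof is correct and follows essentially the same route as the paper: verify convergence on the dense span of elements $F(a)(e')$ using \Cref{approxunit} and the contractivity of $F$, then extend via \Cref{limsupid} and the uniform bound $\|F(u_\lambda)\|\leq 1$; the converse is the same immediate density observation. If anything, you are slightly more careful than the paper in explicitly checking the uniform bound hypothesis needed for \Cref{limsupid}.
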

\begin{proof}
For the rightward implication, note that for any $a\in\mathcal{A}(y,x)$, we have that $u_\lambda a\xrightarrow[]{\lambda} a$. So by the continuity of $F$, for any element  $F(a)(d)\in F\mathcal{A}(y,x)(F(y)(z))$ we have that $$F(u_\lambda)(F(a)(d))=F(u_\lambda a)(d)\xrightarrow[]{\lambda}F(a)(d)$$ and clearly the same holds for finite sums of elements of this form.

Hence by approximating $e$ by such finite sums and applying \Cref{nondegenstrongthm}, we see that $F(u_\lambda)(e)\rightarrow e$.

For the converse, note that if $F(u_\lambda)e\xrightarrow[]{\lambda}e$ for $e\in\mathcal{}$then the set $$\{F(u_\lambda)(e):\lambda\in\Lambda\}\subseteq F(\mathcal{A}(x,x))(F(x)(z))$$ has $e$ as a limit point, so the density requirement is satisfied.
\end{proof}
Bimodules always extend 
\begin{lemma}\label{extensionlemma}
If $F:\mathcal{A}\rightarrow\Hilb\mathcal{B}$ is a non-degenerate bimodule, and $\mathcal{C}$ is any $C^*$-category containing $\mathcal{A}$ as a $C^*$-ideal, then $F$ extends uniquely to a $C^*$-functor $\bar{F}:\mathcal{C}\rightarrow\Hilb\mathcal{B}$, which is faithful whenever $F$ is faithful and $\mathcal{A}$ is essential in $\mathcal{C}$.
\end{lemma}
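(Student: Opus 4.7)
The plan is to adapt the construction from criterion (5) of \Cref{nondegenstrongthm}, exploiting the ideal property: for $c\in\mathcal{C}(x,y)$ and any approximate unit $(u_\lambda)$ for $\mathcal{A}(x,x)$, each $c\circ u_\lambda$ lies in $\mathcal{A}(x,y)$, so $F(c\circ u_\lambda):F(x)\to F(y)$ is already defined. Since $F$ is norm-decreasing on $\mathcal{A}$ and $\|u_\lambda\|\leq 1$, the net $\{F(c\circ u_\lambda)\}$ is uniformly bounded by $\|c\|$, and I would define
\[
\bar F(c)(e) := \lim_\lambda F(c\circ u_\lambda)(e),\qquad e\in F(x)(z).
\]

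To see that the limit exists, I would first evaluate on elements of the dense subspace from \Cref{nondegenbimoddefn}: for $a\in\mathcal{A}(w,x)$ and $e'\in F(w)(z)$,
\[
F(c\circ u_\lambda)(F(a)(e')) = F(c\circ u_\lambda\circ a)(e') \xrightarrow{\lambda} F(c\circ a)(e'),
\]
because $u_\lambda\circ a\to a$ by \Cref{approxunit}, so $c\circ u_\lambda\circ a\to c\circ a\in\mathcal{A}(w,y)$ and $F$ is continuous on $\mathcal{A}$. The uniform bound plus pointwise convergence on this dense subspace lets \Cref{limsuptrick} push the limit to all of $F(x)(z)$, giving a bounded $\mathcal{B}$-module map $\bar F(c)$ of norm at most $\|c\|$. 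The adjoint is built by repeating the recipe with $c^*\in\mathcal{C}(y,x)$ and an approximate unit for $\mathcal{A}(y,y)$; adjointability is first verified on the dense subspace and extended by continuity.

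That $\bar F$ is a $C^*$-functor is then routine: linearity and $\ast$-compatibility are pointwise, and the composition identity $\bar F(c'\circ c)=\bar F(c')\circ\bar F(c)$ reduces to inserting an approximate unit at the middle object and passing to the limit. The extension property $\bar F|_\mathcal{A}=F$ is immediate, since for $c\in\mathcal{A}$ we have $c\circ u_\lambda\to c$ in norm by \Cref{approxunit}, hence $F(c\circ u_\lambda)\to F(c)$ in norm and a fortiori pointwise. For uniqueness, any other extension $\tilde F$ must satisfy $\tilde F(c)\circ F(u_\lambda) = \tilde F(c\circ u_\lambda) = F(c\circ u_\lambda)$; since $\tilde F(c)$ is bounded and $F(u_\lambda)e\to e$ by \Cref{strongnondegenchar}, evaluating on $e$ and taking $\lambda\to\infty$ forces $\tilde F(c)=\bar F(c)$.

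For faithfulness, assume $F$ faithful and $\mathcal{A}$ essential in $\mathcal{C}$, and take $c\in\mathcal{C}(x,y)$ with $\bar F(c)=0$. Then for every $a\in\mathcal{A}(w,x)$ the element $c\circ a$ lies in $\mathcal{A}$ and satisfies $F(c\circ a)=\bar F(c)\circ F(a)=0$, so $c\circ a=0$ by faithfulness of $F$; \Cref{essentialcriterion} then forces $c=0$. I anticipate the main technical obstacle to be verifying well-definedness of $\bar F(c)$ cleanly: one must show the limit is independent both of the chosen approximate unit and of the chosen dense representation of $e$, and that the resulting $\bar F(c)$ assembles into a genuine natural transformation rather than merely a collection of bounded maps on each $F(x)(z)$. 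Both points hinge on marshalling the uniform bound $\|F(c\circ u_\lambda)\|\leq\|c\|$ together with \Cref{limsuptrick} in the right order.
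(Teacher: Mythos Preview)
Your proof is correct and follows essentially the same approach as the paper: both exploit the ideal property to make sense of $F(c\circ a)$ for $c\in\mathcal{C}$ and $a\in\mathcal{A}$, use nondegeneracy to get a dense subspace on which $\bar F(c)$ is determined, and invoke approximate units together with uniform boundedness to extend. The only cosmetic difference is that the paper defines $\bar F(c)$ directly on the dense subspace by $\sum_i F(a_i)e_i\mapsto\sum_i F(ca_i)e_i$ and then uses approximate units to check well-definedness, whereas you package the same computation as the strong limit $\bar F(c)(e)=\lim_\lambda F(c\circ u_\lambda)(e)$; your formulation has the mild advantage that well-definedness (independence of the representation of $e$) is automatic, at the cost of needing a small $\epsilon/3$ argument (not literally \Cref{limsuptrick} as stated) to show the limit exists on all of $F(x)(z)$. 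Your explicit uniqueness argument and faithfulness argument via \Cref{essentialcriterion} match the paper's reasoning.
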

\begin{proof}
Note that $\mathcal{A}$ and $\mathcal{C}$ have the same objects by the remark below \Cref{subcatideal}. Take any morphism $c\in\mathcal{C}(x,x')$. By nondegeneracy, at any object $z\in\Ob\mathcal{B}$, the span of $\cup_{y\in\Ob\mathcal{A}}F\mathcal{A}(y,x)F(y)(z)$, is dense in $F(x)(z)$. On this dense subset we define the transformation $$\bar{F}(c):\begin{array}{l}\sum_{y\in\Ob\mathcal{A}}F\mathcal{A}(y,x)F(y)(z)\rightarrow F(x')(z)\\\Sigma_{i=1}^nF(a_i)e_i\mapsto\Sigma_{i=1}^n F(ca_i)e_i.\end{array}$$ Note the expression is well-defined since $ca_i\in\A$. We need to show that this is a well-defined function on the sum of subspaces, i.e. that if $\Sigma_{i=1}^nF(a_i)e_i=\Sigma_{i=1}^mF(b_i)f_i$, then  $\Sigma_{i=1}^n F(ca_i)e_i=\Sigma_{i=1}^m F(cb_i)f_i$. Take an approximate unit $u_\lambda$ in the $C^*$-algebra $\mathcal{A}(x,x)$, then by \Cref{approxunit} we have $u_\lambda a_i\xrightarrow[]{\lambda} a_i$, so $cu_\lambda a_i\xrightarrow[]{\lambda} ca_i$ and by the continuity of $F$ we have 

$$\begin{array}{rrl}
    
\Sigma_{i=1}^n F(ca_i)e_i&=\lim_\lambda\sum_{i=1}^n F(cu_\lambda a_i)e_i&=\lim_\lambda F(cu_\lambda)\sum_{i=1}^n  F(a_i)e_i\\&&=\lim_\lambda F(cu_\lambda)\sum_{i=1}^m  F(b_i)f_i\\&&=\lim_\lambda\sum_{i=1}^m F(cu_\lambda b_i)f_i\\&&=\Sigma_{i=1}^m F(cb_i)f_i \end{array}$$

Hence $\overline{F}(c)$ is well-defined, and applying norms to the first three terms, we deduce it's bounded by $\|c\|$: hence by \Cref{densextend} we can extend it to a bounded map $\overline{F}(c):F(x)(z)\rightarrow F(x')(z)$, which is easily seen to be an adjointable operator. It's elementary to check that $\bar{F}$ is complex linear and preserves composition and involution. It follows that $\bar{F}$ defines a $C^*$-functor $\mathcal{C}\rightarrow\Hilb\mathcal{A}$. Finally if $F$ is faithful and $\mathcal{A}$ is essential in $\mathcal{C}$, we have $\ker(\bar{F})\cap \mathcal{A}=\ker F=0$, so $\ker(\bar{F})=0$ and $\bar{F}$ is faithful.
\end{proof}

\begin{lemma}\label{idealizerlemma}

Suppose $F:\mathcal{A}\rightarrow\Hilb\mathcal{B}$ is a faithful non-degenerate $C^*$-functor. There is a $C^*$-subcategory $\mathcal{D}$ of $\Hilb\mathcal{B}$, termed the \emph{idealizer under} $F$ \emph{of} $\mathcal{A}$,  with objects being those in the image of $F$ and hom spaces 
$$\mathcal{D}(F(x),F(y)):=\Bigg\{ d\in\mathcal{L}(F(x),F(y))):\begin{array}{l}
    d\circ F\mathcal{A}(z,x)\subseteq F\mathcal{A}(z,y)\emph{ and} \\ F\mathcal{A}(y,z)\circ d\subseteq F\mathcal{A}(x,z) \\ \emph{ for all } z\in\Ob\mathcal{A}\
 
\end{array}\Bigg\}.$$ Furthermore, $F$ extends uniquely to a $C^*$-equivalence $\bar{F}:\mathcal{MA}\xrightarrow{\cong}\mathcal{D}.$
\end{lemma}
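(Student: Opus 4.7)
The plan is to first verify that $\D$ is a $C^*$-subcategory of $\Hilb\B$ in which $F\A$ sits as an essential ideal, and then combine \Cref{extensionlemma} with the universal property of $\MA$ to produce the equivalence.

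For the first step, closure of $\D$ under composition is immediate from the idealizer conditions, and the identities $\id_{F(x)}$ lie in $\D$ trivially, making $\D$ unital. For closure under the involution, given $d\in\D(F(x),F(y))$ and $b\in\A(z,y)$, one writes $d^*\circ F(b)=(F(b^*)\circ d)^*$; the second idealizer condition on $d$ places $F(b^*)\circ d$ in $F\A(x,z)$, and since $F$ is isometric (hence injective) by \Cref{starfuncisbounded}, this equals $F(a)$ for a unique $a\in\A(x,z)$, so $d^*\circ F(b)=F(a^*)\in F\A(z,x)$; the dual condition is symmetric. Norm-closure of $\D(F(x),F(y))$ reduces to the observation that $F\A(z,y)$ is norm-closed in $\mathcal{L}(F(z),F(y))$, again by isometricity. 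Finally $F\A$ is a $C^*$-ideal in $\D$ by the defining conditions, and it is essential by \Cref{essentialcriterion}: if $d\circ F(a)=0$ for every $a$ in every $\A(w,x)$, then $d$ vanishes on the span of $F\A(w,x)(F(w)(z))$ for all $w,z$, which is dense in $F(x)(z)$ by the non-degeneracy of $F$, forcing $d=0$.

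Since $\MA$ is a $C^*$-category containing $\A$ as a $C^*$-ideal, \Cref{extensionlemma} produces a unique faithful $C^*$-functor $\bar F:\MA\to\Hilb\B$ extending $F$. Its image lies in $\D$: for $T\in\MA(x,y)$ and $a\in\A(z,x)$, the ideal structure of $\A\subseteq\MA$ gives $T\circ\kappa_\A(a)\in\kappa_\A(\A(z,y))$, so $\bar F(T)\circ F(a)=\bar F(T\circ \kappa_\A(a))\in F\A(z,y)$, and the opposite side is symmetric. Thus $\bar F$ restricts to a faithful $C^*$-functor $\MA\to\D$ that is the identity on objects.

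The main obstacle, and the only remaining point, is fullness. Given $d\in\D(F(x),F(y))$, I would construct a multiplier $T=(L,R)\in\MA(x,y)$ directly via the reformulation of \Cref{altmultiplier}: for each $w,z\in\Ob\A$, set $L_w(a):=F^{-1}(d\circ F(a))$ for $a\in\A(w,x)$, and $R_z(b):=F^{-1}(F(b)\circ d)$ for $b\in\A(y,z)$, both well-defined by the idealizer conditions and the faithfulness of $F$. The three compatibility axioms of \Cref{altmultiplier} follow from associativity in $\Hilb\B$ after applying $F$, and boundedness of each $L_w,R_z$ is inherited from that of $d$. Then $\bar F(T)$ and $d$ both live in $\mathcal{L}(F(x),F(y))$ and agree on every element of the form $F(a)e\in F\A(w,x)(F(w)(z))$, both acting as $F(L_w(a))e = d\circ F(a)e$. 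By the non-degeneracy of $F$ together with the continuity of bounded operators they coincide on all of $F(x)$. Uniqueness of the equivalence is already delivered by \Cref{extensionlemma}.
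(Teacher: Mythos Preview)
Your proof is correct. The paper's own argument is considerably shorter because it leans on the uniqueness clause of \Cref{multiplierprop}: having noted that $\D$ is a $C^*$-category containing (the isometric image of) $\A$ as an ideal, it observes that any $\C$ containing $\A$ as an essential ideal embeds into $\D$ via \Cref{extensionlemma}, so $\D$ enjoys the \emph{same} universal property as $\MA$ and the two must be equivalent. Your route instead produces $\bar F:\MA\to\D$ from \Cref{extensionlemma} and then explicitly verifies fullness by reverse-engineering a multiplier $(L_w,R_z)$ from each $d\in\D$ using \Cref{altmultiplier}. This is really the universal-property argument unwound: your construction of $T$ from $d$ is precisely the map $\D\to\MA$ that \Cref{multiplierprop} would supply, transported along the faithful identification $F\A\cong\A$. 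What your approach buys is a self-contained verification that does not invoke the uniqueness statement in \Cref{multiplierprop}; what the paper's approach buys is brevity, given that this machinery is already in place. Your more careful checks that $\D$ is a $C^*$-subcategory and that $F\A$ is essential in $\D$ are useful content that the paper glosses over with a single ``Notice''.
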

\begin{proof}
Notice $\mathcal{D}$ is a $C^*$-category which contains (the isometric image of) $\mathcal{A}$ as an ideal. Now if $\mathcal{C}$ is any other $C^*$-category containing $\mathcal{A}$ as an essential $C^*$-ideal, by \Cref{extensionlemma} we get an embedding $\mathcal{C}\xhookrightarrow{}\Hilb\mathcal{B}$ whose image lands in $\mathcal{D}$ by construction. So $\mathcal{D}$ has the defining universal property of $\mathcal{MA}$ from \Cref{multiplierprop}.
\end{proof}
%\texttt{potentially take out above two results and just quote this?}
We deduce our promised result.
\begin{prop}\label{MKA}
For any $C^*$-category $\mathcal{A}$, we have $\mathcal{M}(\KHilb\mathcal{A})\cong\Hilb\mathcal{A}$.
\end{prop}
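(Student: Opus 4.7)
The plan is to apply the idealizer lemma (\Cref{idealizerlemma}) to the inclusion $\iota : \KHilb\mathcal{A} \hookrightarrow \Hilb\mathcal{A}$. We have already verified above that this inclusion is a faithful non-degenerate $C^*$-functor, so \Cref{idealizerlemma} yields a unique extension to a $C^*$-equivalence $\bar{\iota} : \mathcal{M}(\KHilb\mathcal{A}) \xrightarrow{\cong} \mathcal{D}$, where $\mathcal{D}$ is the idealizer of $\KHilb\mathcal{A}$ inside $\Hilb\mathcal{A}$: its objects are all right Hilbert $\mathcal{A}$-modules, and its morphisms from $E$ to $F$ are those bounded adjointable operators $d \in \mathcal{L}(E,F)$ such that both $d \circ \mathcal{K}(D,E) \subseteq \mathcal{K}(D,F)$ and $\mathcal{K}(F,D) \circ d \subseteq \mathcal{K}(E,D)$ for every right Hilbert $\mathcal{A}$-module $D$.

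The entire content of the proposition therefore reduces to showing $\mathcal{D} = \Hilb\mathcal{A}$, i.e.\ that \emph{every} bounded adjointable operator lies in the idealizer. But this is immediate: we have already shown that $\KHilb\mathcal{A}$ is an ideal in $\Hilb\mathcal{A}$, so for any $d \in \mathcal{L}(E,F)$ and any compact $S$, the composites $d \circ S$ and $S \circ d$ are again compact by the ideal property. Hence the idealizer condition is automatically satisfied by every bounded adjointable operator, giving $\mathcal{D} = \Hilb\mathcal{A}$.

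Combining, the extension $\bar{\iota}$ furnishes a $C^*$-equivalence $\mathcal{M}(\KHilb\mathcal{A}) \xrightarrow{\cong} \Hilb\mathcal{A}$, as desired. There is essentially no obstacle beyond unpacking definitions: the two ingredients (non-degeneracy of $\KHilb\mathcal{A} \hookrightarrow \Hilb\mathcal{A}$ and the ideal property of compact operators) have both been established earlier in this section, and the idealizer lemma does all the heavy lifting by reducing the identification of the multiplier category to a characterization purely inside $\Hilb\mathcal{A}$.
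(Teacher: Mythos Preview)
Your proposal is correct and follows essentially the same approach as the paper: apply \Cref{idealizerlemma} to the faithful non-degenerate inclusion $\KHilb\mathcal{A}\hookrightarrow\Hilb\mathcal{A}$, then observe that since $\KHilb\mathcal{A}$ is already an ideal in $\Hilb\mathcal{A}$, the idealizer $\mathcal{D}$ is all of $\Hilb\mathcal{A}$. The paper's proof is simply a more terse version of yours.
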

\begin{proof}
We know the inclusion $\KHilb\mathcal{A}\xhookrightarrow{}\Hilb\mathcal{A}$ is nondegenerate, and since $\KHilb\mathcal{A}$ is an ideal in $\Hilb\mathcal{A}$, we see the idealizer of $\KHilb\mathcal{A}$ is all of $\Hilb\mathcal{A}$, so by \Cref{idealizerlemma}, we get the desired isomorphism. 
\end{proof}

%\texttt{corollary; L($h_x,h_y$)=MA(x,y)}
Above proposition was also proved in \cite[Corollary 3.4.5]{Ferrier}. We hence get the promised disambiguation of non-degeneration.
\begin{cor}
A functor $F:\mathcal{A}\rightarrow\Hilb\mathcal{B}$ is non-degenerate in the above sense if and only if,  considered as a functor $\mathcal{A}\rightarrow\mathcal{M}(\KHilb\mathcal{B})$, it's non-degenerate in the sense of \Cref{nondegendefn}.
\end{cor}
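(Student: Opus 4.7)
The strategy is to identify $\mathcal{M}(\KHilb\mathcal{B}) \cong \Hilb\mathcal{B}$ via \Cref{MKA} and unpack both definitions into equivalent approximate-unit conditions. Under this identification, \Cref{nondegendefn} (via criterion 1 of \Cref{nondegenstrongthm}) becomes the statement (Mu): \emph{for all $x, y \in \Ob\mathcal{A}$, $F\mathcal{A}(y,y) \circ \mathcal{K}(F(x),F(y))$ is norm-dense in $\mathcal{K}(F(x),F(y))$}, while \Cref{nondegenbimoddefn} is (Bi): \emph{for all $x \in \Ob\mathcal{A}, z \in \Ob\mathcal{B}$, $\Span \bigcup_{y \in \Ob\mathcal{A}} F\mathcal{A}(y,x)(F(y))$ is norm-dense in $F(x)(z)$}.

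For (Bi) $\Rightarrow$ (Mu), I invoke \Cref{strongnondegenchar} to conclude that any approximate unit $(u_\lambda)$ of $\mathcal{A}(y,y)$ satisfies $F(u_\lambda)(f) \to f$ pointwise on every $F(y)(w)$. On a single-rank operator $k = \theta^{f,e}_w$, the naturality of $F(u_\lambda)$ as a module map gives $F(u_\lambda) \circ \theta^{f,e} = \theta^{F(u_\lambda)(f), e}$, so
\begin{equation*}
\|F(u_\lambda) \circ \theta^{f,e} - \theta^{f,e}\| \leq \|F(u_\lambda)(f) - f\|\|e\| \to 0.
\end{equation*}
Since approximate units in $C^*$-algebras can be chosen contractive, $\|F(u_\lambda)\| \leq 1$ uniformly, and the maps $k \mapsto F(u_\lambda)k - k$ on $\mathcal{K}(F(x), F(y))$ form a uniformly bounded net vanishing on the dense subspace of finite-rank operators; \Cref{limsuptrick} then extends the convergence to all of $\mathcal{K}(F(x), F(y))$. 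This exhibits every $k$ as a norm limit of elements of $F\mathcal{A}(y,y) \circ \mathcal{K}(F(x), F(y))$, proving (Mu).

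For (Mu) $\Rightarrow$ (Bi), I fix $e \in F(x)(z)$ and an approximate unit $(U_\mu)$ for the $C^*$-algebra $\mathcal{K}(F(x))$. By \Cref{strongapprox}, $U_\mu(e) \to e$ in norm. Applying (Mu) with $y = x$, each $U_\mu$ is norm-approximated by a finite sum $\sum_i F(a_i^\mu) \circ k_i^\mu$ with $a_i^\mu \in \mathcal{A}(x,x)$ and $k_i^\mu \in \mathcal{K}(F(x))$. Evaluating such a sum at $e$ and using the boundedness of the action of $\mathcal{K}(F(x))$ on the Banach space $F(x)(z)$, the element $\sum_i F(a_i^\mu)(k_i^\mu(e)) \in F\mathcal{A}(x,x)(F(x))$ lies arbitrarily norm-close to $U_\mu(e)$. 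Combining the two approximations via an $\epsilon/2$ argument shows $e$ is a norm limit of elements of $\Span F\mathcal{A}(x,x)(F(x))$, which is \emph{a fortiori} contained in the span of the union over all $y$.

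The main obstacle is essentially bookkeeping: there is no new analytic content, since the work is entirely carried by the approximate-unit characterizations (\Cref{strongnondegenchar} and criterion 3 of \Cref{nondegenstrongthm}), the approximate identity of the compacts (\Cref{strongapprox}), and the uniform-boundedness extension lemma \Cref{limsuptrick}. The cleanest path avoids a direct appeal to the Yoneda lemma and instead uses the compact approximate unit on $F(x)$ to mediate between the two definitions.
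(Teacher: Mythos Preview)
Your proof is correct but takes a more hands-on route than the paper. The paper's argument is a three-line chain of equivalences: by \Cref{strongnondegenchar}, condition (Bi) says precisely that $F(u_\lambda)\to\id_{F(x)}$ in the \emph{strong} topology on $\mathcal{L}(F(x))$ for every approximate unit $(u_\lambda)$ of $\mathcal{A}(x,x)$; by criterion~3 of \Cref{nondegenstrongthm}, condition (Mu) says $F(u_\lambda)\to\id_{F(x)}$ in the $\KHilb\mathcal{B}$-\emph{relative} (ultrastrong) topology; and by \Cref{2topologies} these two topologies agree on the bounded net $(F(u_\lambda))$. Your (Bi)$\Rightarrow$(Mu) step is essentially an in-line reproof of one half of \Cref{2topologies} for this particular net (pointwise convergence on module elements forces norm convergence on single-rank, hence on all, compacts). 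Your (Mu)$\Rightarrow$(Bi) step is a genuinely different path: rather than invoking the other half of \Cref{2topologies}, you push a compact approximate unit $(U_\mu)$ through \Cref{strongapprox} and then approximate each $U_\mu$ via (Mu). Since the paper omits the proof of \Cref{2topologies}, your argument has the merit of being self-contained; the paper's has the merit of isolating the single topological fact that makes the two non-degeneracy notions coincide.
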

\begin{proof}
The content of \Cref{strongnondegenchar} is that $F:\mathcal{A}\rightarrow\mathsf{Hilb}-\mathcal{B}$ is non-degenerate if and only for every approximate unit $u_\lambda$ of every endomorphism algebra $\mathcal{A}(x,x)$, the net $F(u_\lambda)$ approaches $\id\in\mathcal{L}(Fx,Fx)$ in the strong topology. Note that by \Cref{2topologies} this is equivalent to $F(u_\lambda)$ approaching $\id$ in the $\KHilb\mathcal{B}$-relative topology, and in turn by \Cref{nondegenstrongthm} this is equivalent to asking that $F$, considered as a functor $F:\mathcal{A}\rightarrow\mathcal{M}(\KHilb\mathcal{B})$ is non-degenerate in the sense of \Cref{nondegendefn}.
\end{proof}

%\texttt{give  x mapsto (y mapsto MA(y,x)) as a degenerate A-MA bimodule.}

We close with two propositions on multiplier direct sums of Hilbert modules.
\begin{prop}
The multiplier direct sum in $\KHilb\mathcal{A}$ of an arbitrary multiset $\{E_i:i\in I\}$ of right Hilbert $\mathcal{A}$-modules is given on each object $x\in\Ob\mathcal{A}$ by $$ (\bigoplus_{i\in I}E_i)(x)=\{(e_i)\in\prod_{i\in I}E_i(x):\sum_{i\in I}\langle e_i,e_i\rangle \emph{ converges in }\mathcal{A}(x,x)\},$$ 
 with inner products defined by $\langle(e_i),(f_i)\rangle=\sum_{i\in I}\langle e_i,f_i\rangle $, and bounded adjointable structure maps $\iota_i\in\mathcal{L}(E_i,\bigoplus_{i\in I}E_i)$ sending elements to sequences with zeroes in all but one position.
\end{prop}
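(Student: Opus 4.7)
The plan is to verify two things: first, that the explicit fiberwise formula defines a right Hilbert $\mathcal{A}$-module (hence an object of $\KHilb\mathcal{A}$); second, that the maps $\iota_i$ together with this module satisfy the two bullet points of \Cref{multiplierdirectsumdefn} inside $\mathcal{M}(\KHilb\mathcal{A})\cong\Hilb\mathcal{A}$ (by \Cref{MKA}). This identification lets us verify ``isometry'' using ordinary adjointable operators and verify ``ultrastrong convergence'' using the strong topology of \Cref{strongtopdefn}, which by \Cref{2topologies} agrees with the $\KHilb\mathcal{A}$-relative topology on bounded subsets.

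For the module structure, I would argue as follows. Closure of $(\bigoplus_{i\in I} E_i)(x)$ under $\mathbb{C}$-linear combinations uses the standard $C^*$-algebraic inequality $\langle e_i+f_i,e_i+f_i\rangle\leq 2\langle e_i,e_i\rangle+2\langle f_i,f_i\rangle$, which controls partial sums. Closure under the $\mathcal{A}$-action uses $\langle e_i\cdot a,e_i\cdot a\rangle=a^*\langle e_i,e_i\rangle a$ and continuity of conjugation by $a$. Convergence of the candidate inner product $\sum_{i\in I}\langle e_i,f_i\rangle$ in $\mathcal{A}(y,x)$ follows by applying \Cref{CauchyS} to each summand together with the assumed convergence of $\sum\langle e_i,e_i\rangle$ and $\sum\langle f_i,f_i\rangle$. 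The axioms of \Cref{hilbmoddefn} are then inherited fiberwise: sesquilinearity and $*$-symmetry are clear; positivity of $\sum\langle e_i,e_i\rangle$ follows from closure of the positive cone under limits; and if $\langle e,e\rangle=0$ then each $\langle e_i,e_i\rangle=0$ so each $e_i=0$. Norm-completeness of $(\bigoplus E_i)(x)$ is obtained by a standard diagonal argument: a Cauchy sequence is componentwise Cauchy, so converges to some $(e_i)$ componentwise, and a uniform-in-$n$ tail estimate on $\|\sum_{i\notin J}\langle e_i^{(n)},e_i^{(n)}\rangle\|$ lets one pass to the limit both to show $(e_i)\in(\bigoplus E_i)(x)$ and to show convergence of the sequence to it.

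For the multiplier direct sum axioms, define $\iota_i(e)$ as the sequence with $e$ in position $i$ and zero elsewhere; its adjoint is $\iota_i^*((e_k))=e_i$, as one checks from $\langle\iota_i(e),(f_k)\rangle=\langle e,f_i\rangle$. Then $\iota_i^*\iota_i=\id_{E_i}$, so $\iota_i$ is an isometry in $\mathcal{L}(E_i,\bigoplus E_i)=\mathcal{M}(\KHilb\mathcal{A})(E_i,\bigoplus E_i)$. The self-adjoint projection $\iota_j\iota_j^*$ kills all but the $j$-th component, so for $e=(e_i)$ one has
$$\big\|e-\sum_{j\in J}\iota_j\iota_j^*(e)\big\|^2=\Big\|\sum_{i\notin J}\langle e_i,e_i\rangle\Big\|,$$
which tends to $0$ as $J$ grows by the defining convergence of $\sum_{i\in I}\langle e_i,e_i\rangle$. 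The partial-sum projections are uniformly norm-bounded by $1$, so by \Cref{2topologies} this pointwise convergence (which by self-adjointness covers both the operator and its adjoint) upgrades to ultrastrong convergence of $\sum_{j\in J}\iota_j\iota_j^*$ to $\id_{\bigoplus E_i}$, giving the second condition of \Cref{multiplierdirectsumdefn}.

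The main obstacle is the completeness argument for the fibers, which is a careful but standard $\epsilon$-chase using the uniform tail-bound trick; everything else reduces to the Cauchy--Schwarz and positivity machinery of Section 3.1, together with the identifications \Cref{MKA} and \Cref{2topologies}.
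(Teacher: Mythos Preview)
Your proposal is correct and follows essentially the same route as the paper. The paper gives only a proof sketch that says the partial sums $\sum_{j\in J}\iota_j\iota_j^*$ converge strongly to the identity and then invokes \Cref{2topologies} to upgrade this to ultrastrong convergence; you do exactly this, while also spelling out the module-structure verifications (well-definedness, completeness, the isometry condition $\iota_i^*\iota_i=\id$) that the paper leaves implicit.
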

\begin{proof}[Proof Sketch] It is straightforward to verify that the net  $\sum_{i\in J}\iota_i\iota_i^*$ for finite sub-multisets $J\subseteq I$ converges strongly to the identity, hence it also converges ultrastrongly by \Cref{2topologies}. This is the defining property of a multiplier direct sum.\end{proof}%\begin{proof}
%Clearly $\iota_i^*$ is the projection on the $i$-th coordinate, so each $\iota_i^*\iota_i$ is an isometry. Note that any element $(e_i)\in(\bigoplus_{i\in I}E_i)(x)$ is zero in all but countably many places, since the sum $\sum_{i\in I}\langle e_i,e_i\rangle$ converges in the normed vector space $\mathcal{A}(x,x)$, implying by \cite[p.136]{AppliedHunter} that all but countably many of the elements $\langle e_i,e_i\rangle$ are zero.

%Hence by considering the finite subsets of the support of the element $(e_i)$, it's clear that the net $\{\sum_{i\in F}\iota_i\iota^*_i((e_i)):F\subseteq I\text{ finite}\}$ converges to $(e_i)$ in the norm. Hence $\{\sum_{i\in F}\iota_i\iota^*_i:F\subseteq I\text{ finite}\}$ converges strongly to the identity, so as these maps and their sums clearly all have norm 1, the net converges ultrastrongly to the identity.
%\end{proof}
 %\texttt{say "indexed set"} below
\begin{prop}\label{compactdirect}
    If $(E_i)_{i\in I}$ and $(F_j)_{j\in J}$ are multisets of right Hilbert $\mathcal{A}$-modules with structure maps $\iota_i: E_i\rightarrow \bigoplus_{i\in I}E_i$ and $\iota_j: F_j\rightarrow \bigoplus_{j\in J}F_j$, then the isometries  $$\lambda_{ij}:\mathcal{K}(E_i,F_j)\rightarrow \mathcal{K}(\bigoplus_{i\in I}E_i,\bigoplus_{j\in J}F_j):T\mapsto \iota_j  T \iota_i^*$$ and their left inverses $$\mu_{ij}: \mathcal{K}(\bigoplus_{i\in I}E_i,\bigoplus_{j\in J}F_j)\rightarrow \mathcal{K}(E_i,F_j):S\mapsto \iota^*_j  T \iota_i$$ exhibit the Banach space $\mathcal{K}(\bigoplus_{i\in I}E_i,\bigoplus_{j\in J}F_j)$ as a direct sum of its subspaces $\mathcal{K}(E_i,F_j)$. In other words: for every operator $T\in\mathcal{K}(\bigoplus_{i\in I}E_i,\bigoplus_{j\in J}F_j)$, the net $\sum_{S\subseteq I\times J \text{finite}}\lambda_{ij}(\mu_{ij}(T))$ converges \emph{in the norm} to $T$.
\end{prop}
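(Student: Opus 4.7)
The plan is to verify the elementary algebraic properties first, then reduce the main norm-convergence claim to the single-rank case using the density of finite-rank operators inside $\mathcal{K}$.

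\textbf{Step 1 (Basic properties of $\lambda_{ij}$ and $\mu_{ij}$).} Since $\KHilb\A$ is a $C^*$-ideal in $\Hilb\A\cong\mathcal{M}(\KHilb\A)$ (\Cref{MKA}), the operator $\iota_j T\iota_i^*$ is compact whenever $T$ is, so $\lambda_{ij}$ is well-defined. The multiplier-isometry identities $\iota_i^*\iota_i=\id_{E_i}$ and $\iota_j^*\iota_j=\id_{F_j}$ immediately give $\mu_{ij}\lambda_{ij}=\id$, and then the sandwich inequality $\|T\|=\|\mu_{ij}\lambda_{ij}(T)\|\le\|\lambda_{ij}(T)\|\le\|T\|$ shows that each $\lambda_{ij}$ is an isometry.

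\textbf{Step 2 (Reduction to rectangular partial sums).} Finite subsets of $I\times J$ of product form $S_1\times S_2$ with $S_1\subseteq I$, $S_2\subseteq J$ finite are cofinal in the directed system of all finite subsets, so it is enough to show norm convergence of the subnet indexed by rectangles. Writing $P_{S_1}:=\sum_{i\in S_1}\iota_i\iota_i^*$ and $Q_{S_2}:=\sum_{j\in S_2}\iota_j\iota_j^*$, one computes
$$\sum_{(i,j)\in S_1\times S_2}\lambda_{ij}(\mu_{ij}(T))=\Big(\sum_{j\in S_2}\iota_j\iota_j^*\Big)\,T\,\Big(\sum_{i\in S_1}\iota_i\iota_i^*\Big)=Q_{S_2}\,T\,P_{S_1},$$
so the claim becomes: $Q_{S_2}TP_{S_1}\to T$ in norm as $S_1\nearrow I$, $S_2\nearrow J$.

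\textbf{Step 3 (The main obstacle — compactness is essential).} For a general bounded adjointable $T$, the analogous statement fails: by \Cref{multiplierdirectsumdefn} the nets $P_{S_1}$ and $Q_{S_2}$ converge only ultrastrongly (equivalently, strongly on bounded subsets by \Cref{2topologies}) to the respective identities. The compactness of $T$ is therefore unavoidable. I would first verify the claim for a single-rank operator $T=\theta_x^{f,e}$ with $e\in(\bigoplus_i E_i)(x)$, $f\in(\bigoplus_j F_j)(x)$. Since $P_{S_1}$ is self-adjoint,
$$Q_{S_2}\,\theta^{f,e}\,P_{S_1}=\theta^{Q_{S_2}f,\,P_{S_1}e}.$$
Using the explicit description of $(\bigoplus_i E_i)(x)$ as square-summable tuples, one has $\|e-P_{S_1}e\|^2=\|\sum_{i\notin S_1}\langle e_i,e_i\rangle\|\to 0$, and likewise $\|f-Q_{S_2}f\|\to 0$. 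The bilinear estimate $\|\theta^{g,h}\|\le\|g\|\|h\|$ combined with the triangle inequality then gives $\|\theta^{f,e}-\theta^{Q_{S_2}f,P_{S_1}e}\|\to 0$. By linearity the same conclusion holds for every finite linear combination of single-rank operators.

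\textbf{Step 4 (Density argument).} For general $T\in\mathcal{K}(\bigoplus_i E_i,\bigoplus_j F_j)$, pick by definition of compactness a finite-rank $T'$ with $\|T-T'\|<\varepsilon/3$, then by Step 3 choose a rectangle $S_0=S_1^0\times S_2^0$ past which $\|Q_{S_2}T'P_{S_1}-T'\|<\varepsilon/3$. The uniform bounds $\|P_{S_1}\|,\|Q_{S_2}\|\le 1$ yield
$$\|Q_{S_2}TP_{S_1}-T\|\le\|Q_{S_2}(T-T')P_{S_1}\|+\|Q_{S_2}T'P_{S_1}-T'\|+\|T'-T\|<\varepsilon,$$
which is the required norm convergence. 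The hardest step is Step 3, where one must convert the merely strong convergence of the projection nets to a norm statement; this is exactly where compactness of $T$ (i.e.\ approximability by finite-rank operators built from individual module elements) does the work.
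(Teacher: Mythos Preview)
There is a genuine gap in Step~2. Cofinality of the rectangular subsets among all finite $S\subseteq I\times J$ does \emph{not} let you pass from convergence of the rectangular subnet to convergence of the full net: convergence of a cofinal subnet never implies convergence of the original net without further input (think of $(-1)^n$ and its even-index subnet). What would be needed for your Step~4 argument to cover all finite $S$ is a uniform bound $\|U_S\|\le 1$ on the maps $U_S:T\mapsto\sum_{(i,j)\in S}\iota_j\iota_j^*T\iota_i\iota_i^*$, but these are precisely the Schur multipliers by $\mathbf{1}_S$, and those are \emph{not} uniformly bounded over non-rectangular $S$: already on $M_n(\mathbb{C})$ the triangular truncation has norm of order $\log n$. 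In fact one can build a compact block-diagonal $T=\bigoplus_k (\log k)^{-1}B_k$ with $\|B_k\|=1$ chosen to witness this bound, and then for every finite $S_0$ there is an $S\supseteq S_0$ (adding the upper triangle of a far-out block) with $\|T-U_S(T)\|$ bounded below by a positive constant. So the literal ``all finite $S$'' version of the convergence fails.

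What your Steps~3--4 correctly establish is the rectangular statement $Q_{S_2}TP_{S_1}\to T$ in norm, and this is exactly what the paper's sketch proves as well: the paper invokes \Cref{limsupid}, which requires uniform boundedness of $(U_S)$, available precisely for rectangular $S$ via $\|P_{S_1}\|,\|Q_{S_2}\|\le 1$. Your Step~3 is in fact slightly more direct than the paper's route (you treat a general single-rank $\theta^{f,e}$ at once via $P_{S_1}e\to e$ and $Q_{S_2}f\to f$, whereas the paper first restricts to finitely supported $e,f$ and then takes a closure); Step~4 is the same $3\varepsilon$ argument. Every use of the proposition in the paper (the finite case in \Cref{matrixpos} and the identification in \Cref{catalgequiv}) needs only the rectangular version, so your argument suffices for those purposes---just remove the cofinality sentence and state the conclusion for rectangular partial sums.
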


\begin{proof}[Proof Sketch] The single-rank operators $\theta^{e,f}$ such that $e$ and $f$ have finitely many non-zero coordinates are dense in all single-rank operators from $\bigoplus_{i\in I}E_i$ to $\bigoplus_{j\in J}F_j$. It is easily shown that the stated convergence result holds for these. Hence an application of \Cref{limsupid} proves the result for arbitrary single-rank operators, and hence also for finite-rank operators. Another application of the same corollary proves the result for arbitrary compacts.\end{proof} 
%\begin{proof}
 %We show first that the result holds for any single-rank operator. Consider such an operator $\theta^{f,e}$ for some $x\in\Ob\mathcal{A}$ and $e=(e_i)\in\oplus_i E_i,f=(f_i)\in \oplus_{j}F_j$. We know that $e,f$ have countably many non-zero coordinates, so they can both be written as limits of sequences $(e_n), (f_n)$ whose elements each have finitely many non-zero coordinates, as well as norm less than or equal to that of $e$, respectively $f$. The stated convergence result is clearly true for each operator $\theta^{e_n,f_n}$ as one can simply consider finite subsets $F$ that include all non-zero coordinates of $e$ and $f$. 

%But now note that we have $$\begin{array}{rl}\|\theta^{e_n,f_n}-\theta^{e,f}\|&\leq\|\theta^{e_n,f}-\theta^{e,f}\|+\|\theta^{e_n,f_n}\theta^{e_n,f}\|
%\\&\leq\|\theta^{e_n-e,f}\|+\|\theta^{e_n,f_n-f}\|
%\\&\leq\|e_n-e\|\|f\|+\|e_n\|\|f_n-f\|
%\\&\leq\|e_n-e\|\|f\|+\|e\|\|f_n-f\|\xrightarrow[n]{}0\end{array}$$

%and hence $\theta^{e_n,f_n}\rightarrow \theta^{e,f}$ in the operator norm. Hence the single-rank operators with both module elements having finitely many non-zero coordinates are dense in all single-rank operators, so as the convergence holds for the former, by \Cref{limsupid} it holds for the latter.

%Hence the result must hold for arbitrary finite-rank operators. But now it follows from another application of \Cref{limsupid} that the same holds for arbitrary compact operators.
%\end{proof}
\subsection[Extending modules to the additive hull and approximate projectivity]{Extending Hilbert modules to $\A_{\oplus}$ and approximate projectivity}
%The definition of Hilbert module $E$ over a C*-algebra $A$ include the requirement that for every $e\in E$, the element $\langle e,e\rangle_E\in A$ is positive. In order to extend a Hilbert module to the additive closure, an \textit{a priori} stronger statement is required: namely, that for any finite list of elements $(e_1,...,e_n)\in E^n$, the matrix with $(i,j)$-th entry $\langle e_i,e_j\rangle$ is a positive element of the C*-algebra $M_n(A)$. To show in the algebra case that this matrix positivity in fact follows from ordinary positivity, Lance proves first a vector criterion for positivity:
In this section we show that all Hilbert modules over a $C^*$-category $\mathcal{A}$ extend to Hilbert modules over the additive hull $\mathcal{A}_{\oplus}$. We quote first the following lemma of Mitchener's: %The crucial step in this extension will be to show the positivity of a certain matrix. We therefore begin with a lemma of Mitchener's that characterizes exactly when an operator between Hilbert modules over a $C^*$-category is positive. 
%\begin{lemma}\label{positivity}
%\cite[Lemma 4.1]{LanceHilbert} If $E$ is a Hilbert module over a C*-algebra $A$, then a bounded $A$-linear operator $T:E\rightarrow E$ is a positive element of the C*-algebra $\mathcal{L}(E,E)$ if and only if for every $e\in E$, $\langle e,Te\rangle\in A$ is positive.
%\end{lemma}

%Mitchener shows that a similar lemma is true for right Hilbert modules over a $C^*$-category:

\begin{lemma}[{\cite[Lemma 3.13]{MitchenerKK}}]\label{positivitycat}
If $E$ is a Hilbert module over a C*-category $\mathcal{A}$, then an operator $T:E\rightarrow E$ is a positive element of the C*-algebra $\mathcal{L}(E,E)$ if and only if for every $x\in\Ob\mathcal{A}$ and $e\in E(x)$, we have $\langle e,Te\rangle\in\mathcal{A}(x,x)$ positive.
\end{lemma}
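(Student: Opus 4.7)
The forward direction is immediate: if $T$ is positive in $\mathcal{L}(E)$, apply the continuous functional calculus inside the $C^*$-algebra $\mathcal{L}(E)$ to write $T = S^*S$ with $S = T^{1/2}$. Then for any $e \in E(x)$, $\langle e, Te\rangle = \langle e, S^*Se\rangle = \langle Se, Se\rangle$, which is positive in $\mathcal{A}(x,x)$ by the defining axioms of a Hilbert module (since $Se \in E(x)$).

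For the converse, assume $\langle e, Te\rangle \geq 0$ in $\mathcal{A}(x,x)$ for every $x$ and $e\in E(x)$. The plan has two steps. First, I show $T$ is self-adjoint. Positivity of $\langle e, Te\rangle$ implies in particular self-adjointness, and one checks $\langle e, T^*e\rangle = \langle Te, e\rangle = \langle e, Te\rangle^* = \langle e, Te\rangle$, so $\langle e, (T-T^*)e\rangle = 0$ for every $e \in E(x)$. Setting $S := T - T^*$, the standard sesquilinear polarization identity applied \emph{within each fixed} $E(x)$ (using linear combinations $e+f$, $e-f$, $e+if$, $e-if$, all of which lie in $E(x)$) gives $\langle e, Sf\rangle = 0$ for all $e,f \in E(x)$. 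Specializing to $f = Se$ yields $\|Se\|^2 = \|\langle Se, Se\rangle\| = 0$, so $S$ vanishes on $E(x)$ for every $x$, i.e. $T = T^*$.

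Second, I show $T$ has positive spectrum. Since $T$ is now self-adjoint in the $C^*$-algebra $\mathcal{L}(E)$, functional calculus gives the Jordan decomposition $T = T_+ - T_-$ with $T_\pm \geq 0$ in $\mathcal{L}(E)$ and $T_+T_- = 0$. The goal is to prove $T_- = 0$. For any $x$ and $e \in E(x)$, set $f := T_-e \in E(x)$. Using $T_+T_- = 0$ and self-adjointness of $T_-$, compute
$$\langle f, Tf\rangle = \langle T_-e, (T_+-T_-)T_-e\rangle = -\langle T_-e, T_-^2 e\rangle = -\langle e, T_-^3 e\rangle,$$
where the last equality uses the adjoint relation $\langle a, T_-b\rangle = \langle T_-a, b\rangle$. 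By hypothesis the left side is positive in $\mathcal{A}(x,x)$. On the other hand, since $T_-^3 = (T_-^{3/2})^*T_-^{3/2}$ via functional calculus, the forward direction already proved gives $\langle e, T_-^3 e\rangle = \langle T_-^{3/2}e, T_-^{3/2}e\rangle \geq 0$. Thus both $\langle e, T_-^3e\rangle$ and its negative are positive in the $C^*$-algebra $\mathcal{A}(x,x)$, forcing $\langle T_-^{3/2}e, T_-^{3/2}e\rangle = 0$, hence $T_-^{3/2}e = 0$ for every $e$. This gives $T_-^{3/2} = 0$ as a natural transformation, and then $T_- = 0$ by functional calculus (its spectrum collapses to $\{0\}$, so its norm is zero). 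Therefore $T = T_+ \geq 0$.

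The one point demanding care is the polarization argument in the first step: it must be carried out \emph{within a single} $E(x)$ since elements of $E(x)$ and $E(y)$ cannot be added when $x \neq y$. Fortunately, the polarization in the fixed fibre is already enough, because the conclusion $T = T^*$ only requires knowing that $Se = 0$ for each individual $e$. Once self-adjointness is in hand, the spectral argument proceeds entirely inside the $C^*$-algebra $\mathcal{L}(E)$ together with the single-fibre positivity of $\langle-,-\rangle$, and no cross-object machinery is needed.
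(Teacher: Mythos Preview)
Your proof is correct. The paper does not supply its own proof of this lemma; it simply quotes the statement from \cite{MitchenerKK} and uses it as input for later results. Your argument is the standard one from the Hilbert $C^*$-module literature (cf.\ \cite[Lemma~4.1]{LanceHilbert}), carried over fibrewise: the forward direction via $T=(T^{1/2})^*T^{1/2}$, self-adjointness from polarization in each $E(x)$, and then the Jordan decomposition $T=T_+-T_-$ together with the trick of evaluating $\langle T_-e, T(T_-e)\rangle$ to kill $T_-$.

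One expositional slip worth cleaning up: in the self-adjointness step you write ``specializing to $f=Se$ yields $\langle Se,Se\rangle=0$'', but plugging $f=Se$ into $\langle e,Sf\rangle=0$ literally gives $\langle e,S^2e\rangle=0$. You recover what you want either by using $S^*=-S$ (so $\langle e,S^2e\rangle=-\langle Se,Se\rangle$) or, more directly, by substituting into the \emph{first} slot instead: since $\langle e,Sf\rangle=0$ for all pairs, take $e=Sg$ and $f=g$ to get $\langle Sg,Sg\rangle=0$ immediately. Either route is fine; just make the step explicit.
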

We can now isolate the positivity axiom on what will be the extension of $E$ to $\A_{\oplus}$, which is of independent interest elsewhere:
%\texttt{this does not work because Vect isn't a C*-category: Hilbert modules are not Hilbert spaces}
\begin{lemma}[{\cite[c.f. Lemma 4.2]{LanceHilbert}}]\label{matrixpos}
If $E$ is a right Hilbert module over $\mathcal{A}$ and we have a sequence of elements $e_1\in E(x_1),\dots,e_n\in E(x_n)$, then the $n\times n$ matrix $\mu_e$ with $(i,j)$-th entry $\langle e_i,e_j\rangle$ is a positive element of the $C^*$-algebra $M_{x_1,..,x_n}(\mathcal{A})$
\end{lemma}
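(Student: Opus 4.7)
The strategy is to realize $\mu_e$ explicitly as $T^\ast T$ for a compact operator $T$, so that positivity becomes automatic, and then transport this positivity back through the $\ast$-isomorphism $M_{x_1,\dots,x_n}(\mathcal{A})\cong\mathcal{K}(h_{x_1}\oplus\cdots\oplus h_{x_n})$ that follows from \Cref{imageyoneda} and \Cref{compactdirect}. Concretely, form the finite direct sum $F:=h_{x_1}\oplus\cdots\oplus h_{x_n}$ in $\Hilb\mathcal{A}$ (\Cref{findirsumhilb}) with structure maps $\iota_i\colon h_{x_i}\to F$, and for each $i$ invoke \Cref{contrayoneda} to obtain the compact operator $\epsilon_{e_i}\colon h_{x_i}\to E$. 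Set $T:=\sum_{i=1}^{n}\epsilon_{e_i}\circ\iota_i^\ast\in\mathcal{K}(F,E)$. Then $T^\ast T\in\mathcal{K}(F)\subseteq\mathcal{L}(F)$ is manifestly a positive element of the $C^\ast$-algebra $\mathcal{L}(F)$.

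A direct calculation using the adjoint formula of \Cref{contrayoneda} yields, for every $j,k$,
\begin{equation*}
\iota_j^\ast\circ T^\ast T\circ\iota_k=\epsilon_{e_j}^\ast\circ\epsilon_{e_k}=\iota_{\mathcal{A}}(\langle e_j,e_k\rangle),
\end{equation*}
where $\iota_{\mathcal{A}}$ is the Yoneda embedding of \Cref{imageyoneda}. Combining \Cref{compactdirect} (applied to the finite direct sum $F$) with \Cref{imageyoneda}, one obtains a canonical Banach-space bijection $\Phi\colon\mathcal{K}(F)\to\bigoplus_{j,k}\mathcal{A}(x_k,x_j)=M_{x_1,\dots,x_n}(\mathcal{A})$ whose $(j,k)$-entry is $\iota_{\mathcal{A}}^{-1}(\iota_j^\ast\circ(-)\circ\iota_k)$. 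Under $\Phi$, the above formula identifies $T^\ast T$ with $\mu_e$ (up to the matrix transpose appearing in \Cref{sumclosuredefn}, which fixes $\mu_e$ since $\mu_e=\mu_e^\ast$).

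To finish, it remains to argue that $\Phi$ is in fact a $\ast$-isomorphism of $C^\ast$-algebras rather than merely a Banach-space bijection; granted this, positivity of $T^\ast T$ immediately transports to positivity of $\mu_e$. This compatibility with multiplication and involution is the main bookkeeping obstacle, but reduces to combining the structure-map identities $\iota_j^\ast\iota_k=\delta_{jk}\,\mathrm{id}_{h_{x_j}}$ and $\sum_i\iota_i\iota_i^\ast=\mathrm{id}_F$ from \Cref{findirsumhilb} with the functoriality and $\ast$-compatibility of $\iota_{\mathcal{A}}$ established in \Cref{imageyoneda}.
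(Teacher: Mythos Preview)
Your proof is correct and shares the same setup as the paper's---both pass through the identification $M_{x_1,\dots,x_n}(\mathcal{A})\cong\mathcal{K}(F)$ for $F=h_{x_1}\oplus\cdots\oplus h_{x_n}$---but the two arguments establish positivity in $\mathcal{K}(F)$ by different mechanisms. The paper invokes \Cref{positivitycat}: it verifies directly that for every $f=(f_1,\dots,f_n)\in F(y)$ the inner product $\langle f,\mu_e f\rangle_F$ equals $\langle\sum_i e_i\cdot f_i,\sum_j e_j\cdot f_j\rangle_E$, which is positive in $\mathcal{A}(y,y)$ by the Hilbert-module axioms. You instead exhibit $\mu_e$ as $T^\ast T$ for the explicit operator $T=\sum_i\epsilon_{e_i}\iota_i^\ast$, bypassing \Cref{positivitycat} entirely. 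The two are secretly the same computation: your $T$ satisfies $T(f)=\sum_i e_i\cdot f_i$, so the paper's inner product is precisely $\langle T(f),T(f)\rangle_E$. Your formulation has the advantage of making the factorization $\mu_e=T^\ast T$ visible from the outset (and of not depending on \Cref{positivitycat}), while the paper's has the advantage of not needing to verify that $\Phi$ is a $\ast$-isomorphism rather than merely a Banach-space bijection---since \Cref{positivitycat} works in $\mathcal{L}(F)$ directly, the paper only needs the identification at the level of elements.
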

\begin{proof}
Consider the Hilbert $\mathcal{A}$-module $F=h_{x_1}\oplus ...\oplus h_{x_n}$. By \Cref{compactdirect} and \Cref{imageyoneda} we have $M_{x_1,..,x_n}(\mathcal{A})\cong \mathcal{K}(F)$. So if we can show that for every $y\in\Ob\mathcal{A}$ and $f\in F(y)$, the inner product $\langle f,\mu_ef\rangle_F\in\mathcal{A}(y,y)$ is positive, then by \Cref{positivitycat}, $\mu_e$ is a positive element of $\mathcal{L}(F)$, and hence of $M_{x_1,..,x_n}(\mathcal{A})$.

To do this, let $f=(f_1,\dots,f_n)$ where $f_i\in\mathcal{A}(y,x_i)$. Then \begin{align*}\langle f,\mu_ef\rangle_F=\sum_i (f_i^*\sum_j \langle e_i,e_j\rangle_Ef_j )
&=\sum_i (f_i^*\sum_j\langle e_i,e_j f_j\rangle_E)
\\ & =\sum_i\sum_j\langle e_if_i,e_jf_j\rangle_E
\\ & =\langle\sum_i e_if_i,\sum_j e_jf_j\rangle_E.\end{align*}
This is a positive element of $\mathcal{A}(y,y)$ by the assumption that $E$ is a Hilbert module. Hence it follows that $\mu_e$ is positive.
\end{proof}
The above lemma first appeared as \cite[Lemma 3.14]{MitchenerKK}, but as far as we can see the proof given there does not quite work: it is unclear how to define the action of the matrix algebra on the supposed Hilbert module.

To extend an arbitrary Hilbert module from $\mathcal{A}$ to $\mathcal{A}_{\oplus}$, note firstly that by \Cref{addextendfunc}, any functor $E:\mathcal{A}^{\text{op}}\rightarrow\mathsf{Vect}_\mathbb{C}$ lifts to a functor  $E_{\oplus}:(\mathcal{A}_{\oplus})^{\text{op}}\rightarrow\mathsf{Vect}_\mathbb{C}$.  We need to show that if $E$ is a right Hilbert module, then $E_{\oplus}$ can be given an inner product making it into a right Hilbert $\mathcal{A}_{\oplus}$-module.

Our promised extension lemma now follows:
\begin{prop}\label{addextend}
Let $E_{\oplus}:(\mathcal{A}_{\oplus})^{\emph{op}}\rightarrow\mathsf{Vect}_\mathbb{C}$ be the natural extension mentioned above. $E_{\oplus}$ has the structure of a Hilbert module with inner products $$\langle-,-\rangle: E_{\oplus}(\{y_1,..,y_k\})\times E_{\oplus}(\{x_1,..,x_n\})\rightarrow\mathcal{A}_{\oplus}(\{x_1,..,x_n\},\{y_1,..,y_k\})$$
defined by $$\langle(f_1,..,f_k),(e_1,..,e_n)\rangle_{E_{\oplus}}:=[\langle f_j,e_i\rangle_E]_{i,j}.$$
\end{prop}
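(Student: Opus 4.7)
The plan is to run through the axioms of \Cref{hilbmoddefn} for $E_{\oplus}$ one by one, reducing each to the corresponding axiom of $E$ entrywise, and leaning on two results already established in the paper. Sesquilinearity and the involutive symmetry $\langle f,e\rangle = \langle e,f\rangle^*$ are immediate from the formula, because the involution on $\mathcal{A}_{\oplus}$ defined in \Cref{sumclosuredefn} is precisely entrywise conjugate-transpose, and $\langle f_j, e_i\rangle_E^* = \langle e_i, f_j\rangle_E$. Likewise, the $\mathbb{C}$-linearity in each slot is entrywise.

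For the compatibility identity $\langle e, f\rangle_{E_{\oplus}} \circ g = \langle e, f\cdot g\rangle_{E_{\oplus}}$, I would first use \Cref{addextendfunc} to identify the action of a morphism $g = [g_{lj}] \in \mathcal{A}_{\oplus}(\{w_l\},\{x_j\})$ on $(f_1,\ldots,f_n)$ as the tuple whose $l$-th coordinate is $\sum_j f_j \cdot g_{lj}$. Then both sides of the identity expand into matrices whose $(l,j)$ entry is $\sum_i \langle e_j, f_i\rangle_E \circ g_{li}$, using the corresponding axiom for $E$ inside each entry.

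The main obstacle is positivity of $\langle(e_1,\ldots,e_n),(e_1,\ldots,e_n)\rangle_{E_{\oplus}} = [\langle e_i, e_j\rangle_E]_{i,j}$ in the $C^*$-algebra $M_{x_1,\ldots,x_n}(\mathcal{A})$. This is exactly the statement of \Cref{matrixpos}, which was singled out above for this purpose and proved by realizing $M_{x_1,\ldots,x_n}(\mathcal{A}) \cong \mathcal{K}(h_{x_1} \oplus \cdots \oplus h_{x_n})$ and applying the pointwise positivity criterion of \Cref{positivitycat}. Given this, definiteness is immediate: if the whole matrix vanishes, so does each diagonal entry $\langle e_i, e_i\rangle_E$, and hence $e_i = 0$ by definiteness in $E$.

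It remains to check that the associated norm on each $E_{\oplus}(\{x_1,\ldots,x_n\})$ is complete. A direct unpacking of the norm in \Cref{sumclosuredefn} shows that the single-entry inclusion $\mathcal{A}(x_i, x_j) \hookrightarrow M_{x_1,\ldots,x_n}(\mathcal{A})$ is isometric, and using the isometries $\iota_i \in \mathcal{A}_{\oplus}(\{x_i\},\{x_1,\ldots,x_n\})$ to compress to the diagonal one gets $\|e_i\|_E^2 = \|\iota_i^* [\langle e_j, e_k\rangle_E]_{j,k} \iota_i\| \leq \|(e_1,\ldots,e_n)\|_{E_{\oplus}}^2$. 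Hence a Cauchy sequence in $E_{\oplus}(\{x_1,\ldots,x_n\})$ is coordinatewise Cauchy in each $E(x_i)$; letting $e_i$ be the limit, the triangle inequality $\|[\langle a_i,a_j\rangle]\| \leq \sum_{i,j} \|\langle a_i, a_j\rangle\|$ combined with \Cref{CSLemma} shows that the componentwise limit $(e_1,\ldots,e_n)$ is the limit in $E_{\oplus}$.
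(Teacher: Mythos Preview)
Your proposal is correct and follows exactly the approach the paper indicates: the paper omits the proof entirely, saying only that all conditions in \Cref{hilbmoddefn} are easily verified except positivity, which is handled by \Cref{matrixpos}. You have simply filled in the routine verifications the paper leaves to the reader, with the same key input (\Cref{matrixpos}) for the one nontrivial axiom.
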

%\begin{proof}
%The involution and naturality axioms both follow immediately from the definition of $\langle-,-\rangle_{E_{\oplus}}$. The positivity axiom follows from \Cref{matrixpos}.

%To prove the completeness property, note that from the arguments about norms in the proof of \Cref{directsumdefn}, we have $\max_i\|e_i\|\leq\|(e_k)\|\leq n\max_i\|e_i\|$ so a Cauchy sequence of vectors in $E_{\oplus}((x_i))=\bigoplus_iE(x_i)$ must be a Cauchy sequence in each coordinate and converge to the coordinate-wise limit.

%Hence it follows that $E_{\oplus}$ satisfies all axioms of a right Hilbert module over $\mathcal{A}_{\oplus}$.
%\end{proof}
We omit the proof as all conditions in \Cref{hilbmoddefn} are easily proven, except the positivity which is taken care of by \Cref{matrixpos}

Conversely, it is clear that a Hilbert module over $\mathcal{A}_{\oplus}$ is determined by its spaces and inner products on objects of $\mathcal{A}$, and clearly a transformation $E_{\oplus}\rightarrow F_{\oplus}$ is determined by its values on the objects of $\mathcal{A}$, in other words by its restriction $E\rightarrow F$. All this assembles to the following:

\begin{prop}\label{extensionequivalence}
The extension operation in \Cref{addextend} and restriction operation defined above constitute a unitary equivalence  $\Hilb\mathcal{A}\cong\Hilb\mathcal{A}_{\oplus}$.
\end{prop}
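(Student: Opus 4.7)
The plan is to construct the extension and restriction as mutually quasi-inverse $C^*$-functors realizing the promised unitary equivalence. First, I would promote both operations to $C^*$-functors: the restriction $R:\Hilb\A_{\oplus}\rightarrow\Hilb\A$ is precomposition with the inclusion $\A\hookrightarrow\A_{\oplus}$, which is manifestly a $C^*$-functor. The extension is already defined on objects by \Cref{addextend}; on an operator $T:E\rightarrow F$ I would define $T_{\oplus}$ at a list $\{x_1,\dots,x_n\}$ by applying $T$ in each coordinate, and verify in a short calculation that the result is natural, bounded, and adjointable with adjoint $(T^*)_{\oplus}$.

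Next, I would observe that $R\circ(-)_{\oplus}$ equals the identity on $\Hilb\A$ strictly: under the canonical identification $\A_{\oplus}(\{x\},\{y\})=\A(x,y)$, the formulas in \Cref{addextend} give $E_{\oplus}(\{x\})=E(x)$ with inner product $\langle(f),(e)\rangle_{E_{\oplus}}=\langle f,e\rangle_E$, and the restriction of $T_{\oplus}$ returns $T$.

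The substance of the argument is producing a natural unitary isomorphism $\eta_F:(R(F))_{\oplus}\xrightarrow{\cong}F$ for each $F\in\Hilb\A_{\oplus}$. For a list $\mathbf{x}=\{x_1,\dots,x_n\}$, the structure isometries $\iota_i\in\A_{\oplus}(\{x_i\},\mathbf{x})$ satisfy $\iota_j^*\iota_i=\delta_{ij}\id$ and $\sum_i\iota_i\iota_i^*=\id_{\mathbf{x}}$. I would define $\eta_F(\mathbf{x}):\bigoplus_{i=1}^n F(\{x_i\})\rightarrow F(\mathbf{x})$ by $(e_1,\dots,e_n)\mapsto\sum_i e_i\cdot\iota_i^*$. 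Surjectivity follows from the identity $f=\sum_i(f\cdot\iota_i)\cdot\iota_i^*$, and preservation of inner products comes from a short computation using the orthogonality relations on the $\iota_i$ combined with the matrix-formula definition of the inner product on the extended module. \Cref{unitarycrit} then certifies that $\eta_F(\mathbf{x})$ is a unitary isomorphism of $\A$-modules.

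The main obstacle I anticipate is naturality of $\eta_F$ with respect to the full $C^*$-category $\A_{\oplus}$ rather than merely $\A$: given a matrix morphism $a=[a_{ij}]\in\A_{\oplus}(\mathbf{x},\mathbf{y})$, I must verify the square $\eta_F(\mathbf{x})\circ(R(F))_{\oplus}(a)=F(a)\circ\eta_F(\mathbf{y})$ commutes. I would decompose $a=\sum_{ij}\iota^{\mathbf{y}}_j\circ a_{ij}\circ(\iota^{\mathbf{x}}_i)^*$ into its entries; by linearity, naturality reduces to the case of a single entry $a_{ij}\in\A(x_i,y_j)$, where it follows from associativity of the module action and the defining relations on the structure isometries. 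Naturality in $F$ is automatic since every morphism in $\Hilb\A_{\oplus}$ is itself a natural transformation, so the two resulting naturalities assemble into the required unitary natural isomorphism $(-)_{\oplus}\circ R\cong\id_{\Hilb\A_{\oplus}}$.
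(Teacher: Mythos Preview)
Your plan is more explicit than the paper's, which essentially asserts the result from the remark preceding the proposition (``a Hilbert module over $\A_{\oplus}$ is determined by its spaces and inner products on objects of $\A$'') without spelling out the unitary $\eta_F$. Making $\eta_F$ concrete via the structure isometries is a reasonable strategy.

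There is, however, a genuine gap in the non-unital case, which is precisely the setting the paper cares about. You write $\iota_i\in\A_{\oplus}(\{x_i\},\mathbf{x})$ and invoke $\iota_j^*\iota_i=\delta_{ij}\id$ and $\sum_i\iota_i\iota_i^*=\id_{\mathbf{x}}$. When $\A$ is non-unital, $\A_{\oplus}$ has no identities and these structure maps exist only as \emph{multiplier} morphisms, i.e.\ as elements of $\mathcal{M}(\A_{\oplus})$ (this is exactly how the paper exhibits finite multiplier direct sums in $\A_{\oplus}$). Consequently the expression $e_i\cdot\iota_i^*$ for $e_i\in F(\{x_i\})$ is not \emph{a priori} defined: $F$ is a module over $\A_{\oplus}$, not over its multiplier category, and your surjectivity argument $f=\sum_i(f\cdot\iota_i)\cdot\iota_i^*$ likewise presupposes an action of the $\iota_i$.

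The repair is short but must be made explicit. By \Cref{unitcor} an approximate unit for $\A_{\oplus}(\mathbf{x},\mathbf{x})$ acts as an approximate identity on $F(\mathbf{x})$, so the $\A_{\oplus}$-action on $F$ extends uniquely and boundedly to an $\mathcal{M}(\A_{\oplus})$-action (define $e\cdot(L,R)$ on the dense subspace $F(\mathbf{x})\cdot\A_{\oplus}(\mathbf{x},\mathbf{x})$ by $(f\cdot a)\cdot(L,R):=f\cdot R(a)$ and use \Cref{densextend}). Once that extension is in place your formulas, inner-product computation, and naturality check go through verbatim. Alternatively you can sidestep multipliers by first defining the inverse $\eta_F(\mathbf{x})^{-1}:F(\mathbf{x})\to\bigoplus_iF(\{x_i\})$ on elements of the form $f\cdot[a_{ij}]$ via the matrix columns, checking it is well-defined and isometric there, and extending by density; this uses only honest $\A_{\oplus}$-morphisms.
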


We also note that this equivalence preserves compacts:

\begin{lemma}\label{compactadd}
Suppose $E,F$ are right Hilbert modules over $\mathcal{A}_{\oplus}$. For all lists $\mathbf{x}=\{x_1,\dots,x_n\}\in\mathsf{Ob}(\mathcal{A}_{\oplus})$ and tuples of elements $(e_i)\in E(\mathbf{x}),(f_i)\in F(\mathbf{x})$, we have $\theta_{\mathbf{x}}^{(f_i),(e_i)}=\sum_{i=1}^{n}\theta_{x_i}^{f_i,e_i}$.
\end{lemma}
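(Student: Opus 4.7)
The approach is entirely computational: both sides are operators in $\KHilb\mathcal{A}_{\oplus}(E,F)$, so it suffices to evaluate them on an arbitrary element $e'=(e'_1,\dots,e'_k)\in E(\mathbf{y})$ for a test list $\mathbf{y}=\{y_1,\dots,y_k\}$ and check that the resulting tuples in $F(\mathbf{y})$ agree component-wise.

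First I would unfold the left-hand side. By definition,
$$\theta_{\mathbf{x}}^{(f_i),(e_i)}(e') = (f_i)\cdot\langle (e_i), e'\rangle_{E_{\oplus}}.$$
Using the explicit matrix formula from \Cref{addextend}, $\langle (e_i), e'\rangle_{E_{\oplus}}$ is the matrix in $\mathcal{A}_{\oplus}(\mathbf{y},\mathbf{x})$ whose $(l,j)$-entry is $\langle e_j, e'_l\rangle_E\in\mathcal{A}(y_l,x_j)$. Now I invoke the recipe from \Cref{addextendfunc} for how the extended functor $F_{\oplus}$ acts on matrix morphisms: the $l$-th component of $(f_i)$ acted on by this matrix is $\sum_{j=1}^n f_j\cdot\langle e_j,e'_l\rangle_E$, living in $F(y_l)$.

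Next I would compute the right-hand side termwise. For each fixed $i$, treating $f_i\in F(\{x_i\})$ and $e_i\in E(\{x_i\})$ via the embedding $\mathcal{A}\hookrightarrow\mathcal{A}_{\oplus}$, the same formula from \Cref{addextend} identifies $\langle e_i,e'\rangle_{E_{\oplus}}$ as the column vector in $\mathcal{A}_{\oplus}(\mathbf{y},\{x_i\})$ with entries $\langle e_i,e'_l\rangle_E$. Applying $F_{\oplus}$ to this column and evaluating at $f_i$ gives the tuple whose $l$-th component is $f_i\cdot\langle e_i,e'_l\rangle_E$. Summing over $i$ produces a tuple in $F(\mathbf{y})$ whose $l$-th component is exactly $\sum_{i=1}^n f_i\cdot\langle e_i,e'_l\rangle_E$, matching the left-hand side.

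There is no genuine obstacle here: the content of the lemma is simply that the matrix formulas for both the inner product and the $F_{\oplus}$-action on $\mathcal{A}_{\oplus}$-morphisms are bilinear in the "coordinate" decomposition, so a single-rank operator on the sum decomposes into a sum of single-rank operators on the components. The only minor care needed is to keep the indices in the matrix formula of \Cref{addextend} straight when one argument is an $n$-tuple and the other is a $k$-tuple, but this is bookkeeping rather than a real difficulty.
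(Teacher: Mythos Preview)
Your argument is correct: the lemma is a direct computation, and evaluating both operators on an arbitrary test element $(e'_l)\in E(\mathbf{y})$ and matching components is exactly the right thing to do. The paper in fact states this lemma without proof, so there is nothing to compare against; your write-up supplies precisely the verification the paper leaves implicit, and the only care required is the index bookkeeping you already flag.
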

%\begin{proof}
%This follows from the definition of the inner product: for any $z\in\Ob\mathcal{A},d\in E(z)$ we have $$\begin{array}{rl}\theta_{(x_i)}^{(f_i),(e_i)}(d)&:=(f_i)\cdot\langle(e_i),d\rangle=(f_i)\cdot [\langle e_1,d\rangle\cdots\langle e_n,d\rangle]\\&=F([\langle e_1,d\rangle\cdots\langle e_n,d\rangle])((f_i)) \\&
%=[F(\langle e_1,d\rangle)\cdots F(\langle e_n,d\rangle)]((f_i))\\&=F(\langle e_1,d\rangle)(f_1)+\dots+F(\langle e_n,d\rangle)(f_n)\\&=f_1\cdot\langle e_1,d\rangle+\dots+f_n\cdot\langle e_n,d\rangle\\&=\sum_{i=1}^n \theta_{x_i}^{f_i,e_i}(d)\end{array}$$ 
%\end{proof}

We end this chapter with a powerful result which says that for any Hilbert module $E$, we can construct a net of finite free Hilbert modules which \textit{in the limit} admits a projection onto $E$. The results here are a direct adaptation and generalization of those in \cite[Section 3]{BlecherModules}.
\begin{lemma}\label{compactapprox}
If $E$ is a right Hilbert $\mathcal{A}$-module, the $C^*$-algebra $\mathcal{K}(E)$ has an approximate unit $\{u_\lambda:\lambda\in\Lambda\}$ consisting of operators $u_\lambda=\sum_{e\in I(\lambda)}\theta_{x_e}^{e,e}$ where each $I(\lambda)$ is a finite list of module elements $e\in E(x_e)$, where $x_e\in\mathsf{Ob}\mathcal{A}$. 
\end{lemma}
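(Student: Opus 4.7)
The plan is first to show that the positive cone $\mathcal{P}$ of all finite sums $\sum_k \theta^{e_k,e_k}_{x_k}$ is norm-dense in $\mathcal{K}(E)_+$, and then to extract from $\mathcal{P}$ a bounded approximate unit via a standard $C^*$-algebra rescaling argument.

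The key step is to show that for any finite-rank operator $T = \sum_{i=1}^n \theta^{\eta_i,\xi_i}_{y_i} \in \mathcal{K}(E)$ (with $\eta_i, \xi_i \in E(y_i)$), the composite $T^*T$ already lies in $\mathcal{P}$. I factor $T$ through the finitely generated free module $F := \bigoplus_{i=1}^n h_{y_i}$ as follows: using the equality $\theta^{\eta_i, \xi_i}_{y_i} = \epsilon_{\eta_i} \circ \epsilon_{\xi_i}^*$ from the proof of \Cref{singlerank}, set $B := \sum_i \iota_i \epsilon_{\xi_i}^* : E \to F$ and $A := \sum_i \epsilon_{\eta_i} \iota_i^* : F \to E$, giving $T = A \circ B$. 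Then $T^*T = B^*(A^*A)B$, and by \Cref{compactdirect} and \Cref{imageyoneda} the operator $A^*A \in \mathcal{K}(F)$ corresponds under the identification $\mathcal{K}(F) \cong M_{y_1,\ldots,y_n}(\mathcal{A})$ to the Gram matrix $[\langle \eta_i, \eta_j\rangle]_{i,j}$, which is positive by \Cref{matrixpos}. Taking its positive square root $H$ via continuous functional calculus in the $C^*$-algebra $\mathcal{K}(F)$ and setting $C := HB : E \to F$, we get $T^*T = C^*C$ with $C$ compact. Decomposing $C$ into components $C_i := \iota_i^* C \in \mathcal{K}(E, h_{y_i})$ and applying \Cref{contrayoneda} to each adjoint $C_i^* \in \mathcal{K}(h_{y_i}, E)$ to identify it with $\epsilon_{e_i}$ for a unique $e_i \in E(y_i)$, we find $T^*T = \sum_i C_i^* C_i = \sum_i \epsilon_{e_i}\epsilon_{e_i}^* = \sum_i \theta^{e_i, e_i}_{y_i} \in \mathcal{P}$.

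Density of $\mathcal{P}$ in $\mathcal{K}(E)_+$ now follows: any $a \in \mathcal{K}(E)_+$ has a compact self-adjoint square root $a^{1/2}$, norm-approximable by finite-rank $T_n$, so $a = \lim_n T_n^*T_n$ lies in the closure of $\mathcal{P}$. Moreover, $\mathcal{P}$ is closed under positive scalar multiplication via $\lambda \theta^{e,e} = \theta^{\sqrt{\lambda} e, \sqrt{\lambda} e}$ for $\lambda \geq 0$. To extract the approximate unit, I index $\Lambda$ by pairs $\lambda = (S, \epsilon)$ where $S \subset \mathcal{K}(E)$ is a finite $*$-closed subset and $\epsilon > 0$, ordered so that $(S, \epsilon) \leq (S', \epsilon')$ iff $S \subseteq S'$ and $\epsilon \geq \epsilon'$. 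For each such $\lambda$, pick a positive contraction $q \in \mathcal{K}(E)_+$ with $\|qT - T\| < \epsilon/3$ for every $T \in S$ (available from any bounded approximate unit of the $C^*$-algebra $\mathcal{K}(E)$), approximate $q$ within $\epsilon/(3M)$ in norm by some $p \in \mathcal{P}$ (where $M := 1 + \max_{T \in S}\|T\|$), and finally rescale to $u_\lambda := p/\max(1,\|p\|) \in \mathcal{P}$; a direct estimate then yields $\|u_\lambda\| \leq 1$ and $\|u_\lambda T - T\| < \epsilon$ for every $T \in S$, showing $(u_\lambda)$ is an approximate unit of the required form.

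The main obstacle will be the decomposition $T^*T = \sum_i \theta^{e_i, e_i}_{y_i}$, which hinges on correctly navigating the identification with the matrix $C^*$-algebra $M_{y_1,\ldots,y_n}(\mathcal{A})$, taking a positive square root there, and translating back through the Yoneda correspondence. Once this is in place, both the density of $\mathcal{P}$ and the rescaling construction of the approximate unit are fairly routine.
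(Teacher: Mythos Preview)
Your proof is correct and shares the same algebraic heart as the paper's argument: both show that for any finite-rank operator $T$, the product $T^*T$ can be rewritten as a finite sum $\sum_i \theta^{e_i,e_i}_{x_i}$ by identifying the relevant Gram matrix as a positive element of $M_{y_1,\dots,y_n}(\mathcal{A})$ via \Cref{matrixpos}, taking its square root, and pushing it back through the Yoneda correspondence. The difference lies in how the approximate unit is produced. The paper invokes \cite[Theorem 2.1]{BrownIdeal} directly: since the finite-rank operators form a dense right ideal, that theorem yields an approximate unit already of the form $u_\lambda = \sum_{r\in R_\lambda} r^*r$ with each $r$ finite-rank, and then one only needs the rewriting step above. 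Your route instead first establishes that the cone $\mathcal{P}$ is norm-dense in $\mathcal{K}(E)_+$, and then builds the approximate unit by hand via the $(S,\epsilon)$-indexed rescaling construction. Your approach is more self-contained, trading the citation of Brown's result for a slightly longer but entirely elementary endgame; the paper's is shorter but relies on that external lemma. Both land in exactly the same place.
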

\begin{proof}
Note that the right ideal of finite-rank operators is dense in $\mathcal{K}(E)$, so by \cite[Theorem 2.1]{BrownIdeal}, there's an increasing approximate unit consisting of elements $u_{\lambda}=\sum_{r\in R_{\lambda}}r^*r$, where each $R_\lambda$ is a finite list of finite-rank operators. Take one such operator $r=\sum_{i=1}^n\theta_{x_i}^{f_i,e_i}$, then we have  $$r^*r=\sum_{1\leq i,j\leq n}\theta_{x_i}^{e_i,f_i}\theta_{x_j}^{f_j,e_j}=\sum_{1\leq i,j\leq n}\theta_{x_i}^{e_i,\theta_{x_j}^{e_j, f_j}(f_i)}=\sum_{1\leq i,j\leq n}\theta_{x_i}^{e_i,e_j\cdot\langle f_j,f_i\rangle}.$$

Note by \Cref{matrixpos} the matrix $\mu_f\in M_{x_1,\dots,x_n}(\mathcal{A})$ with $(i,j)$-th entry $\langle f_i,f_j\rangle$ is positive, so it has a square root $N$ with entries $n_{ij}$ such that $N^*N=\mu_f$. Hence extending $E$ over $\mathcal{A}_{\oplus}$ and using \Cref{compactadd} we get 

$$r^*r=\theta_{\oplus_i x_i}^{(e_i),N^*N\cdot(e_i)}=\theta_{\oplus_i x_i}^{N\cdot(e_i),N\cdot(e_i)}=\sum^n_{i=1}\theta_{x_i}^{\sum_j e_j\cdot n_{ij},\sum_j e_j\cdot n_{ij}},$$

giving an approximate unit of the described form.
\end{proof}
We can then deduce the following generalization of one direction in \cite[Theorem 3.1]{BlecherModules} .
\begin{thm}\label{approxff}
If $E$ is a right Hilbert $\mathcal{A}$-module there is a collection of  operators $\phi_\lambda:\bigoplus_{x\in S_{\lambda}}h_x\rightarrow E$, where each $\{S_\lambda:\lambda\in\Lambda\}$ is a finite list of $\mathcal{A}$-objects, such that the net of operators $\phi_\lambda\phi_\lambda^*$ converges strongly to $\id_E$.
\end{thm}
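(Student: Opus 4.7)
The plan is to build $\phi_\lambda$ directly from the approximate unit for $\mathcal{K}(E)$ provided by \Cref{compactapprox}, in such a way that $\phi_\lambda\phi_\lambda^*$ equals this approximate unit on the nose; strong convergence then follows immediately from \Cref{strongapprox} together with the self-adjointness of the approximate unit.

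More concretely, \Cref{compactapprox} gives us an approximate unit $(u_\lambda)$ for $\mathcal{K}(E)$ with $u_\lambda=\sum_{e\in I(\lambda)}\theta_{x_e}^{e,e}$, where each $I(\lambda)$ is a finite list of elements $e\in E(x_e)$ with $x_e\in\Ob\mathcal{A}$. Set $S_\lambda:=\{x_e:e\in I(\lambda)\}$ (as a finite multiset of objects) and, using the Yoneda correspondence of \Cref{contrayoneda}, take the adjointable maps $\epsilon_e:h_{x_e}\to E$ satisfying $\epsilon_e\epsilon_e^*=\theta_{x_e}^{e,e}$. I then define
\[
\phi_\lambda:\bigoplus_{x\in S_\lambda}h_x\longrightarrow E,\qquad \phi_\lambda:=\sum_{e\in I(\lambda)}\epsilon_e\circ\iota_e^*,
\]
where $\iota_e:h_{x_e}\to\bigoplus_{x\in S_\lambda}h_x$ are the structure maps of the (finite) direct sum from \Cref{findirsumhilb}. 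The key computation is that $\iota_e^*\iota_{e'}=\delta_{e,e'}\id_{h_{x_e}}$, so a routine expansion gives
\[
\phi_\lambda\phi_\lambda^*=\sum_{e,e'\in I(\lambda)}\epsilon_e\iota_e^*\iota_{e'}\epsilon_{e'}^*=\sum_{e\in I(\lambda)}\epsilon_e\epsilon_e^*=\sum_{e\in I(\lambda)}\theta_{x_e}^{e,e}=u_\lambda.
\]

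To conclude, observe that each $u_\lambda$ is self-adjoint (being a positive element of $\mathcal{K}(E)$ as in the proof of \Cref{compactapprox}), so $(\phi_\lambda\phi_\lambda^*)^*=u_\lambda$ as well. By \Cref{strongapprox}, for any $x\in\Ob\mathcal{A}$ and $e\in E(x)$ we have $u_\lambda(e)\to e$; applying this to both $\phi_\lambda\phi_\lambda^*$ and its adjoint shows that $\phi_\lambda\phi_\lambda^*\to\id_E$ in the strong topology of \Cref{strongtopdefn}.

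There is no real obstacle here: the substantive work has already been done in \Cref{compactapprox} (extracting a nicely structured approximate unit for $\mathcal{K}(E)$) and in \Cref{contrayoneda,singlerank} (identifying single-rank operators with maps factoring through representables). The only minor bookkeeping point is to be careful that $I(\lambda)$ is treated as a list rather than a set, so that the finite direct sum $\bigoplus_{x\in S_\lambda}h_x$ has one summand per element $e\in I(\lambda)$ and the maps $\iota_e$ are genuinely orthogonal in the sense $\iota_e^*\iota_{e'}=\delta_{e,e'}\id$.
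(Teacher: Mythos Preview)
Your proof is correct and follows essentially the same approach as the paper: both use the approximate unit from \Cref{compactapprox}, assemble $\phi_\lambda$ from the Yoneda maps $\epsilon_e$ of \Cref{contrayoneda} on the summands of $\bigoplus_{x\in S_\lambda}h_x$, verify $\phi_\lambda\phi_\lambda^*=u_\lambda$, and invoke \Cref{strongapprox}. Your write-up is in fact slightly more careful than the paper's, making explicit the orthogonality $\iota_e^*\iota_{e'}=\delta_{e,e'}\id$ and the self-adjointness of $u_\lambda$ needed for convergence in the strong($*$) topology of \Cref{strongtopdefn}.
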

\begin{proof}
We proceed as in \cite{BlecherModules}. Note that by the previous lemma, $\mathcal{K}(E)$ has an approximate unit $\{u_\lambda:\lambda\in\Lambda\}$ consisting of operators $u_\lambda=\sum_{x\in S_\lambda}\theta_{x}^{e_x,e_x}$. It then follows from \Cref{contrayoneda} that the map $\phi_\lambda:\bigoplus_{x\in S_{\lambda}}h_x\rightarrow E$ defined on $h_{x}(y)=\mathcal{A}(y,x)$ by $a\mapsto e_x\cdot a$ is bounded and has adjoint $\phi_\lambda^*$ with components $\phi_\lambda^*:\begin{array}{l}E(y)\rightarrow h_{x}(y)\\e'\mapsto\langle e,e'\rangle\end{array}$, and one easily verifies $\phi_\lambda\phi_\lambda^*(e)=e\cdot u_\lambda$. Hence \Cref{strongapprox} gives us the required convergence result.
\end{proof}

\begin{rmk}Note that we do not have that $S_\lambda\subset S_{\lambda'}$ for $\lambda<\lambda'$, and even if we did, it would not follow that $\phi_{\lambda'}$ restricts to $\phi_\lambda$. Hence \Cref{approxff} does not allow us to embed $E$ as a direct summand of a single infinite-dimensional free module: to do so would give an untenable generalization of the Kasparov stabilization theorem, contradicting the results in e.g. \cite{NoFrames}.\end{rmk}

As an aside we deduce that a Hilbert module with compact identity must be finitely generated projective:

\begin{prop}\label{compactidfgp}
    If the Hilbert module $E$ has $\id_E\in\mathcal{K}(E)$, then $E$ must be  finitely generated and projective. If $\A$ is unital, the converse holds, i.e. any finitely generated projective module has compact unit.
\end{prop}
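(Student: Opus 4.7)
The plan is to treat the two directions separately, with the forward direction being the substantive one.

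For the forward direction, I would assume $\id_E\in\mathcal{K}(E)$ and invoke \Cref{approxff} to obtain a net of finitely generated free modules with maps $\phi_\lambda:\bigoplus_{x\in S_\lambda}h_x\to E$ such that $u_\lambda:=\phi_\lambda\phi_\lambda^*$ converges strongly to $\id_E$; tracing through the proof via \Cref{compactapprox}, the net $(u_\lambda)$ can be taken to be a norm-bounded approximate unit for $\mathcal{K}(E)$. The key observation is then that, precisely because $\id_E\in\mathcal{K}(E)$, the $\KHilb\mathcal{A}$-relative topology on $\mathcal{L}(E,E)$ dominates the norm topology: indeed, by \Cref{reltop} one of its defining seminorms is $T\mapsto\|T\circ\id_E\|=\|T\|$. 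Combining this with \Cref{2topologies}, which identifies the strong topology with the $\KHilb\mathcal{A}$-relative topology on bounded sets, I conclude that $u_\lambda\to\id_E$ in norm.

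Consequently, for some sufficiently large $\lambda$ the positive operator $u_\lambda$ sits within norm distance $1$ of $\id_E$ and is therefore invertible in the unital $C^*$-algebra $\mathcal{L}(E,E)$. Applying continuous functional calculus to take a positive square root of $u_\lambda^{-1}$ and setting $\psi:=u_\lambda^{-1/2}\phi_\lambda:\bigoplus_{x\in S_\lambda}h_x\to E$, one computes $\psi\psi^*=u_\lambda^{-1/2}u_\lambda u_\lambda^{-1/2}=\id_E$. Hence $\psi^*\psi$ is a projection on the finitely generated free module $\bigoplus_{x\in S_\lambda}h_x$ and $\psi$ restricts to a unitary isomorphism from its image onto $E$, exhibiting $E$ as a direct summand of a finitely generated free module.

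For the converse, I would assume $\mathcal{A}$ is unital and write $E$ as a direct summand of some $\bigoplus_{i=1}^n h_{x_i}$. \Cref{unitalyoneda} identifies $\id_{h_{x_i}}=\iota_{\mathcal{A}}(\id_{x_i})$ as a compact operator, and \Cref{compactdirect} then shows that each $\iota_i\iota_i^*=\lambda_{ii}(\id_{h_{x_i}})\in\mathcal{L}(\bigoplus_j h_{x_j})$ lies in the compacts. Summing, the identity $\id_{\bigoplus_i h_{x_i}}=\sum_i\iota_i\iota_i^*$ is compact, and $\id_E$ is extracted as a corner $\pi\circ\id_{\bigoplus_i h_{x_i}}\circ\sigma$ using the ideal property of compact operators, where $\sigma,\pi$ are the structure maps of the summand decomposition.

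The main obstacle is the strong-to-norm upgrade in the forward direction: without the hypothesis $\id_E\in\mathcal{K}(E)$, one only has strong convergence $u_\lambda\to\id_E$, which is insufficient to locate a single invertible $u_\lambda$. The recognition that compactness of $\id_E$ supplies exactly the seminorm needed to force the $\KHilb\mathcal{A}$-relative topology to agree with the norm near $\id_E$ is the central insight; once this is in hand, the rest is routine spectral calculus together with the idempotent completeness of $\Hilb\mathcal{A}$.
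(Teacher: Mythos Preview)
Your proposal is correct and follows essentially the same route as the paper. The one point of divergence is in the strong-to-norm upgrade: the paper simply observes that since $\id_E\in\mathcal{K}(E)$ the algebra $\mathcal{K}(E)$ is \emph{unital}, and an approximate unit in a unital $C^*$-algebra converges to the unit in norm by definition (apply the approximate unit to the element $\id_E$). Your detour through \Cref{2topologies} and the seminorm $T\mapsto\|T\circ\id_E\|$ reaches the same conclusion, but the paper's one-line observation is more direct. Your construction of the projection as $\psi^*\psi$ with $\psi=u_\lambda^{-1/2}\phi_\lambda$ is arguably cleaner than the paper's $\phi^*(\phi\phi^*)^{-1}\phi$, since self-adjointness is manifest; both produce the same idempotent. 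The converse arguments are identical in substance.
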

\begin{proof}
    As noted before an approximate unit in a unital algebra is simply a net converging in the norm to the identity. Therefore if $\mathcal{K}(E)$ is unital, then by \Cref{compactapprox} the module $E$ must have a finite set of elements $e_1\in E(x_1),\dots,e_n\in E(x_n)$ such that $\|\id_E-\sum_{i=1}^{n}\theta_{x_i}^{e_i,e_i}\|<1$. But then $\sum_{i=1}^{n}\theta_{x_i}^{e_i,e_i}$ is invertible and decomposing as in \Cref{approxff} we see that in fact there is a compact operator $\phi:\bigoplus_{i=1}^nh_{x_i}\rightarrow E$ such that $\phi\phi^*:E\rightarrow E$ is an isomorphism. Suppose it has inverse $\psi$, then one easily verifies that $\phi^*\psi\phi:\bigoplus_{i=1}^nh_{x_i}\rightarrow \bigoplus_{i=1}^nh_{x_i}$ is a projection with image isomorphic to $E$.

    To prove the partial converse: suppose $\A$ is unital, then for any $x\in\Ob\A$ we have $\mathcal{L}(h_x,h_x)\cong\MA(x,x)\cong\A(x,x)\cong\mathcal{K}(h_x,h_x)$ so we see that for any list $x_1\dots x_n\in\Ob\A$ the module $\bigoplus_{i=1}^nh_{x_i}$ has a compact unit $\sum_{i=1}^{i=n}\id_{x_i}$. So if $E$ is a direct summand of $\bigoplus_{i=1}^nh_{x_i}$, we immediately see we can factor the identity of $E$ through that of $\bigoplus_{i=1}^nh_{x_i}$, meaning $\id_E$ is compact.
\end{proof}
%We end by deducing a sort of Kasparov theorem for right Hilbert $\mathcal{A}$-modules.
%\begin{lemma}
%If $E$ is a right Hilbert $\mathcal{A}$-module, there is a multiset $S$ of $\mathcal{A}$-objects and an operator $\phi:\bigoplus_{x\in S}h_x\rightarrow E$ such that $\phi\phi^*=\id_E$.
%\end{lemma}
%\begin{proof}
%Note that if $S_\lambda\subseteq S_{\lambda'}$, then by definition of the maps above we have $\phi_{\lambda'}|_{\bigoplus_{x\in S_\lambda}h_x}=\phi_\lambda$. Now let $S$ be the colimit of all lists $S_\lambda$ and recall the definition of $\bigoplus_{x\in S}h_x$, consisting of the elements in the free product of which the sum of squares of coordinate norms converges. Noting that each map $\phi_\lambda$ has norm at most 1, we see $\phi((a_x)):=\sum_{x\in S}e_x\cdot a_x=\lim_{\lambda}\phi_\lambda((a_x))$ gives a well-defined operator. Note $\phi^*(e)=(\langle e_x,e\rangle)$ and that $\phi_\lambda\phi^*_\lambda(e)=\sum_{x\in S_\lambda}e_x\cdot\langle e_x,e\rangle=\phi_\lambda\phi^*(e)$, so $$\phi\phi^*(e)=\lim_\lambda \phi_\lambda\phi^*(e)=\lim_\lambda \phi_\lambda \phi^*_\lambda(e)=e$$

%Where the last equality follows from the previous lemma.
%\end{proof}

\section{Tensor products of bimodules}
In this section we prove two of the main results in this article: an Eilenberg-Watts theorem that characterizes functors between Hilbert module categories as being given by tensor products, and a Morita theorem that describes which of these bimodules give equivalences upon tensoring.
\subsection{Tensoring bimodules}
When we consider a ring $A$ as a one-object additive category, an $A-B$ bimodule can also be described as an additive functor $A\rightarrow\mathsf{Mod}-B$. We adapt this to the following definition, which first appeared in the previous section:
% Could try and define the C*cat case directly, or move through modules over additive categories using Kan extensions etc etc?

\begin{defn}
If $\mathcal{A}$ and $\mathcal{B}$ are $C^*$-categories, a \emph{right Hilbert $\mathcal{A}-\mathcal{B}$ bimodule} is a $C^*$-functor $F:\mathcal{A}\rightarrow\Hilb\mathcal{B}$. A right Hilbert bimodule is called \textit{non-degenerate} when the linear span of the subspace $\bigcup_{y\in\Ob\mathcal{A}}F\mathcal{A}(y,x)(F(y))\subseteq F(x)$ is dense for all $x\in\Ob\mathcal{A}$.
\end{defn}
For any $a\in\mathcal{A}(x,y), z\in\Ob\mathcal{B}$ and $e\in F(x)(z)$, we write $a\cdot e$ for $F(a)(e)$ when the action is not ambiguous. 

Notice that there is a $C^*$-category $(\Hilb\mathcal{B})^\mathcal{A}$ of right Hilbert $\mathcal{A}-\mathcal{B}$ bimodules by \Cref{functorcat}. In this category, a morphism $T\in(\Hilb\mathcal{B})^\mathcal{A}(E,E')$ is a bounded adjointable natural transformation of functors with $\|T\|:=\sup_{x\in\Ob\mathcal{A}}\|T_x\|_{\mathcal{L}(E(x),E'(x))}$. There is also a \textit{strong topology} on $(\Hilb\mathcal{B})^\mathcal{A}(E,E')$ in which $T_\lambda\rightarrow T$ if and only if $(T_\lambda)_x\rightarrow T_x$ strongly for each $x\in\Ob\mathcal{A}$. 

We can easily generalize \Cref{unitarycrit} to capture unitary isomorphisms of bimodules:

\begin{prop}\label{unitarybimodcrit}
If $E$ and $F$ are right Hilbert $\mathcal{A}-\mathcal{B}$ bimodules, and $T:E\Rightarrow F$ is a natural transformation from $E$ to $F$, ($E$ and $F$ considered purely as functors $\mathcal{A}\rightarrow\Hilb\mathcal{B}$) then $T$ is a unitary isomorphism of right Hilbert $\mathcal{A}$-modules if and only if for every all objects $x\in\Ob\mathcal{A}$, the map of $\mathcal{B}$-modules $T(x):E(x)\rightarrow F(x)$ is surjective at every $y\in\Ob\mathcal{B}$ and preserves all inner products.
\end{prop}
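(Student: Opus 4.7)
My plan is to reduce to \Cref{unitarycrit} applied objectwise in $\A$. For each $x\in\Ob\A$, the component $T(x):E(x)\to F(x)$ is a natural transformation between the underlying functors $\B^{\mathrm{op}}\to\mathsf{Vect}_\mathbb{C}$ of the Hilbert $\B$-modules $E(x)$ and $F(x)$. The hypotheses---surjectivity at every $y\in\Ob\B$ and preservation of $\B$-valued inner products---are precisely the hypotheses of \Cref{unitarycrit} applied to this component. That lemma therefore immediately yields that each $T(x)$ is a unitary isomorphism in $\Hilb\B$, with $T(x)^{-1}=T(x)^*$.

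The remaining task is to show that these pointwise adjoints assemble into a morphism $T^*:F\Rightarrow E$ in $(\Hilb\B)^\A$. Naturality of the componentwise inverse of a natural transformation with invertible components is a standard categorical fact: from the naturality square $F(a)\circ T(x)=T(x')\circ E(a)$ for $a\in\A(x,x')$, pre-composing with $T(x)^{-1}$ and post-composing with $T(x')^{-1}$ produces the naturality square for $T^{-1}$. Since $T(x)^{-1}=T(x)^*$ pointwise, the same array is natural when read as the adjoint transformation. Each $T(x)^*$ is unitary, hence has norm at most $1$, so $\sup_{x\in\Ob\A}\|T(x)^*\|\leq 1<\infty$ and $T^*$ is a valid morphism in $(\Hilb\B)^\A$ in the sense of \Cref{functorcat}.

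The identities $T^*T=\id_E$ and $TT^*=\id_F$ hold componentwise in $\Hilb\B$ and therefore hold in $(\Hilb\B)^\A$, where composition and identities are taken objectwise. This exhibits $T$ as a unitary isomorphism of bimodules. The converse is immediate: if $T$ is unitary in $(\Hilb\B)^\A$, then each component $T(x)$ is unitary in $\Hilb\B$, and the reverse direction of \Cref{unitarycrit} gives the surjectivity and inner product preservation.

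I do not anticipate a genuine obstacle: the statement is a direct categorification of \Cref{unitarycrit} obtained by applying that lemma fiberwise over $\Ob\A$ and invoking the categorical fact that the componentwise inverse of an invertible natural transformation is natural. The only point that requires a moment's care is confirming the norm bound needed to land in $(\Hilb\B)^\A$, and this is trivial since unitaries have norm $1$.
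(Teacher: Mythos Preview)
Your proof is correct and follows essentially the same approach as the paper, which simply observes that a transformation in the functor $C^*$-category $(\Hilb\mathcal{B})^\mathcal{A}$ is a unitary isomorphism if and only if it is one at every object, and then invokes \Cref{unitarycrit}. You have expanded the details the paper leaves implicit (naturality of the componentwise inverse, the norm bound), but the route is the same.
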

\begin{proof}
    It is obvious that a transformation of functors on $C^*$-categories is a unitary isomorphism in the functor $C^*$-category if and only if it is a unitary isomorphism at any object. So this follows from \Cref{unitarycrit}.
\end{proof}

We continue with our first example of a non-degenerate bimodule.
\begin{lemma}\label{representation}
The Yoneda functor $\iota_\mathcal{A}:\mathcal{A}\xhookrightarrow{}\Hilb\mathcal{A}$ is a non-degenerate $\mathcal{A}-\mathcal{A}$ bimodule.
\end{lemma}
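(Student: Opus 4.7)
The plan is to unpack the non-degeneracy condition in the case $F = \iota_{\A}$ and then reduce it to an already-established factorization result. Concretely, non-degeneracy asks that for each $x \in \Ob\A$, the linear span of $\bigcup_{y \in \Ob\A} \iota_{\A}(\A(y,x))(h_y)$ be dense in $h_x$, i.e.\ that for every $z \in \Ob\A$ this span be dense in $h_x(z) = \A(z,x)$. Unravelling the definitions, an element of this span at $z$ is a finite sum of elements of the form $\iota(a)(e) = a \circ e$, where $a \in \A(y,x)$ and $e \in h_y(z) = \A(z,y)$.

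The key observation is that every single morphism $b \in \A(z,x)$ is already of this form. The cleanest argument is to take $y = z$ and invoke \Cref{factorlemma}, which produces $v \in \A(z,x)$ and $w \in \A(z,z)$ with $b = v \circ w = \iota(v)(w)$. Alternatively, one may fix an approximate unit $(e_\lambda)$ for $\A(z,z)$ and observe by \Cref{approxunit} that $b \circ e_\lambda = \iota(b)(e_\lambda) \to b$; this exhibits an explicit approximating net. Either route shows that already the subset $\iota(\A(z,x))(h_z(z)) \subseteq \iota_{\A}\A(z,x)(h_z)$ is dense (in fact equal, in the first formulation) in $\A(z,x)$, which is a fortiori what we need.

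There is no real obstacle here: the representable modules are designed precisely so that the Yoneda action recovers composition in $\A$, and non-degeneracy of $\iota_{\A}$ is essentially the statement that every morphism in $\A$ can be factored through some object of $\A$, which is guaranteed by \Cref{factorlemma} (or approximately by \Cref{approxunit}). The lemma thus reduces to a one-line invocation once the definitions are unpacked.
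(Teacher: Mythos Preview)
Your argument is correct and essentially parallels the paper's: the paper invokes the approximate-unit characterization of non-degeneracy (\Cref{strongnondegenchar}) and then \Cref{unitcor}, which amounts to your second route via \Cref{approxunit}. Your first route through \Cref{factorlemma} is a minor variant that in fact shows the span \emph{equals} $\A(z,x)$ rather than merely being dense in it; this is the same observation the paper later uses in \Cref{yonedafull}.
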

\begin{proof}
Recall from \Cref{strongnondegenchar} that a functor $F:\mathcal{A}\rightarrow\Hilb\mathcal{B}$ is non-degenerate if and only if for every approximate unit $(u_\lambda)$ for $\mathcal{A}(x,x)$ and every $e\in F(x)$ we have $u_\lambda\cdot e\xrightarrow[]{\lambda} e$. But in the case of the Yoneda functor this follows from \Cref{unitcor}.
\end{proof}
%\begin{lemma}
%If $\mathcal{A}$ is a unital $C^*$-category, a right Hilbert bimodule $E:\mathcal{A}\rightarrow\Hilb\mathcal{B}$ is non-degenerate if and only if it is a unital functor.
%\end{lemma}
%\begin{proof}
%Consider again \Cref{strongnondegenchar} and note that any approximate unit in the unital $C^*$-algebra $\mathcal{A}(x,x)$ is a net $(u_\lambda)$ converging in norm to the unit. It follows that $u_\lambda\cdot e\xrightarrow[]{\lambda}e$ if and only if $\id_x\cdot e=e$ for all $e\in E(x)$.
%\end{proof}
\begin{xmpl}
Every right Hilbert $\mathcal{A}$-module $E$ can be given the structure of a non-degenerate right Hilbert $\mathbb{C}-\mathcal{A}$ bimodule by the unique unital $C^*$-functor $\mathbb{C}\rightarrow\Hilb\mathcal{A}$ that sends the single object of the former category to $E$.
\end{xmpl}

We now move on to a discussion of tensor products of bimodules. In the section below, $\mathcal{A},\mathcal{B},\mathcal{C}$ and $\mathcal{D}$ are $C^*$-categories.
%\begin{lemma}\label{compositionnondegen}
%If $F:\Hilb\mathcal{A}\rightarrow\Hilb\mathcal{B}$ is any unital $C^*$-functor which is  strongly continuous on bounded subsets, then precomposing $F$ with the embedding $\mathcal{A}\xhookrightarrow{}\Hilb\mathcal{A}$ gives a non-degenerate right Hilbert $\mathcal{A}-\mathcal{B}$ bimodule.
%\end{lemma}
%\begin{proof}
%Note that by \Cref{representation} the embedding sends approximate units to nets strongly converging to identities, and as $F$ is unital and strongly continuous it preserves such nets, meaning the composition satisfies the requirement in \Cref{strongnondegenchar} and is hence non-degenerate.
%\end{proof}
%\texttt{should the notation below be the other way around since the tensor swallows up two B actions? might be easier to switch the definitions instead}
\begin{defn}\label{tensordefn}
    If $E:\mathcal{A}\rightarrow\Hilb\mathcal{B}$ and $F:\mathcal{B}\rightarrow\Hilb\mathcal{C}$ are right Hilbert bimodules, their \emph{uncompleted tensor product} $E\otimes_\mathcal{B}F:\mathcal{A}\rightarrow\mathsf{Vect}_\mathbb{C}^\mathcal{C}$ is defined at any two objects $x\in\Ob\mathcal{C},z\in\Ob\mathcal{A}$ by taking in $\mathsf{Vect}_\mathbb{C} $ the colimit  $$ E\otimes_\mathcal{B}F(z)(x):=\bigoplus_{y\in\Ob\mathcal{B}}E(x)(y)\otimes_\mathbb{C} F(y)(z)\bigg/\sim$$
where the relation $\sim$ is generated by identifying any pair of simple tensor elements $e\otimes f\in E(x)(y)\otimes_\mathbb{C}F(y)(z)$ and $e'\otimes f'\in E(x)(y')\otimes_\mathbb{C}F(y')(z)$ such that there exists an element $b\in\mathcal{B}(y,y')$ with  $e'\cdot b=e$ and $b\cdot(f)=f'$. The \emph{tensor product} $E\bar{\otimes}_\mathcal{B}F:\mathcal{A}\rightarrow\Hilb\mathcal{C}$ is then obtained using the inner product defined on any pair of simple tensors $e\otimes f\in E(x)(y)\otimes F(y)(z),e'\otimes f'\in E(x)(y')\otimes F(y')(z')$ as $$\langle e\otimes f, e'\otimes f'\rangle:=\langle f,\langle e,e'\rangle_E\cdot f'\rangle_F$$ and completing under the norm this gives.  The action of $\mathcal{A}$ on this space is defined on simple tensors by $a\cdot(e\otimes f):=a\cdot e\otimes f$ and the $\mathcal{C}$-action is given by $(e\otimes f)\cdot c:=e\otimes (f\cdot c)$; these actions are easily checked to be linear and norm-decreasing so they extend to all of $E\bar{\otimes}_\mathcal{B}F$
\end{defn}
The proof that this gives a well-defined $\mathcal{A}-\mathcal{C}$ bimodule proceeds as in \cite[Proposition 3.10]{MitchenerKK} and will be omitted here for brevity.

A basic result on tensor products of bimodules is that they are associative up to unitary isomorphism:

\begin{lemma}\label{tensorassociative}
    If $E,F,$ and $G$ are respectively $\mathcal{A}-\mathcal{B}, \mathcal{B}-\mathcal{C}$ and $\mathcal{C}-\mathcal{D}$ right Hilbert bimodules, there is a canonical unitary isomorphism $E\bar{\otimes}_\mathcal{B}(F\bar{\otimes}_\mathcal{C}G)\cong (E\bar{\otimes}_\mathcal{B}F)\bar{\otimes}_\mathcal{C} G$.
\end{lemma}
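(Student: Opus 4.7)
The plan is to construct the canonical map on simple tensors, show it respects all the relations defining the uncompleted tensor products, verify it is inner-product preserving, and then apply \Cref{unitarybimodcrit}. Concretely, for fixed $z\in\Ob\mathcal{A}$ and $w\in\Ob\mathcal{D}$, I would first define $\Phi_{z,w}$ on the algebraic level by sending a simple tensor $e\otimes(f\otimes g)$, with $e\in E(z)(y)$, $f\in F(y)(y')$ and $g\in G(y')(w)$, to $(e\otimes f)\otimes g$, and extending linearly. Well-definedness on the uncompleted $\bar{\otimes}_\mathcal{B}$ relations follows because $\Phi_{z,w}$ intertwines the $\mathcal{B}$-actions (the action on $F\bar{\otimes}_\mathcal{C}G$ lives on the $F$-factor, by definition of a $\mathcal{B}-\mathcal{C}$ bimodule), and similarly for the $\bar{\otimes}_\mathcal{C}$ relation inside the second factor: in each case the defining identification $e\cdot b\otimes(f\otimes g)\sim e\otimes(b\cdot f\otimes g)$, respectively $e\otimes(f\cdot c\otimes g)\sim e\otimes(f\otimes c\cdot g)$, is sent to an identification of the same type in the target.

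Next I would verify isometry by a direct inner product computation on two simple tensors $e\otimes(f\otimes g)$ and $e'\otimes(f'\otimes g')$: unfolding the definition of the inner product in \Cref{tensordefn} twice on each side gives
\[
\langle g,\langle f,\langle e,e'\rangle_E\cdot f'\rangle_F\cdot g'\rangle_G
\]
both times, so $\Phi_{z,w}$ preserves the pre-inner products. Hence $\Phi_{z,w}$ extends by continuity (using \Cref{densextend}) to an isometry of the completions, and taken over all $w$ this assembles to a natural transformation of right Hilbert $\mathcal{C}$-modules after applying the action of $\mathcal{D}$, which is again intertwined by construction.

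For naturality in the $\mathcal{A}$-argument, for any $a\in\mathcal{A}(z,z')$ the identity $\Phi_{z',w}(a\cdot e\otimes(f\otimes g))=(a\cdot e\otimes f)\otimes g=a\cdot\Phi_{z,w}(e\otimes(f\otimes g))$ holds on simple tensors and extends by density, showing that $\Phi$ gives a morphism in $(\Hilb\mathcal{D})^\mathcal{A}$. Surjectivity at each object is immediate since the image contains every simple tensor $(e\otimes f)\otimes g$, whose linear span is dense in $(E\bar{\otimes}_\mathcal{B}F)\bar{\otimes}_\mathcal{C}G$ by definition of the tensor product. Combined with inner product preservation, the criterion of \Cref{unitarybimodcrit} gives that $\Phi$ is a unitary isomorphism of right Hilbert $\mathcal{A}-\mathcal{D}$ bimodules.

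The only real obstacle is administrative: one must keep careful track of which tensor product carries which equivalence relation and which bimodule action, and verify that $\Phi$ intertwines everything before passing to completions. Once the algebraic map is set up correctly, the inner product calculation is symmetric in the grouping, and the unitary isomorphism falls out automatically from the characterization in \Cref{unitarybimodcrit}.
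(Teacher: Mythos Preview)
Your proposal is correct and follows exactly the approach the paper takes: define the map on simple tensors by $e\otimes(f\otimes g)\mapsto(e\otimes f)\otimes g$ and verify it is a unitary isomorphism. The paper's proof is a one-line sketch leaving the verification as elementary, so your write-up simply fills in the details the paper omits.
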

\begin{proof}
    This isomorphism is given on simple tensors by $e\otimes(f\otimes g)\mapsto (e\otimes f)\otimes g$. It is elementary to verify that it is in fact a unitary isomorphism which is natural in all three modules.
\end{proof}

In addition, the Yoneda bimodule functions as an identity for non-degenerate bimodules:

\begin{lemma}\label{yonedaisid}
    If $E$ is a non-degenerate right Hilbert $\mathcal{A}-\mathcal{B}$ bimodule there are canonical unitary isomorphisms $U_\ell:(\iota_\mathcal{A})\bar{\otimes}_\mathcal{A}E\xrightarrow[]{\cong} E$ and $U_r:E\bar{\otimes}_\mathcal{B}(\iota_\mathcal{B})\xrightarrow[]{\cong} E$ of right Hilbert $\mathcal{A}-\mathcal{B}$ bimodules.
\end{lemma}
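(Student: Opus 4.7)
The plan is to define $U_\ell$ and $U_r$ on simple tensors, verify they preserve inner products (so they extend isometrically to the completions), and then use non-degeneracy to establish surjectivity, after which \Cref{unitarybimodcrit} promotes them to unitary isomorphisms of bimodules. Concretely, for each $u\in\Ob\A$ and $v\in\Ob\B$ I would set
\[U_\ell(u)_v(a\otimes e):=E(a)(e)\in E(u)(v),\qquad a\in\A(y,u),\ e\in E(y)(v),\]
and
\[U_r(u)_v(f\otimes g):=f\cdot g\in E(u)(v),\qquad f\in E(u)(y),\ g\in\B(v,y),\]
on algebraic simple tensors, extending (sesqui)linearly.

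Both maps descend to the quotient from \Cref{tensordefn}: if $a'\circ c=a$ and $E(c)(e)=e'$ witnesses an equivalence in $\iota_\A\otimes_\A E$, then functoriality of $E$ gives $E(a)(e)=E(a')(E(c)(e))=E(a')(e')$, and the parallel statement for $U_r$ uses associativity of the right $\B$-action on the Hilbert $\B$-module $E(u)$. For preservation of the inner product, the crucial computations on simple tensors are
\[\langle E(a)e,E(a')e'\rangle_{E(u)}=\langle e,E(a^*a')(e')\rangle_{E(y)}=\langle e,\langle a,a'\rangle_{\iota_\A(u)}\cdot e'\rangle_{E(y)},\]
using that $E$ is a $C^*$-functor (so $E(a)^*=E(a^*)$) together with $\langle a,a'\rangle_{h_u}=a^*a'$, and
\[\langle f\cdot g,f'\cdot g'\rangle_{E(u)}=g^*\circ\langle f,f'\rangle_{E(u)}\circ g'=\langle g,\langle f,f'\rangle_{E(u)}\cdot g'\rangle_{\iota_\B(y)},\]
which agree with the tensor-product inner products from \Cref{tensordefn}. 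By \Cref{densextend} both maps then extend uniquely to isometric bimodule morphisms on the completed tensor products.

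The final step is surjectivity at every $(u,v)$. The image of $U_\ell(u)_v$ contains all $a\cdot e$ with $a\in\A(y,u)$ and $e\in E(y)(v)$, and its linear span is dense in $E(u)(v)$ by the non-degeneracy hypothesis on $E$ (\Cref{nondegenbimoddefn}); since an isometric map has closed image, $U_\ell(u)_v$ is surjective. For $U_r(u)_v$ the image contains $E(u)(v)\cdot\B(v,v)$, which is dense in $E(u)(v)$ by \Cref{density} applied to the right Hilbert $\B$-module $E(u)$, giving surjectivity by the same closed-image argument. Applying \Cref{unitarybimodcrit} at each $u\in\Ob\A$ shows that $U_\ell$ and $U_r$ are unitary isomorphisms of right Hilbert $\A-\B$ bimodules. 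Naturality of the whole family in $u$ (making these actually morphisms in $(\Hilb\B)^\A$) is a short verification from functoriality of $E$ and $\iota_\A$ on simple tensors, extended to the completion by continuity; the $\B$-linearity of each component $U_\ell(u)_v$ likewise follows because $E(a)$ is a $\B$-module map.

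I do not expect a genuine conceptual obstacle beyond what is already encoded in the hypothesis: non-degeneracy of $E$ is exactly what is needed to make $U_\ell$ surjective, while the analogous density fact for $U_r$ is automatic from the axioms of Hilbert modules. The main delicacy will be bookkeeping around variance and handedness---$\iota_\A$ is contravariant in its $\A$-argument with inner product $a^*b$, whereas $E$ is covariant with $E(a)^*=E(a^*)$---so one must be careful to check that these conventions assemble coherently at the level of the tensor-product inner product.
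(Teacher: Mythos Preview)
Your proposal is correct and follows essentially the same approach as the paper: define the maps on simple tensors, check they descend and preserve inner products, extend via \Cref{densextend}, and invoke \Cref{unitarybimodcrit} for the unitary conclusion. The only notable difference is in the surjectivity step: the paper appeals to approximate units together with the Cohen--Hewitt theorem to show every element is literally of the form $a\cdot e$ (resp.\ $e\cdot b$), whereas you argue that the image is dense and then closed because an isometry has closed range. Your route is slightly more economical, since \Cref{densextend} already packages the closed-image fact and Cohen--Hewitt is not needed here.
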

\begin{proof}
    The first isomorphism is given on simple tensors by $a\otimes e\mapsto a\cdot e$, and the second by $e\otimes b\mapsto e\cdot b$. It is elementary to verify that these maps respect the equivalence relation defining the uncompleted tensor product, and that they preserve inner products, meaning they're isometric and we can extend them to isometries on the whole module by \Cref{densextend}. Note that if $(u_\lambda)$ and $(v_\mu)$ are approximate units for the relevant endomorphism algebra in $\mathcal{A}$ and $\mathcal{B}$ respectively, then $u_\lambda\cdot e\xrightarrow[]{\lambda} e$ by non-degeneracy and $e\cdot v_\mu\xrightarrow[]{\mu} e $ since this is true of all Hilbert modules. Hence by applying the Cohen-Hewitt theorem, we see both $U_\ell$ and $U_r$ are surjective. Hence by \Cref{unitarybimodcrit} these are unitary maps of bimodules.

    It is elementary to verify that the inverses of these maps are in fact their adjoints, and that they are natural in $E$.
\end{proof}

\begin{cor}\label{tensorextend}
If $E:\mathcal{A}\rightarrow\Hilb\mathcal{B}$ is a non-degenerate $\mathcal{A}-\mathcal{B}$ bimodule, there is for each $x\in\Ob\mathcal{A}$ a unitary isomorphism of $\mathcal{B}$-modules $\rho_x:h_x\bar{\otimes}_\mathcal{A}E\rightarrow E(x)$, given on simple tensors $a\otimes e\in \mathcal{A}(x',x)\otimes E(x')(y)$ by $\rho_x(a\otimes e)=a\cdot e\in E(x)(y)$. This isomorphism is moreover natural in $x$ and hence gives a natural unitary isomorphism of bimodules $E\cong(-\bar{\otimes}_\mathcal{A}E)\circ\iota_\mathcal{A}$.

\end{cor}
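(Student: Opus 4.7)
The plan is to observe that this corollary is essentially a componentwise reformulation of Lemma \ref{yonedaisid} applied to $E$, combined with an unpacking of what tensoring with the Yoneda bimodule looks like objectwise.

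First, I would verify that the tensor product construction in \Cref{tensordefn} is computed objectwise in the first variable, so that the right Hilbert $\mathcal{B}$-module obtained by evaluating the bimodule $\iota_{\mathcal{A}}\bar{\otimes}_{\mathcal{A}}E$ at $x\in\Ob\mathcal{A}$ is literally the tensor product $h_x\bar{\otimes}_{\mathcal{A}}E$. This is true because $\iota_{\mathcal{A}}(x)=h_x$ and the colimit formula in \Cref{tensordefn} is taken coordinatewise: $(\iota_{\mathcal{A}}\bar{\otimes}_{\mathcal{A}}E)(x)(z)$ is built from $\bigoplus_{y\in\Ob\mathcal{A}}\mathcal{A}(y,x)\otimes_{\mathbb{C}}E(y)(z)$ modulo the same relations that define $h_x\bar{\otimes}_{\mathcal{A}}E$ at $z$, with the same completion norm coming from the same inner product formula.

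Next, I would apply \Cref{yonedaisid} to the non-degenerate bimodule $E$ (the left version) to obtain a natural unitary isomorphism of bimodules $U_{\ell}:\iota_{\mathcal{A}}\bar{\otimes}_{\mathcal{A}}E\xrightarrow{\cong}E$ sending a simple tensor $a\otimes e$ to $a\cdot e$. Setting $\rho_x:=(U_{\ell})_x$, the previous step identifies this as a unitary isomorphism of right Hilbert $\mathcal{B}$-modules $h_x\bar{\otimes}_{\mathcal{A}}E\xrightarrow{\cong}E(x)$ with exactly the claimed formula on simple tensors.

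Naturality in $x$ is free: it is the statement that $U_{\ell}$ is a natural transformation between bimodules in $(\Hilb\mathcal{B})^{\mathcal{A}}$, which is built into \Cref{yonedaisid}. Concretely, for $a\in\mathcal{A}(x,x')$ and a simple tensor $b\otimes e\in h_x\bar{\otimes}_{\mathcal{A}}E$, the map induced by $\iota_{\mathcal{A}}(a)$ sends $b\otimes e$ to $(ab)\otimes e$, and $\rho_{x'}((ab)\otimes e)=(ab)\cdot e=a\cdot(b\cdot e)=E(a)(\rho_x(b\otimes e))$. Repackaging the family $(\rho_x)_{x\in\Ob\mathcal{A}}$ as a transformation of bimodules yields the final natural unitary isomorphism $E\cong(-\bar{\otimes}_{\mathcal{A}}E)\circ\iota_{\mathcal{A}}$.

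There is no real obstacle here; the only thing to check carefully is the tautological identification in the first step, after which everything is a direct specialization of \Cref{yonedaisid}.
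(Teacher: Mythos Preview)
Your proposal is correct and takes essentially the same approach as the paper: both obtain $\rho_x$ as the component at $x$ of the unitor isomorphism from \Cref{yonedaisid}, with naturality inherited from that lemma. (The paper's proof writes $U_r$, but given the formula $a\otimes e\mapsto a\cdot e$ this must mean the left unitor $U_\ell$, exactly as you identify.)
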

\begin{proof}
%At every $y\in\Ob\mathcal{B}$, we define the map $\rho_x$ on tensors $a\otimes e\in \mathcal{A}(x',x)\otimes E(x')(y)$ by $\rho_x(a\otimes e)=a\cdot e\in E(x)(y)$. This is compatible with the equivalence relation defining $h_x\otimes E$, so it defines an additive map on the uncompleted module $h_x\otimes_\mathcal{A}E$, which is easily seen to be natural in $x$. We show it's an isometry on this uncompleted module by showing it preserves inner products:

%$$\begin{array}{rl}
%     \langle\rho_x(a\otimes e),\rho(a'\otimes e')\rangle&=\langle a\cdot e,a'\cdot e'\rangle  \\
%     &= \langle e,a^*a'\cdot e'\rangle
%    \\ &=\langle e,\langle a',a\rangle_{h_x}\cdot e'\rangle
%     \\&=\langle a\otimes e,a'\otimes e'\rangle
%\end{array}$$
%Hence by \Cref{densextend} we can extend $\rho_x$ to an isometry on $h_x\bar{\otimes}_\mathcal{A}E$, which is easily seen to still preserve all inner products. Note further that if $(u_\lambda)_{\lambda\in\Lambda}$ is an approximate unit for $\mathcal{A}(x,x)$, we have by non-degeneracy for all $e\in E(x)(y)$ that $\rho_x(u_\lambda\otimes e)=u_\lambda\cdot e\xrightarrow[]{\lambda}e$, so $\rho_x$ has dense image. So by \Cref{unitarycrit}, $\rho_x$ is a unitary isomorphism.
The map $\rho_x$ is obtained by taking the component at $x$ of the map $U_r$ from \Cref{yonedaisid}. Naturality of $\rho_x$ in $x$ is obvious, and naturality of its involute/inverse follows from this. We hence obtain the described isomorphism. 
\end{proof}

Before we move onto our next proposition we need an intermediate lemma on positive operators.

\begin{lemma}\label{positiveineq}
If $S$ is a positive operator on a right Hilbert $\mathcal{D}$-module $H$, then for any $x\in\Ob\mathcal{D}$, $h\in H(x)$ we have $\|S\|\langle h,h\rangle\geq\langle h,Sh\rangle$ in the $C^*$-algebra $\mathcal{D}(x,x)$. Furthermore, if $S\geq Q\geq0$, we have $\|\langle h,Sh\rangle\|\geq \|\langle h,Qh\rangle\|$. If $S$ is any operator, $\langle Sh,Sh\rangle\leq\|S\|^2\langle h,h\rangle$.
\end{lemma}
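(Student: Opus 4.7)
The plan is to reduce each statement to a standard $C^*$-algebraic fact about positive elements, using \Cref{positivitycat} to translate between positivity of operators on $H$ and positivity of the inner products $\langle h,-h\rangle$ in $\mathcal{D}(x,x)$.

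First I would handle the inequality $\|S\|\langle h,h\rangle \geq \langle h,Sh\rangle$. The key observation is that $\|S\|\cdot\id_H - S$ is a positive operator on $H$: this is the standard fact that for any self-adjoint element $s$ of a unital $C^*$-algebra we have $\|s\|\cdot 1 - s \geq 0$, applied in the unitization of $\mathcal{L}(H)$ to the positive (hence self-adjoint) element $S$. Applying \Cref{positivitycat} to $\|S\|\cdot\id_H - S$ then yields, for every $h\in H(x)$, that $\langle h, (\|S\|\cdot\id_H - S)h\rangle \geq 0$ in $\mathcal{D}(x,x)$, which by sesquilinearity is exactly the inequality $\|S\|\langle h,h\rangle \geq \langle h,Sh\rangle$.

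The second statement follows by a similar trick: $S\geq Q\geq 0$ means $S-Q\geq 0$ in $\mathcal{L}(H)$, so \Cref{positivitycat} gives $\langle h,(S-Q)h\rangle\geq 0$, hence $\langle h,Sh\rangle\geq\langle h,Qh\rangle\geq 0$ in the $C^*$-algebra $\mathcal{D}(x,x)$. Then I would invoke the standard fact that the norm on a $C^*$-algebra is monotone on positive elements (this is immediate from the continuous functional calculus, or from the equation $\|a\| = \sup\{\lambda : \lambda\in\mathrm{Spec}(a)\}$ for positive $a$) to conclude $\|\langle h,Sh\rangle\| \geq \|\langle h,Qh\rangle\|$.

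For the third part, I would apply the first part to the positive operator $S^*S$, noting that $\|S^*S\| = \|S\|^2$ by the $C^*$-identity in $\mathcal{L}(H)$. This yields $\|S\|^2\langle h,h\rangle \geq \langle h,S^*Sh\rangle$, and the right-hand side equals $\langle Sh,Sh\rangle$ by the adjointability of $S$, giving the claimed inequality. The main (very mild) obstacle is simply checking that the standard positivity/norm-monotonicity arguments on $C^*$-algebras carry over verbatim here, but this is immediate since $\mathcal{L}(H)$ is a genuine $C^*$-algebra and $\mathcal{D}(x,x)$ is a $C^*$-algebra for each $x$, so no genuinely new arguments are required beyond the translation provided by \Cref{positivitycat}.
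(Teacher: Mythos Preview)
Your proof is correct and follows essentially the same approach as the paper: both deduce $\|S\|\id_H - S \geq 0$ from basic $C^*$-algebra facts, apply \Cref{positivitycat} to translate this into the inner-product inequality, handle the second statement via $S-Q\geq 0$ and norm monotonicity on positives, and obtain the third by applying the first to $S^*S$. Your write-up is slightly more explicit about the $C^*$-identity and the monotonicity of the norm, but the argument is the same.
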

\begin{proof}
Note  by functional calculus in $\mathcal{L}(H)$ we have $\|S\|\id_H\geq S\geq0$ and hence $\|S\|\id_H-S\geq0$, hence by \Cref{positivitycat} we get $$\|S\|\langle h,h\rangle-\langle h, Sh\rangle=\langle h, (\|S\|\id_H-S)h\rangle\geq0.$$ 

If $S\geq Q\geq 0$ then by applying \Cref{positivitycat} to $S-Q$ we have $\langle h,Sh\rangle\geq\langle h,Qh\rangle\geq0$ so $\|\langle h,Sh\rangle\|\geq\langle \|h,Qh\rangle\|$

The final result is obtained by applying the first result to the positive operator $S^*S$.
\end{proof}

This allows us to prove the functoriality of the tensor product construction.
\begin{prop}For any right Hilbert $\mathcal{B}-\mathcal{C}$ bimodule $F$, a bounded adjointable transformation of $\mathcal{A}-\mathcal{B}$ bimodules $T:E\rightarrow E'$ induces a bounded adjointable morphism of $\mathcal{A}-\mathcal{C}$ bimodules $T\bar{\otimes}_\mathcal{B}\id:E\bar{\otimes}_\mathcal{B}F\rightarrow E'\bar{\otimes}_\mathcal{B}F$ by acting on the first coordinate.
\end{prop}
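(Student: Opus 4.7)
The plan is to define $(T \bar{\otimes}_{\mathcal{B}} \id)$ on the algebraic pre-completion of the tensor product via the formula $e \otimes f \mapsto Te \otimes f$ on simple tensors and then extend by continuity. The assignment descends to the quotient by the defining equivalence relation of \Cref{tensordefn} because each component $T_x$ is a morphism of right Hilbert $\mathcal{B}$-modules, so $T_x(e' \cdot b) = T_x(e') \cdot b$ and hence $T(e' \cdot b) \otimes f = (Te') \cdot b \otimes f \sim Te' \otimes (b \cdot f) = T(e') \otimes (b \cdot f)$. The main technical task is to prove boundedness by $\|T\| := \sup_x \|T_x\|$; once this is established, \Cref{densextend} produces the promised extension to the completed tensor product.

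For the boundedness bound, given a finite sum $\xi = \sum_{i=1}^n e_i \otimes f_i$ with $e_i \in E(x)(y_i)$ and $f_i \in F(y_i)(z)$, I expand
$$\|(T \otimes \id)(\xi)\|^2 = \Big\|\sum_{i,j}\langle f_i, \langle Te_i, Te_j\rangle_{E'(x)} \cdot f_j\rangle_{F}\Big\|$$
and control it via a double application of \Cref{positiveineq}. First, use \Cref{extensionequivalence} to transfer $T_x$ to the extended operator $(T_x)_{\oplus}: E(x)_{\oplus} \to E'(x)_{\oplus}$, which has the same operator norm as $T_x$ and acts entrywise on tuples. Applying \Cref{positiveineq} to $(T_x)_{\oplus}$ at the tuple $\mathbf{e} = (e_1, \dots, e_n) \in E(x)_{\oplus}(\{y_1,\dots,y_n\})$ yields the matrix inequality
$$[\langle Te_j, Te_i\rangle]_{i,j} \leq \|T\|^2 [\langle e_j, e_i\rangle]_{i,j}$$
in the $C^*$-algebra $\mathcal{B}_{\oplus}(\{y_1,\dots,y_n\},\{y_1,\dots,y_n\})$.

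Second, extend $F: \mathcal{B} \to \Hilb \mathcal{C}$ to a $C^*$-functor $F_{\oplus}: \mathcal{B}_{\oplus} \to \Hilb \mathcal{C}$ via \Cref{addextendfunc} (applicable since $\Hilb \mathcal{C}$ admits finite direct sums). Because $C^*$-functors preserve positivity, applying $F_{\oplus}$ to the matrix inequality above produces a corresponding inequality of positive operators on $F_{\oplus}(\{y_1,\dots,y_n\})$. A second use of \Cref{positiveineq}, now to the tuple $(f_1,\dots,f_n) \in F_{\oplus}(\{y_1,\dots,y_n\})(z)$, converts this operator inequality into the desired bound $\|(T \otimes \id)(\xi)\|^2 \leq \|T\|^2 \|\xi\|^2$, once both sides are identified with the appropriate sums of scalar inner products in $\mathcal{C}(z,z)$ using the explicit description of $F_\oplus$ on matrices.

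Adjointability of the resulting operator is then immediate from the calculation on simple tensors
$$\langle (T \otimes \id)(e \otimes f), e' \otimes f'\rangle = \langle f, \langle Te, e'\rangle \cdot f'\rangle = \langle f, \langle e, T^* e'\rangle \cdot f'\rangle = \langle e \otimes f, (T^* \otimes \id)(e' \otimes f')\rangle,$$
showing that $T^* \bar{\otimes}_{\mathcal{B}} \id$ is the adjoint. Naturality in the $\mathcal{A}$-variable follows directly from naturality of $T$, and $\mathcal{C}$-linearity is obvious on simple tensors, so $T \bar{\otimes}_{\mathcal{B}} \id$ is a genuine morphism in $(\Hilb\mathcal{C})^{\mathcal{A}}$. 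I expect the main obstacle to be the double passage through additive hulls—first of $\mathcal{B}$ via the extended operator, then of $F$ via the extended functor—and in particular keeping the indexing conventions (the swap between $\langle e_i, e_j\rangle$ and the $(i,j)$-entry of the Gram matrix, per \Cref{addextend}) consistent throughout both extensions.
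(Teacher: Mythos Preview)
Your proposal is correct and follows essentially the same approach as the paper: define the map on simple tensors, pass to the additive hull of $\mathcal{B}$ so that the $e_i$ bundle into a single element $\mathbf{e}$, apply the last clause of \Cref{positiveineq} to $T$ to get the Gram-matrix inequality $\langle T\mathbf{e},T\mathbf{e}\rangle\leq\|T\|^2\langle\mathbf{e},\mathbf{e}\rangle$, push through $F$ (or $F_\oplus$), and then use the middle clause of \Cref{positiveineq} at $\mathbf{f}$ to conclude. The only cosmetic difference is that the paper invokes \Cref{extensionequivalence} to assume $\mathcal{B}$ is already additively closed, whereas you explicitly carry the extensions $(T_x)_\oplus$ and $F_\oplus$; your concern about the indexing swap in \Cref{addextend} is well-placed but does not affect the argument.
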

\begin{proof}
We first define a map $T\otimes_\mathcal{B}\id:E\otimes_\mathcal{B} F\rightarrow E'\otimes_\mathcal{B}F$ on the uncompleted Hilbert modules by $(T\otimes_\mathcal{B}\id)(e\otimes f):=T(e)\otimes f$. It is straightforward to show that $(T\otimes_\mathcal{B}\id)^*=(T^*)\otimes_\mathcal{B}\id$ functions as an adjoint on the uncompleted module. To extend the transformation to the completed module, we must show that $T\otimes_\mathcal{B}\id$ is bounded on sums of simple tensors: more specifically we will prove that $\|T\otimes_\mathcal{B}\id\|\leq\|T\|$ on this subspace. 

We need to show, for an arbitrary element $$g=\sum_{i=1}^n e_i\otimes f_i\in E\otimes_\mathcal{B}F(x)(z)$$ where $e_i\otimes f_i\in E(x)(y_i)\otimes F(y_i)(z)$, that $\|T\|\|g\|\geq\|(T\otimes\id)(g)\|$. But by \Cref{addextend} and \Cref{extensionequivalence} we can assume without loss of generality that $\mathcal{B}$ is closed under finite direct sums. So writing $\textbf{f}:=(f_i)\in ( F(\oplus_i y_i))(z)$ and $\textbf{e}:=E_{\oplus}(x)(\oplus_i y_i)$, we have $$\begin{array}{rl}
   \|T\|^2\|g\|^2  & =\|T\|^2\|\sum_{i,j}\langle f_i,F(\langle e_i,e_j\rangle)(f_j)\rangle\|\\&=\|\langle\textbf{f},F(\langle \|T\|\textbf{e},\|T\|\textbf{e}\rangle)(\textbf{f})\rangle\|
    \\ &\geq\|\langle\textbf{f},F(\langle T\textbf{e},T\textbf{e}\rangle)(\textbf{f})\rangle\| \\&=\|(T\otimes\id)(g)\|^2
\end{array}$$
where the inequality follows from \Cref{positiveineq}. 

By \Cref{densextend} we can extend $T\otimes_\mathcal{B}\id$ to a map on the completed module $T\bar{\otimes}_\mathcal{B}\id:E\bar{\otimes}_\mathcal{B}F\rightarrow E'\bar{\otimes}_\mathcal{B}F$, also bounded by $\|T\|$ and with adjoint $T^*\bar{\otimes}_\mathcal{B}\id$
\end{proof}
We are now going to show that moreover, the tensoring operation above satisfies a strong continuity requirement.

\begin{lemma}\label{tensorcts}
The right tensor product with $F$, together with the assignment $T\mapsto T\bar{\otimes}_\mathcal{B}\id$ defined above, gives a unital $C^*$-functor $-\bar{\otimes}_\mathcal{B}F:\Hilb\mathcal{B}^{\mathcal{A}}\rightarrow\Hilb\mathcal{C}^{\mathcal{A}}$ which is strongly continuous on bounded subsets of operators.
\end{lemma}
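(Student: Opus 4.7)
The plan divides into verifying the $C^*$-functor axioms and then establishing the strong continuity on bounded subsets.

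For the functor properties, unitality $\id_E\bar{\otimes}_\B\id = \id_{E\bar{\otimes}_\B F}$ and preservation of composition $(ST)\bar{\otimes}_\B\id = (S\bar{\otimes}_\B\id)(T\bar{\otimes}_\B\id)$ are immediate on simple tensors $e\otimes f$, and extend to all of $E\bar{\otimes}_\B F$ because simple tensors are dense and all these maps are bounded by the inequality $\|T\bar{\otimes}_\B\id\|\leq\|T\|$ proven in the preceding proposition. Linearity in $T$ is obvious on simple tensors and extends similarly. The identity $(T\bar{\otimes}_\B\id)^* = T^*\bar{\otimes}_\B\id$ was already verified in the preceding proposition. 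Hence $-\bar{\otimes}_\B F$ is indeed a unital $C^*$-functor between the functor $C^*$-categories.

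For strong continuity on bounded subsets, by linearity it suffices to take a net $T_\lambda:E\to E'$ in $\Hilb\B^\A(E,E')$ with $\|T_\lambda\|\leq K$ and $T_\lambda\to 0$ strongly, and show $T_\lambda\bar{\otimes}_\B\id\to 0$ strongly in $\Hilb\C^\A$. Fixing an object $x\in\Ob\A$ and $z\in\Ob\C$, we must show that $(T_\lambda\bar{\otimes}_\B\id)_x(g)\to 0$ in norm for every $g\in (E\bar{\otimes}_\B F)(x)(z)$, and similarly for the adjoints. On a simple tensor $e\otimes f\in E(x)(y)\otimes F(y)(z)$, I would compute
\[
\|T_\lambda(e)\otimes f\|^2 \;=\; \|\langle f,\,F(\langle T_\lambda e,T_\lambda e\rangle_{E'})\,f\rangle_F\| \;\leq\; \|f\|^2\,\|F(\langle T_\lambda e,T_\lambda e\rangle_{E'})\| \;\leq\; \|f\|^2\,\|T_\lambda e\|^2,
\]
where the last step uses that the $C^*$-functor $F$ is norm-decreasing (\Cref{starfuncisbounded}). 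Since $T_\lambda\to 0$ strongly at $x$, $\|T_\lambda e\|\to 0$, so $\|(T_\lambda\bar{\otimes}_\B\id)(e\otimes f)\|\to 0$. By linearity the same holds on finite sums of simple tensors.

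To pass from the dense subspace of finite sums of simple tensors to all of $(E\bar{\otimes}_\B F)(x)(z)$, I would invoke \Cref{limsuptrick}: the family $(T_\lambda\bar{\otimes}_\B\id)_x$ is uniformly bounded by $K$, and we have pointwise convergence to zero on a dense subspace, so we get convergence to zero on the whole space. For the adjoint part, observe that $(T_\lambda\bar{\otimes}_\B\id)^* = T_\lambda^*\bar{\otimes}_\B\id$ and that strong convergence of $T_\lambda$ includes strong convergence of $T_\lambda^*$, so the identical argument applies. The main obstacle is really just the density/uniform-boundedness bookkeeping, which \Cref{limsuptrick} handles cleanly; the critical tool is the norm estimate $\|T\bar{\otimes}_\B\id\|\leq\|T\|$ from the previous proposition (which itself rested on \Cref{positiveineq}) combined with the fact that $F$ is norm-decreasing as a $C^*$-functor.
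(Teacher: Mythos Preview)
Your proposal is correct and follows essentially the same route as the paper: verify the $C^*$-functor axioms directly on simple tensors, then for strong continuity reduce by linearity to a bounded net $T_\lambda\to 0$, bound $\|T_\lambda(e)\otimes f\|$ via the inner-product formula and the fact that $F$ is norm-decreasing, and pass to the whole module using \Cref{limsuptrick} with the uniform bound $\|T_\lambda\bar\otimes_\B\id\|\le\|T_\lambda\|$. You are in fact slightly more explicit than the paper in noting that the adjoint case must be handled separately (using $(T_\lambda\bar\otimes_\B\id)^*=T_\lambda^*\bar\otimes_\B\id$), which the definition of the strong topology requires.
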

\begin{proof}
It is easily verified from the definition of $T\bar{\otimes}_\mathcal{B}\id$ that this assignment it defines a $C^*$-functor. 

So we have only left to show that if $T_\lambda$ is a bounded net of operators in $\mathcal{L}(E,E')$ strongly converging to $0$, then the net $(T_\lambda\bar{\otimes}\id)$ of bimodule operators $(\Hilb\mathcal{C})^\mathcal{A}$ strongly converges to $0$ too. Note that as the operators are uniformly bounded, by \Cref{limsuptrick} it is enough to show convergence on finite sums of simple tensors, hence we can show it just on simple tensors:

$$\begin{array}{rl}\|(T_\lambda\otimes\id)(e\otimes f)\|^2 &= \|T_\lambda(e)\otimes f\|^2
\\&=\|\langle f,F(\langle T_\lambda(e),T_\lambda(e)\rangle)(f)\rangle\|
\\&\leq\|F(\langle T_\lambda(e),T_\lambda(e)\rangle)\|\|\langle f,f\rangle\|
\\&\leq\|T_\lambda(e)\|^2\|f\|^2
\end{array}$$
where the first inequality follows from the last statement of \Cref{positiveineq}, and the second from \Cref{starfuncisbounded}. But by hypothesis the last term goes to zero. Hence $(T_\lambda\bar{\otimes}\id)$ goes to 0 strongly.
\end{proof}

\begin{defn}From now on we will write `strong' instead of `strongly continuous on bounded subsets', for brevity.\end{defn} Note that by \Cref{2topologies}, this agrees with the terminology in \cite{AntounVoigt}. We follow with a lemma that says the functoriality of tensor products also holds on the left.
\begin{lemma}\label{lefttensorfunctorial}If $E$ is a right Hilbert $\mathcal{A}-\mathcal{B}$ bimodule, the left tensor product with $E$ induces a strong unital  $C^*$-functor $E\bar{\otimes}_\mathcal{B}-:\Hilb\mathcal{C}^\mathcal{B}\rightarrow\Hilb\mathcal{C}^\mathcal{A}$.

%every bounded adjointable transformation of $\mathcal{B}-\mathcal{C}$ bimodules $S:F\rightarrow F'$ induces a bounded adjointable operator $\id\bar{\otimes}_\mathcal{B}S:E\bar{\otimes}_\mathcal{B}F\rightarrow E\bar{\otimes}_\mathcal{B}F'$ by acting on the second coordinate, and this assignment is strong.
\end{lemma}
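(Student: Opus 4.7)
The plan is to mirror the proof of \Cref{tensorcts} with an additional step to handle the fact that the operator now acts on the right factor. First, I would define the map $\id\bar{\otimes}_\mathcal{B}S : E\bar{\otimes}_\mathcal{B}F \rightarrow E\bar{\otimes}_\mathcal{B}F'$ on simple tensors by $e\otimes f \mapsto e\otimes S(f)$, check it respects the balancing relation (straightforward, since $b\cdot Sf = S(b\cdot f)$ by naturality of $S$), and extend linearly to the uncompleted tensor product. The would-be adjoint $\id\otimes S^*$ is defined similarly and is easily verified to function as an adjoint on the uncompleted module.

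The main obstacle will be showing $\id\otimes S$ is bounded by $\|S\|$ on sums of simple tensors, so that it extends to the completion via \Cref{densextend}. I would use the additive hull trick exactly as in \Cref{tensorcts}: write $g=\sum_{i=1}^n e_i\otimes f_i$ and, extending $F$ to $F_\oplus:\mathcal{B}_\oplus\rightarrow\Hilb\mathcal{C}$, collect the $f_i$ into a single tuple $\mathbf{f}\in F_\oplus(\oplus y_i)$ and the $e_i$ into $\mathbf{e}\in E_\oplus(z)(\oplus y_i)$. Then
\[
\|(\id\otimes S)(g)\|^2=\|\langle S_\oplus\mathbf{f},F'_\oplus(\langle\mathbf{e},\mathbf{e}\rangle)S_\oplus\mathbf{f}\rangle\|=\|\langle\mathbf{f},S_\oplus^*F'_\oplus(\langle\mathbf{e},\mathbf{e}\rangle)S_\oplus\mathbf{f}\rangle\|.
\]
The key extra step (not needed in \Cref{tensorcts}) is to use naturality of $S$, which gives $F'_\oplus(b)\circ S_\oplus=S_\oplus\circ F_\oplus(b)$; taking adjoints yields $S_\oplus^*\circ F'_\oplus(b)=F_\oplus(b)\circ S_\oplus^*$ for self-adjoint $b$. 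Setting $b=\langle\mathbf{e},\mathbf{e}\rangle$, this lets us rewrite the right-hand side as $\|\langle\mathbf{f},F_\oplus(\langle\mathbf{e},\mathbf{e}\rangle)S_\oplus^*S_\oplus\mathbf{f}\rangle\|$. Now $S_\oplus^*S_\oplus$ is positive, bounded by $\|S\|^2$, and commutes with $F_\oplus(\langle\mathbf{e},\mathbf{e}\rangle)$ (again by naturality), so their product is positive and bounded above by $\|S\|^2 F_\oplus(\langle\mathbf{e},\mathbf{e}\rangle)$. Applying \Cref{positiveineq} gives $\|(\id\otimes S)(g)\|^2\leq\|S\|^2\|g\|^2$, as required.

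Once boundedness is established, verifying that the assignment $S\mapsto\id\bar{\otimes}_\mathcal{B}S$ is linear, unital, and compatible with compositions and involutions is routine (all these identities hold on simple tensors and extend by density). For strong continuity on bounded subsets, suppose $(S_\lambda)$ is a uniformly bounded net in $(\Hilb\mathcal{C})^\mathcal{B}(F,F')$ converging strongly to $0$. On a simple tensor $e\otimes f$, using naturality and \Cref{CSLemma}:
\[
\|(\id\otimes S_\lambda)(e\otimes f)\|^2=\|\langle S_\lambda f,S_\lambda(\langle e,e\rangle\cdot f)\rangle\|\leq\|S_\lambda f\|\cdot\|S_\lambda(\langle e,e\rangle\cdot f)\|\xrightarrow[]{\lambda}0
\]
since both factors go to zero by strong convergence of $S_\lambda$ at the relevant module elements. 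Extending this to sums of simple tensors is linear, and \Cref{limsuptrick} combined with uniform boundedness $\|\id\bar{\otimes}_\mathcal{B}S_\lambda\|\leq\|S_\lambda\|$ promotes this to convergence on all of $E\bar{\otimes}_\mathcal{B}F$. The case of adjoints is identical since $(\id\bar{\otimes}_\mathcal{B}S_\lambda)^*=\id\bar{\otimes}_\mathcal{B}S_\lambda^*$ and $S_\lambda^*\to 0$ strongly by hypothesis.
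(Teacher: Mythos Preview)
Your proposal is correct and follows exactly the approach the paper indicates: the paper omits the proof entirely, stating only that ``it proceeds similarly to the above two results,'' and your argument is precisely the expected adaptation of the proof of \Cref{tensorcts}. You have correctly identified the one genuinely new ingredient needed in the left-tensor case---namely, that naturality of $S$ gives the commutation $S^*_\oplus S_\oplus\,F_\oplus(\langle\mathbf{e},\mathbf{e}\rangle)=F_\oplus(\langle\mathbf{e},\mathbf{e}\rangle)\,S^*_\oplus S_\oplus$, which is what makes the positivity bound go through---and your strong-continuity estimate via \Cref{CSLemma} is a clean way to handle that part.
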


We omit the proof as it proceeds similarly to the above two results.

We record here for later use a simple lemma regarding non-degeneracy of tensor products.

\begin{lemma}\label{nondegencompo}
    If $E:\mathcal{A}\rightarrow\Hilb\mathcal{B}$ is a non-degenerate right Hilbert bimodule and $F:\mathcal{B}\rightarrow\Hilb\mathcal{C}$ is a right Hilbert bimodule, then the bimodule $E\Bar{\otimes}_\mathcal{B}F:\mathcal{A}\rightarrow\Hilb\mathcal{C}$ is also non-degenerate.
\end{lemma}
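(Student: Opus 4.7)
The plan is to reduce the problem to a statement about simple tensors, and then import the non-degeneracy of $E$ through the continuity of the tensor product operation.

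First I would note that by construction, finite linear combinations of simple tensors $e\otimes f$ with $e\in E(x)(y)$ and $f\in F(y)(z)$ are dense in $(E\bar\otimes_\mathcal{B} F)(x)(z)$. Hence to show non-degeneracy of $E\bar\otimes_\mathcal{B} F$ at an object $x\in\Ob\mathcal{A}$, it suffices to approximate every simple tensor $e\otimes f\in(E\bar\otimes_\mathcal{B} F)(x)(z)$ by elements of $\bigcup_{x'\in\Ob\mathcal{A}}(E\bar\otimes_\mathcal{B} F)(\mathcal{A}(x',x))\big((E\bar\otimes_\mathcal{B} F)(x')(z)\big)$.

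Next I would record the norm estimate $\|v\otimes f\|\leq \|v\|\cdot\|f\|$ for simple tensors, which is a one-line consequence of \Cref{CauchyS}, \Cref{continuity}, and the $C^*$-identity applied in the expression $\|v\otimes f\|^2=\|\langle f,\langle v,v\rangle_E\cdot f\rangle_F\|$. This shows that for fixed $f$, the map $v\mapsto v\otimes f$ is norm-continuous on the underlying Banach space $E(x)(y)$. Since $E$ is non-degenerate, we may choose, for every $\epsilon>0$, a finite sum $\sum_{i=1}^n a_i\cdot e'_i$ with $a_i\in\mathcal{A}(y_i,x)$ and $e'_i\in E(y_i)(y)$ satisfying $\|e-\sum_i a_i\cdot e'_i\|<\epsilon/(\|f\|+1)$. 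Then
\[\Big\|e\otimes f-\sum_{i=1}^n (a_i\cdot e'_i)\otimes f\Big\|=\Big\|\Big(e-\sum_{i=1}^n a_i\cdot e'_i\Big)\otimes f\Big\|\leq\epsilon.\]

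Finally, on simple tensors the $\mathcal{A}$-action is $a\cdot(e'\otimes f)=(a\cdot e')\otimes f$ by \Cref{tensordefn}, so each summand $(a_i\cdot e'_i)\otimes f$ lies in $(E\bar\otimes_\mathcal{B} F)(\mathcal{A}(y_i,x))\big((E\bar\otimes_\mathcal{B} F)(y_i)(z)\big)$. Letting $\epsilon\to0$ completes the proof.

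There is no serious obstacle here: the argument is essentially a reduction to simple tensors together with the elementary norm bound on the tensor product norm. The only thing to be careful about is the indexing of objects in $\mathcal{B}$ when writing out the approximation, but this has no bearing on the argument because the approximation and the tensor norm bound happen at a single fixed $y\in\Ob\mathcal{B}$.
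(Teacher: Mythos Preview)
Your proof is correct and follows essentially the same route as the paper: reduce to simple tensors, use the norm bound $\|v\otimes f\|\leq\|v\|\,\|f\|$, and import the non-degeneracy of $E$ on the first factor. The only cosmetic difference is that the paper verifies non-degeneracy via the approximate-unit characterization (\Cref{strongnondegenchar}) and then invokes \Cref{limsupid} to pass from simple tensors to general elements, whereas you work directly with the density definition; both arguments rest on the same estimate, which the paper writes out as $\|(u_\lambda\cdot e-e)\otimes f\|=\|\langle f,F(\langle u_\lambda\cdot e-e,u_\lambda\cdot e-e\rangle)(f)\rangle\|^{1/2}\to 0$.
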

\begin{proof}
    By the characterization in \Cref{nondegenstrongthm}, it suffices to show that for any element $g\in E\Bar{\otimes}_\mathcal{B}F(x)(z)$, and approximate unit $(u_\lambda)$ for $\mathcal{A}(x,x)$, that $u_\lambda\cdot g\xrightarrow[]{\lambda}g$ in norm. Consider first a simple tensor $e\otimes f\in E(x)(y)\otimes F(y)(z)$: note that $$\begin{array}{rll}\|u_\lambda\cdot(e\otimes f)-e\otimes f\|=\|( u_\lambda\cdot e-e)\otimes f\|=\|\langle f,F(\langle u_\lambda\cdot e-e,u_\lambda\cdot e-e\rangle)(f)\rangle\|\end{array}$$ and this last term tends to zero since $u_\lambda\cdot e\xrightarrow[]{\lambda} e$ in the norm and $F$ is bounded. 
    
    The same result follows for finite sums of simple tensors, and by \Cref{limsupid} we obtain the case for general $g$, since every $g$ is by definition a norm-limit of finite sums of simple tensors.
\end{proof}

Letting now $\mathcal{A}=\mathbb{C}$ and noting that a non-degenerate right Hilbert bimodule whose left action comes from $\mathbb{C}$ is simply a right Hilbert module, we easily derive the following corollary from \Cref{lefttensorfunctorial} and \Cref{nondegencompo}:

\begin{prop}\label{tensorfunctor} Given a right Hilbert $\mathcal{B}-\mathcal{C}$ bimodule $E$, the tensor product above defines a strong unital $C^*$-functor $-\bar{\otimes}_\mathcal{B}E:\Hilb\mathcal{B}\rightarrow\Hilb\mathcal{C}$.
\end{prop}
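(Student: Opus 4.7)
The plan is to reduce this statement to the more general \Cref{tensorcts}, which already shows, for any indexing $C^*$-category $\mathcal{A}$, that $-\bar{\otimes}_\mathcal{B}E : \Hilb\mathcal{B}^\mathcal{A} \to \Hilb\mathcal{C}^\mathcal{A}$ is a strong unital $C^*$-functor. I would specialize to $\mathcal{A}=\mathbb{C}$ and then carve out the part of the domain and codomain corresponding to ordinary Hilbert modules.

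First I would set up the identification alluded to in the excerpt: a right Hilbert $\mathcal{B}$-module $M$ corresponds to the unital $C^*$-functor $\mathbb{C} \to \Hilb\mathcal{B}$ sending the lone object to $M$ and $1\in\mathbb{C}$ to $\id_M$, and since $\mathbb{C}$ is unital this functor is non-degenerate by the remark following \Cref{nondegenstrongthm}. Conversely any non-degenerate (equivalently unital) functor $\mathbb{C} \to \Hilb\mathcal{B}$ is determined by the object it picks out. Morphisms of Hilbert $\mathcal{B}$-modules correspond bijectively to bounded adjointable natural transformations (naturality is automatic, since the only scalar action to check is multiplication by the identity), with matching norm and involution. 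Hence $\Hilb\mathcal{B}$ embeds as a full sub-$C^*$-category of the non-degenerate functors in $\Hilb\mathcal{B}^{\mathbb{C}}$, and likewise for $\mathcal{C}$.

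Next, I would apply \Cref{tensorcts} with $\mathcal{A}=\mathbb{C}$ and the given bimodule $E$ to obtain a strong unital $C^*$-functor $-\bar{\otimes}_\mathcal{B}E : \Hilb\mathcal{B}^\mathbb{C} \to \Hilb\mathcal{C}^\mathbb{C}$. I would then invoke \Cref{nondegencompo}: any $\mathbb{C}-\mathcal{B}$ bimodule associated to a right Hilbert $\mathcal{B}$-module is trivially non-degenerate, so $M\bar{\otimes}_\mathcal{B}E$ is a non-degenerate $\mathbb{C}-\mathcal{C}$ bimodule, i.e.\ comes from a right Hilbert $\mathcal{C}$-module. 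Consequently, the general functor restricts to one $\Hilb\mathcal{B} \to \Hilb\mathcal{C}$.

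Finally, strong continuity on bounded subsets, $C^*$-functoriality and unitality all pass immediately from the ambient functor on $\Hilb\mathcal{B}^\mathbb{C}$ to its restriction to the full subcategories, so no further verification is required. The only mild obstacle is the bookkeeping around the identification $\Hilb\mathcal{B} \simeq \{\text{non-degenerate functors } \mathbb{C} \to \Hilb\mathcal{B}\}$, which amounts to a Yoneda-style unpacking of definitions and is essentially routine.
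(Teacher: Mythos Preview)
Your proposal is correct and follows essentially the same route as the paper: set $\mathcal{A}=\mathbb{C}$, identify non-degenerate $\mathbb{C}$--$\mathcal{B}$ bimodules with right Hilbert $\mathcal{B}$-modules, and invoke \Cref{nondegencompo} to ensure the tensor product lands back in that subcategory. The paper happens to cite \Cref{lefttensorfunctorial} rather than \Cref{tensorcts}, but these are the left and right versions of the same functoriality statement, and the substance of the argument is identical.
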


An important property of the tensor product functor is that it can be viewed as an extension (through the Yoneda embedding) of a bimodule from representable modules to arbitrary modules. We make this precise in the following lemma:

%\texttt{put counterexample of multiplier module functor?}

We derive a corollary that characterizes which bimodules preserve compact operators upon tensoring.
\begin{cor}\label{compactpreserve}
For a given right Hilbert $\A-\B$ bimodule $E:\mathcal{A}\rightarrow\Hilb\mathcal{B}$, the tensor product functor $-\bar{\otimes}_\mathcal{A}E:\Hilb\mathcal{A}\rightarrow\Hilb\mathcal{B}$ preserves compact morphisms if and only if $E$ has image entirely in compact morphisms.
\end{cor}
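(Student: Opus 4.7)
The plan rests on the natural unitary isomorphism $\rho:(-\bar{\otimes}_\A E)\circ\iota_\A\xrightarrow{\cong}E$ of \Cref{tensorextend}. A direct check on a simple tensor $b\otimes e\in h_x(z)\otimes E(z)(w)$ gives $\rho_y((\iota(a)\bar{\otimes}\id_E)(b\otimes e)) = (a\circ b)\cdot e = E(a)(b\cdot e) = E(a)(\rho_x(b\otimes e))$, so naturality of $\rho$ in the $x$-coordinate identifies $\iota(a)\bar{\otimes}\id_E$ with $E(a)$ via unitary isomorphisms of Hilbert $\B$-modules; in particular, one is compact if and only if the other is. Throughout we tacitly reduce to the case that $E$ is non-degenerate, noting that Cohen-Hewitt (\Cref{CohHewCstar}) lets us rewrite any simple tensor $m\otimes e$ as $m'\otimes(u\cdot e)$ with $u\cdot e$ in the non-degenerate sub-bimodule $E^\circ\subseteq E$, so that $-\bar{\otimes}_\A E = -\bar{\otimes}_\A E^\circ$ as functors.

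The forward direction is then immediate: each $\iota(a)$ is compact by \Cref{imageyoneda}, so if $-\bar{\otimes}_\A E$ preserves compacts then $\iota(a)\bar{\otimes}\id_E$ is compact, whence $E(a)$ is compact for every morphism $a$ of $\A$.

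For the reverse direction, assume $E(a)$ is compact for every $a$. By \Cref{yonedagenerate}, every compact $\phi\in\mathcal{K}(M,N)$ in $\Hilb\A$ lies in the norm-closure of finite linear combinations of operators of the form $M\xrightarrow{\psi_1}h_x\xrightarrow{\iota(a)}h_y\xrightarrow{\psi_2}N$. Since $-\bar{\otimes}_\A E$ is norm-continuous and linear (\Cref{starfuncisbounded}), it suffices to check that its value on operators of this factored form is compact. Functoriality decomposes the tensor product as $(\psi_2\bar{\otimes}\id_E)\circ(\iota(a)\bar{\otimes}\id_E)\circ(\psi_1\bar{\otimes}\id_E)$, whose middle factor is compact by the first paragraph, and then the ideal property of $\KHilb\B\subseteq\Hilb\B$ yields compactness of the entire composite. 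The argument presents no serious obstacle: the whole proof is organized around the single identification $\iota(a)\bar{\otimes}\id_E\sim E(a)$, combined with the generation of the compact ideal by the Yoneda image.
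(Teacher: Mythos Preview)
Your proof is correct and follows essentially the same route as the paper: the forward direction uses the identification $E\cong(-\bar{\otimes}_\A E)\circ\iota_\A$ from \Cref{tensorextend} together with \Cref{imageyoneda}, and the reverse direction uses \Cref{yonedagenerate} to generate $\KHilb\A$ from the image of $\iota_\A$ and then the ideal property of compacts. Your explicit verification of naturality on simple tensors and your remark reducing to the non-degenerate sub-bimodule via Cohen--Hewitt are extra care the paper does not spell out (it simply invokes \Cref{tensorextend}, which as stated requires non-degeneracy), but the underlying argument is the same.
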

\begin{proof}
Since by \Cref{tensorextend} we have $E\cong(-\bar{\otimes}_\mathcal{A}E)\circ\iota_\mathcal{A}$ and by \Cref{imageyoneda} the image of $\iota_{\mathcal{A}}$ consists of compact operators, the `only if' direction is clear.

For the converse, note that by \Cref{yonedagenerate} the ideal $\KHilb\mathcal{A}$ is generated by the image of $\iota_\mathcal{A}$, so if $E$ has image in $\KHilb\mathcal{B}$, we have that $-\bar{\otimes}_\mathcal{A}E$ sends all of $\KHilb\mathcal{A}$ to compact morphisms since it is an extension of $E$ through $\iota_\mathcal{A}$.
\end{proof}
The point of the following section is to show a converse to \Cref{tensorfunctor}: that is, to prove that up to unitary isomorphism, tensor products by right Hilbert $\mathcal{A}-\mathcal{B}$ bimodules in fact make up \textit{all strong unital functors} from $\Hilb\mathcal{B}$ to $\Hilb\mathcal{C}$.
\subsection{The Eilenberg-Watts theorem}
In this section we prove the $C^*$-categorical Eilenberg-Watts theorem, which establishes a 1-1 correspondence between bimodules and strong functors on categories of Hilbert modules:
\begin{thm}[The Eilenberg-Watts Theorem for $C^*$-categories]\label{EW}
Suppose that $\A$ and $\B$ are $C^*$-categories and $F:\Hilb\mathcal{A}\rightarrow\Hilb\mathcal{B}$ is any unital strong $C^*$-functor: then there exists a natural unitary isomorphism of functors $\psi:-\bar{\otimes}_\mathcal{A} E\xrightarrow[]{\cong} F$, where $E:=F\circ\iota_\mathcal{A}$ is the composition  with the Yoneda embedding $\iota_{\mathcal{A}}:\mathcal{A}\rightarrow\Hilb\mathcal{A}$.  %which is natural in $F$.
\end{thm}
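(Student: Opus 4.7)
The plan is to construct $\psi_M: M\bar{\otimes}_\A E\to F(M)$ directly using the compact Yoneda correspondence of \Cref{contrayoneda}, which attaches to each $m\in M(x)$ a compact operator $\epsilon_m: h_x\to M$, $a\mapsto m\cdot a$. A preliminary observation is that $E=F\circ\iota_\A$ is non-degenerate: $\iota_\A$ is non-degenerate by \Cref{representation}, and applying the strong continuity of $F$ to any approximate unit $(u_\lambda)$ of $\A(y,y)$ gives $F(\iota_\A(u_\lambda))\to\id_{F(h_y)}$ strongly, which by \Cref{strongnondegenchar} is the requisite criterion. Hence \Cref{tensorextend} supplies unitary isomorphisms $\rho_x: h_x\bar{\otimes}_\A E\xrightarrow{\cong}F(h_x)$ on representables, and the theorem amounts to extending these compatibly to all modules.

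On a simple tensor $m\otimes e\in M(x)\otimes E(x)(z)$ I set
\[\psi_M(m\otimes e):=F(\epsilon_m)(e)\in F(M)(z).\]
Well-definedness on the uncompleted tensor product reduces to the formula $\epsilon_{m\cdot a}=\epsilon_m\circ\iota_\A(a)$, immediate from the definitions, to which one applies $F$ and uses $F\circ\iota_\A=E$. The essential calculation is that $\psi_M$ preserves inner products: the identity $\epsilon_m^*\epsilon_{m'}=\iota_\A(\langle m,m'\rangle)$ (a direct Yoneda computation, using the defining property $\epsilon_m^*(m')=\langle m,m'\rangle$) yields
\[\langle\psi_M(\textstyle\sum_i m_i\otimes e_i),\psi_M(\sum_j m_j'\otimes e_j')\rangle=\sum_{ij}\langle e_i,F(\epsilon_{m_i}^*\epsilon_{m_j'})(e_j')\rangle=\sum_{ij}\langle e_i,E(\langle m_i,m_j'\rangle)(e_j')\rangle,\]
which equals the inner product of $\sum_i m_i\otimes e_i$ with $\sum_j m_j'\otimes e_j'$ in $M\bar{\otimes}_\A E$. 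Thus $\psi_M$ is isometric on the uncompleted tensor and extends uniquely to an isometric operator $\psi_M: M\bar{\otimes}_\A E\to F(M)$ by \Cref{densextend}. Naturality in $M$ follows from the identity $T\circ\epsilon_m=\epsilon_{T(m)}$ for any $T\in\mathcal{L}(M,M')$, which gives $F(T)\circ\psi_M=\psi_{M'}\circ(T\bar{\otimes}\id_E)$ on simple tensors and hence on the completion.

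The main obstacle is surjectivity, where \Cref{approxff} and the strong continuity of $F$ enter essentially. Apply \Cref{approxff} to obtain compact operators $\phi_\lambda: H_\lambda\to M$ from finite direct sums of representables $H_\lambda=\bigoplus_{x\in S_\lambda}h_x$ with $\phi_\lambda\phi_\lambda^*\to\id_M$ strongly; then $F(\phi_\lambda)F(\phi_\lambda)^*\to\id_{F(M)}$ strongly by the hypothesis on $F$. Since both $F$ and $-\bar{\otimes}_\A E$ are unital $C^*$-functors they preserve finite direct sums, and a short computation using $\epsilon_m=\sum_i\iota_i\circ\epsilon_{a_i}$ for $m=(a_i)\in H_\lambda$ identifies $\psi_{H_\lambda}$ with $\bigoplus_{x\in S_\lambda}\rho_x$, which is in particular unitary. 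For any $n\in F(M)(z)$, naturality applied to the morphism $\phi_\lambda$ then rewrites
\[F(\phi_\lambda)F(\phi_\lambda)^*(n)=\psi_M\bigl((\phi_\lambda\bar{\otimes}\id_E)\circ\psi_{H_\lambda}^{-1}\circ F(\phi_\lambda)^*(n)\bigr),\]
exhibiting $n$ as a norm-limit of elements in the image of $\psi_M$. Since $\psi_M$ is an isometry its image is norm-closed, so $\psi_M$ must be surjective and hence unitary, finishing the proof.
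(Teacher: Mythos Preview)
Your proof is correct and follows essentially the same route as the paper's own argument: you define $\psi_M$ via $F(\epsilon_m)$, verify inner-product preservation using $\epsilon_m^*\epsilon_{m'}=\iota_\A(\langle m,m'\rangle)$, identify $\psi$ with the unitors $\rho_x$ on representables (and hence on finite sums thereof), and then use \Cref{approxff} together with the strong continuity of $F$ to obtain surjectivity on general modules. The only cosmetic difference is that you conclude surjectivity by noting the image of an isometry is closed, whereas the paper phrases it as density plus \Cref{densextend}; these are the same observation.
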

We begin by defining the map and proving its naturality.

\begin{defn}\label{EWMapDefn}
For any $M\in\Hilb\mathcal{A}$ we define the \emph{Eilenberg-Watts map} $\psi_M:M\bar{\otimes}_\mathcal{A}E\rightarrow F(M)$ on simple tensors $m\otimes e\in M(x)\otimes E(x)(z)$ by $\psi_M(m\otimes e):=F(\epsilon_m)(e)\in F(M)(z)$, where $\epsilon_m\in\mathcal{K}(h_x,M)$ is defined as in \Cref{contrayoneda}.
\end{defn}

\begin{lemma}
The Eilenberg-Watts map as defined above is well-defined and extends to an map $\psi_M:M\bar{\otimes}_\mathcal{A}E\rightarrow F(M)$ on the completed module which is natural in $M$ and in $F$, and which moreover preserves all inner products.
\end{lemma}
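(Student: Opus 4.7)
The plan is to prove all four properties --- well-definedness, extendability, naturality in $M$, and naturality in $F$ --- by first establishing them on simple tensors in the uncompleted module $M \otimes_\mathcal{A} E$ and then transferring to the completion via \Cref{densextend}. The algebraic identity that drives the whole argument is the Yoneda-style relation
\[
\epsilon_m^* \circ \epsilon_{m'} = \iota_\mathcal{A}(\langle m, m' \rangle_M) \in \mathcal{K}(h_{x'}, h_x)
\]
for $m \in M(x)$ and $m' \in M(x')$, which follows directly from the adjoint formula $\epsilon_m^*(m'') = \langle m, m'' \rangle$ in \Cref{contrayoneda}. Applying $F$ turns this into $F(\epsilon_m)^* \circ F(\epsilon_{m'}) = E(\langle m, m' \rangle_M)$, using that $F$ preserves adjoints and $F \circ \iota_\mathcal{A} = E$.

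For well-definedness on the uncompleted module, the generating identifications are $m \otimes e \sim m' \otimes e'$ with $m = m' \cdot a$ and $e' = a \cdot e$ for some $a \in \mathcal{A}(x, x')$. Using the second naturality statement of \Cref{contrayoneda}, $\epsilon_{m' \cdot a} = \epsilon_{m'} \circ \iota_\mathcal{A}(a)$, we get
\[
F(\epsilon_m)(e) = F(\epsilon_{m'}) F(\iota_\mathcal{A}(a))(e) = F(\epsilon_{m'})(a \cdot e) = F(\epsilon_{m'})(e'),
\]
so $\psi_M$ descends to $M \otimes_\mathcal{A} E$. The key identity then yields on a pair of simple tensors
\[
\langle \psi_M(m \otimes e), \psi_M(m' \otimes e')\rangle_{F(M)} = \langle e, F(\epsilon_m^* \circ \epsilon_{m'})(e')\rangle_E = \langle e, \langle m, m'\rangle \cdot e'\rangle_E,
\]
which matches the tensor inner product formula in \Cref{tensordefn}. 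Expanding sesquilinearly shows that $\psi_M$ preserves inner products on finite sums, and is therefore isometric on the uncompleted module; \Cref{densextend} provides the unique isometric extension to $M \bar\otimes_\mathcal{A} E$, and continuity of the inner product transfers inner-product preservation to the completion.

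Naturality in $M$ follows from the first naturality statement of \Cref{contrayoneda}, $\epsilon_{T(m)} = T \circ \epsilon_m$: on simple tensors
\[
\psi_{M'}((T \bar\otimes \id)(m \otimes e)) = F(\epsilon_{T(m)})(e) = F(T) \psi_M(m \otimes e),
\]
and the identity transfers to the completion by norm-continuity. For naturality in $F$, a bounded adjointable natural transformation $\eta : F \Rightarrow F'$ of strong unital $C^*$-functors restricts along $\iota_\mathcal{A}$ to a bimodule transformation $\eta\iota_\mathcal{A} : E \to E'$; applying the naturality square of $\eta$ at the operator $\epsilon_m \in \mathcal{K}(h_x, M)$ yields $\eta_M \circ F(\epsilon_m) = F'(\epsilon_m) \circ \eta_{h_x}$, which on simple tensors is precisely the required commutation $\eta_M \circ \psi_M^F = \psi_M^{F'} \circ (\id \bar\otimes \eta\iota_\mathcal{A})$.

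The main obstacle I anticipate is purely bookkeeping: keeping track of the three-level indexing (the operators $\epsilon_m$ between representable modules, their images under $F$, and the tensor-product convention in \Cref{tensordefn}) so that the Yoneda identity is brought to bear at the right moment. Once that identity is cleanly isolated, inner product preservation follows in one line, and both naturality statements reduce to the corresponding naturality statements for $\epsilon$ recorded in \Cref{contrayoneda}.
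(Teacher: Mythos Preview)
Your proposal is correct and follows essentially the same route as the paper: both arguments hinge on the Yoneda identity $\epsilon_m^*\circ\epsilon_{m'}=\iota_\mathcal{A}(\langle m,m'\rangle)$ from \Cref{contrayoneda}, verify inner-product preservation on simple tensors, extend isometrically via \Cref{densextend}, and derive naturality in $M$ from $\epsilon_{T(m)}=T\circ\epsilon_m$. You additionally spell out naturality in $F$ (which the paper asserts but does not prove), while the paper in turn includes an explicit check that $\psi_M$ respects the right $\mathcal{B}$-action, which you leave implicit.
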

\begin{proof}

We show first that $\psi$ is well-defined on the uncompleted module $M\otimes_\mathcal{A} E$: recall that $m\otimes e\in M(x)\otimes E(x)(z)$ and $m'\otimes e'\in M(x')\otimes E(x')(z)$ are identified in $M\otimes_\mathcal{A}E(z)$ whenever there exists an element $b\in\mathcal{A}(x,x')$ such that we have  $m'\cdot b=m$ and $E(b)(e)=e'$. But then 

$$\begin{array}{rll}\psi_x(m'\otimes e')&=F(\epsilon_{m'})(e')&=F(\epsilon_{m'})(E(b)(e))\\&=F(\epsilon_{m'}\circ \iota(b))(e)&=F(\epsilon_{m'\cdot b})(e)\\&=F(\epsilon_m)(e)&=\psi_x(m\otimes e).\end{array}$$

We show next that $\psi$ is an actual map of $\mathcal{B}$-modules: take $m\otimes e\in M(x)\otimes E(x)(z)$ and a morphism $b\in\mathcal{B}(z',z)$. Then note $$\psi((m\otimes e)\cdot b)=\psi(m\otimes(e\cdot b))=F(\epsilon_m)(e\cdot b)=F(\epsilon_m)(e)\cdot b=\psi(m\otimes e)\cdot b$$ as $F(\epsilon_m)$ is a natural transformation.

To extend $\psi$ and show that it preserves inner products,  take simple tensors $m\otimes e\in M(x)\otimes E(x)(z)$ and $ m'\otimes e' \in M(y)\otimes E(y)(z)$, and note that from \Cref{contrayoneda} we get that $\epsilon_{m'}^*\epsilon_m=\iota(\langle m',m\rangle)$. So we have 
$$\begin{array}{rll}\langle \psi(m\otimes e),\psi(m'\otimes e')\rangle&=\langle F(\epsilon_m)(e),F(\epsilon_{m'})(e')\rangle&=\langle e, F(\epsilon_{m'}^*\epsilon_m)(e)\rangle\\&=\langle e, E(\langle m,m'\rangle)(e)\rangle&=\langle m\otimes e,m'\otimes e'\rangle.\end{array}$$
Hence we see $\psi$ preserves all inner products on simple tensors, so it must do so on sums of simple tensors and hence by \Cref{densextend} extends to an isometry on $M\bar{\otimes}_{\A} E$ which must also preserve inner products.

Recall from \Cref{contrayoneda} that for a bounded adjointable operator $\phi:M\rightarrow N$ we have $\phi\circ\epsilon_m=\epsilon_{\phi(m)}$. To prove naturality of $\psi_M$ in $M$, we must show that $F(\phi)\circ\psi_M=\psi_N\circ(\phi\otimes\id)$. But note for $m\otimes e\in M\otimes E$ that $$\begin{array}{rll}(F(\phi)\circ\psi_M)(m\otimes e)&=F(\phi)F(\epsilon_m)(e)&=F(\phi\epsilon_m)(e)\\&=F(\epsilon_{\phi(m)})(e)&=\psi_N(\phi(m)\otimes e)\\&&=\psi_N((\phi\otimes\id)(m\otimes e))\end{array}$$ proving naturality on simple tensors, and hence again on the whole module.
\end{proof}
%To show $\psi$ is natural in $F$,

Having defined the natural transformation $\psi$, we show in a few steps that it is a natural unitary isomorphism, beginning with the case for representable modules.

\begin{prop}\label{EWonRep}
For $F:\Hilb\mathcal{A}\rightarrow\Hilb\mathcal{B}$ and $\psi:-\bar{\otimes}_\mathcal{A}(F\circ\iota)\rightarrow F$ as above, $\psi$ is a unitary isomorphism on the subcategory of representable $\mathcal{A}$-modules and their finite direct sums. 
\end{prop}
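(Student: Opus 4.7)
The plan is to use the already-established fact that $\psi_M$ preserves inner products, so that by \Cref{unitarybimodcrit} it suffices to verify surjectivity at every object, and to reduce everything to the case $M = h_x$ by exploiting how both $F$ and $-\bar{\otimes}_{\A}E$ interact with finite direct sums.

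First, I would compute $\psi_{h_x}$ explicitly. Given $a\in h_x(x')=\A(x',x)$, note that the compact operator $\epsilon_a\colon h_{x'}\to h_x$ from \Cref{contrayoneda} sends $b\in h_{x'}(w)$ to $a\circ b$, which is exactly the action of $\iota_{\A}(a)$ (\Cref{imageyoneda}). Hence $\epsilon_a=\iota_{\A}(a)$ and $F(\epsilon_a)=F(\iota_{\A}(a))=E(a)$, so on a simple tensor $a\otimes e\in \A(x',x)\otimes E(x')(z)$ the Eilenberg-Watts map becomes $\psi_{h_x}(a\otimes e)=E(a)(e)=a\cdot e$. This is precisely the formula for the canonical map $\rho_x\colon h_x\bar{\otimes}_{\A}E\to E(x)$ from \Cref{tensorextend}. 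To apply that corollary I need $E$ to be non-degenerate as a bimodule; this is where the unitality and strong continuity of $F$ enter. Pick an approximate unit $(v_\lambda)$ for $\A(x,x)$. By \Cref{imageyoneda} the operators $u_\lambda:=\iota_{\A}(v_\lambda)$ form an approximate unit for $\mathcal{K}(h_x)$, and by \Cref{strongapprox} they converge strongly to $\id_{h_x}$. Since $F$ is unital and strong, $F(u_\lambda)\to F(\id_{h_x})=\id_{E(x)}$ strongly, i.e. $v_\lambda\cdot e=E(v_\lambda)(e)\to e$ for every $e\in E(x)$. By \Cref{strongnondegenchar}, $E$ is non-degenerate, so \Cref{tensorextend} gives that $\rho_x=\psi_{h_x}$ is a unitary isomorphism.

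For a finite direct sum $M=\bigoplus_{i=1}^n h_{x_i}$ with structure maps $\iota_i\colon h_{x_i}\to M$, I would use that unital $C^*$-functors preserve direct sums: since $\iota_i^*\iota_i=\id$ and $\sum_i\iota_i\iota_i^*=\id_M$, applying the unital $C^*$-functors $F$ and $-\bar{\otimes}_{\A}E$ yields analogous identities for $F(\iota_i)$ and $\iota_i\bar{\otimes}\id_E$, exhibiting $F(M)$ and $M\bar{\otimes}_{\A}E$ as direct sums of the $F(h_{x_i})$ and the $h_{x_i}\bar{\otimes}_{\A}E$ respectively. Naturality of $\psi$ in $M$, applied to the morphisms $\iota_i$, gives $\psi_M\circ(\iota_i\bar{\otimes}\id_E)=F(\iota_i)\circ\psi_{h_{x_i}}$, so $\psi_M$ is the direct sum of the unitary isomorphisms $\psi_{h_{x_i}}$, and therefore is itself a unitary isomorphism.

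The only nontrivial step is verifying that $E$ is non-degenerate, which is where both the unitality and the strong continuity hypotheses on $F$ are used in an essential way; everything else reduces to the identification $\epsilon_a=\iota_{\A}(a)$ and routine preservation of direct sums by unital $C^*$-functors.
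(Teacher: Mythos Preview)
Your proof is correct and follows essentially the same approach as the paper: reduce to surjectivity via the inner-product preservation, identify $\psi_{h_x}$ with the map $\rho_x$ from \Cref{tensorextend}, and then pass to finite direct sums using that unital $C^*$-functors preserve them. The one place where you are more careful than the paper is in explicitly verifying that $E=F\circ\iota_{\A}$ is non-degenerate before invoking \Cref{tensorextend}; the paper simply asserts that $\psi_{h_x}$ being a unitary isomorphism ``is easily seen to be an instance of \Cref{tensorextend}'' and leaves that hypothesis implicit, whereas you supply the argument via strong convergence of an approximate unit under the unital strong functor $F$. A minor point: the relevant surjectivity criterion here is \Cref{unitarycrit} (for $\mathcal{B}$-modules) rather than \Cref{unitarybimodcrit}, though of course the latter specialises to the former.
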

\begin{proof}
We have shown so far that $\psi$ preserves all inner products, so by \Cref{unitarycrit} we only have left to show that for each $x\in\Ob\mathcal{A}$, the $\mathcal{B}$-module map $\psi_{h_x}:h_x\otimes_\mathcal{A}(F\circ\iota)\rightarrow F(h_x)$ is surjective at each $y\in\Ob\mathcal{B}$. But this is easily seen to be an instance of \Cref{tensorextend}.

%Note firstly that if $(u_\lambda)$ is an approximate unit for $\mathcal{A}(x,x)$, then by \Cref{unitcor} we know the (bounded) net $(\iota(u_\lambda))$ converges strongly to $\id_{h_x}$, and as $F$ is unital and strongly continuous on bounded subsets, the net $F(\iota(u_\lambda))$ must converge strongly to $\id_{F(h_x)}$.

%Now take any $y\in\Ob\mathcal{B},f\in F(h_x)(y)$, and note that $$\psi(u_\lambda\otimes f)=F(\epsilon_{u_\lambda})(f)=F(\iota(u_\lambda))(f)\xrightarrow[]{\lambda} f.$$
%Hence $\psi$ has dense image in $F(h_x)(y)$. So by \Cref{densextend} it is surjective, and then by \Cref{unitarycrit} it is a unitary isomorphism.

Finally, $F$, being a unital $C^*$-functor, must preserve finite direct sums, and we easily deduce $\psi$ is a unitary isomorphism on these, too.
\end{proof}
We are now in a position to prove our main theorem using the finite free approximation result at the end of the previous chapter.

\begin{thm}[{c.f. \cite[Theorem 5.4]{BlecherModules}}]\label{EWthm}
For every right Hilbert $\mathcal{A}$-module $M$, the map of right Hilbert $\mathcal{B}$-modules $\psi_M:M\otimes(F\circ\iota)\rightarrow F(M)$ is a unitary isomorphism.
\end{thm}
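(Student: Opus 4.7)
\smallskip

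\noindent\textbf{Proof plan.} The preceding lemma already shows that $\psi_M$ is an isometry preserving all inner products, and \Cref{EWonRep} handles the case in which $M$ is a finite direct sum of representable modules. By \Cref{unitarycrit}, the remaining task is to show that for every $z \in \Ob\B$ the component $\psi_M \colon (M\bar{\otimes}_{\A}E)(z) \to F(M)(z)$ is surjective. Since $\psi_M$ is an isometry, its image is automatically a closed subspace of $F(M)(z)$, so the goal reduces to proving that this image is norm-dense.

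To produce the density, the plan is to exploit \Cref{approxff}. Choose a net $\phi_\lambda \colon F_\lambda \to M$, where $F_\lambda = \bigoplus_{x \in S_\lambda} h_x$ is a finite direct sum of representables, with $\phi_\lambda\phi_\lambda^* \to \id_M$ strongly in $\mathcal{L}(M)$. By construction $\phi_\lambda\phi_\lambda^*$ sits inside an approximate unit of $\mathcal{K}(M)$ and so is uniformly norm-bounded by $1$. Since $F$ is a unital $C^*$-functor that is strong on bounded subsets, applying $F$ yields
\[
F(\phi_\lambda)F(\phi_\lambda)^* = F(\phi_\lambda\phi_\lambda^*) \xrightarrow{\lambda} F(\id_M) = \id_{F(M)}
\]
strongly in $\mathcal{L}(F(M))$. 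In particular, for any $m \in F(M)(z)$ the vectors $F(\phi_\lambda)\bigl(F(\phi_\lambda)^*(m)\bigr)$ converge to $m$ in norm.

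Naturality of $\psi$ in $M$ then closes the gap. The square
\[
F(\phi_\lambda)\circ\psi_{F_\lambda} = \psi_M \circ (\phi_\lambda \bar{\otimes}_{\A} \id_E)
\]
commutes, and $\psi_{F_\lambda}$ is a unitary isomorphism by the representable case. Setting
\[
\xi_\lambda := (\phi_\lambda \bar{\otimes}_{\A} \id_E)\bigl(\psi_{F_\lambda}^{-1}(F(\phi_\lambda)^*(m))\bigr) \in (M\bar{\otimes}_{\A}E)(z),
\]
commutativity yields $\psi_M(\xi_\lambda) = F(\phi_\lambda)F(\phi_\lambda)^*(m) \to m$, so $m$ lies in the closure of the image of $\psi_M$, as required.

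The main obstacle is essentially already handled by \Cref{approxff}: without an approximation of a general Hilbert module by finitely generated free modules, there is no direct route to reduce the problem to the representable case where $\psi$ is manifestly unitary. Once that approximation is available, the argument is an assembly of naturality, the strong-continuity hypothesis on $F$, and the $C^*$-functoriality that forces $F$ to commute with adjoints and compositions.
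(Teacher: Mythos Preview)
Your proof is correct and follows essentially the same route as the paper: reduce to surjectivity, invoke \Cref{approxff} to approximate $\id_M$ strongly by $\phi_\lambda\phi_\lambda^*$ with $\phi_\lambda$ coming from finitely generated free modules, push through $F$ using strong continuity on bounded sets, and use naturality of $\psi$ together with the already-established unitarity on representables to exhibit each element of $F(M)(z)$ as a limit of elements in the image of $\psi_M$. Your write-up is in fact a bit tidier than the paper's, with the orientations of $\phi_\lambda$ and $\phi_\lambda^*$ kept straight and the approximating element $\xi_\lambda$ made explicit.
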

\begin{proof}
%\texttt{Name Blecher's result}

As above, we only have left to show that $\psi_M$ is surjective at every $y\in\Ob\mathcal{B}$. Recall from \Cref{approxff} that there exists a collection of operators $\phi_\lambda:\bigoplus_{x\in S_{\lambda}}h_x\rightarrow M$, where each $S_\lambda:\lambda\in\Lambda$ is a finite list of $\mathcal{B}$-objects, such that the net of operators $\phi_\lambda\phi_\lambda^*$ converges strongly to $\id_M$. We hence have for each $S:=S_\lambda$ a diagram

% https://tikzcd.yichuanshen.de/#N4Igdg9gJgpgziAXAbVABwnAlgFyxMJZABgBpiBdUkANwEMAbAVxiRAFkAdTgIzoCdg3CHgC28AL4B9bqLo4AFgGNGwAIISAoiAml0mXPkIoAjOSq1GLNgDEAFOwCUOvSAzY8BImRMX6zVkQQO2E0ZjgpAGUFKThHYTF4AAJtXX0PIyIzX2p-ayD7UPComLidCxgoAHN4IlAAM34IUSQyEBwIJAAmagYsMECQOAg+qBBqBRg6McQwJgYGahw6LAY2SAHxoYUsepwkM0sAtm40HaiErHE4biwx3roeGAYABQNPYxB+LCqFfbSQI1mt0lp1EG04Ds9gdclZBqdzpEAHoAKku11u9xADEezzeGS8QW+v3+riBLUQhw6SAAzNRIbt9pTYccCiFOGcsFFnA8nq93pkiT8-i4Gk0KXT2mDDgzocyjvkQIUOYjUTzsbj+QTPsSRQDya1QTCNXz8YZCdiYNCWYrTtgpOxRYDxSCpbSbfCOfahJwIGEmBForEJOUJEA
\begin{center}
\begin{tikzcd}
M\bar{\otimes}_\mathcal{A}E \arrow[d, "\phi_\lambda\otimes\id"', shift right] \arrow[r, "\psi_M"]          & F(M) \arrow[d, "F(\phi_\lambda)"', shift right]             \\
(\oplus_Sh_s)\bar{\otimes}_\mathcal{A} E \arrow[u, "\phi_\lambda^*\otimes\id"', shift right] \arrow[r, "\psi_{\oplus_Sh_s}"] & F(\oplus_Sh_s) \arrow[u, "F(\phi_\lambda^*)"', shift right]
\end{tikzcd}
\end{center}
where the two squares with vertical arrows pointing in the same direction both commute by the naturality of $\psi$. We aim to show the top horizontal map is surjective at any $y\in\Ob\mathcal{B}$, so take an arbitrary $f\in F(M)(y)$. We know by \Cref{EWonRep} that $\psi_{\oplus_Sh_s}$ is surjective at every argument, so $F(\phi_\lambda)(f)=\psi_{\oplus_Sh_s}(t)$ for some $t\in(\oplus_Sh_s)\bar{\otimes}_\mathcal{A} E(y)$. 

So we have $$F(\phi_\lambda^*\phi_\lambda)(f)=F(\phi_\lambda^*)(\psi_{\oplus_Sh_s}(t))=\psi_M((\phi^*_\lambda\otimes\id)(t))\in\text{im }\psi_M.$$ But $F$ is strong and unital and $\phi_\lambda$ is a bounded net, so $F(\phi^*_\lambda\phi_\lambda)\rightarrow\id_{F(M)}$ strongly, and we get $F(\phi_\lambda^*\phi_\lambda)(f)\xrightarrow[]{\lambda} f$. So $\psi_M$ has dense image, so by \Cref{densextend} it is surjective. 
\end{proof}

There is a related result about module categories that uses the approximate projectivity of Hilbert modules in a similar way.

\begin{lemma}\label{repdetermine}
    Suppose $F,F':\Hilb\mathcal{A}\rightarrow\Hilb\mathcal{B}$ are strong unital $C^*$-functors, and let $\eta,\zeta:F\rightarrow F'$ be two (bounded, adjointable) natural transformations. If $\eta$ and $\zeta$ agree on representable modules, then $\eta=\zeta$. 
\end{lemma}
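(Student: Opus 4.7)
The plan is to reduce the statement by approximation to the representable case, using the strong approximation result \Cref{approxff}.

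First I would upgrade the hypothesis: $\eta$ and $\zeta$ must agree on all finite direct sums of representables. If $N = \bigoplus_{i=1}^n h_{x_i}$ with structure maps $\iota_i$, the unitality of $F$ and $F'$ ensures that they send this to a direct sum with structure maps $F(\iota_i)$ and $F'(\iota_i)$ respectively (since $F$ preserves the identities $\iota_i^*\iota_i = \id_{h_{x_i}}$ and $\sum_i \iota_i\iota_i^* = \id_N$). Using the latter equation and naturality of $\eta$ at each $\iota_i$,
\[\eta_N = \sum_{i=1}^n F'(\iota_i) \circ \eta_{h_{x_i}} \circ F(\iota_i^*),\]
and analogously for $\zeta_N$, so $\eta_{h_{x_i}} = \zeta_{h_{x_i}}$ forces $\eta_N = \zeta_N$.

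Next, for an arbitrary $M \in \Hilb\mathcal{A}$, I would invoke \Cref{approxff} to obtain a net of bounded adjointable operators $\phi_\lambda : N_\lambda \to M$, with each $N_\lambda$ a finite direct sum of representables, such that $\phi_\lambda\phi_\lambda^* \to \id_M$ strongly. Crucially, the construction in \Cref{compactapprox} exhibits the net $(\phi_\lambda\phi_\lambda^*)$ as an approximate unit of the $C^*$-algebra $\mathcal{K}(M)$, hence it is bounded by $1$. Naturality of $\eta$ and $\zeta$ at $\phi_\lambda$, combined with the first step, gives
\[\eta_M \circ F(\phi_\lambda) = F'(\phi_\lambda) \circ \eta_{N_\lambda} = F'(\phi_\lambda) \circ \zeta_{N_\lambda} = \zeta_M \circ F(\phi_\lambda),\]
and composing on the right with $F(\phi_\lambda^*)$ yields $\eta_M \circ F(\phi_\lambda\phi_\lambda^*) = \zeta_M \circ F(\phi_\lambda\phi_\lambda^*)$ for every $\lambda$.

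Finally, strongness of $F$ applied to the bounded net $(\phi_\lambda\phi_\lambda^*)$ gives $F(\phi_\lambda\phi_\lambda^*) \to F(\id_M) = \id_{F(M)}$ strongly. Evaluating the previous equality at any $m \in F(M)(y)$ and using that $\eta_M$ and $\zeta_M$ are bounded (hence norm-continuous on this hom-space), I would conclude
\[\eta_M(m) = \lim_\lambda \eta_M\bigl(F(\phi_\lambda\phi_\lambda^*)(m)\bigr) = \lim_\lambda \zeta_M\bigl(F(\phi_\lambda\phi_\lambda^*)(m)\bigr) = \zeta_M(m),\]
so $\eta_M = \zeta_M$ for every $M$. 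I do not anticipate a serious obstacle; the one point requiring care is confirming that $(\phi_\lambda\phi_\lambda^*)$ is a bounded net, since strongness of $F$ is only available on bounded subsets, and this is precisely what the approximate-unit description from \Cref{compactapprox} guarantees.
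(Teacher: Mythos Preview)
Your proposal is correct and follows essentially the same approach as the paper: reduce to $\xi := \eta - \zeta$ (or equivalently compare $\eta$ and $\zeta$ directly as you do), use \Cref{approxff} to approximate $\id_M$ by $\phi_\lambda\phi_\lambda^*$, exploit naturality at $\phi_\lambda$ together with vanishing on finite sums of representables, and pass to the limit using strongness of $F$. Your write-up is in fact cleaner than the paper's on two points: you make explicit the extension from single representables to their finite direct sums (the paper uses $\xi_{\oplus_S h_s}=0$ without comment), and you flag that boundedness of the net $(\phi_\lambda\phi_\lambda^*)$ is needed for strongness of $F$ to apply, which the paper leaves implicit.
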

\begin{proof}
Since $\eta-\zeta:F\rightarrow F'$ is another bounded adjointable natural transformation, it suffices to show that if $\xi:F\Rightarrow F'$ is zero on representable modules, then $\xi=0$.    

Recall once more from \Cref{approxff} that there exists a net of operators $\phi_\lambda:\bigoplus_{x\in S_{\lambda}}h_x\rightarrow M$, where each $S_\lambda:\lambda\in\Lambda$ is a finite list of $\mathcal{B}$-objects, such that the net of operators $\phi_\lambda\phi_\lambda^*$ converges strongly to $\id_M$. We hence have for each $S:=S_\lambda$ a diagram

\begin{center}
\begin{tikzcd}
F(M) \arrow[d, "F(\phi_\lambda)"', shift right] \arrow[r, "\xi_M"]          & F'(M) \arrow[d, "F'(\phi_\lambda)"', shift right]             \\
F(\oplus_Sh_s) \arrow[u, "F(\phi_\lambda)^*"', shift right] \arrow[r, "\xi_{\oplus_Sh_s}"] & F'(\oplus_Sh_s) \arrow[u, "F'(\phi_\lambda)^*"', shift right]
\end{tikzcd}
\end{center}
We want to show for an arbitrary $f\in F(M)(x),x\in\Ob\mathcal{B}$ that $\xi_M(f)=0$. Since $F(\phi_\lambda)F(\phi_\lambda)^*\xrightarrow[]{\lambda}\id_{F(M)}$ strongly, we have  $F(\phi_\lambda)^*F(\phi_\lambda)(f)\rightarrow f$. At the same time, since $\xi_{\oplus_Sh_s}=0$, we get $$(\xi_M\circ F(\phi_\lambda)^*\circ F(\phi_\lambda))(f)=(F'(\phi_\lambda)^*\circ\xi_{\oplus_Sh_s}\circ F(\phi_\lambda))(f)=0.$$ Hence we have a net of elements converging in the norm to $f$ which is in the kernel of the bounded map $\xi_M$, so $\xi_M(f)=0$. 
\end{proof}

Another way of formulating this result is: if $\eta:-\bar{\otimes}_\mathcal{A}E\rightarrow\bar{\otimes}_\mathcal{A}E'$ is any transformation of tensor product functors, it is in fact given on each module by tensoring with a fixed transformation $\epsilon$, which can be obtained by whiskering $\eta$ through $\iota_\mathcal{A}$.
\subsection{The Morita theory of $C^*$-categories}

Up until now we've discussed bimodules which have inner products valued in the $C^*$-category acting on the right. We will now discuss bimodules with products in both categories, and see that a subclass of these `bi-Hilbert' bimodules are exactly those bimodules that give equivalences of module categories. Parts of this section are adapted from \cite[Chapter 3]{RaeWill} and \cite[Chapter 1]{Echterhoff}, where analogous results are obtained over $C^*$-algebras.

\begin{defn}\label{Bihilbdefn}
If $\mathcal{A}$ and $\mathcal{B}$ are $C^*$-categories, a \emph{bi-Hilbert} $\mathcal{A}-\mathcal{B}$ \emph{bimodule} is a functor $E:\mathcal{A}^+\times\mathcal{B}^{\mathrm{op}+}\rightarrow\mathsf{Vect}_\mathbb{C}$ which is equipped for all $x\in\Ob\mathcal{A},y\in\Ob\mathcal{B}$ with $\mathcal{A}$-valued products $$_\mathcal{A}\langle-,-\rangle:E(x')(y)\times E(x)(y)\rightarrow\mathcal{A}(x,x')$$ and $\mathcal{B}$-valued products $$\langle-,-\rangle_\mathcal{B}:E(x)(y')\times E(x)(y)\rightarrow \mathcal{B}(y,y')$$ which make $E$ both an $\mathcal{A}-\mathcal{B}$ right Hilbert bimodule and a $\mathcal{B}^{\mathrm{op}}-\mathcal{A}^{\mathrm{op}}$ right Hilbert bimodule\footnotemark. Unpacking this a little, we require that
\begin{itemize}
    \item $E(x)(-)$ is a right Hilbert $\mathcal{B}$-module for each $x\in\Ob\mathcal{A}$, with product $\langle -,-\rangle_\mathcal{B}$ and right action by morphisms of the form $(\id_x,b)$. 
    \item $E(-)(y)$ is a left Hilbert $\mathcal{A}$-module for each $y\in\Ob\mathcal{B}$, with product $_\mathcal{A}\langle-,-\rangle$ and left action by morphisms of the form $(a,\id_y)$
    \item The action of $a\in\mathcal{A}$ on the $\mathcal{B}$-modules is bounded by $\|a\|$ and adjoint to the action of $a^*$; similarly for the action of $b\in\mathcal{B}$ on the $\mathcal{A}$-modules. Writing this out in equations, we get $$\begin{array}{rlcrl}
         \langle e,a\cdot f\rangle_\mathcal{B}&=\langle a^*\cdot e,f\rangle_\mathcal{B} &\mathrm{and}& \|\langle a\cdot e,a\cdot e\rangle_\mathcal{B}\|&\leq \|a\|^2\|\langle e,e\rangle_\mathcal{B}\|  \\ _\mathcal{A}\langle e, f\cdot b\rangle&={}_\mathcal{A}\langle e\cdot b^*,f\rangle &\mathrm{and}& \|{}_\mathcal{A}\langle  e\cdot b, e\cdot b\rangle\|&\leq \|b\|^2\|{}_\mathcal{A}\langle e,e\rangle\|.
    \end{array} $$
    
\end{itemize}
\end{defn}
We write $E^1:\mathcal{A}\rightarrow\Hilb\mathcal{B}$ and $E^2:\mathcal{B}^{\mathrm{op}}\rightarrow\Hilb\mathcal{A}^\mathrm{op}$ for the two bimodules when it is necessary to consider them separately, but will generally speak simply of $E$.
\footnotetext{To see how this works, notice that functors from non-unital  into unital $\mathbb{C}$-categories always extend to the minimal unitization (as defined in \Cref{minimalunit}), so a bi-Hilbert $\mathcal{A}$-$\mathcal{B}$-bimodule is an object of $\mathrm{Func(}\mathcal{A}^+,\mathrm{Func}(\mathcal{B}^{\mathrm{op}+},\mathsf{Vect}_\mathbb{C})\cong\mathrm{Func(}\mathcal{A}^+\times\mathcal{B}^{\mathrm{op}+},\mathsf{Vect}_\mathbb{C})\cong\mathrm{Func}(\mathcal{B}^{\mathrm{op}+},\mathrm{Func}(\mathcal{A}^+,\mathsf{Vect}_\mathbb{C}))$. We use the minimal unitizations to ensure  the $\mathcal{A}$ and $\mathcal{B}$-actions are defined in isolation.}

%\begin{lemma}
%If $E$ is a bi-Hilbert bimodule, its attending bimodules $E^1:\mathcal{A}\rightarrow\Hilb\mathcal{B}$ and $E^2:\mathcal{B}^{\mathrm{op}}\rightarrow\Hilb(\mathcal{A}^\mathrm{op})$ are non-degenerate.
%\end{lemma}
%\begin{proof}
%Take any $x\in\Ob\mathcal{A},y\in\Ob\mathcal{B},e\in E(x)(y)$,  and note that we can identify $E(x)(y)=E^1(x)(y)=E^2(y)(x)$. By \Cref{nondegenstrongthm} it suffices to show that for any approximate unit $(u_\lambda)$ for $\mathcal{A}(x,x)$ that $E^1(u_\lambda)(e)\xrightarrow[]{\lambda}e$. But note that $E^1(u_\lambda)(e)=e\cdot u_\lambda\in E^2(y)(x)$, and that $(u_\lambda)$ is certainly an approximate unit of $\mathcal{A}^\mathrm{op}(x,x)$. So by \Cref{unitcor} we have that $e\cdot u_\lambda\xrightarrow[]{\lambda}e$, proving the lemma. The non-degeneracy of $E^2$ follows similarly.
%\end{proof}

\begin{defn}\label{fulldefn}
For any bimodule $E:\mathcal{A}\rightarrow\Hilb\mathcal{B}$, we let $\langle E,E\rangle$ denote the subcategory of $\mathcal{B}$ given by the linear span of the inner products $\langle e,f\rangle\in\mathcal{B}(y,y')$ for  all objects $x\in\Ob\mathcal{A},y,y'\in\Ob\mathcal{B}$ and module elements $e\in E(x)(y'),f\in E(x)(y)$. Noting that this subcategory is closed under the involution and under multiplication by outside elements, we denote by $\overline{\langle E,E\rangle}$ the ideal given by its norm closure and we say that $E$ is \emph{essential} if $\overline{\langle E,E\rangle}$ is an essential ideal and that $E$ is \emph{full} if $\overline{\langle E,E\rangle}=\mathcal{B}$. %If $E$ is a bi-Hilbert bimodule, we will write $\langle E,E\rangle_\mathcal{B}$ for $\langle E^1,E^1\rangle$ and $_\mathcal{A}\langle E,E\rangle$ for the opposite of $\langle E^2,E^2\rangle\subseteq\mathcal{A}^\mathrm{op}$.
\end{defn}

\begin{xmpl}\label{yonedafull}
The Yoneda bimodule $\iota_\mathcal{A}:\mathcal{A}\xhookrightarrow[]{}\Hilb\mathcal{A}$ is full; in fact we see that $\langle\iota_\mathcal{A},\iota_\mathcal{A}\rangle$ consists of all morphisms of $\mathcal{A}$ which have some factorization, but by \Cref{factorlemma} this is all of $\mathcal{A}$. 
\end{xmpl}
We are interested in bi-Hilbert bimodules that satisfy an additional axiom on the inner products:
\begin{defn}\label{imprimdefn}
A bi-Hilbert $\mathcal{A}-\mathcal{B}$ bimodule $E$ is termed \emph{a partial imprimitivity} $\mathcal{A}-\mathcal{B}$ \emph{bimodule} when for all elements $e\in E(x)(y), f\in E(x')(y)$ and $g\in E(x')(y')$ we have $$_\mathcal{A}\langle e,f\rangle\cdot g=e\cdot\langle f,g\rangle_\mathcal{B},$$ noting this is an equation in $E(x)(y')$. It is called\footnotemark\vspace{0pt} \emph{right essential} if the bimodule $E^1:\mathcal{A}\rightarrow\Hilb\mathcal{B}$ is essential, \emph{right full} if $E^1$ is full, \emph{left-essential} if $E^2:\mathcal{B}^{\mathrm{op}}\rightarrow\Hilb\mathcal{A}^\mathrm{op}$ is essential and \emph{left-full} if $E^2$ is full. A partial imprimitivity bimodule $E$ is called an $\mathcal{A}-\mathcal{B}$ \emph{imprimitivity bimodule} if both $E^1$ and $E^2$ are full. 
\end{defn}
\footnotetext{A note on the terminology here: in \cite{Echterhoff}, bimodules which are left full in our terminology  are termed `right partial', and vice versa. We modify their terminology here since strictly it implies that modules have full products on both sides when they are both left and right partial, a rather unintuitive statement.}
\begin{xmpl}
The representation bimodule $\iota_\mathcal{A}:\mathcal{A}\xhookrightarrow[]{}\Hilb\mathcal{A}$ can be given the structure of a bi-Hilbert bimodule by setting for $a\in h_x(y),b\in h_z(y)$ the product $_\mathcal{A}\langle a,b\rangle:=a\circ b^*$, and we see immediately that this is a partial imprimitivity bimodule since $_\mathcal{A}\langle a,b\rangle\cdot c=a\circ b^*\circ c=a\cdot\langle b,c\rangle_\mathcal{A}$. As explained in the previous example, it is evident that both products are full, so $\iota_\mathcal{A}$ is in fact an $\mathcal{A}-\mathcal{A}$ imprimitivity bimodule.
\end{xmpl}

\begin{lemma}\label{imprimnormequiv}
If $E$ is a partial imprimitivity $\mathcal{A}-\mathcal{B}$ bimodule, the two norms on each space $E(x)(y)$ given by the $\mathcal{A}$- and $\mathcal{B}$-valued product are equal. 
\end{lemma}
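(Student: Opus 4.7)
The plan is to sandwich the element $e\cdot\langle e,e\rangle_\mathcal{B}={}_\mathcal{A}\langle e,e\rangle\cdot e$ (whose two descriptions come from the partial imprimitivity identity with $f=g=e$) between an exact computation and a continuity bound. Directly computing its $\mathcal{B}$-norm from the right Hilbert module axioms, one pushes $\langle e,e\rangle_\mathcal{B}$ out of both arguments of the inner product (using self-adjointness of $\langle e,e\rangle_\mathcal{B}$) to obtain
$$\langle e\cdot\langle e,e\rangle_\mathcal{B},\,e\cdot\langle e,e\rangle_\mathcal{B}\rangle_\mathcal{B}=\langle e,e\rangle_\mathcal{B}^{3},$$
and the $C^*$-identity then gives $\|e\cdot\langle e,e\rangle_\mathcal{B}\|_\mathcal{B}=\|e\|_\mathcal{B}^{3}$.

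On the other hand, viewing the same element as ${}_\mathcal{A}\langle e,e\rangle\cdot e$ and applying the continuity axiom of the left $\mathcal{A}$-action on the right Hilbert $\mathcal{B}$-module (which is part of \Cref{Bihilbdefn} and says $\|a\cdot e\|_\mathcal{B}\leq\|a\|\,\|e\|_\mathcal{B}$) yields
$$\|e\cdot\langle e,e\rangle_\mathcal{B}\|_\mathcal{B}\;=\;\|{}_\mathcal{A}\langle e,e\rangle\cdot e\|_\mathcal{B}\;\leq\;\|{}_\mathcal{A}\langle e,e\rangle\|\cdot\|e\|_\mathcal{B}\;=\;\|e\|_\mathcal{A}^{2}\,\|e\|_\mathcal{B}.$$
Combining these, $\|e\|_\mathcal{B}^{3}\leq\|e\|_\mathcal{A}^{2}\|e\|_\mathcal{B}$. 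Assuming $\|e\|_\mathcal{B}\neq 0$ we divide to get $\|e\|_\mathcal{B}\leq\|e\|_\mathcal{A}$; if instead $\|e\|_\mathcal{B}=0$, then $\langle e,e\rangle_\mathcal{B}=0$ forces $e=0$ by the definiteness axiom in \Cref{hilbmoddefn}, and both norms are zero.

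The reverse inequality follows by the symmetric argument: the same element ${}_\mathcal{A}\langle e,e\rangle\cdot e=e\cdot\langle e,e\rangle_\mathcal{B}$ has $\mathcal{A}$-norm equal to $\|e\|_\mathcal{A}^{3}$ by the analogous computation in the left Hilbert $\mathcal{A}$-module (using ${}_\mathcal{A}\langle a\cdot e,f\rangle=a\circ{}_\mathcal{A}\langle e,f\rangle$ and self-adjointness of ${}_\mathcal{A}\langle e,e\rangle$), while the right continuity bound $\|e\cdot b\|_\mathcal{A}\leq\|b\|\|e\|_\mathcal{A}$ from \Cref{Bihilbdefn} gives $\|e\cdot\langle e,e\rangle_\mathcal{B}\|_\mathcal{A}\leq\|e\|_\mathcal{B}^{2}\|e\|_\mathcal{A}$, producing $\|e\|_\mathcal{A}\leq\|e\|_\mathcal{B}$.

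There is no real obstacle here: the proof is essentially a two-line exercise once the partial imprimitivity identity is applied to $f=g=e$, and the rest is bookkeeping with the bi-Hilbert axioms and the $C^*$-identity. The only mild subtlety is remembering to handle the degenerate case $\|e\|_\mathcal{B}=0$ separately via the non-degeneracy of the inner product, rather than dividing blindly.
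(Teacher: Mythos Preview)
Your proof is correct and follows essentially the same approach as the paper: both apply the imprimitivity identity with $f=g=e$ to rewrite $e\cdot\langle e,e\rangle_\mathcal{B}={}_\mathcal{A}\langle e,e\rangle\cdot e$, then combine an exact $C^*$-computation with the boundedness of the opposite action and divide. The paper's version is marginally more compact, working directly with $\|\langle e,e\rangle_\mathcal{B}\|^2=\|\langle e,e\cdot\langle e,e\rangle_\mathcal{B}\rangle_\mathcal{B}\|$ rather than the module norm of $e\cdot\langle e,e\rangle_\mathcal{B}$, but the content is identical.
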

\begin{proof}
Note that 

$$\begin{array}{rll}
     \|\langle e,e\rangle_\mathcal{B}\|^2&=\|\langle e,e\rangle_\mathcal{B}\langle e,e\rangle_\mathcal{B}\|
     &=\|\langle e,e\cdot \langle e,e\rangle_\mathcal{B}\rangle_\mathcal{B}\| \\&=\|\langle e,{}_\mathcal{A}\langle e,e\rangle\cdot e\rangle_\mathcal{B} &\leq\|\langle e,e\rangle_\mathcal{B}\|\|_\mathcal{A}\langle e,e\rangle\|
     
\end{array} $$

So for $e\neq0$ we can divide to get $\|\langle e,e\rangle_\mathcal{B}\|\leq\|_\mathcal{A}\langle e,e\rangle\|$, and it follows similarly that $\|_\mathcal{A}\langle e,e\rangle\|\leq\|\langle e,e\rangle_\mathcal{B}\|$, proving the lemma.
\end{proof}
The imprimitivity equation gives us significant control over both actions on the bimodule:
\begin{lemma}\label{imprimcompact}
If $E$ is a partial imprimitivity $\mathcal{A}-\mathcal{B}$ bimodule, we have that  $$E^1(\langle E^2,E^2\rangle(x,x'))\subseteq\mathcal{L}(E^1(x),E^1(x'))$$ and

$$E^2(\langle E^1,E^1\rangle(y,y'))\subseteq\mathcal{L}(E^2(y),E^2(y'))$$ both consist of exactly the finite-rank operators.

\end{lemma}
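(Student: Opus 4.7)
The plan is to reduce both inclusions to a direct identification of $E^{1}({}_{\mathcal{A}}\langle e,f\rangle)$ with a single-rank operator, straight from the imprimitivity axiom of \Cref{imprimdefn}; the second inclusion is symmetric to the first, so I will concentrate on the first.

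Fix $x,x'\in\Ob\mathcal{A}$ and $y\in\Ob\mathcal{B}$, and take $e\in E(x')(y)$, $f\in E(x)(y)$. Then ${}_{\mathcal{A}}\langle e,f\rangle\in\mathcal{A}(x,x')$, and by the bimodule structure the operator $E^{1}({}_{\mathcal{A}}\langle e,f\rangle)\in\mathcal{L}(E^{1}(x),E^{1}(x'))$ acts at any $y'\in\Ob\mathcal{B}$ on an element $g\in E(x)(y')$ as left multiplication by ${}_{\mathcal{A}}\langle e,f\rangle$. The imprimitivity axiom of \Cref{imprimdefn} (applied with the roles of $x$ and $x'$ interchanged) gives
\[
{}_{\mathcal{A}}\langle e,f\rangle\cdot g \;=\; e\cdot\langle f,g\rangle_{\mathcal{B}},
\]
and the right-hand side is, by the very definition of single-rank operator between right Hilbert $\mathcal{B}$-modules, precisely $\theta_{y}^{e,f}(g)$. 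Hence $E^{1}({}_{\mathcal{A}}\langle e,f\rangle)=\theta_{y}^{e,f}$ as operators from $E^{1}(x)$ to $E^{1}(x')$.

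Since $E^{1}$ is a $C^{*}$-functor, hence linear, and $\langle E^{2},E^{2}\rangle(x,x')$ is by definition the linear span of the morphisms ${}_{\mathcal{A}}\langle e,f\rangle$ as $y$, $e$ and $f$ vary, its image under $E^{1}$ is the linear span of the single-rank operators $\theta_{y}^{e,f}$ with $e\in E^{1}(x')(y)$, $f\in E^{1}(x)(y)$, $y\in\Ob\mathcal{B}$. This span is by definition the space of finite-rank operators in $\mathcal{L}(E^{1}(x),E^{1}(x'))$, which gives both inclusions at once. The second equality, $E^{2}(\langle E^{1},E^{1}\rangle(y,y'))=$ finite-rank operators in $\mathcal{L}(E^{2}(y),E^{2}(y'))$, follows by applying the identical argument to $E^{2}:\mathcal{B}^{\mathrm{op}}\rightarrow\Hilb\mathcal{A}^{\mathrm{op}}$, noting that the imprimitivity identity is manifestly symmetric under the exchange of left and right inner products.

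The only real obstacle here is bookkeeping: one must keep track of which object indexes which module and which inner product lives in which category, particularly when passing from the ``right'' picture for $E^{1}$ to the ``left'' picture for $E^{2}$. Once the indices are aligned, no analysis beyond the imprimitivity identity is required.
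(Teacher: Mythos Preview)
Your proof is correct and follows essentially the same approach as the paper: both identify $E^{1}({}_{\mathcal{A}}\langle e,f\rangle)$ with the single-rank operator $\theta^{e,f}$ directly from the imprimitivity identity, then invoke linearity of $E^{1}$ to pass from single-rank to finite-rank, and treat the $E^{2}$ case symmetrically. The paper's version is terser but the content is identical.
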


\begin{proof}
The identity in \Cref{imprimdefn} states that $E^1(_\mathcal{A}\langle e,f\rangle)(g)=\theta^{e,f}(g)$, so as $E^1$ is additive and continuous we get the first result. 

The analogous result on $E^2$ follows similarly as the identity in \Cref{imprimdefn} gives $E^2(\langle f,g\rangle_\mathcal{B})(e)=\theta^{g,f}(e)$.
\end{proof}

Given additional information about the products, we can deduce that the action is entirely compact:

\begin{lemma}\label{imprimaction}
    If $E$ is a left-essential partial imprimitivity $\mathcal{A}-\mathcal{B}$, then in fact  $E^1$ is an isometry on hom-sets and satisfies $$E^1(\overline{\langle E^2,E^2\rangle}(x,x'))=\mathcal{K}(E^1(x),E^1(x')),$$ and if $E$ is left full, $E^1$ gives a surjection of $\mathcal{A}(x,x')$ onto $\mathcal{K}(E^1(x),E^1(x'))$. 

Similarly, if $E$ is right-essential, $E^2$ acts isometrically and we have $$E^2(\overline{\langle E^1,E^1\rangle_\mathcal{B}}(y',y))=\mathcal{K}(E^1(y),E^1(y')),$$ and if $E$ is right-full, $E^2$ in addition surjects $\mathcal{B}(y',y))$ onto $\mathcal{K}(E^1(y),E^1(y'))$.
\end{lemma}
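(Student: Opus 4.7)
The plan is to reduce everything to an injectivity computation on the essential ideal $I := \overline{\langle E^2, E^2\rangle}$, then upgrade to isometry on all of $\mathcal{A}$ via \Cref{essentialcriterion} and \Cref{starfuncisbounded}, and finally identify the image using \Cref{imprimcompact} together with the fact that an isometric linear map between Banach spaces has closed image.

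First I would show $E^1$ restricted to $I$ is injective. Suppose $a \in I(x', x)$ satisfies $E^1(a) = 0$, i.e.\ $a \cdot e = 0$ for every $y \in \Ob\mathcal{B}$ and every $e \in E(x')(y)$. For each $y$, the slice $E(-)(y)$ is a left Hilbert $\mathcal{A}$-module, so its inner product satisfies the left-module compatibility $_\mathcal{A}\langle a \cdot e, f\rangle = a \circ {}_\mathcal{A}\langle e, f\rangle$ (the left-module analog of the axiom in \Cref{hilbmoddefn}). Applied to arbitrary $e \in E(x')(y)$ and $f \in E(x'')(y)$, this gives $a \circ {}_\mathcal{A}\langle e, f\rangle = 0$, so $a \circ b = 0$ for every generator of $\langle E^2, E^2\rangle(x'', x')$ and hence by continuity for all $b \in I(x'', x')$. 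Since $I$ is essential by hypothesis, \Cref{essentialcriterion} forces $a = 0$. To extend injectivity to $\mathcal{A}$, take arbitrary $a \in \mathcal{A}(x', x)$ with $E^1(a) = 0$: for any composable $b \in I$ the product $ab$ lies in $I$ (ideal property) and satisfies $E^1(ab) = E^1(a) \circ E^1(b) = 0$, so $ab = 0$ by the previous step, and a second application of \Cref{essentialcriterion} gives $a = 0$. Hence $E^1$ is injective on every hom-space of $\mathcal{A}$, so by the injective case of \Cref{starfuncisbounded} it is isometric on hom-sets.

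For the image identification, \Cref{imprimcompact} tells us that $E^1(\langle E^2, E^2\rangle(x, x'))$ is exactly the subspace of finite-rank operators in $\mathcal{L}(E^1(x), E^1(x'))$. Isometry of $E^1|_I$ implies its image is closed in $\mathcal{L}(E^1(x), E^1(x'))$, so $E^1(I(x,x'))$ is the norm-closure of the finite-rank operators, i.e.\ $\mathcal{K}(E^1(x), E^1(x'))$; if in addition $E$ is left full then $I = \mathcal{A}$, and so $E^1$ surjects $\mathcal{A}(x, x')$ onto $\mathcal{K}(E^1(x), E^1(x'))$. The right-essential and right-full statements follow by running the symmetric argument on $E^2$ viewed as a right Hilbert $\mathcal{B}^{\mathrm{op}}$-$\mathcal{A}^{\mathrm{op}}$ bimodule. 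I expect the only real care needed is in the source/target bookkeeping when invoking the left-module axiom, because in a bi-Hilbert bimodule the two inner-product maps land in hom-spaces whose source and target are formally swapped, so it is easy to mis-compose and accidentally pick up a stray involution; the conceptual content is the clean essentiality-to-injectivity step combined with \Cref{imprimcompact}.
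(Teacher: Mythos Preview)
Your proof is correct and follows essentially the same route as the paper's: use the left-module identity ${}_\mathcal{A}\langle a\cdot e,f\rangle = a\circ{}_\mathcal{A}\langle e,f\rangle$ to show that $E^1(a)=0$ forces $a$ to annihilate the essential ideal $\overline{\langle E^2,E^2\rangle}$, invoke \Cref{essentialcriterion} for faithfulness, upgrade to isometry via \Cref{starfuncisbounded}, and then combine \Cref{imprimcompact} with the fact that isometries preserve closures to identify the image. The only difference is that your two-stage injectivity argument (first on $I$, then extend to $\mathcal{A}$) is unnecessary: the computation in your Step~1 already works verbatim for arbitrary $a\in\mathcal{A}(x',x)$, since \Cref{essentialcriterion} applies to elements of the ambient category, not just of the ideal---so the paper does it in one step.
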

\begin{proof}
We show that if $E$ is left essential, $E^1$ acts isometrically on hom-sets. Suppose $E^1(a)=0$, then for all $e,f$ we have $a\circ {}_\mathcal{A}\langle e,f\rangle={}_\mathcal{A}\langle E^1(a)(e),f\rangle=0$, so as $\langle E^2,E^2\rangle$ is essential, by \Cref{essentialcriterion} we in fact have $a=0$. Hence $E^1$ is faithful and by \Cref{starfuncisbounded} in fact acts isometrically, and in particular preserves closures of sets. If $E^2 $ is in fact full, clearly the final result on $E^1$ follows.

The dual results follow similarly.
\end{proof}

This last lemma has the following converse:

\begin{lemma}\label{giveimprimstruc}For every subcategory $\mathcal{D}\subseteq\Hilb\mathcal{B}$, containing all compact operators between its objects, the inclusion $\mathcal{D}\xhookrightarrow{}\Hilb\mathcal{B}$ can be given the structure of a left-essential $\mathcal{D}-\mathcal{B}$ imprimitivity bimodule by setting $_{\mathcal{D}}\langle e,f\rangle=\theta^{e,f}$. It is an imprimitivity bimodule if and only if $\mathcal{D}$ contains precisely the compact operators and the bimodule $\mathcal{D}\xhookrightarrow[]{}\Hilb\mathcal{B}$ is full. \end{lemma}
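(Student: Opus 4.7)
The plan is to install the bi-Hilbert structure on the inclusion, verify partial imprimitivity and left-essentiality, and then read off the `iff'. The key point, which drives everything else, is that the partial-imprimitivity identity is built in: by the very definition of single-rank operators, $\theta^{e,f}(g) = e\cdot\langle f,g\rangle_{\mathcal{B}}$, so setting $_{\mathcal{D}}\langle e,f\rangle := \theta^{e,f}$ makes the identity $_{\mathcal{D}}\langle e,f\rangle\cdot g = e\cdot\langle f,g\rangle_{\mathcal{B}}$ hold automatically.

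Next I would verify the remaining bi-Hilbert axioms on each $E(y)$ viewed as a left $\mathcal{D}$-module via $_{\mathcal{D}}\langle-,-\rangle$. Sesquilinearity and conjugate symmetry are inherited from the corresponding properties of $\theta^{-,-}$ (in particular $(\theta^{e,f})^* = \theta^{f,e}$); positivity of $_{\mathcal{D}}\langle e,e\rangle = \theta^{e,e}$ follows from the factorization $\theta^{e,e} = \epsilon_e\epsilon_e^*$ of \Cref{singlerank}; and definiteness is the short observation that $\theta^{e,e}=0$ forces $\theta^{e,e}(e) = e\cdot\langle e,e\rangle_{\mathcal{B}}=0$, from which $\|e\|^6 = \|\langle e,e\rangle_{\mathcal{B}}\|^3 = \|e\cdot\langle e,e\rangle_{\mathcal{B}}\|^2 = 0$ gives $e=0$. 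The new norm on $E(y)$ coincides with the old one since $\|\theta^{e,e}\|=\|\epsilon_e\|^2=\|e\|^2$ by the $C^*$-identity, so completeness is inherited from the $\mathcal{B}$-module structure. The bounds and adjointness between the two actions are immediate because the $\mathcal{D}$-action is by adjointable $\mathcal{B}$-linear operators and the $\mathcal{B}$-action by module maps.

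For left-essentiality, note that $\langle E^2,E^2\rangle$ is precisely the linear span of single-rank operators between objects of $\mathcal{D}$, so $\overline{\langle E^2,E^2\rangle}$ is the $C^*$-ideal of compacts between those objects, which sits inside $\mathcal{D}$ by the standing hypothesis. Essentiality of this ideal in $\mathcal{D}$ then reduces to essentiality of compacts in $\Hilb\mathcal{B}$: if $a\in\mathcal{D}\subseteq\Hilb\mathcal{B}$ kills every composable compact, then \emph{a fortiori} this holds in $\Hilb\mathcal{B}$, so by \Cref{essentialcriterion} we get $a=0$.

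Finally for the `iff', both $E^1$ and $E^2$ must be full. From the previous paragraph $\overline{\langle E^2,E^2\rangle}$ equals the compacts between objects of $\mathcal{D}$, so $E^2$ is full iff $\mathcal{D}$ is precisely this $C^*$-ideal, i.e.\ $\mathcal{D}$ contains only compacts. On the other hand, $E^1$ is literally the inclusion $\mathcal{D}\hookrightarrow\Hilb\mathcal{B}$, so its fullness is by definition the fullness of the bimodule. The only genuinely delicate point is the norm-matching $\|\theta^{e,e}\|=\|e\|^2$ needed to transfer completeness from the $\mathcal{B}$-module structure to the left $\mathcal{D}$-module structure; everything else is routine bookkeeping of previously established facts.
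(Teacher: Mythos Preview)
Your proof is correct and follows essentially the same route as the paper's: install the left inner product $_{\mathcal{D}}\langle e,f\rangle=\theta^{e,f}$, check the bi-Hilbert axioms, observe that the imprimitivity identity holds by definition of $\theta^{e,f}$, note that the closed span of the left products is exactly the compacts between objects of $\mathcal{D}$ (giving left-essentiality and the left-fullness criterion), and read off the right-fullness condition directly.

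There are two small technical differences worth recording. For positivity of $\theta^{e,e}$ you factor it as $\epsilon_e\epsilon_e^*$ via \Cref{singlerank}, while the paper instead checks $\langle f,\theta^{e,e}(f)\rangle=\langle f,e\rangle\langle e,f\rangle\geq 0$ and invokes \Cref{positivitycat}; both are immediate. For the equality of the two norms you compute $\|\theta^{e,e}\|=\|\epsilon_e\epsilon_e^*\|=\|\epsilon_e\|^2=\|e\|^2$ directly via the $C^*$-identity and the Yoneda isometry, whereas the paper appeals to \Cref{imprimnormequiv} (noting carefully that its proof uses only the imprimitivity equation and the action compatibility, both already in hand). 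Your direct computation is arguably cleaner here since it sidesteps that circularity concern. One small omission: you should also record the left-module compatibility $T\circ\theta^{e,f}=\theta^{T(e),f}$ and the $\mathcal{B}$-adjointability $\theta^{e,f\cdot b}=\theta^{e\cdot b^*,f}$ explicitly, as the paper does, rather than folding them into ``routine bookkeeping''---but these are indeed one-line identities.
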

\begin{proof}
The left action of $\mathcal{D}$ on $\Hilb\mathcal{B}$ is obvious, and the imprimitivity equation follows immediately from the definition of $_{\mathcal{D}}\langle-,-\rangle$. To show that for every object $y\in\Ob\mathcal{B}$, the functor $\mathcal{D}\rightarrow\mathsf{Vect}_\mathbb{C}:E\mapsto E(y)$ is a left $\mathcal{D}$-Hilbert module with product as described, note that $\theta^{e,f}=(\theta^{f,e})^*$ and that $T\circ\theta^{e,f}=\theta^{T(e),f}$. Finally, to see for any $E\in\Ob\mathcal{D},e\in E(y),$ that $_\mathcal{D}\langle e,e\rangle=\theta^{e,e}_y\in\mathcal{KD}(E,E)$ is positive, note that for all $f\in E(x)$ we have $\langle f,\theta_y^{e,e}(f)\rangle=\langle f,e\rangle\langle e,f\rangle\geq0$, so by \Cref{positivitycat} we get positivity. Finally if $\theta^{e,e}_y=0$, then $\langle e,e\rangle^2=\langle e,\theta^{e,e}(e)\rangle=0 $ so $e=0$. 

Every space in the bimodule is complete in this norm as it is equivalent to the $\mathcal{B}$-norm; note that the proof of \Cref{imprimnormequiv}
uses only the imprimitivity equation and the compatibility of the actions. The action of $\mathcal{B}$ is bounded in the $\mathcal{D}$-norm since by \Cref{CSLemma} it's bounded in the $\mathcal{B}$-norm and the two norms are equal, and finally it's adjointable since $\theta^{e,f\cdot b}=\theta^{e\cdot b^*,f}$. This completes the proof.

Left essentiality follows \textit{a fortiori} from the fact that $\KHilb\mathcal{B}$ is essential in $\Hilb\mathcal{B}$, and left fullness is clearly equivalent to asking that $\mathcal{D}$ contains only compact operators. So the inclusion $\mathcal{D}\xhookrightarrow{}\Hilb\mathcal{B}$ is an imprimitivity bimodule with described left product if it is also right full.\end{proof}
\Cref{giveimprimstruc} and \Cref{imprimaction} together allow us to give a complete characterization of which bimodules come from imprimitivity bimodules:
\begin{prop}\label{imprimchar}
A bimodule $E^1:\mathcal{A}\rightarrow\Hilb\mathcal{B}$ can be given the structure of a left-full partial imprimitivity bimodule if and only if $E^1$ is a faithful functor that surjects onto the compact operators between the modules in its image. In this case the $\mathcal{A}$-valued product must equal $_{\mathcal{A}}\langle e,f\rangle=\theta^{e,f}$. This is an imprimitivity bimodule if and only if $E^1$ is full.
\end{prop}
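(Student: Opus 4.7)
The plan is to prove the biconditional by leveraging the two previous lemmas almost directly, and then handle the uniqueness statement and the final ``imprimitivity iff right full'' claim by chasing definitions.

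For the forward implication, suppose $E^1$ carries the structure of a left-full partial imprimitivity bimodule. Applying \Cref{imprimaction} with left-fullness in place of left-essentiality immediately yields that $E^1$ acts isometrically (hence faithfully) on hom-sets and that $E^1$ surjects $\mathcal{A}(x,x')$ onto $\mathcal{K}(E^1(x), E^1(x'))$. To see that $_\mathcal{A}\langle e,f\rangle = \theta^{e,f}$ (under the identification via $E^1$), note that the imprimitivity equation $_\mathcal{A}\langle e,f\rangle \cdot g = e \cdot \langle f,g\rangle_\mathcal{B}$ rewrites to $E^1({}_\mathcal{A}\langle e,f\rangle)(g) = \theta^{e,f}(g)$ for all $g \in E^1(x')(y')$. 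Since $E^1$ is faithful and surjects onto the compacts, it gives an isometric isomorphism $\mathcal{A}(x,x') \cong \mathcal{K}(E^1(x), E^1(x'))$ by \Cref{starfuncisbounded}, forcing $_\mathcal{A}\langle e,f\rangle$ to be the unique preimage of $\theta^{e,f}$.

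For the reverse implication, let $\mathcal{D} \subseteq \Hilb\mathcal{B}$ be the full subcategory on the objects in the image of $E^1$. Since $E^1$ surjects onto the compacts between these objects, $\mathcal{D}$ contains all compact operators between its objects, so \Cref{giveimprimstruc} equips the inclusion $\mathcal{D} \hookrightarrow \Hilb\mathcal{B}$ with a left-essential $\mathcal{D}$-$\mathcal{B}$ partial imprimitivity structure where $_\mathcal{D}\langle e,f\rangle = \theta^{e,f}$. Because $E^1$ is a faithful isometric surjection onto the compacts (again by \Cref{starfuncisbounded}), we may transfer this structure along $E^1$: define $_\mathcal{A}\langle e,f\rangle$ to be the unique element of $\mathcal{A}(x,x')$ with $E^1({}_\mathcal{A}\langle e,f\rangle) = \theta^{e,f}$, and transport sesquilinearity, positivity, the $C^*$-adjointness identities and the boundedness of the left $\mathcal{A}$-action along the isometric hom-set isomorphisms induced by $E^1$. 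The imprimitivity equation holds by construction, and left-fullness holds because the linear span of $\{\theta^{e,f}\}$ is dense in $\mathcal{K}(E^1(x), E^1(x')) = E^1(\mathcal{A}(x,x'))$, so the span of $\{{}_\mathcal{A}\langle e,f\rangle\}$ is dense in $\mathcal{A}(x,x')$.

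For the final sentence, recall from \Cref{imprimdefn} that an imprimitivity bimodule is a partial imprimitivity bimodule which is both left-full and right-full. Having already established left-fullness, the additional requirement of being a (full) imprimitivity bimodule is precisely right-fullness, which by \Cref{fulldefn} means that $E^1$ itself is full as a right Hilbert bimodule.

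The main obstacle I anticipate is the bookkeeping in the reverse direction: verifying that the data transferred along $E^1$ really assembles into a bi-Hilbert bimodule in the sense of \Cref{Bihilbdefn}, rather than just a set of inner products. This ultimately comes down to the fact that $E^1$ is an isometric $C^*$-functor onto its image in the compact operators, so all the algebraic and analytic axioms transfer verbatim from the concrete case handled in \Cref{giveimprimstruc}; but care is needed to ensure that the bifunctoriality and the compatibility of the two actions (via minimal unitizations) are preserved by the transfer, rather than merely being asserted.
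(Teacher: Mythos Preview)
Your proposal is correct and follows essentially the same route as the paper: the paper's proof consists of a single sentence pointing to \Cref{imprimaction} for the forward direction and \Cref{giveimprimstruc} for the reverse, which is exactly what you do, only with the details spelled out. Your additional remarks on uniqueness of the $\mathcal{A}$-valued product and on the bookkeeping required to transfer the bi-Hilbert structure along the isometric isomorphism $E^1:\mathcal{A}(x,x')\xrightarrow{\cong}\mathcal{K}(E^1(x),E^1(x'))$ are accurate and fill in what the paper leaves implicit.
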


Hence we see that for a given right Hilbert bimodule, being an imprimitivity bimodule or not is a \textit{property} rather than a structure.

The reason to study imprimitivity bimodules is that as in the $C^*$-algebra case, they turn out to be exactly the bimodules that induce strong unitary equivalences of module categories.

\begin{defn}
If $E:\mathcal{A}^+\times\mathcal{B}^{\mathrm{op}+}\rightarrow\mathsf{Vect}_\mathbb{C}$ is a bi-Hilbert $\mathcal{A}-\mathcal{B}$ bimodule, its \emph{conjugate} $\widetilde{E}:\mathcal{B}^+\times\mathcal{A}^{\mathrm{op}+}\rightarrow\mathsf{Vect}_\mathbb{C}$ is the bi-Hilbert $\mathcal{B}-\mathcal{A}$ bimodule defined as follows:
\begin{itemize}
    \item For $x\in\Ob\mathcal{A}$ and $y\in\Ob\mathcal{B}$ we set $\widetilde{E}(y)(x)=E(x)(y)^*$, where $^*$ denotes the conjugate of a complex vector space.
    \item For any morphisms $a\in\mathcal{A}^+$ and $b\in\mathcal{B}^+$ we set $\widetilde{E}(b,a)=E(a^*,b^*)$.
    \item $\bar{E}$ has products $_\mathcal{B}\langle-,-\rangle$ and $\langle-,-\rangle_\mathcal{A}$ defined by setting ${}_\mathcal{B}\langle \widetilde{e},\widetilde{f}\rangle=\langle f,e\rangle_\mathcal{B}$ and $\langle \widetilde{e},\widetilde{f}\rangle_\mathcal{A}={}_\mathcal{A}\langle f,e\rangle$ where e.g. $\widetilde{e}\in\widetilde{E}(y)(x)$ denotes the element of the conjugate space corresponding to $e\in E(x)(y)$.
\end{itemize}
\end{defn}
We denote by $\widetilde{E}^1:\mathcal{B}\rightarrow\Hilb\mathcal{A}$ and $\widetilde{E}^2:\mathcal{A}^\mathrm{op}\rightarrow\Hilb\mathcal{B}^\mathrm{op}$ the two associated bimodules. It is immediate that many properties of $\widetilde{E}$ carry over from $E$, namely:
\begin{itemize}
    \item $\widetilde{E}$ is a partial imprimitivity bimodule if and only if $E$ is.
    \item $\widetilde{E}$ is left essential/full, respectively right essential/full if and only if $E$ is right essential/full, respectively left essential/full.
\end{itemize}
Imprimitivity bimodules were also studied in \cite{Ferrier}, where they are termed `equivalence bimodules'. There a result is proved which also appears in this subsection, namely that imprimitivity bimodules are exactly those bimodules that induce equivalences of module categories. 

We begin by proving that an imprimitivity bimodule is invertible under the tensor product.
\begin{lemma}\label{imprimisinvertible}
If $E$ is a partial imprimitivity bi-Hilbert $\mathcal{A}-\mathcal{B}$ bimodule, there are isometric bimodule maps $\phi:\widetilde{E}^1\bar{\otimes}_\mathcal{A} E^2\rightarrow\iota_\mathcal{B}$ and $\psi:E^2\bar{\otimes}_\mathcal{B} \widetilde{E}^1\rightarrow\iota_\mathcal{A}$. If $E$ is right-full, $\phi$ is a unitary isomorphism, and if $E$ is left-full, $\psi$ is a unitary isomorphism.
\end{lemma}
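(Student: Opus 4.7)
The strategy is to define both maps explicitly on simple tensors by applying the bi-Hilbert bimodule's inner products, show that the partial imprimitivity equation is precisely what makes these maps preserve inner products, and then extract surjectivity from the appropriate fullness hypothesis. The one genuinely non-formal step is the isometry calculation, where the imprimitivity axiom is used in an essential way; everything else is bookkeeping in the language of \Cref{hilbmoddefn}, \Cref{tensordefn} and \Cref{Bihilbdefn}.

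\emph{Construction of $\phi$.} On a simple tensor $\tilde{e}\otimes f$, where $\tilde{e}$ is the conjugate of $e\in E(x)(y)$ and $f\in E(x)(y')$, set $\phi(\tilde{e}\otimes f):=\langle e,f\rangle_\mathcal{B}$, viewed as a morphism in $\iota_\mathcal{B}(y)(y')=\mathcal{B}(y',y)$. Descent through the tensor-product relation $\tilde{e}\cdot a\otimes f=\tilde{e}\otimes a\cdot f$ is exactly the bi-Hilbert compatibility $\langle a^*\cdot e,f\rangle_\mathcal{B}=\langle e,a\cdot f\rangle_\mathcal{B}$ from \Cref{Bihilbdefn}. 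The sesquilinearity of $\langle-,-\rangle_\mathcal{B}$ together with the axioms of the right $\mathcal{B}$-action make $\phi$ a $\mathcal{B}$-$\mathcal{B}$-bimodule map on the uncompleted tensor product.

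\emph{Isometry via imprimitivity.} Expanding the tensor-product inner product from \Cref{tensordefn} and unwinding the definition of the inner products on $\widetilde{E}$ yields
\[
\langle \tilde{e}\otimes f,\tilde{e}'\otimes f'\rangle=\langle f,\,{}_\mathcal{A}\langle e,e'\rangle\cdot f'\rangle_\mathcal{B}.
\]
By the partial imprimitivity identity ${}_\mathcal{A}\langle e,e'\rangle\cdot f'=e\cdot\langle e',f'\rangle_\mathcal{B}$, and then pulling the $\mathcal{B}$-scalar out of the second slot, this equals $\langle f,e\rangle_\mathcal{B}\,\langle e',f'\rangle_\mathcal{B}$. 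On the other side,
\[
\langle\phi(\tilde{e}\otimes f),\phi(\tilde{e}'\otimes f')\rangle_{\iota_\mathcal{B}}=\langle e,f\rangle_\mathcal{B}^*\,\langle e',f'\rangle_\mathcal{B}=\langle f,e\rangle_\mathcal{B}\,\langle e',f'\rangle_\mathcal{B},
\]
which matches. Hence $\phi$ preserves all inner products on finite sums of simple tensors, and \Cref{densextend} extends it uniquely to an isometric morphism of right Hilbert $\mathcal{B}$-$\mathcal{B}$ bimodules on the completion.

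\emph{Unitarity from fullness.} The image of $\phi$ on simple tensors is exactly $\langle E,E\rangle_\mathcal{B}$ in the notation of \Cref{fulldefn}. When $E$ is right-full this span is norm-dense in every hom-space $\mathcal{B}(y',y)$, so $\phi$ has dense image and is therefore surjective by the final part of \Cref{densextend}. Combined with the isometry and bimodule property, \Cref{unitarybimodcrit} then promotes $\phi$ to a unitary isomorphism. The construction and analysis of $\psi$ are entirely dual: set $\psi(f\otimes\tilde{e}):={}_\mathcal{A}\langle f,e\rangle$ and run the mirror argument using the symmetric form ${}_\mathcal{A}\langle e,f\rangle\cdot g=e\cdot\langle f,g\rangle_\mathcal{B}$ of the imprimitivity equation, together with left-fullness for density. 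I expect the real obstacle to be not any single technical step but the careful bookkeeping of left versus right actions, of the conjugate module's inner products, and of the directionality forced by $\mathcal{B}^{\mathrm{op}}$; the one decisive computational step is the single use of the imprimitivity equation in the isometry paragraph.
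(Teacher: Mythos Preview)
Your proof is correct and follows essentially the same approach as the paper: the same explicit definitions $\phi(\widetilde{e}\otimes f)=\langle e,f\rangle_\mathcal{B}$ and $\psi(e\otimes\widetilde{f})={}_\mathcal{A}\langle e,f\rangle$, the same inner-product computation pivoting on the imprimitivity identity, and the same appeal to \Cref{densextend} and \Cref{unitarybimodcrit} for the extension and unitarity. The only cosmetic difference is that you run the isometry calculation from the tensor side toward the Yoneda side, whereas the paper runs it in the opposite direction.
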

\begin{proof}
We construct $\phi$ first. For all $y,y'\in\Ob\mathcal{B}$ we define the bimodule map $\phi:\widetilde{E}^1\otimes_\mathcal{B} E^2(y)(y')\rightarrow\iota_\mathcal{B}(y)(y'):=\mathcal{B}(y',y)$ on  tensors $\widetilde{e}\otimes f\in \widetilde{E}^1(y)(x)\otimes E^2(x)(y')$ by $\phi(\widetilde{e}\otimes f):=\langle e,f\rangle_\mathcal{B}$. It's easily verified that this is a bilinear map which is natural with respect to the actions of $\mathcal{A}$ and $\mathcal{B}$, and that it respects the equivalence relation defining $\widetilde{E}^1\otimes_\mathcal{B} E^2(y)(y')$. To show that it extends to an isometry on the completion, we show it respects the inner product on simple tensors:
$$\begin{array}{rll}
\langle \phi(\widetilde{e}\otimes f),\phi(\widetilde{e'}\otimes f)\rangle_\mathcal{B}     &=\langle e,f\rangle_\mathcal{B}^*\langle e',f'\rangle_\mathcal{B} &=\langle f,e\rangle_\mathcal{B}\langle e',f'\rangle_\mathcal{B}  \\
     =\langle f,e\cdot\langle e',f'\rangle_\mathcal{B}\rangle_\mathcal{B} &=\langle f,{}_\mathcal{A}\langle e,e'\rangle\cdot f'\rangle_\mathcal{B}
      &=\langle \widetilde{e}\otimes f,\widetilde{e'}\otimes f'\rangle_\mathcal{B}
\end{array}$$
Hence by \Cref{densextend} $\phi$ extends to an isometric bimodule map on the completed module $\widetilde{E}^1\bar{\otimes}_\mathcal{B} E^2(y)(y')$. This map is evidently surjective when $E$ is right-full, so by \Cref{unitarybimodcrit} it is a unitary equivalence.

The map $\psi$ is defined on simple tensors by $\psi(e\otimes\widetilde{f})={}_\mathcal{A}\langle e,f\rangle$. The proof that $\psi$ preserves inner products is completely analogous to the above, and clearly it is surjective (and hence a unitary equivalence) when $E$ is left-full. 
\end{proof}
We follow with a converse of this lemma, which has the hardest proof in this section.
\begin{lemma}\label{inversesareimprim}
Let $E:\mathcal{A}\rightarrow\Hilb\mathcal{B}$ and $F:\mathcal{B}\rightarrow\Hilb\mathcal{A}$ be non-degenerate right Hilbert bimodules, and suppose there exist bimodule isomorphisms
$\psi:E\bar{\otimes}_\mathcal{B}F\rightarrow\iota_\mathcal{A}$ and 
$\phi:F\bar{\otimes}_\mathcal{A}E\rightarrow\iota_\mathcal{B}$. Then both $E$ and $F$ are full bimodules, and are faithful functors that surject onto the compact operators between the modules in their image.
\end{lemma}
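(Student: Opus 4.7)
The plan is to first promote the given tensor-invertibility into a full equivalence of $C^*$-categories via Eilenberg--Watts, and then derive all three claims from that. By \Cref{tensorfunctor} both $G := -\bar{\otimes}_{\A}E : \Hilb\A \to \Hilb\B$ and $H := -\bar{\otimes}_{\B}F : \Hilb\B \to \Hilb\A$ are strong unital $C^*$-functors, and \Cref{tensorextend} provides natural unitary isomorphisms $G\circ\iota_\A\cong E$ and $H\circ\iota_\B\cong F$. Composing yields $(G\circ H)\circ\iota_\B \cong G\circ F \cong \iota_\B$ using the given $\phi$, so applying \Cref{EW} to $G\circ H$ and to $\id_{\Hilb\B}$ (whose Yoneda-precompositions are both isomorphic to $\iota_\B$) produces a natural isomorphism $G\circ H\cong\id_{\Hilb\B}$. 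Symmetrically $\psi$ gives $H\circ G\cong\id_{\Hilb\A}$, so $G$ and $H$ are quasi-inverse equivalences.

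Faithfulness and fullness follow almost formally. Since $E\cong G\circ\iota_\A$ and both the Yoneda embedding (isometric by \Cref{imageyoneda}) and the equivalence $G$ are faithful, so is $E$; likewise $F$. For fullness of $F$: $\iota_\A$ is full by \Cref{yonedafull}, so its inner products densely span every hom-space of $\A$; the unitary $\psi$ preserves inner products, and by \Cref{tensordefn} the inner product of simple tensors in $E\bar{\otimes}_\B F$ takes the form $\langle f, F(\langle e,e'\rangle_\B)f'\rangle_\A \in \langle F,F\rangle_\A$. Hence $\overline{\langle F,F\rangle_\A}=\A$, and $\phi$ symmetrically shows $E$ is full.

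The remaining claim---that $E$ surjects isometrically onto the compact operators between its image modules---I would obtain by equipping $E$ with the structure of a left-full partial imprimitivity bimodule and invoking \Cref{imprimchar}. The candidate left $\A$-valued product is defined by transporting a single-rank operator through $\psi$: for $e'\in E(x')(y)$ and $e\in E(x)(y)$, set $_\A\langle e',e\rangle := \iota_\A^{-1}\bigl(\psi_{x'}\circ(\theta^{e',e}\bar{\otimes}\id_F)\circ\psi_x^{-1}\bigr)$. The essential technical step is verifying that the transported operator actually lies in $\mathcal{K}(h_x,h_{x'})=\iota_\A(\A(x,x'))$ rather than merely in $\mathcal{L}(h_x,h_{x'})=\MA(x,x')$. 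Unpacking inner-product preservation of $\phi$ yields the identity $E(\langle f,f'\rangle_\A) = \phi_f^*\circ\phi_{f'}$, where $\phi_f:E(x)\to h_y$ is $e\mapsto\phi(f\otimes e)$ with $h_y=\iota_\B(y)$; this factors the operator through the representable $h_y$. Factoring $f$ via \Cref{factorlemma} or Cohen--Hewitt (\Cref{CohHewCstar}) and using a careful approximate-unit argument with \Cref{approxunit} and the unitarity of $\phi$, I would extract a representing element $g_f\in E(x)(y)$ such that $\phi(f\otimes e)=\langle g_f,e\rangle_\B$ for all $e$, thereby realising $\phi_f^*=\epsilon_{g_f}$ as the Yoneda map from \Cref{contrayoneda}. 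Then $\phi_f^*\phi_{f'}=\theta^{g_f,g_{f'}}$ is single-rank by \Cref{singlerank}.

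Once this compactness is in hand, fullness of $F$ approximates every $a\in\A(x',x)$ in norm by sums $\sum\langle f_i,f'_i\rangle_\A$, so each $E(a)$ is a norm-limit of single-rank operators and hence compact. The remaining imprimitivity axioms (conjugation, positivity, and the identity $_\A\langle e',e\rangle\cdot g = e'\cdot\langle e,g\rangle_\B$) are direct transports of the corresponding properties of $\iota_\A$ through the natural isomorphism $\psi$, so \Cref{imprimchar} then delivers the desired isometric surjection of $E$ onto compacts. A completely symmetric argument running through $\phi$ handles $F$. The main anticipated obstacle is precisely the construction of the representing elements $g_f$, which is where the full strength of the unitarity of $\phi$ (rather than merely its being an iso) will be needed to promote pointwise convergence into norm convergence of the net $\phi_f^*(u_\lambda)$.
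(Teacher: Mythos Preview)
Your treatment of fullness and faithfulness is correct and essentially matches the paper, though you take a small detour: invoking Eilenberg--Watts to obtain $G\circ H\cong\id$ is unnecessary, since associativity of the tensor product already gives
\[
(-\bar{\otimes}_\B F)\bar{\otimes}_\A E \;\cong\; -\bar{\otimes}_\B(F\bar{\otimes}_\A E)\;\cong\; -\bar{\otimes}_\B\iota_\B\;\cong\;\id_{\Hilb\B}
\]
directly from the hypothesis $\phi$. Your fullness argument via the inclusion $\langle E\bar{\otimes}_\B F, E\bar{\otimes}_\B F\rangle\subseteq\langle F,F\rangle$ is the paper's argument verbatim.

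The genuine gap is in the surjectivity onto compacts. You correctly isolate the identity $E(\langle f,f'\rangle_\A)=\phi_f^*\circ\phi_{f'}$, which is exactly the paper's first claim about the map $\delta(f):=\phi_f$. But your proposed method for showing each $\phi_f$ is compact---``extract $g_f\in E(x)(y)$ with $\phi(f\otimes e)=\langle g_f,e\rangle_\B$ via Cohen--Hewitt factorization of $f$ and an approximate-unit argument''---does not go through. Factoring $f=f'\cdot a$ with $a\in\A(x,x)$ only yields $\phi_f=\phi_{f'}\circ E(a)$, which is no progress since neither $\phi_{f'}$ nor $E(a)$ is yet known to be compact; factoring through the left $\B$-action gives $\phi_f\approx\sum\iota_\B(b_i)\circ\phi_{f_i}$, equally unhelpful. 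There is no evident way to produce a single representing element $g_f$ from these manipulations, and you yourself flag this as ``the main anticipated obstacle'' without resolving it.

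The paper's route is substantively different here: rather than attacking individual $\phi_f$, it computes the \emph{entire image} $\delta(F(y)(x))$ at once via an eight-step chain of equalities, bootstrapping from non-degeneracy of $F$ (step 1), isometry of $\delta$ (step 2), surjectivity of $\phi$ (step 4), the factorization $\phi(f\otimes e)=\delta(f)\circ\epsilon_e$ through representables (step 5), the already-proven identity $\delta(f')^*\delta(f)=E(\langle f',f\rangle)$ (step 6), fullness of $F$ (step 7), and non-degeneracy of $E$ (step 8). The argument is global and structural; it never produces an explicit $g_f$, and I do not see how your local factorization strategy can substitute for it. Your definition of ${}_\A\langle e',e\rangle$ by transporting $\theta^{e',e}\bar{\otimes}\id_F$ through $\psi$ is also a red herring: it is never used, and once compactness of $E$ is established, \Cref{imprimchar} supplies the left product automatically as $\theta^{e',e}$.
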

\begin{proof}
By symmetry it is enough to prove the stated properties of $E$. Note that we can assume that all isomorphisms are unitary by \Cref{isoisunitary}. 

Note also that $\langle\iota_\mathcal{B},\iota_\mathcal{B}\rangle=\langle F\bar{\otimes}_{\mathcal{A}}E,F\bar{\otimes}_{\mathcal{A}}E\rangle\subseteq\langle E,E\rangle$ by definition of the inner product on $F\bar{\otimes}_{\mathcal{A}}E$, so $E$ is full. Similarly, $F$ is full since $\iota_\mathcal{A}$ is.

To show $E$ is an isometry, note that we have unitary isomorphisms of functors $$(-\bar{\otimes}_\mathcal{B}F)\bar{\otimes}_\mathcal{A}E\cong-\bar{\otimes}_\mathcal{B}(F\bar{\otimes}_\mathcal{A}E)\cong-\bar{\otimes}_\mathcal{A}\iota_\mathcal{A}\cong\id_{\Hilb\mathcal{A}}.$$ Here we have used \Cref{tensorassociative} for the first isomorphism, the hypothesis for the second and \Cref{yonedaisid} for the third. We hence conclude that $-\bar\otimes_\mathcal{A}E$ must be faithful, but by \Cref{tensorextend}, $-\bar\otimes_\mathcal{A}E$ is an extension of $E$ from $\mathcal{A}$ to $\Hilb\mathcal{A}$. So certainly $E$ is faithful, and hence an isometry by \Cref{starfuncisbounded}.

We end with the most challenging part: showing that the image of $\mathcal{A}(x,x')$ under $E$ is exactly $\mathcal{K}(E(x),E(x'))$. For every $x\in\Ob\mathcal{A},y\in\Ob\mathcal{B}$ we define the map $$\begin{array}{rl}\delta:&F(y)(x)\rightarrow\mathcal{L}(E(x),h_y)\\&\delta(f)(e):=\phi(f\otimes e)\end{array}$$ for each $y'\in\Ob\mathcal{B}$ and $e\in E(x)(y')$. It is clear that $\delta(f)$ is bounded by $\|f\|$, and since $\phi$ is a unitary isomorphism at every $y\in\Ob\mathcal{B}$ we see that $\delta(f)$ is adjointable if and only if the map $\phi_y^*\delta(f):E(x)\rightarrow F\bar{\otimes}_\mathcal{A}E(y)$ is adjointable, where we have $\phi_y^*\delta(f)(e):=f\otimes e$. But $\phi_y^*\delta(f)$ has an obvious bounded adjoint sending $f'\otimes e'$ to $\langle f,f'\rangle_\mathcal{B}\cdot e'$. So $\delta$ has an adjoint $\delta^*$ found by $(\phi_y^*\delta)^*\phi_y$. 

Note that if we fix $y$, not only does the domain of $\delta$ have an inner product $$\langle-,-\rangle_F:F(y)(x')\times F(y)(x)\rightarrow\mathcal{A}(x,x'),$$ but its codomain has an inner product $$\langle-,-\rangle_{\textit{O}}:\mathcal{L}(E(x'),h_y)\times \mathcal{L}(E(x),h_y)\rightarrow \mathcal{L}(E(x),E(x'))\text{ defined by }\langle S,T\rangle_{\textit{O}}=S^*\circ T.$$

We are going to prove the following two claims about the map $\delta$:
\begin{itemize}
%    \item We have for every $a\in\mathcal{A}(x',x),f\in F(y)(x)$ that $\delta(f\cdot a)=\delta(f)\circ E(a)$, and for every $b\in\mathcal{B}(y,y')$ that $\delta(F(b)(f))=\iota_\mathcal{B}(b)\circ\delta(f)$. We will from here on out denote all these four actions as simply $-\cdot a$ and $b\cdot-$, which $\delta$ preserves.
    \item For each $f\in F(y)(x),f'\in F(y)(x')$ we have $\langle\delta(f'),\delta(f)\rangle_\textit{O}=E(\langle f',f\rangle_F)$
    \item The image of $\delta$ at every $x\in\Ob\mathcal{A},y\in\Ob\mathcal{B}$ is $\mathcal{K}(E(x),h_y)$.

\end{itemize}
Provided we can prove these claims, the desired result will then roll out as follows: note that as $F$ is full, the span of $\bigcup_{y\in\Ob\mathcal{B}}\langle F(y)(x'),F(y)(x)\rangle_F$ is dense in $\mathcal{A}(x,x')$. Note also by \Cref{singlerank} that the span of $$\bigcup_{y\in\Ob\mathcal{B}}\langle \mathcal{K}(E(x'),h_y),\mathcal{K}(E(x),h_y)\rangle_{\textit{O}}$$ is a dense subset of $\mathcal{K}(E(x),E(x'))$ (in the operator norm, which is the one induced by $\langle-,-\rangle_O$). But $E$ is a faithful $C^*$-functor so in particular acts by linear isometries on hom-sets, it preserves spans and closures, and the second claim in fact gives $E(\mathcal{A}(x,x'))=\mathcal{K}(E(x),E(x'))$.

To prove the first claim, let $f\in F(y)(x),f'\in F(y)(x')$, and take elements $e\in E(x)(y'),e'\in E(x')(y'')$. Then note that we have $$\begin{array}{rl}
  \langle e',\delta(f')^*\delta(f)e\rangle   &=\langle \delta(f')e',\delta(f)(e)\rangle  \\
  &= \langle \phi(f'\otimes e'),\phi(f\otimes e)\rangle   \\
  &=\langle f'\otimes e',f\otimes e\rangle 
    \\&=\langle e',E(\langle f',f\rangle)(e)\rangle 
\end{array} $$
Hence if we set $T=\delta(f')^*\delta(f)-E(\langle f',f\rangle)$ we see $\langle e',T(e)\rangle=0$ for all $e,e'$, and setting $e'=T(e)$ we conclude $T=0$, proving the first claim. 

To prove the second claim, note that we have for every $a\in\mathcal{A}(x',x),f\in F(y)(x)$ that $\delta(f\cdot a)=\delta(f)\circ E(a)$, and for every $b\in\mathcal{B}(y,y')$ that $\delta(F(b)(f))=\iota_\mathcal{B}(b)\circ\delta(f)$. From now on, we will denote all these actions as simply $-\cdot a$ and $b\cdot-$, which $\delta$ preserves. Now note that 
$$\begin{array}{rll}
     \delta(F(y)(x)) &\overset{(1)}{=}\delta(\overline{\Span_{y'\in\Ob\mathcal{B}}\mathcal{B}(y,y')\cdot F(y')(x)}) & \\
     &\overset{(2)}{=}\overline{\Span_{y'\in\Ob\mathcal{B}}\iota_\mathcal{B}(y')(y)\circ\delta(F(y')(x))} &\\
       &\overset{(3)}{=}\overline{\Span_{y'\in\Ob\mathcal{B}}(\iota_\mathcal{B}(y)(y'))^*\circ\delta(F(y')(x))} &\\
     &\overset{(4)}{=}\overline{\Span_{y'\in\Ob\mathcal{B}}(\phi(F\bar{\otimes}_\mathcal{A}E)(y)(y'))^*\circ\delta(F(y')(x))} &\\
     &\overset{(5)}{=}\overline{\Span_{y'\in\Ob\mathcal{B},x'\in\Ob\mathcal{A}}\mathcal{K}(E(x'),h_{y})\circ\delta(F(y')(x'))^*\circ\delta(F(y')(x))}&\\
     &\overset{(6)}{=}\overline{\Span_{y'\in\Ob\mathcal{B},x'\in\Ob\mathcal{A}}\mathcal{K}(E(x'),h_{y})\circ E(\langle F(y')(x'),F(y')(x)\rangle)}&\\
     &\overset{(7)}{=}\overline{\Span_{x'\in\Ob\mathcal{A}}\mathcal{K}(E(x'),h_{y})\circ E(\mathcal{A}(x',x))}&\\
     &\overset{(8)}{=}\mathcal{K}(E(x)(h_y))&
\end{array}$$
We justify each of these equalities as follows:
\begin{enumerate}
    \item By non-degeneracy of $F$.
    \item Since $E$ acts isometrically, the first claim we proved shows that $\delta$ is a norm isometry, so it preserves closures, and we also know $\delta$ is natural with respect to the left action of $\mathcal{B}$.

    \item Since the involution of operators is an isometry.
    \item By the surjectivity of $\phi$.
    \item Notice that if $\epsilon_e:h_y\rightarrow E(x')$ is the compact operator corresponding to $e\in E(x')(y)$, then by definition of $\delta$, for all $f\in F(y')(x')$ we have $$\phi(e\otimes f)=\iota_\mathcal{B}(\delta(f)(e))=\delta(f)\circ\epsilon_e:h_y\rightarrow h_{y'},$$ since for all $b\in h_y$, we have $$(\delta(f)\circ\epsilon_e)(b)=\phi(f\otimes e\cdot b)=\phi(f\otimes e)\circ b.$$ Hence $\phi(F\bar{\otimes}_\mathcal{A}E)(y')(y)=\overline{\Span_{x'\in\Ob\mathcal{A}}\delta(F(y')(x'))\circ\mathcal{K}(h_y,E(x'))}$, and applying the involution we deduce this equality.
    \item By the first of the two claims.
    \item By the fullness of $F$.
    \item By the non-degeneracy of $E$.
\end{enumerate}
This concludes the  proof.
\end{proof}
\begin{cor}[{c.f. \cite[Proposition 8.1.3]{Ferrier}}]\label{equivimprim}
Let $E:\mathcal{A}\rightarrow\Hilb\mathcal{B}$ and $F:\mathcal{B}\rightarrow\Hilb\mathcal{A}$ be non-degenerate right Hilbert bimodules, and suppose there exist bimodule isomorphisms
$\psi:E\bar{\otimes}_\mathcal{B}F\rightarrow\iota_\mathcal{A}$ and 
$\phi:F\bar{\otimes}_\mathcal{A}E\rightarrow\iota_\mathcal{B}$.
Then $E$ and $F$ are imprimitivity bimodules and we in fact have $F\cong\widetilde{E}$.
\end{cor}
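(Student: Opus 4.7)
The plan is to avoid constructing the isomorphism $F \cong \widetilde{E}$ by hand and instead derive it by chaining tensor-product identities already established in the paper. First I would apply \Cref{inversesareimprim} to conclude that both $E$ and $F$ are full right Hilbert bimodules and that each is a faithful $C^*$-functor surjecting onto the compact operators between the modules in its image. \Cref{imprimchar} then endows $E$ (and analogously $F$) with a unique structure of left-full partial imprimitivity bimodule, characterized by $_\mathcal{A}\langle e,e'\rangle = E^{-1}(\theta^{e,e'})$; together with the right-fullness already in hand this promotes both $E$ and $F$ to (two-sided) imprimitivity bimodules, which proves the first half of the statement.

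Second, because $E$ is now a bi-Hilbert bimodule its conjugate $\widetilde{E}:\mathcal{B}\rightarrow\Hilb\mathcal{A}$ is well-defined, and the right-full branch of \Cref{imprimisinvertible} supplies a canonical unitary isomorphism of $\mathcal{B}-\mathcal{B}$ bimodules $\widetilde{E}\bar\otimes_\mathcal{A} E \cong \iota_\mathcal{B}$.

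Third, I would produce the isomorphism $F \cong \widetilde{E}$ by stringing together natural unitary isomorphisms
\[
F \;\cong\; \iota_\mathcal{B} \bar\otimes_\mathcal{B} F \;\cong\; (\widetilde{E}\bar\otimes_\mathcal{A} E) \bar\otimes_\mathcal{B} F \;\cong\; \widetilde{E}\bar\otimes_\mathcal{A} (E \bar\otimes_\mathcal{B} F) \;\cong\; \widetilde{E}\bar\otimes_\mathcal{A} \iota_\mathcal{A} \;\cong\; \widetilde{E},
\]
using \Cref{yonedaisid} for the first and last isomorphisms (together with the non-degeneracy of $F$ and of $\widetilde{E}$), the previous step for the second, \Cref{tensorassociative} for the third, and the hypothesised $\psi:E\bar\otimes_\mathcal{B} F\cong\iota_\mathcal{A}$ for the fourth. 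Since \Cref{imprimchar} makes the imprimitivity structure determined by the underlying right Hilbert structure, any such unitary isomorphism of right Hilbert bimodules automatically respects both sets of inner products.

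The main technical obstacle will be bookkeeping: verifying that $\widetilde{E}$ is non-degenerate as a right Hilbert $\mathcal{B}-\mathcal{A}$ bimodule so that the outer applications of \Cref{yonedaisid} are valid, and that each tensor product in the chain pairs the correct sides with the correct actions. Non-degeneracy of $\widetilde{E}$ should fall out of the fact that its left $\mathcal{B}$-action corresponds under the conjugation to the right $\mathcal{B}$-action on $E$, which is always non-degenerate for a right Hilbert module by \Cref{unitcor}; the side-matching is routine but requires care given the paper's convention that $E^1$ and $E^2$ record the two viewpoints on a bi-Hilbert bimodule.
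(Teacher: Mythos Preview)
Your argument is correct, and it takes a genuinely different route from the paper for the second half. Both proofs invoke \Cref{inversesareimprim} and \Cref{imprimchar} to obtain the imprimitivity structure on $E$ and $F$, so the first half is identical. For the isomorphism $F\cong\widetilde{E}$, however, the paper does not build a tensor chain: it simply observes that the auxiliary map $\delta:F(y)(x)\rightarrow\mathcal{K}(E(x),h_y)$ constructed inside the proof of \Cref{inversesareimprim} is already an isometric bijection compatible with both actions, and then composes with the conjugate-linear identification $\mathcal{K}(E(x),h_y)\cong\mathcal{K}(h_y,E(x))\cong E(x)(y)$ from \Cref{Yoneda}. Your approach instead recovers the same conclusion by the standard Morita-theoretic chain $F\cong(\widetilde{E}\bar\otimes_\mathcal{A}E)\bar\otimes_\mathcal{B}F\cong\widetilde{E}\bar\otimes_\mathcal{A}(E\bar\otimes_\mathcal{B}F)\cong\widetilde{E}$, using \Cref{imprimisinvertible}, \Cref{tensorassociative}, and \Cref{yonedaisid}. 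The paper's route is shorter because the explicit map $\delta$ has already been exhibited and analysed, but it relies on reaching back into the internals of a preceding proof; your route is more self-contained and conceptually transparent, at the cost of the bookkeeping you flag (non-degeneracy of $\widetilde{E}$, which you correctly derive from \Cref{unitcor}, and matching sides in the tensor factors).
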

\begin{proof}
The imprimitivity structure on $E$ is deduced by combining \Cref{inversesareimprim} with \Cref{imprimchar}. %\texttt{does this presume the A-product is necessarily unique?}. The isomorphism $F\cong\widetilde{E}$ can be deduced in one of two ways:

To prove $F\cong\widetilde{E}$, one may notice that at any $x\in\Ob\mathcal{A},y\in\Ob\mathcal{B}$, the map $\delta$ in the above proof gives an isometry from $F(y)(x)$ to $\mathcal{K}(E(x),h_y)$, which (with the $\mathcal{A}$- and $\mathcal{B}$-actions as described) is in turn conjugate isomorphic to $\mathcal{K}(h_y,E(x))\cong E(x)(y)$.

%Alternatively (this is the argument applied in the $C^*$-algebra case in \cite{Echterhoff}), one may construct the category of $C^*$-categories (enriched in some universe $\mathcal{U}$) with isomorphism classes of non-degenerate bimodules as morphisms and the classes containing Yoneda embeddings as identities: we do this in the final section. Note that by \Cref{inversesareimprim} $E$ is necessarily an imprimitivity bimodule, which in this category has inverse $\widetilde{E}$ by \Cref{imprimisinvertible}. Hence as inverses in any category are unique we see $F$ and $\widetilde{E}$ are in the same isomorphism class.
\end{proof}
We are now ready to summarize the results of this section into one theorem: 
\begin{thm}\label{moritathm}
Suppose $\A$ and $\B$ are $C^*$-categories. A strong unital functor $F:\Hilb\mathcal{A}\rightarrow\Hilb\mathcal{B}$ is an equivalence if and only if its restriction $E:=F\circ\iota_\mathcal{A}$ is a full bimodule, a faithful functor, and surjects onto the compact operators between the modules in its image.
\end{thm}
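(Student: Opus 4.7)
The plan is to leverage \Cref{EW} throughout, turning the equivalence question into a question about bimodules. For the ($\Leftarrow$) direction, assuming $E$ is full, faithful, and surjects onto compact operators between modules in its image, I would first use \Cref{imprimchar} to equip $E$ with the structure of a full bi-Hilbert $\A$-$\B$ imprimitivity bimodule. Forming its conjugate $\widetilde{E}$ and invoking \Cref{imprimisinvertible}, one obtains unitary bimodule isomorphisms $E\bar{\otimes}_\B\widetilde{E}\cong\iota_\A$ and $\widetilde{E}\bar{\otimes}_\A E\cong\iota_\B$. Combining these with \Cref{tensorassociative} and \Cref{yonedaisid} shows that $-\bar{\otimes}_\A E$ is an equivalence with inverse $-\bar{\otimes}_\B\widetilde{E}$; both are strong unital by \Cref{tensorfunctor}. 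Since \Cref{EW} gives $F\cong -\bar{\otimes}_\A E$, $F$ is an equivalence.

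For ($\Rightarrow$), assume $F$ is a strong unital equivalence with unital quasi-inverse $G$ (which exists as $\Hilb\A,\Hilb\B$ are unital). The crucial preliminary step, and the main obstacle of the proof, is to show that $G$ is automatically strong; without this we cannot apply \Cref{EW} to $G$. I would set $E':=G\circ\iota_\B$ and $\bar{G}:=-\bar{\otimes}_\B E':\Hilb\B\to\Hilb\A$, which is strong unital by \Cref{tensorfunctor}. Using $F\cong -\bar{\otimes}_\A E$ and $FG\cong\id$, we have $E'(y)\bar{\otimes}_\A E\cong F(G(h_y))\cong h_y$ naturally in $y$, so $E'\bar{\otimes}_\A E\cong\iota_\B$ as bimodules. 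Via \Cref{tensorassociative} and \Cref{yonedaisid}, this yields $F\bar{G}\cong\id_{\Hilb\B}\cong FG$. Since $F$ is fully faithful, one can ``cancel'' it to obtain a natural isomorphism $\bar{G}\cong G$; strongness of $\bar{G}$ then transfers to $G$, since conjugation by fixed bounded maps preserves the strong topology.

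With $G$ now known to be strong, \Cref{EW} also applies to $G$: $G\cong -\bar{\otimes}_\B E'$. The natural isomorphisms $GF\cong\id_{\Hilb\A}$ and $FG\cong\id_{\Hilb\B}$ restrict on representables (where the tensor product functors reduce to the bimodules themselves via \Cref{tensorextend}) to bimodule isomorphisms $E\bar{\otimes}_\B E'\cong\iota_\A$ and $E'\bar{\otimes}_\A E\cong\iota_\B$. Both $E$ and $E'$ are non-degenerate bimodules: by \Cref{strongnondegenchar} this reduces to checking $F(\iota_\A(u_\lambda))(e)\to e$ for approximate units $(u_\lambda)$ on $\A(x,x)$ and similarly for $E'$, which follows since $\iota_\A(u_\lambda)\to\id_{h_x}$ strongly (by \Cref{strongdensity} and \Cref{2topologies}) and $F$ is strong on the bounded net $\iota_\A(u_\lambda)$. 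Applying \Cref{inversesareimprim} to this data delivers precisely the conclusion: $E$ is a full, faithful functor that surjects onto the compact operators between modules in its image.
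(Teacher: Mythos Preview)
Your proof is correct and follows the paper's approach: \Cref{imprimchar} plus \Cref{imprimisinvertible} for the ($\Leftarrow$) direction, and the Eilenberg--Watts theorem followed by \Cref{inversesareimprim} for ($\Rightarrow$). The paper's ($\Rightarrow$) argument is terser---it simply invokes \Cref{equivimprim} without explicitly producing the inverse bimodule or checking its hypotheses---whereas your construction of $E'=G\circ\iota_\B$ together with the argument that $G\cong -\bar\otimes_\B E'$ (hence $G$ is strong) supplies precisely the verification the paper leaves implicit.
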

\begin{proof}
Suppose $F$ is an equivalence. Firstly, the Eilenberg-Watts theorem guarantees that $F\cong -\bar{\otimes}_\mathcal{A}E$. But then by \Cref{equivimprim} we see that $E$ is an imprimitivity $\mathcal{A}-\mathcal{B}$ module. So by \Cref{imprimchar}, we see that $E$ is an isometry onto the compacts between modules in its image.

The converse is even simpler; a bimodule with the specified properties is always an imprimitivity bimodule and by \Cref{imprimisinvertible}, we see that $-\bar{\otimes}_\mathcal{A} E$ is a strong unital unitary equivalence.
\end{proof}

We will hereafter describe such an equivalence $F$ as a \emph{Morita equivalence} between $\mathcal{A}$ and $\mathcal{B}$.
Notice that this agrees with the case of $C^*$-algebras, as exposed in e.g. \cite[Chapter 3]{RaeWill}. The bimodule $E$ will be referred to as a \emph{Morita bimodule}; so the above theorem states that Morita equivalences correspond exactly to tensoring with Morita bimodules.

An interesting corollary to this states that such an equivalence must necessarily preserve compact operators.

\begin{cor}\label{compactdetermine}
If a unital functor $G:\Hilb\mathcal{A}\rightarrow\Hilb\mathcal{B}$ is a strong equivalence, then it restricts to a multiplier equivalence $G|_{\KHilb\mathcal{A}}:\KHilb\mathcal{A}\rightarrow\KHilb\mathcal{B}$. 

Conversely, any multiplier equivalence $\Gamma:\KHilb\mathcal{A}\rightarrow\KHilb\mathcal{B}$ extends to a strong unital equivalence $G:\Hilb\mathcal{A}\rightarrow\Hilb\mathcal{B}$. 
\end{cor}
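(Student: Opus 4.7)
My plan is to prove both directions by reducing to the multiplier category description $\mathcal{M}(\KHilb\mathcal{A})\cong\Hilb\mathcal{A}$ from \Cref{MKA} and matching the strong topology with the ultrastrong one via \Cref{2topologies}.

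For the forward direction, I would start by applying the Eilenberg-Watts theorem (\Cref{EWthm}) to write $G\cong-\bar{\otimes}_{\mathcal{A}}E$ where $E=G\circ\iota_{\mathcal{A}}$. By \Cref{moritathm} the bimodule $E$ surjects onto the compact operators between the modules in its image, so \Cref{compactpreserve} implies $G$ preserves compact morphisms. Applying the same reasoning to an inverse equivalence $G^{-1}$ shows $G^{-1}$ preserves compacts too, so that $G$ restricts to a $C^*$-algebra isomorphism $\mathcal{K}(M)\cong\mathcal{K}(G(M))$ for every $M\in\Hilb\mathcal{A}$, and hence $G|_{\KHilb\mathcal{A}}$ lands in $\KHilb\mathcal{B}$. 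Next I would verify non-degeneracy of $G|_{\KHilb\mathcal{A}}:\KHilb\mathcal{A}\rightarrow\mathcal{M}(\KHilb\mathcal{B})=\Hilb\mathcal{B}$ via criterion 3 of \Cref{nondegenstrongthm}: if $(u_\lambda)$ is an approximate unit for $\mathcal{K}(M')$, then $G(u_\lambda)$ is an approximate unit for $\mathcal{K}(G(M'))$ (since $G$ restricts to a $C^*$-isomorphism on compacts), so by \Cref{approxunit} we get $G(u_\lambda)\circ b\to b$ in norm for every $b\in\mathcal{K}(G(M),G(M'))$.

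To finish the forward direction I would invoke criterion 5 of \Cref{nondegenstrongthm}: non-degeneracy ensures a unique extension $\overline{G|}:\Hilb\mathcal{A}\rightarrow\Hilb\mathcal{B}$ that is unital and ultrastrongly continuous on bounded subsets. But by \Cref{2topologies} the ultrastrong topology on $\Hilb\mathcal{A}=\mathcal{M}(\KHilb\mathcal{A})$ coincides with the strong topology on bounded subsets, so $G$ itself satisfies the uniqueness hypothesis and $\overline{G|}=G$. Applying the symmetric construction to $G^{-1}$ gives a quasi-inverse $G^{-1}|_{\KHilb\mathcal{B}}$ whose extension is $G^{-1}$, and the fact that $G,G^{-1}$ are inverse equivalences on the full module categories is exactly the defining condition in \Cref{multequiv} for $G|_{\KHilb\mathcal{A}}$ to be a multiplier equivalence.

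The converse is essentially a restatement: given $\Gamma:\KHilb\mathcal{A}\rightarrow\KHilb\mathcal{B}$ a multiplier equivalence, by definition the composite $\KHilb\mathcal{A}\rightarrow\KHilb\mathcal{B}\hookrightarrow\mathcal{M}(\KHilb\mathcal{B})$ is non-degenerate, so criterion 5 of \Cref{nondegenstrongthm} produces a unital extension $G:=\bar{\Gamma}:\Hilb\mathcal{A}\rightarrow\Hilb\mathcal{B}$ that is ultrastrongly continuous on bounded sets, hence strong by \Cref{2topologies}. The multiplier equivalence hypothesis gives a quasi-inverse $\Gamma':\KHilb\mathcal{B}\rightarrow\KHilb\mathcal{A}$ whose extension $\bar{\Gamma'}$ is inverse to $G$, so $G$ is an equivalence.

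The main obstacle is really just bookkeeping: ensuring that the various notions of non-degeneracy (the bimodule version in \Cref{nondegenbimoddefn} and the multiplier version in \Cref{nondegendefn}) are being applied consistently, and that uniqueness of the multiplier extension identifies $\overline{G|}$ with $G$. All the conceptual work has already been done in \Cref{MKA}, \Cref{2topologies}, \Cref{compactpreserve}, and \Cref{nondegenstrongthm}; the proof is a matter of assembling these results carefully.
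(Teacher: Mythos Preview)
Your proposal is correct and follows essentially the same route as the paper: both directions hinge on \Cref{moritathm}/\Cref{compactpreserve} to see that a strong equivalence preserves compacts, on \Cref{nondegenstrongthm} (criterion~5) for the multiplier extension, and on \Cref{2topologies} to identify the strong and ultrastrong topologies on bounded sets. The paper's own proof is considerably terser---for the forward direction it only records that $G$ preserves compacts and leaves the verification that $G|_{\KHilb\mathcal{A}}$ is a multiplier equivalence implicit---so your explicit check of non-degeneracy via approximate units and the uniqueness argument identifying $\overline{G|}$ with $G$ are welcome additions rather than deviations.
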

\begin{proof}
For the rightward implication, note that \Cref{moritathm}  tells us that $G$ is given by tensoring with an $\mathcal{A}-\mathcal{B}$ bimodule $E$ which acts entirely by compact operators. So by \Cref{compactpreserve} we see that $G$ preserves compact operators.%, and extending the restriction $G|_{\KHilb\mathcal{A}}$ using \Cref{nondegenstrongthm} we must get back $G$ by the uniqueness statement in that theorem.

For the converse, note that the extension $\bar{\Gamma}:\mathcal{M}(\KHilb\mathcal{A})\rightarrow\mathcal{M}(\KHilb\mathcal{B})$ provided by \Cref{nondegenstrongthm} is necessarily unital and ultrastrongly continuous on bounded subsets. So then by \Cref{2topologies}, $\Bar{\Gamma}$ is strong and unital.
\end{proof}
We are long overdue some examples of Morita equivalences of $C^*$-categories, other than the Yoneda bimodules which are self-Morita equivalences by \Cref{yonedaisid}. Luckily, the results in this section set us up well to provide these. Specifically, by \Cref{imprimisinvertible}, if we can provide an example of an $\mathcal{A}-\mathcal{B}$ imprimitivity bimodule, then we know there's a unital, strong, unitary equivalence between $\Hilb\mathcal{A}$ and $\Hilb\mathcal{B}$.  We will begin with an example that already played a role in Chapters 1 and 2:

\begin{lemma}
    If $\mathcal{A}$ is a $C^*$-category, there is an $\mathcal{A}_{\oplus}-\mathcal{A}$ Morita bimodule given by sending $x_1\oplus\cdots\oplus x_n$ to $h_{x_1}\oplus\cdots\oplus h_{x_n}$.
\end{lemma}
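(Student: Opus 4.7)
My plan is to exhibit the described assignment as the extension of the Yoneda bimodule $\iota_{\mathcal{A}}:\mathcal{A}\to\Hilb\mathcal{A}$ along the embedding $\mathcal{A}\hookrightarrow\mathcal{A}_{\oplus}$, and then verify the three criteria of \Cref{moritathm}: the resulting bimodule $E:\mathcal{A}_{\oplus}\to\Hilb\mathcal{A}$ must be full, faithful, and surject onto the compact operators between modules in its image. Since \Cref{representation} says $\iota_{\mathcal{A}}$ is non-degenerate and $\Hilb\mathcal{A}$ admits finite multiplier direct sums, \Cref{multclosure} gives a (unique) non-degenerate extension $E:\mathcal{A}_{\oplus}\to\Hilb\mathcal{A}$ sending $\{x_1,\dots,x_n\}$ to $h_{x_1}\oplus\cdots\oplus h_{x_n}$. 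Concretely, on morphisms $E$ sends a matrix $[a_{ij}]$ to the corresponding bounded adjointable operator built from $\iota(a_{ij})$ and the direct sum structure maps, as in \Cref{addextendfunc}.

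The key computation is the identification of hom-spaces. By \Cref{compactdirect} the subspaces $\mathcal{K}(h_{x_i},h_{y_j})$ assemble into a direct sum decomposition of $\mathcal{K}(\bigoplus_ih_{x_i},\bigoplus_jh_{y_j})$, and by \Cref{imageyoneda} each $\mathcal{K}(h_{x_i},h_{y_j})$ is isometrically identified with $\mathcal{A}(x_i,y_j)$ via $\iota_{\mathcal{A}}$. This identification is exactly the definition of $\mathcal{A}_{\oplus}(\{x_i\},\{y_j\})$ up to the $C^*$-norm, which by \Cref{normunique} must coincide. Thus $E$ acts as an isometric isomorphism
\[
E:\mathcal{A}_{\oplus}(\{x_1,\dots,x_n\},\{y_1,\dots,y_k\})\xrightarrow{\cong}\mathcal{K}(h_{x_1}\oplus\cdots\oplus h_{x_n},\,h_{y_1}\oplus\cdots\oplus h_{y_k}),
\]
simultaneously giving faithfulness and surjection onto compact operators between objects in the image.

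For fullness as a bimodule (in the sense of \Cref{fulldefn}), I note that the inner products of $E$ already contain the inner products of the Yoneda modules $h_x$ as the diagonal case, since the structure maps $\iota_i$ in $\Hilb\mathcal{A}$ are isometries and satisfy $\langle\iota_i(e),\iota_i(f)\rangle=\langle e,f\rangle$. Hence $\langle E,E\rangle\supseteq\langle\iota_{\mathcal{A}},\iota_{\mathcal{A}}\rangle$, and by \Cref{yonedafull} the latter already equals $\mathcal{A}$, so $\overline{\langle E,E\rangle}=\mathcal{A}$.

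Combining these three facts, \Cref{moritathm} (applied to the strong unital functor $-\bar{\otimes}_{\mathcal{A}_{\oplus}}E$, which is the equivalence produced from $E$) identifies $E$ as a Morita bimodule. The one step that takes the most care is the identification of hom-spaces, where one must check that the matrix-norm on $\mathcal{A}_{\oplus}$ from \Cref{sumclosuredefn} really matches the operator norm inherited from $\mathcal{K}(\bigoplus h_{x_i},\bigoplus h_{y_j})$; but this is precisely the content of the uniqueness of $C^*$-norms (\Cref{normunique}) applied to the two \emph{a priori} distinct $C^*$-structures on the same underlying $\ast$-category, so no new calculation is required beyond invoking earlier results.
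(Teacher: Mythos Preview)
Your proof is correct and follows essentially the same approach as the paper: the core is the identification $\mathcal{A}_{\oplus}(\{x_i\},\{y_j\})\cong\mathcal{K}(\bigoplus_i h_{x_i},\bigoplus_j h_{y_j})$ via \Cref{compactdirect} and \Cref{imageyoneda}, together with fullness, after which \Cref{moritathm} applies. The paper's proof is terser (it simply asserts the bimodule is ``certainly full'' and does not spell out the construction of $E$ or invoke \Cref{normunique}), but your added detail---building $E$ explicitly as the additive extension of $\iota_{\mathcal{A}}$ and deducing fullness from $\langle E,E\rangle\supseteq\langle\iota_{\mathcal{A}},\iota_{\mathcal{A}}\rangle$ via \Cref{yonedafull}---is entirely in the spirit of the argument and fills in exactly the steps the paper leaves implicit.
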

\begin{proof}
    It follows from \Cref{compactdirect} and \Cref{imageyoneda} that $$\mathcal{A}_{\oplus}(x_1\oplus\dots\oplus x_n,y_1\oplus \dots\oplus y_k)\cong\mathcal{K}(h_{x_1}\oplus\cdots\oplus h_{x_n},h_{y_1}\oplus\cdots\oplus h_{y_k})                                  $$ naturally, so the described assignment is functorial and isometric onto the compact operators. Hence as this bimodule is certainly full, by \Cref{moritathm} we see the described functor is a Morita equivalence.
\end{proof}

\begin{prop}\label{unitalmorita}
    Two unital $C^*$-categories $\A$, $\B$ are Morita equivalent if and only if there is a unitary equivalence $\A^{\natural}_{\oplus}\cong\B^{\natural}_{\oplus}$
\end{prop}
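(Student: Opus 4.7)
By \Cref{unitalyoneda}, $\iota_\A$ extends to an equivalence $\A^\natural_\oplus\simeq\fgProj\A$, and similarly for $\B$; so a unitary equivalence $\A^\natural_\oplus\cong\B^\natural_\oplus$ amounts to an equivalence $\fgProj\A\simeq\fgProj\B$. Any equivalence between unital $C^*$-categories can be promoted to a unitary one by the corollary to \Cref{isoisunitary}, so the problem reduces to showing that $\A$ and $\B$ are Morita equivalent if and only if $\fgProj\A\simeq\fgProj\B$.

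For the forward direction, suppose $F:\Hilb\A\to\Hilb\B$ is a Morita equivalence. By \Cref{compactdetermine}, $F$ restricts to a multiplier equivalence on the $\KHilb$'s and in particular preserves the property of being a compact morphism. Combined with the characterisation of f.g.p modules over a unital $C^*$-category as those with compact identity (\Cref{compactidfgp}), and with the unitality of $F$ (so that $F(\id_E)=\id_{F(E)}$ is compact whenever $\id_E$ is), this shows $F$ preserves f.g.p-ness. Applying the same argument to a quasi-inverse of $F$ yields the restricted equivalence $\fgProj\A\simeq\fgProj\B$.

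For the backward direction, given an equivalence $G:\fgProj\A\to\fgProj\B$, I set $E:=G\circ\iota_\A:\A\to\fgProj\B\subseteq\Hilb\B$ and use \Cref{moritathm} to conclude that $-\bar{\otimes}_\A E$ is a Morita equivalence. Faithfulness of $E$ is immediate from the isometric $\iota_\A$ of \Cref{imageyoneda} and the equivalence $G$. For the required surjection onto compacts between modules in its image, note that every bounded adjointable operator between f.g.p modules is itself compact (it factors through the compact identities, and the compacts form an ideal); combined with $\iota_\A(\A(x,x'))=\mathcal{K}(h_x,h_{x'})$ from \Cref{imageyoneda} and the equivalence $G$, this gives an isometric bijection $E(\A(x,x'))=\mathcal{K}(E(x),E(x'))$.

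The only delicate step, and the one I expect to be the main obstacle, is fullness of $E$. Since $\B$ is unital it suffices to show that $\id_y\in\langle E,E\rangle$ for every $y\in\Ob\B$. As $\B$ is unital, $h_y$ is f.g.p, so by essential surjectivity of $G$ there exists an f.g.p $\A$-module $P$ with $G(P)\cong h_y$, and $P$ is a direct summand of some $\bigoplus_{i=1}^n h_{x_i}$. Transporting via $G$ (which preserves finite direct sums, being a unital $C^*$-functor), $h_y$ becomes a direct summand of $\bigoplus_{i=1}^n E(x_i)$, exhibited by an isometry with components $\tau_i:h_y\to E(x_i)$. Each $\tau_i$ is compact (operators between f.g.p modules being compact), so by \Cref{Yoneda} we may write $\tau_i=\epsilon_{e_i}$ for some $e_i\in E(x_i)(y)$. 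The isometry relation $\sum_i\tau_i^*\tau_i=\id_{h_y}$ translates, via the identity $\epsilon_{e}^*\epsilon_{f}=\iota_\B(\langle e,f\rangle_\B)$ already exploited in the proof of \Cref{EWonRep}, into $\sum_i\langle e_i,e_i\rangle_\B=\id_y$, establishing fullness.
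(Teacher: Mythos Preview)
Your proof is correct, and the forward direction is essentially the paper's argument: both rely on \Cref{compactidfgp} to identify finitely generated projective modules over a unital $C^*$-category with those having compact identity, and observe that a Morita equivalence preserves this property (you route through \Cref{compactdetermine}, the paper works directly with the imprimitivity bimodule, but the content is the same).

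For the backward direction you take a genuinely different route. The paper constructs from the equivalence $F:\fgProj\A\to\fgProj\B$ and its quasi-inverse two bimodules $E^1,E^2$ and asserts that $E^1\bar\otimes_\B E^2\cong\iota_\A$ and $E^2\bar\otimes_\A E^1\cong\iota_\B$ (implicitly using that $-\bar\otimes_\B E^2$ agrees with $F^{-1}$ on finitely generated projectives, since both are additive idempotent-complete extensions of $F^{-1}\circ\iota_\B$), and then invokes \Cref{equivimprim}. You instead verify the three hypotheses of \Cref{moritathm} for the single bimodule $E=G\circ\iota_\A$ directly. Faithfulness and surjection onto compacts fall out immediately from $G$ being an equivalence and $\mathcal{L}=\mathcal{K}$ on f.g.p.\ modules, while your fullness argument---pulling an isometry $h_y\hookrightarrow\bigoplus_i E(x_i)$ back through the Yoneda identification $\epsilon_e^*\epsilon_f=\iota_\B(\langle e,f\rangle)$ to exhibit $\id_y$ as a sum of inner products---is a pleasant explicit computation that the paper avoids entirely by working with inverse bimodules. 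Your approach is more hands-on and self-contained; the paper's is shorter but leaves more to the reader under the word ``evident''.
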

\begin{proof}
Recall from \Cref{unitalyoneda} that for a unital $C^*$-category $\A$ there's an equivalence between $\A_{\oplus}^{\natural}$ and the category $\fgProj\A$ of finitely generated projective $\A$-modules.

Suppose $\A$ and $\B$ are unital and $E$ is an $\A$-$\B$ imprimitivity bimodule: then since $E^1:\A\rightarrow\Hilb\B$ is non-degenerate, it must in fact be unital, but it must also have image in compact operators, so by \Cref{compactidfgp} we see that for each $x\in\Ob\A$, the module $E^1(x)$ is finitely generated and projective. Hence the functor $-\bar\otimes_{\A} E^1$ sends representable modules to finitely generated projectives, and hence sends all finitely generated projectives to finitely generated projectives. We can make a similar argument on the inverse functor $\bar\otimes_{\B} E^2$ and then we obtain by  the equivalence $\A^{\natural}_{\oplus}\cong\B^{\natural}_{\oplus}$.

On the other hand, suppose there's an equivalence $F:\fgProj\A\xrightarrow{\cong}\fgProj\B$: let $E^1$ be the right Hilbert $\A-\B$ bimodule given by postcomposing $F$ with the inclusion $\fgProj\B\xhookrightarrow{}\Hilb\B$ and precomposing with the Yoneda embedding $\A\xhookrightarrow{}\fgProj\A$, and $E^2$ be the $\B-\A$ bimodule similarly obtained from the inverse of $F$. Then it is evident that $E^1\bar{\otimes}_{\A}E^2\cong\iota_{\A}$ and $E^2\bar{\otimes}_{\B}E^1\cong\iota_{\B}$. 

%Using \Cref{unitalyoneda} we see that in fact we can then think of $E^1$ as a functor $\A\rightarrow\B_{\natural}_{\oplus}$, which by \Cref{idempotentcompletion} extends to a functor $\A_{\natural}_{\oplus}\rightarrow\B_{\natural}_{\oplus} $. Applying the same logic to $E^2$ and noting they are tensor inverses of each other we obtain the stated equivalence.
\end{proof}
This means our  terminology coincides with that in \cite{IvoGoncalo}, where Morita equivalence of (unital) $C^*$-categories was \emph{defined} by the second criterion above.

Recall from \Cref{matrixalg} that $\Mat\mathcal{A}$ is the direct limit of the endomorphism algebras of \textit{non-repeating} sums. Using the results we have at our disposal it is now very easy to prove that the algebra $\Mat\mathcal{A}$ is Morita equivalent to $\mathcal{A}$.

 \begin{prop}\label{catalgequiv}
If $\mathcal{A}$ is a $C^*$-category, there is a  $(\Mat\mathcal{A})-\mathcal{A}$ Morita bimodule given by letting the former act on the $\mathcal{A}$-module $\bigoplus_{x\in\Ob\mathcal{A}}h_x$.% There is also a $(\Matrep\mathcal{A})-\mathcal{A}$ Morita bimodule given by letting the former act on the $\mathcal{A}$-module $\bigoplus_{x\in\Ob\mathcal{A}}\bigoplus_{n=1}^\infty h_x$.

%\Hilb(\Matrep\mathcal{A})$ given by 
\end{prop}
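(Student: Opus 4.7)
The plan is to apply the Morita theorem (\Cref{moritathm}) to the bimodule $E : \Mat\A \to \Hilb\A$ determined by sending the unique object of the one-object category $\Mat\A$ to $M := \bigoplus_{x\in\Ob\A}h_x$, the multiplier direct sum of all representable right Hilbert $\A$-modules. The engine driving the whole argument is a canonical $C^*$-isomorphism $\Mat\A \cong \mathcal{K}(M)$, which both defines the left action of $\Mat\A$ on $M$ (through the inclusion $\mathcal{K}(M)\subseteq\mathcal{L}(M)$) and verifies two of the three criteria of \Cref{moritathm} in one stroke.

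First I would exhibit this isomorphism. By \Cref{imageyoneda} and \Cref{compactdirect}, each finite matrix algebra satisfies
\[
M_{x_1,\dots,x_n}(\A) \;\cong\; \mathcal{K}(h_{x_1}\oplus\cdots\oplus h_{x_n}),
\]
and these isomorphisms are compatible with the structure maps $h_{x_1}\oplus\cdots\oplus h_{x_n}\hookrightarrow M$. They therefore assemble into an isometric $*$-homomorphism from the direct limit $\Mat\A$ into $\mathcal{K}(M)$, whose image contains every single-rank operator $\theta^{e,f}$ between finite-support elements of $M$. Since these are dense in $\mathcal{K}(M)$ (which follows from \Cref{limsupid} applied to the truncation net $\sum_{J\text{ finite}}\iota_J\iota_J^*\xrightarrow{\mathrm{strong}}\id_M$), the homomorphism is surjective.

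Next I would verify the three criteria of \Cref{moritathm}. Faithfulness of $E$ and its being an isometry onto the compact operators between modules in its image (of which there is only one, namely $M$) are immediate from the isomorphism above. For fullness, the inner products of elements of $M$ supported in a single coordinate $x$ recover the inner products of the Yoneda bimodule $\iota_{\A}$ on $h_x$, and by \Cref{yonedafull} these already span a dense subspace of every $\A(y',y)$. For non-degeneracy, by \Cref{strongapprox} any approximate unit of $\mathcal{K}(M)$ converges strongly to $\id_M$; transporting across the isomorphism, the corresponding approximate unit of $\Mat\A$ satisfies criterion (3) of \Cref{nondegenstrongthm} via \Cref{strongnondegenchar}. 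Once all three criteria hold, \Cref{moritathm} declares $E$ to be a Morita bimodule and tensoring with $E$ to be a Morita equivalence $\Hilb(\Mat\A)\simeq\Hilb\A$.

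The main obstacle is neither conceptual nor deep: it is the bookkeeping required to check that the inclusions $M_{x_1,\dots,x_n}(\A)\hookrightarrow M_{y_1,\dots,y_{n+k}}(\A)$ defining $\Mat\A$ agree with the inclusions $\mathcal{K}(\bigoplus h_{x_i})\hookrightarrow\mathcal{K}(\bigoplus h_{y_j})$ induced by the structure maps of $M$, and then that the assembled homomorphism has dense image. The informal identification $\Mat\A\cong\bigoplus_{x,y}\A(x,y)$ noted after \Cref{matrixalg} makes this essentially routine, but it is the single place where careful attention is needed before the rest of the proof reduces to citation of earlier results.
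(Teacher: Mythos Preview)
Your proposal is correct and follows essentially the same approach as the paper: both arguments reduce to establishing the $C^*$-isomorphism $\Mat\A\cong\mathcal{K}(\bigoplus_{x}h_x)$, observing the bimodule is full, and invoking \Cref{moritathm}. The only cosmetic difference is that the paper obtains the isomorphism via the alternate description $\Mat\A\cong\bigoplus_{x,y}\A(x,y)\cong\bigoplus_{x,y}\mathcal{K}(h_x,h_y)$ (noted after \Cref{matrixalg}) together with \Cref{compactdirect}, whereas you assemble it from the finite-stage isomorphisms $M_{x_1,\dots,x_n}(\A)\cong\mathcal{K}(h_{x_1}\oplus\cdots\oplus h_{x_n})$ through the direct limit; your explicit checks of non-degeneracy and fullness are more thorough than the paper's terse ``clearly full''.
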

\begin{proof}
   Note  $\Mat\mathcal{A}$  is isomorphic as a $C^*$-algebra to $\bigoplus_{x,y\in\Ob\mathcal{A}}\mathcal{A}(x,y)\cong\bigoplus_{x,y\in\Ob\mathcal{A}}\mathcal{K}(h_x,h_y) $ with the obvious `distributive' multiplication, which by \Cref{compactdirect} is isomorphic to $\mathcal{K}(\bigoplus_{x\in\Ob\mathcal{A}}h_x)$. The module $\bigoplus_{x\in\Ob\mathcal{A}}h_x$ is clearly full, so tensoring with this bimodule gives a Morita equivalence by \Cref{moritathm}.

%   As an application we deduce Renault's equivalence theorem for discrete groupoids:

%   \begin{prop}[{c.f. \cite[Theorem 2.8]{RenaultWilliams}}]
  %     If $\mathcal{G}$ and $\mathcal{H}$ are equivalent discrete groupoids, 
 %  \end{prop}

 %   Similarly, we note upon inspection that 
    %$$\Matrep\mathcal{A}\cong\bigoplus_{\substack{x,y\in\Ob\mathcal{A}\\n,m\in\mathbb{N}}}\mathcal{A}(x,y)\cong\mathcal{K}(\bigoplus_{x\in\Ob\mathcal{A}}\bigoplus_{n=0}^\infty h_x)$$
%and clearly $\bigoplus_{x\in\Ob\mathcal{A}}\bigoplus_{n=0}^\infty h_x$
 %   is a full $\mathcal{A}$-module.
    \end{proof}
 These results are a significant generalization of results obtained in \cite{JoachimKHom}, where it was proved that (translated to our terminology) $\Mat\mathcal{A}$ and $\mathcal{A}$ are Morita equivalent in the case where $\mathcal{A}$ is unital and has a countable set of objects. This equivalence is one of the main theorems of \cite{Ferrier}, where attention is restricted to the case where $\A$ is small. Using our framework this is not necessary, but we caution that as $\Mat\A$ is a colimit indexed over pairs of objects of $\A$, we cannot expect the Banach space $\Mat\A$ to live in the same universe as the hom-spaces of $\A$ \emph{unless $\A$ has a set of objects}.
\section{(Bi)categories of $C^*$-categories and $C^*$-algebras}
In this section, we use the results in this article to prove results on a variety of categories whose objects are $C^*$-categories and $C^*$-algebras. We start by looking at bicategories.

\subsection{The bicategories $\Hcat$ and $\Bimod$}

We will assume basic results on bicategories and refer the reader to \cite{JohnsonYau} for a primer. Having formulated all of our results until this point for $C^*$-categories enriched over a universe $\mathcal{U}$, in the following subsection we assume our $C^*$-categories are \textit{small}, that is, have a \textit{set} of objects and a \textit{set} of morphisms between any two objects.

Our first bicategory has evidently strictly associative and unital compositions, so is in fact a 2-category:
\begin{defn}
    We let $\Hcat$ be the 2-category defined as follows:
    \begin{itemize}
    \item Its objects are small $C^*$-categories.
    \item The horizontal arrows from $\mathcal{A}$ to $\mathcal{B}$ are strong unital $C^*$-functors\footnotemark\hspace{0pt} from $\Hilb\mathcal{A}$ to $\Hilb\mathcal{B}$. 
    \item  The vertical arrows are bounded adjointable natural transformations between these functors. 
    \item $\Hcat$ has 1- and 2-composition laws are defined in the obvious way.
    \end{itemize}
\end{defn} \footnotetext{Readers concerned about set-theoretical size issues may note that each 1-category $\Hcat(\A,\B)$ is locally small by \Cref{repdetermine}.}
Our second bicategory is an honest bicategory:
\begin{prop}
    There is a bicategory $\Bimod$ defined as follows: \begin{itemize} \item Its objects are small $C^*$-categories.
    \item  The horizontal arrows from $\mathcal{A}$ to $\mathcal{B}$ are non-degenerate right Hilbert $\mathcal{A}-\mathcal{B}$ bimodules, and horizontal identities are the Yoneda bimodules.
    \item The vertical arrows are bounded adjointable natural transformations.
    \item Horizontal composition is given by tensor products of bimodules\footnotemark\vspace{0pt}. 
    \item Vertical composition is given by composition of transformations.
    \item Associators provided by \Cref{tensorassociative} and unitors by \Cref{yonedaisid}. \end{itemize}
    The equivalences in this bicategory are given by Morita bimodules/imprimitivity bimodules.
\end{prop}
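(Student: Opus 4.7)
The plan breaks naturally into two parts: first checking that the listed data assemble into a valid bicategory, and then reading off the equivalences from the results of Section~4.3. For the underlying structure, the local 1-categories $\Bimod(\A,\B)$ are well-defined by \Cref{functorcat} (which in particular provides composition and identities for vertical composition of adjointable natural transformations). Horizontal composition by $\bar{\otimes}$ lands in non-degenerate bimodules by \Cref{nondegencompo}, and is functorial in each variable through \Cref{tensorfunctor} and \Cref{lefttensorfunctorial}; the interchange law then follows immediately from this bifunctoriality (both $E\bar{\otimes}_\B(-)$ and $(-)\bar{\otimes}_\B F$ being $C^*$-functors, the two ways of composing a pair of 2-cells agree on the nose). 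The Yoneda bimodules $\iota_\A$ serve as horizontal units: they are non-degenerate by \Cref{representation}. The associator is provided by \Cref{tensorassociative}, and the left/right unitors by \Cref{yonedaisid}.

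The main obstacle --- and essentially the only nontrivial computation --- is the verification of the coherence axioms, i.e.\ the pentagon and triangle identities, together with naturality of the structural 2-cells. However, this obstacle is largely illusory: the associator is defined on simple tensors by $(e\otimes f)\otimes g \mapsto e\otimes(f\otimes g)$ and the unitors by $a\otimes e \mapsto a\cdot e$ and $e\otimes b \mapsto e\cdot b$. Each side of each coherence axiom therefore reduces to a tautological identity on simple tensors (for the pentagon, both routes send $((e\otimes f)\otimes g)\otimes h$ to $e\otimes(f\otimes(g\otimes h))$; for the triangle, both sides send $e\otimes(\id\otimes f)$ to $e\otimes f$). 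Since simple tensors span a dense subspace and the maps involved are bounded, the identities propagate to the completions by \Cref{densextend}. Naturality of associators and unitors in all three variables is similarly checked on simple tensors.

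For the characterization of equivalences, a 1-morphism $E \in \Bimod(\A,\B)$ is an equivalence precisely when there exists $F\in\Bimod(\B,\A)$ together with unitary isomorphisms $E\bar\otimes_\B F \cong \iota_\A$ and $F\bar\otimes_\A E \cong \iota_\B$. \Cref{equivimprim} gives the forward direction directly: any such $E$ carries a canonical imprimitivity structure, and moreover $F \cong \widetilde{E}$. For the converse, if $E$ is an imprimitivity bimodule then \Cref{imprimisinvertible} exhibits $\widetilde{E}$ as an explicit pseudo-inverse via the unitary isomorphisms $\widetilde{E}\bar\otimes_\A E \xrightarrow{\cong} \iota_\B$ and $E\bar\otimes_\B \widetilde{E} \xrightarrow{\cong} \iota_\A$. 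Together these identify the equivalences in $\Bimod$ exactly with the Morita bimodules characterized in \Cref{moritathm} and \Cref{imprimchar}.
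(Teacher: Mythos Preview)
Your proposal is correct and follows essentially the same approach as the paper. The paper's own proof is even terser: it simply states that the pentagon and triangle identities are ``easily verified'' and left to the reader, and that the characterization of equivalences ``follows directly from \Cref{imprimisinvertible} and \Cref{inversesareimprim}'' --- you have filled in precisely the details the paper omits, and your citation of \Cref{equivimprim} in place of \Cref{inversesareimprim} is harmless since the former is the immediate corollary packaging the latter with \Cref{imprimchar}.
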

\begin{proof}
   It is easily verified that the associators satisfy the pentagon identity and that the unitors satisfy the triangle identity: we leave the details to the reader.

   The last statement follows directly from \Cref{imprimisinvertible} and \Cref{inversesareimprim}.
\end{proof}
\footnotetext{Recall from \Cref{nondegencompo} that a tensor product where the left bimodule is non-degenerate is again non-degenerate.}

This bicategory is analogous to similar ones defined over rings and operator algebras, see \cite{Brouwer}. 

The results in this article let us show without much effort that these two bicategories are equivalent:

\begin{thm}\label{bicatequiv}
There is a biequivalence $\Psi:\Bimod\rightarrow\Hcat$ given by 

\begin{itemize}
    \item The identity on objects.
    \item Sending every bimodule $E:\mathcal{A}\rightarrow\Hilb\mathcal{B}$ to the functor 
    
    $-\bar{\otimes}_\mathcal{A} E:\Hilb\mathcal{A}\rightarrow\Hilb\mathcal{B}$.
    \item Sending a bounded adjointable transformation $T:E\rightarrow E'$ of right Hilbert $\mathcal{A}-\mathcal{B}$-bimodules to the bounded adjointable natural transformation 
    
    $\Psi(T):-\bar{\otimes}_\mathcal{A} E\rightarrow -\bar{\otimes}_\mathcal{A} E'$ whose existence is given by \Cref{lefttensorfunctorial}.
    \item Natural equivalences $\gamma:\Psi(E\bar{\otimes}_{\B}F)\xrightarrow[]{\cong}\Psi(F)\circ\Psi(E)$ given for every right Hilbert $\A$-module $M$, $\A-\B$ bimodule $E$, and $\B-\mathcal{C}$ bimodule $F$ simply by the associator isomorphism $M\bar{\otimes}_{\A}(E\bar{\otimes}_{\B}F)\cong (M\bar{\otimes}_{\A}E)\bar{\otimes}_{\B}F$ for \Cref{tensorassociative}.
    \item Natural equivalences $I:\Psi(\iota_{\A})\cong\id_{\A}$ given for every right Hilbert $\A$-module $M$ by the unitor isomorphism $M\bar{\otimes}_{\A} (\iota_{\A})\cong M$ from \Cref{yonedaisid}.
\end{itemize}
\end{thm}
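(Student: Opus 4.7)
The plan is to verify that $\Psi$ is a genuine pseudofunctor and then establish the biequivalence via the standard two-step criterion: essential surjectivity on objects together with local equivalences of the hom-1-categories.

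First I would confirm that $\Psi$ is well-defined on 1- and 2-cells. On 1-cells, \Cref{tensorfunctor} guarantees $-\bar{\otimes}_\A E$ is a strong unital $C^*$-functor; on 2-cells, the assignment $T \mapsto \Psi(T)$ (acting as $\id_M \bar\otimes T$ at each $M$) yields a bounded adjointable natural transformation by \Cref{lefttensorfunctorial}. The natural isomorphisms $\gamma$ and $I$ are built from the associator of \Cref{tensorassociative} and the unitor of \Cref{yonedaisid}, both of which are themselves natural in all arguments; verifying the pentagon and triangle coherence axioms reduces to checking equalities on simple tensors $m\otimes e\otimes f \otimes g$, where both sides act by the obvious bracketing shuffle. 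This is routine but is one of two places where the bookkeeping is nontrivial.

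Next I would verify that $\Psi$ is essentially surjective on objects, which is immediate as $\Psi$ is the identity on objects. The substantive content is the local equivalence: for each pair $\A,\B$, the functor $\Psi_{\A,\B}:\Bimod(\A,\B)\to\Hcat(\A,\B)$ is an equivalence. For local essential surjectivity I would take any strong unital $F:\Hilb\A\to\Hilb\B$ and set $E := F\circ\iota_\A$; by the Eilenberg-Watts theorem \Cref{EW} there is a natural unitary isomorphism $-\bar{\otimes}_\A E \cong F$. I would then verify that $E$ is non-degenerate as a bimodule: given an approximate unit $(u_\lambda)$ for $\A(x,x)$, the net $\iota(u_\lambda)$ is an approximate unit for $\mathcal{K}(h_x)$, so by \Cref{strongapprox} it converges strongly to $\id_{h_x}$; strong continuity of $F$ then yields $F(\iota(u_\lambda))\to \id_{F(h_x)}$ strongly, which is exactly criterion \Cref{strongnondegenchar} for non-degeneracy of $E$.

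For local faithfulness, if $\Psi(T)=\Psi(T')$ then evaluating at $M=h_x$ and conjugating by the unitor $\rho_x:h_x\bar{\otimes}_\A E \xrightarrow{\cong} E(x)$ of \Cref{tensorextend} recovers $T_x=T'_x$ for every $x$. For local fullness, given $\eta:-\bar\otimes_\A E\Rightarrow -\bar\otimes_\A E'$, I would define $T_x := \rho'_x\circ\eta_{h_x}\circ\rho_x^{-1}$; naturality of $T$ in $x$ follows from naturality of $\eta$ applied to morphisms $\iota(a):h_x\to h_y$, combined with naturality of $\rho$ in $x$ from \Cref{tensorextend}. By construction $\Psi(T)$ and $\eta$ agree on all representable modules, hence on their finite direct sums, so \Cref{repdetermine} forces $\Psi(T)=\eta$. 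The final claim about equivalences then follows from \Cref{imprimisinvertible} and \Cref{inversesareimprim}, which identify invertible 1-cells in $\Bimod$ with imprimitivity bimodules, matched on the $\Hcat$ side by \Cref{moritathm}.

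The main obstacle will be the coherence verification for $\Psi$ as a pseudofunctor: the pentagon for $\gamma$ and the two triangles involving $I$ must be checked. Although each reduces to a simple tensor computation, one has to carefully keep track of which of the tensor factors the unitors act on, and one must verify naturality in both the module variable and the bimodule variables. Everything else in the argument is a direct consequence of the Eilenberg-Watts theorem and of the approximate-projectivity machinery underlying \Cref{repdetermine} and \Cref{tensorextend}.
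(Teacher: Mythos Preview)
Your proposal is correct and follows essentially the same route as the paper: both invoke the standard criterion (essential surjectivity on objects plus local equivalence of hom-categories), both use the Eilenberg--Watts theorem for local essential surjectivity, and both use \Cref{repdetermine} for fullness after recovering $T$ by restricting to representables. Your argument is in fact slightly more careful than the paper's in one respect: you explicitly verify that $E=F\circ\iota_\A$ is non-degenerate (needed since $\Bimod$ has only non-degenerate bimodules as 1-cells), which the paper passes over in silence; your final remark about equivalences and imprimitivity bimodules is correct but belongs to the preceding proposition rather than this theorem.
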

\begin{proof}
It is straightforward to verify that this data gives a pseudofunctor: we leave this to the reader.

     To show that $\Psi$ is a biequivalence, we employ the criterion in e.g. \cite[Theorem 7.4.1]{JohnsonYau} which says $\Psi$ is a biequivalence if and only if it is essentially surjective on objects, and gives an equivalence of 1-categories $\Psi:\Bimod(\mathcal{A},\mathcal{B})\rightarrow\Hcat(\mathcal{A},\mathcal{B})$ for any $\mathcal{A}$ and $\mathcal{B}$. The essential surjectivity is immediate.

     To show that $\Psi:\Bimod(\mathcal{A},\mathcal{B})\rightarrow\Hcat(\mathcal{A},\mathcal{B})$ is an equivalence of 1-categories, we show that it is essentially surjective and fully faithful. 

     For the first claim, suppose $F:\Hilb\mathcal{A}\rightarrow\Hilb\mathcal{B}$ is a strong unital functor; then by \Cref{EWthm} we know that there is a natural isomorphism $F\cong-\bar\otimes_{\mathcal{A}}(F\circ\iota_{\mathcal{A}})$, hence $F$ is in the essential image under $\Psi$ of $\Bimod(\mathcal{A},\mathcal{B})$.

     To show the faithfulness, we note simply that $\Psi(T):-\bar{\otimes}_\mathcal{A} E\rightarrow -\bar{\otimes}_\mathcal{A} E'$ whiskers through the inclusion $\iota_\mathcal{A}:\mathcal{A}\rightarrow\Hilb\mathcal{A}$ to $T:E\rightarrow E'$, so if two transformations $T,T':E\rightarrow E'$ have the same image under $\Psi$, they must already be the same, showing $\Psi$ is faithful.

     To show fullness, let $\tau:-\bar{\otimes}_\mathcal{A} E\rightarrow -\bar{\otimes}_\mathcal{A} E'$ be a bounded adjointable natural transformation. Let $T:E\rightarrow E'$ be the whiskering of $\tau$ through $\iota_\mathcal{A}$. It is clear by definition that $\Psi(T)$ and $\tau$ agree on representable $\mathcal{A}$-modules, but then by \Cref{repdetermine} they must agree everywhere. Hence $\Psi$ is full.
\end{proof}

This lemma tells us that the 2-category $\Hcat$ is a model for the \emph{strictification} of the bicategory $\Bimod$. We also record that the Morita equivalences in \Cref{catalgequiv} provide us with further biequivalences.

\begin{prop}\label{4bicatequiv}
    The following bicategories are biequivalent:
    \begin{itemize}
        \item $\Bimod$
        \item $\Hcat$
        \item The full sub-bicategory of $\Bimod$ whose objects are the one-object categories, i.e. $C^*$-algebras.
        \item The full sub-2-category of $\Hcat$ whose objects are the $C^*$-algebras.
    \end{itemize}
\end{prop}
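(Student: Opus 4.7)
The plan is to use the biequivalence $\Psi:\Bimod\rightarrow\Hcat$ from \Cref{bicatequiv}, together with the Morita equivalence between $\A$ and $\Mat\A$ from \Cref{catalgequiv}, to reduce everything to showing that the full-subbicategory inclusions on $C^*$-algebras into $\Bimod$ and $\Hcat$ are biequivalences. Let me write $\Bimod^\mathrm{alg}\hookrightarrow\Bimod$ and $\Hcat^\mathrm{alg}\hookrightarrow\Hcat$ for these inclusions.

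I will invoke the same biequivalence criterion as in \Cref{bicatequiv}: a pseudofunctor is a biequivalence if and only if it is essentially surjective on objects and induces an equivalence of $1$-categories on each hom. For a full sub-bicategory inclusion, the hom-categories are literally preserved, so the induced hom-functors are identities and hence equivalences. Only essential surjectivity on objects needs checking.

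For $\Bimod^\mathrm{alg}\hookrightarrow\Bimod$: given a small $C^*$-category $\A$, \Cref{catalgequiv} exhibits a full right Hilbert $(\Mat\A)-\A$ bimodule which acts by compact operators, hence is a Morita bimodule by \Cref{moritathm}. By \Cref{imprimisinvertible} and \Cref{inversesareimprim} (the characterization of equivalences in $\Bimod$ recorded in the definition of that bicategory), $\A$ is equivalent to $\Mat\A$ in $\Bimod$. Since we have assumed all $C^*$-categories to be small, $\A$ has a set of objects and sets of morphisms between pairs of objects, so $\Mat\A$ is itself a small $C^*$-algebra and hence lives in $\Bimod^\mathrm{alg}$. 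Thus $\Bimod^\mathrm{alg}\hookrightarrow\Bimod$ is essentially surjective and therefore a biequivalence.

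For $\Hcat^\mathrm{alg}\hookrightarrow\Hcat$, one can either repeat the argument (using that $-\bar{\otimes}_{\Mat\A}\left(\bigoplus_{x\in\Ob\A}h_x\right)$ gives a strong unital equivalence $\Hilb(\Mat\A)\xrightarrow{\cong}\Hilb\A$), or simply apply the pseudofunctor $\Psi$, which restricts to $\Bimod^\mathrm{alg}\to\Hcat^\mathrm{alg}$ by identity on objects. Chaining all four biequivalences — $\Bimod\simeq\Hcat$ from \Cref{bicatequiv}, $\Bimod^\mathrm{alg}\simeq\Bimod$, $\Hcat^\mathrm{alg}\simeq\Hcat$, and the restriction $\Bimod^\mathrm{alg}\simeq\Hcat^\mathrm{alg}$ of $\Psi$ — gives the desired web of biequivalences between all four bicategories.

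The only real obstacle is the size bookkeeping: one must confirm that $\Mat\A$ lands inside the chosen universe, which is precisely the caveat after \Cref{catalgequiv} and is taken care of by the standing smallness assumption at the start of the subsection. Beyond this, the proof is essentially formal once \Cref{bicatequiv} and \Cref{catalgequiv} are in hand.
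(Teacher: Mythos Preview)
Your proof is correct and follows essentially the same approach as the paper: use \Cref{bicatequiv} for $\Bimod\simeq\Hcat$, then apply the biequivalence criterion (essential surjectivity on objects plus local equivalence on hom-categories) to the full sub-bicategory inclusions, with essential surjectivity supplied by \Cref{catalgequiv}. Your write-up is in fact more detailed than the paper's, explicitly noting that full sub-bicategory inclusions are automatically local equivalences and addressing the size caveat on $\Mat\A$.
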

\begin{proof}
    The equivalence of the first two bicategories is the content of the preceding theorem.

    The inclusion of the third into the first category is an equivalence since by \Cref{catalgequiv} it is essentially surjective on objects, as is the inclusion from the fourth into the second; hence again by \cite[Theorem 7.4.1]{JohnsonYau} these inclusions are biequivalences.
\end{proof}
 Variants of several categories in this proposition were studied in other works: the third category, often labelled the `correspondence bicategory', was first defined in  \cite{LandsmanBicat} and further studied in \cite{BussMeyerZhu}. Meanwhile, the subcategory of the third category whose objects are closed under certain multiplier direct sums and morphisms are the bimodules landing in \textit{representable} modules was studied in \cite{AntounVoigt}. The relationship between the first and third bicategory was studied in \cite[]{Ferrier}.

%One interest in this equivalence may be that $\Hcat$ is a 2-category with strict units and strictly associative compositions, whereas $\Bimod$ is a true bicategory with unitors and associators.

The $C^*$-algebraic and $C^*$-categorical outlooks may both offer advantages: for example, in the algebraic setting it is easier to speak of the \textit{dimension} of a bimodule, whereas in the categorical setting it may be easier to form (bi)(co)limits. The results in this section offer a convenient way to move between these perspectives.

\subsection{Localizing a 1-category of $C^*$-categories at the Morita equivalences}

In this subsection we construct a category of $C^*$-categories where Morita equivalences have been inverted, i.e. made into isomorphisms: we will moreover do this in a \emph{universal} way, so that the end result constitutes a localization of 1-categories. To be specific, we will invert the Yoneda embeddings, so that isomorphisms between $\Hilb\A$ and $\Hilb\B$ give isomorphisms between $\A$ and $\B$.

The incisive reader will notice that our current set-up doesn't give a good way of doing this for set-theoretical reasons: if we take our category of $C^*$-categories to have \emph{small} $C^*$-categories $\A$ as its objects, then there is no reason to expect $\Hilb\A$ to be small again and live in the same category, and indeed in very simple examples where $\A$ has one object one immediately sees this won't be true.  Luckily there is a relatively painless way of resolving this, originating in Benjamin Duenzinger's (unpublished) master's thesis; the author is very grateful for conversations with him on this topic, and for his suggestion of using the framework of a \textit{reflective localization}, as done in recent works like \cite{bunke2023kk}.

We will work with \emph{locally small $C^*$-categories}, and restrict our attention to what Duenzinger calls \emph{small} Hilbert modules, which are generated (across all objects) by a \emph{set} of elements. It will turn out that the category of small Hilbert modules over a locally small $C^*$-category is again locally small but also contains enough modules to make the Eilenberg-Watts theorem true, solving the problem of closing up our world. This notion is of course inspired by that of small presheaves over categories, see e.g. \cite{nlab:small_presheaf}. 

In the below, we use \emph{set} to mean a \emph{small set} (say in ZFC), \emph{collection} to mean a set or class, and $\textbf{Vect}_{\mathbb{C}}$ to be the category of small complex vector spaces.  %a $C^*$-category will be locally small unless otherwise specified.
\begin{defn}
For a right $\A$-Hilbert module $E:\A\rightarrow\textbf{Vect}_{\mathbb{C}}$ and a set or class $S$ of elements $s\in E(x_s):x_s\in\Ob\A$ we denote by $\langle S\rangle$ the smallest subfunctor of $E$ containing $S$ which is closed under addition and action from morphisms in $\A$, and such that its value on each object is closed in the norm inherited from $E$: that is, the smallest sub-Hilbert module of $E$ containing $S$. We say that $S$ \emph{generates} $E$ when $\langle S\rangle=E$.

\end{defn}

\begin{lemma}\label{generatedsetisclosure}
For $E$ and $S$ as above, the module $\langle S\rangle$ is given by the norm closure of all finite linear combinations $\Sigma_{i=1}^n s_i\cdot a_i$, where $s_i\in S$ and $a_i\in\A(y_i,x_{s_i})$ for some $y_i$.
\end{lemma}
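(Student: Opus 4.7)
The plan is to define $M(y) := \overline{\mathrm{span}\{s\cdot a : s \in S,\ a \in \A(y, x_s)\}} \subseteq E(y)$ for each $y \in \Ob\A$, show that $M$ assembles into a sub-Hilbert module of $E$ containing $S$, and then show it is the smallest such. Once both directions are established, $M = \langle S\rangle$ follows from the defining minimality of $\langle S\rangle$. (Note that linear combinations with complex scalars are already captured since $\lambda(s\cdot a) = s\cdot(\lambda a)$ by $\mathbb{C}$-linearity of the functor $E$.)

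First I would check $M$ is a subfunctor of $E$: for $b \in \A(z,y)$, naturality gives $(s_i\cdot a_i)\cdot b = s_i\cdot(a_i\circ b)$, so finite sums in the uncompleted span map to finite sums, and by \Cref{continuity} (the action of $\A$ on a Hilbert module is norm-decreasing) this extends continuously to the closure. Closure under addition and the norm topology hold for $M(y)$ by construction. For the minimality step, any sub-Hilbert module $N \subseteq E$ containing $S$ is a subfunctor closed under addition and norm limits, so it automatically contains every element $s_i\cdot a_i = N(a_i)(s_i)$, all their finite sums, and all norm limits thereof; hence $M(y) \subseteq N(y)$ for every $y$.

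The one non-formal point, and what I expect to be the main obstacle, is verifying that $M$ actually contains $S$ — since $\A$ is not assumed unital, for $s \in S \subseteq E(x_s)$ one cannot simply write $s = s \cdot \id_{x_s}$. This is where I would invoke \Cref{unitcor}: picking an approximate unit $(u_\lambda)$ for the $C^*$-algebra $\A(x_s, x_s)$, each $s\cdot u_\lambda$ lies in the uncompleted span defining $M(x_s)$, and $s\cdot u_\lambda \to s$ in norm, placing $s$ in the closure $M(x_s)$. Combining the three items establishes that $M$ is a sub-Hilbert module of $E$ that contains $S$ and is contained in every such, so $M = \langle S\rangle$, as claimed.
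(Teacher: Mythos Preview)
Your proof is correct and follows essentially the same two-inclusion strategy as the paper: show the candidate module is a sub-Hilbert module containing $S$ (giving $\langle S\rangle\subseteq M$), and that it lies inside any such (giving $M\subseteq\langle S\rangle$). You are in fact more careful than the paper on the point $S\subseteq M$, where the paper simply asserts containment while you correctly supply the approximate-unit argument via \Cref{unitcor} to handle the non-unital case.
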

\begin{proof}
Call the second module in the above statement $[S]$. Note firstly that any finite linear combination of the type defined should be in $\langle S\rangle$, which is also norm-closed, giving us that $[S]\subseteq\langle S\rangle$. Conversely, clearly $[S]$ is a Hilbert subfunctor of $E$ containing $S$, giving that $\langle S\rangle\subseteq [S]$, and hence we see $[S]=\langle S\rangle$.
\end{proof}

\begin{defn}
A right $\A$-Hilbert module $E:\A\rightarrow\textbf{Vect}_{\mathbb{C}}$ is termed a \emph{small} Hilbert module if there is a \emph{set} $S$ of elements $s\in E(x_s):x_s\in\Ob\A$ such that $S$ \emph{generates} $E$.
\end{defn}
We can quickly establish the following properties of simple modules:
\begin{lemma}
If an object $x\in\Ob\A$ has a small approximate unit, then $h_x$ is a small $\A$-module.
\end{lemma}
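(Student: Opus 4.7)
The plan is to take the small approximate unit itself as a generating set for $h_x$. Suppose $(u_\lambda)_{\lambda \in \Lambda}$ is a small approximate unit for the $C^*$-algebra $\A(x,x)$, where ``small'' means that the index set $\Lambda$ is a set. Since each $u_\lambda$ lives in $\A(x,x) = h_x(x)$, the collection $S := \{u_\lambda : \lambda \in \Lambda\}$ is a set of elements of $h_x$, and the goal is to verify that $\langle S \rangle = h_x$.

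Next I would invoke \Cref{generatedsetisclosure}, which tells us that $\langle S \rangle(y)$ consists of the norm closure, inside $h_x(y) = \A(y,x)$, of all finite sums $\sum_i u_{\lambda_i} \cdot a_i$ with $a_i \in \A(y_i, x)$ for various $y_i$. For us only the single-term sums $u_\lambda \cdot a$ with $a \in \A(y,x)$ already suffice: by the definition of the action of $\A$ on $h_x$ we have $u_\lambda \cdot a = u_\lambda \circ a$, and \Cref{approxunit} gives $u_\lambda \circ a \xrightarrow{\lambda} a$ in norm. Hence every $a \in h_x(y)$ lies in the norm closure of $\{u_\lambda \cdot a : \lambda \in \Lambda\} \subseteq \langle S \rangle(y)$, so $\langle S \rangle(y) = h_x(y)$ for every $y \in \Ob\A$.

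There is no real obstacle here — the entire content is packaging \Cref{approxunit} through the characterization in \Cref{generatedsetisclosure}. The only thing worth flagging is the implicit interpretation of ``small approximate unit'' as one whose directed index set is a set rather than a proper class; this is automatic for any unital or separable endomorphism algebra and, more generally, for any $\A(x,x)$ whose positive part of the open unit ball forms a set.
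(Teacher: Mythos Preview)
Your proof is correct and matches the paper's argument essentially verbatim: take $S=\{u_\lambda:\lambda\in\Lambda\}$ as the generating set and use \Cref{approxunit} to see that every $a\in h_x(y)$ is a norm limit of $u_\lambda\cdot a$. The only difference is that you spell out the appeal to \Cref{generatedsetisclosure} and the meaning of ``small approximate unit'' explicitly, which the paper leaves implicit.
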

\begin{proof}
Take an approximate unit $(u_\lambda)_{\lambda\in\Lambda}$ for $\A(x,x)$ where $\Lambda$ is a set, and note that \Cref{approxunit} implies that each element $a\in h_x(y)=\A(y,x)$ is the norm limit of $ u_\lambda\circ  a=u_\lambda\cdot a$. Hence $S=\{u_\lambda:\lambda\in\Lambda\}\subseteq h_x(x)$ generates $h_x$.
\end{proof}
\begin{lemma}
If $(E_i)_{i\in I}$ is a (small) set of small Hilbert $\A$-modules, the direct sum $\bigoplus_{i\in I} E_i$ is again small.
\end{lemma}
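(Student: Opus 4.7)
The plan is to build an explicit generating set for $\bigoplus_{i\in I} E_i$ by taking the union of the images (under the structure maps) of small generating sets for each $E_i$, and then to verify that this set actually generates the whole direct sum via \Cref{generatedsetisclosure}.

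First, for each $i\in I$ pick a (small) set $S_i$ with $\langle S_i\rangle = E_i$, and let $\iota_i:E_i\hookrightarrow\bigoplus_{j\in I}E_j$ be the structure map. Define $T := \bigcup_{i\in I}\iota_i(S_i)$. Since $I$ is a set and each $S_i$ is a set, $T$ is a set. The goal is to show $\langle T\rangle = \bigoplus_{i\in I}E_i$.

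By \Cref{generatedsetisclosure}, $\langle T\rangle$ is the norm closure of finite linear combinations of the form $\sum_k t_k\cdot a_k$ with $t_k\in T$ and $a_k$ a morphism in $\A$. Since each $\iota_i$ is an $\A$-module map, $\iota_i(s)\cdot a=\iota_i(s\cdot a)$, so for each fixed $i$ the elements of $\langle T\rangle$ supported in the $i$-th component include the norm closure of all $\sum_k \iota_i(s_k\cdot a_k)=\iota_i\bigl(\sum_k s_k\cdot a_k\bigr)$ with $s_k\in S_i$. Applying \Cref{generatedsetisclosure} again inside $E_i$, this closure is exactly $\iota_i(E_i)$. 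Hence $\langle T\rangle\supseteq\iota_i(E_i)$ for all $i\in I$, and being closed under finite sums, $\langle T\rangle$ contains all elements of $\bigoplus_{i\in I}E_i$ with finite support, i.e. all finite sums $\sum_{i\in J}\iota_i(e_i)$ with $J\subseteq I$ finite and $e_i\in E_i$.

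Finally, given an arbitrary element $(e_i)_{i\in I}\in(\bigoplus_{i\in I}E_i)(x)$, convergence of $\sum_i\langle e_i,e_i\rangle$ in $\A(x,x)$ implies that the finite-support truncations $\sum_{i\in J}\iota_i(e_i)$ converge to $(e_i)_{i\in I}$ in the Hilbert module norm as $J$ exhausts $I$. Hence $(e_i)_{i\in I}$ lies in the norm closure of the finite-support elements and thus in $\langle T\rangle$, giving $\langle T\rangle = \bigoplus_{i\in I}E_i$. The main (and only) potential obstacle is making sure the infinite-support elements are really captured by the norm closure, but this falls out immediately from the very definition of the (multiplier) direct sum, so no serious work is required.
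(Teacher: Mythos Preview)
Your proof is correct and follows essentially the same approach as the paper: both take the union of the images under $\iota_i$ of chosen generating sets for the $E_i$, observe that the resulting submodule contains each $\iota_i(E_i)$, and then use the definition of the direct sum to conclude that finite-support elements are dense. Your write-up is simply more explicit about the final density step (via convergence of $\sum_i\langle e_i,e_i\rangle$), which the paper compresses into a one-line appeal to \Cref{generatedsetisclosure} and the definition of the direct sum.
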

\begin{proof}
Suppose we have generating sets $S_i$ for each $E_i$: for each $i$ let $T_i$ denote the image of $S_i$ under the isometry $\iota_i:E_i\xhookrightarrow{}\bigoplus_{i\in I} E_i$. We claim that $\sqcup_iT_i$ generates $\bigoplus_{i\in I} E_i$: to see this, note $\langle\sqcup_iT_i\rangle$ must certainly contain the image of each $\iota_i$, since $S_i$ generates the domain of $\iota_i$, which is an isometry. But then from \Cref{generatedsetisclosure} and the definition of a direct sum of Hilbert modules we see that $\langle\sqcup_iT_i\rangle$ contains all of   $\bigoplus_{i\in I} E_i$.\end{proof}

\begin{lemma}
If $E$ is a small Hilbert module and $F$ is any Hilbert module whose value on each object of $\A$ forms a set, the bounded adjointable maps $\mathcal{L}(E,F)$ form a set.
\end{lemma}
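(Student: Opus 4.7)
The plan is to exploit naturality and continuity to show that any bounded adjointable map $T : E \to F$ is completely determined by its values on a generating set for $E$.

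First I would fix a generating set $S = \{s_i : i \in I\}$ for $E$, where $I$ is a (small) set and each $s_i \in E(x_{s_i})$. By \Cref{generatedsetisclosure}, every element of $E(y)$ (for any $y \in \Ob\A$) is a norm-limit of finite linear combinations of the form $\sum_{j} s_{i_j} \cdot a_j$ with $a_j \in \A(y, x_{s_{i_j}})$. Since $T$ is a natural transformation, it commutes with the right action of $\A$, so
\[
T\bigl(\sum_j s_{i_j} \cdot a_j\bigr) = \sum_j T(s_{i_j}) \cdot a_j,
\]
and since $T$ is bounded (hence continuous at each object), $T$ on $E(y)$ is then determined by its values on the finite combinations, hence by the tuple $(T(s_i))_{i \in I}$.

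Second, I would package this as an injection
\[
\Phi : \mathcal{L}(E, F) \longrightarrow \prod_{i \in I} F(x_{s_i}), \qquad T \longmapsto (T(s_i))_{i \in I}.
\]
Injectivity is exactly the preceding paragraph: two bounded adjointable maps agreeing on $S$ agree on all finite $\A$-linear combinations of elements of $S$, and then by continuity on their norm closures, which by hypothesis equal $E(y)$ for every $y$.

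Finally, since $I$ is a set and each $F(x_{s_i})$ is a set (by assumption on $F$), the product $\prod_{i \in I} F(x_{s_i})$ is a set in our universe, so $\mathcal{L}(E,F)$ injects into a set and is therefore itself a set. There is no real obstacle here beyond carefully invoking naturality and continuity; the work is really done by \Cref{generatedsetisclosure} together with continuous linear extension (\Cref{densextend}).
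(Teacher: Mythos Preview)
Your proof is correct and follows essentially the same approach as the paper: fix a generating set, use \Cref{generatedsetisclosure} together with naturality and continuity to show any $T$ is determined by its values on $S$, then inject $\mathcal{L}(E,F)$ into a set built from the $F(x_{s_i})$. Your use of the product $\prod_{i\in I} F(x_{s_i})$ is in fact more accurate than the paper's $\sqcup_{s\in S} F(x_s)$, since a map is determined by the full tuple of its values on $S$, not by a single one.
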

\begin{proof}
If $S$ is a generating set for $E$, it is clear from \Cref{generatedsetisclosure} that any bounded adjointable map $T\in\mathcal{L}(E,F)$ is determined by its image on $S$. But each element $s\in E(x_s)$ can only be sent to a set $F(x_s)$, and hence we see there's an injective map from $\mathcal{L}(E,F)$ to the set $\sqcup_{s\in S} F(x_s)$.
\end{proof}
\begin{prop}
The category of small Hilbert modules over a locally small $C^*$-category is again locally small, contains all representables and set-indexed direct sums thereof.
\end{prop}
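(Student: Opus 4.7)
The plan is to assemble the three lemmas just above the statement. Since the proposition concerns a locally small $C^*$-category $\mathcal{A}$, every endomorphism algebra $\mathcal{A}(x,x)$ is a $C^*$-algebra living in a set, so it admits an approximate unit indexed by a set; concretely, one may take the canonical approximate unit given by the positive cone of the open unit ball of $\mathcal{A}(x,x)$ with its standard order (or any cofinal subset thereof), which is evidently a set. By the first preceding lemma this shows that every representable $h_x$ is a small Hilbert module, and by the second preceding lemma any set-indexed direct sum of representables (in fact, of small modules) is again small. This handles the second and third claims.

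For local smallness, I would apply the third preceding lemma: for any two small Hilbert modules $E$ and $F$, the space $\mathcal{L}(E,F)$ is a set provided each $F(x)$ is a set. But if $F$ is small with generating set $S$, then \Cref{generatedsetisclosure} expresses $F(x)$ as the norm closure of the collection of finite linear combinations $\sum_{i=1}^n s_i\cdot a_i$ with $s_i\in S$ and $a_i\in\mathcal{A}(x,x_{s_i})$. Since $S$ is a set and each hom-space $\mathcal{A}(x,x_{s_i})$ is a set by local smallness of $\mathcal{A}$, the collection of such finite combinations is a set, and taking the norm closure of a set of points in a normed space yields a set. Hence $F(x)$ is a set, and the third lemma then yields that $\mathcal{L}(E,F)$ is a set.

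The argument is essentially bookkeeping; the only mildly substantive point is exhibiting a set-indexed approximate unit for each $\mathcal{A}(x,x)$, but this follows from the standard construction for $C^*$-algebras. No new constructions are needed beyond the three lemmas already in the excerpt.
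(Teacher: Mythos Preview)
Your proof is correct and follows essentially the same approach as the paper: collate the three preceding lemmas and fill in the one missing premise, namely that a small Hilbert module over a locally small $C^*$-category has set-valued components, which you argue via \Cref{generatedsetisclosure} exactly as the paper does. Your explicit remark that local smallness guarantees each $\mathcal{A}(x,x)$ admits a set-indexed approximate unit is a detail the paper leaves implicit, but otherwise the arguments are the same.
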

\begin{proof}
This is a collation of above results, save for the subtle premise that a small Hilbert module must have set-valued components: but this is easy to see from \Cref{generatedsetisclosure} since if $S$ generates $F$, every $f\in F(y)$ is the limit of a sequence of combinations $\Sigma_{i=1}^n s_i\cdot a_i$, where $s_i\in S, a_i\in\A(y,x_{s_i})$, and such sequences form a set.
\end{proof}
We denote by $\sHilb\A$ the category of small right Hilbert $\A$-modules, by $\sKHilb\A$ its wide subcategory with only compact morphisms, and finally by $\siota:\A\rightarrow\sHilb\A$ the Yoneda embedding with codomain small modules.
\begin{defn}
We call a right Hilbert $\A-\B$ bimodule $E:\A\rightarrow\Hilb\B$ \emph{small} if it lands in $\sHilb\A$.
\end{defn}
\begin{prop}\label{tensorsmall}
If $M$ is a small right Hilbert $\A$-module and $E$ is a small right Hilbert $\A-\B$ bimodule, the tensor product $M\bar{\otimes}_{\A} E$ is again small.
\end{prop}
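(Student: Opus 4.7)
The plan is to exhibit an explicit small generating set for $M\bar{\otimes}_{\A}E$. First, fix a set $S_M$ of generators for $M$, writing $x_m\in\Ob\A$ for the object with $m\in M(x_m)$. By the hypothesis that $E$ is a small bimodule, every right Hilbert $\B$-module $E(x_m)$ is small, so we may further choose a set $T_m$ of generators for $E(x_m)$. The candidate generating set is
\[ G:=\{m\otimes t : m\in S_M,\ t\in T_m\}\subseteq\bigsqcup_{z\in\Ob\B}(M\bar{\otimes}_{\A} E)(z), \]
which is a set because it is indexed by the disjoint union $\bigsqcup_{m\in S_M}T_m$ of a set of sets.

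The heart of the argument is to show that $G$ generates $M\bar{\otimes}_{\A}E$ as a right Hilbert $\B$-module. By \Cref{generatedsetisclosure}, this amounts to showing that every element of $M\bar{\otimes}_\A E$ is a norm limit of finite $\B$-linear combinations of elements of $G$. Since finite sums of simple tensors $m\otimes e$, with $m\in M(x)$ and $e\in E(x)(z)$, are norm-dense in $M\bar{\otimes}_{\A}E$ by the construction of the tensor product, it suffices to verify this for a single simple tensor $m\otimes e$. My plan is a two-step approximation. First, apply \Cref{generatedsetisclosure} to $M$ to approximate $m$ by finite sums $\sum_k m_{i_k}\cdot a_k$ with $m_{i_k}\in S_M$ and $a_k\in\A(x,x_{m_{i_k}})$, then pass this approximation into the tensor product using norm-continuity of the map $m'\mapsto m'\otimes e$. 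This continuity follows from the estimate
\[ \|m'\otimes e\|^2=\|\langle e,E(\langle m',m'\rangle)(e)\rangle\|\leq\|\langle m',m'\rangle\|\,\|e\|^2=\|m'\|^2\|e\|^2, \]
which is an instance of \Cref{positiveineq} combined with the contractivity of $E$ from \Cref{starfuncisbounded}. Second, use the equivalence relation defining the uncompleted tensor product (with witnessing morphism $a_k$) to rewrite each $(m_{i_k}\cdot a_k)\otimes e$ as $m_{i_k}\otimes(a_k\cdot e)$, which transports the $\A$-action onto the right factor, and then apply \Cref{generatedsetisclosure} once more, this time to the module $E(x_{m_{i_k}})$, to approximate $a_k\cdot e\in E(x_{m_{i_k}})(z)$ by finite $\B$-combinations $\sum_\ell t_\ell\cdot b_\ell$ with $t_\ell\in T_{m_{i_k}}$. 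An analogous continuity estimate for $e'\mapsto m_{i_k}\otimes e'$ then shows $m_{i_k}\otimes(a_k\cdot e)$ lies in the norm closure of the $\B$-span of $\{m_{i_k}\otimes t : t\in T_{m_{i_k}}\}\subseteq G$.

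I do not anticipate a serious obstacle: the argument is essentially a double application of \Cref{generatedsetisclosure} bridged by the defining relation of the tensor product, and the only non-trivial analytic input is the norm-continuity of simple tensors in each argument, which follows directly from the standard Cauchy-Schwartz-type estimates already available. Concatenating the two approximations places $m\otimes e$ in $\langle G\rangle$, which by density and closure gives $\langle G\rangle=M\bar{\otimes}_{\A}E$, proving smallness.
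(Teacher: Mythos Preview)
Your proof is correct and follows essentially the same strategy as the paper: exhibit the set $\{s\otimes t : s\in S_M,\ t\in T_{s}\}$ as a generating set by a two-step approximation, first in $M$ and then in each $E(x_s)$. In fact your write-up is more careful than the paper's, which invokes ``bilinearity'' somewhat loosely where you explicitly use the defining relation $(s\cdot a)\otimes e = s\otimes(a\cdot e)$ to transport the $\A$-action across before applying \Cref{generatedsetisclosure} to $E(x_s)$, and you also justify the needed continuity of $m'\mapsto m'\otimes e$ and $e'\mapsto s\otimes e'$ rather than leaving it implicit.
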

\begin{proof}
We note that at every $y\in\Ob\B$, the space $M\bar{\otimes}_{\A} E$ is by definition generated by elements $m\otimes e$ where $m\in M(x),e\in E(x)(y)$ for some $x\in\Ob\A$. But $M$ is small and each $E(x)$ is small: call their generating sets $S$ and $P_x$. So we can approximate both $m$ and $e$ by finite linear combinations of elements in their respective generating sets, and by bilinearity of the tensor product, the tensor product of two such combinations equals a linear combination of tensors of the form $s\otimes p$ where $s\in S,p\in P_x$. But in fact there is a set of such tensors: fixing $s\in S$ for a moment we see it lives over an object $x_s$, and there is then a set of tensors $\{s\otimes t:t\in P_{x_s}\}$, and taking the disjoint union over $S$ of these we get all $s\otimes p$, giving us a set which generates $M\bar{\otimes}_{\A} E$.
\end{proof}
We now adapt our main result from before to this new context; with the above technology in place this takes surprisingly little work.
\begin{thm}[Small Eilenberg-Watts]\label{SmallEW}
If $F:\sHilb\A\rightarrow\sHilb\B$ is a strongly continuous unital functor, then $F\circ\iota_A$ is a non-degenerate $\A-\B$ bimodule and there is a unitary isomorphism of functors $\psi:-\bar{\otimes}_{\A}(F\circ\iota_A)\xrightarrow{\cong}F$.
\end{thm}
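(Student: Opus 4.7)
The plan is to adapt the proof of Theorem \ref{EW} almost verbatim, taking care only to verify that every object and morphism constructed en route stays within $\sHilb$. Before that, there is one genuinely new assertion to check: that $E := F\circ\siota_{\A}$ is non-degenerate as an $\A$-$\B$ bimodule. This follows from strong continuity of $F$: for any $x\in\Ob\A$, pick an approximate unit $(u_\lambda)$ for the (small, since $\A$ is locally small) $C^*$-algebra $\A(x,x)$. By \Cref{strongapprox} applied to the representable module $h_x$ (which is small by the lemma preceding \Cref{tensorsmall}), the net $\siota_\A(u_\lambda) = \iota_\A(u_\lambda)$ is an approximate unit for $\mathcal{K}(h_x)$ converging strongly to $\id_{h_x}$, so by unitality and strong continuity of $F$ we get $F(\siota_\A(u_\lambda))\to \id_{F(h_x)}$ strongly. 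Lemma \ref{strongnondegenchar} then gives non-degeneracy of $E$.

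Next, I define the Eilenberg--Watts map $\psi_M:M\bar{\otimes}_\A E\to F(M)$ exactly as in \Cref{EWMapDefn}, by $\psi_M(m\otimes e):=F(\epsilon_m)(e)$ on simple tensors. The key point justifying the definition is that for small $M$ and any $m\in M(x)$, the operator $\epsilon_m:h_x\to M$ of \Cref{contrayoneda} is compact and has small domain and codomain, so lives in $\sKHilb\A\subseteq\sHilb\A$ and may be fed into $F$. By \Cref{tensorsmall}, the functor $-\bar{\otimes}_\A E$ lands in $\sHilb\B$, so both sides of $\psi_M$ are small $\B$-modules. The well-definedness, bimodule-map property, inner product preservation, and naturality in $M$ (and in $F$) are proved by exactly the computations in the original lemma after \Cref{EWMapDefn}; nothing in those arguments leaves $\sHilb$.

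It remains to show each $\psi_M$ is a unitary isomorphism. On representable modules $h_x$, this is an instance of \Cref{tensorextend}, and $F$ preserves finite direct sums (being unital), so $\psi$ is already a unitary isomorphism on the full subcategory of finite direct sums of representables — the analogue of \Cref{EWonRep}. For arbitrary small $M$ I invoke \Cref{approxff}: there is a net of operators $\phi_\lambda:\bigoplus_{x\in S_\lambda}h_x\to M$ with each $S_\lambda$ finite and $\phi_\lambda\phi_\lambda^*\to\id_M$ strongly. Since finite direct sums of representables are small, both $\bigoplus_{x\in S_\lambda}h_x$ and each $\phi_\lambda$ live in $\sHilb\A$, so we may run the diagram chase of \Cref{EWthm} verbatim: naturality of $\psi$ together with surjectivity of $\psi_{\bigoplus_{x\in S_\lambda}h_x}$ shows $F(\phi_\lambda\phi_\lambda^*)(f)\in\mathrm{im}(\psi_M)$ for every $f\in F(M)(y)$, and strong continuity of $F$ then forces $f\in\overline{\mathrm{im}(\psi_M)}$. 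Combined with the isometry property of $\psi_M$ and \Cref{densextend}, this proves $\psi_M$ is a unitary isomorphism.

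There is no serious obstacle, because every step of the original proof uses only representables, their finite direct sums, compact operators between small modules, and strong limits thereof — all of which stay inside $\sHilb$. The only point that requires a moment's thought is that \Cref{approxff} produces approximants $\phi_\lambda$ whose domains $\bigoplus_{x\in S_\lambda}h_x$ are automatically small (being finite direct sums of small modules), so the argument does not force us out of $\sHilb\A$; this is the reason the restriction to small Hilbert modules is compatible with the Eilenberg--Watts correspondence.
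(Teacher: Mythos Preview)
Your proof is correct and follows essentially the same approach as the paper's own proof, which is a brief sketch noting that the arguments for \Cref{EW} go through unchanged once one checks (via \Cref{tensorsmall} and smallness of representables) that all constructions stay within $\sHilb$. You are in fact more thorough: the paper's proof does not explicitly verify the non-degeneracy of $F\circ\siota_{\A}$, which is part of the statement, whereas you supply a clean argument for it via strong continuity and \Cref{strongnondegenchar}.
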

\begin{proof}
Note first that nothing in the previous chapters required reference to the size of the relevant categories, so all the proofs used in the ordinary Eilenberg-Watts theorem still hold for locally small $C^*$-categories: the only challenge is to show that the tensor product is a well-defined functor on the $\sHilb$categories, which is taken care of by \Cref{tensorsmall}. Hence we can define the transformation $\psi$ exactly as in \Cref{EWMapDefn}, and show that it is an isometry; since $\A$ is locally small, all representable modules are small, and as before we can show it's surjective on these. Finally we can still employ \Cref{approxff} to show $\psi$ is surjective on arbitrary small modules.
\end{proof}
We can now move on with ease to the promised localization. Recall first the definition:
\begin{defn}
Let $\C$ be a category and $W$ be a class of morphisms in $\C$. The \emph{localization} , if it exists, is a category $\C[W^{-1}]$ admitting a functor $Q:\C\rightarrow\C[W^{-1}]$ such that 
\begin{itemize}
\item For all $w\in W$, $Q(w)$ is an isomorphism.
\item For any functor $F:\C\rightarrow\mathcal{D}$, there is a functor $F_W:\C[W^{-1}]\rightarrow\mathcal{D}$ and a natural isomorphism $F\cong F_W\circ Q$ if and only $F$ sends every morphism in $W$ to an isomorphism.

\end{itemize}
\end{defn}

Our localizations will be of a special type, namely reflective localizations. These are localizations which are at once adjoints:
\begin{defn}
The localization $Q:\A\rightarrow\C[W^{-1}]$ of $\C$ at $W$ is called a $\emph{reflective localization}$ if it has a fully faithful right adjoint $R:\C[W^{-1}]\rightarrow\C$ 
\end{defn}
Reflective localizations have several categorical properties that make them nice to work with.
\begin{prop}\label{reflecloc}
For a functor $R:\B\rightarrow\C$ of 1-categories with a left adjoint $Q:\C\rightarrow\B$ the following are equivalent:
\begin{enumerate}
\item $R:\B\rightarrow\C$ is a fully faithful functor.
\item $Q:\C\rightarrow\B$ exhibits $\B$ as the localization of $\C$ at the class of morphisms which get sent to isomorphisms by $Q$.
\item $Q:\C\rightarrow\B$ exhibits $\B$ as the 
localization of $\C$ at the class of units $\eta_c:c\rightarrow RQ(c)$
\item The counit $\epsilon:QR\Rightarrow\id_{\B}$ is an isomorphism of functors.
\end{enumerate}
\end{prop}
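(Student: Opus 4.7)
The plan is to take (4) as the technical hub: I would first establish (1) $\Leftrightarrow$ (4) by the classical adjunction argument, then derive (2) and (3) from (4) by an explicit construction, and finally close the loop by directly showing (3) $\Rightarrow$ (4) and (2) $\Rightarrow$ (4).

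For (1) $\Leftrightarrow$ (4), the key observation is that the composition $\Hom_{\B}(b,b') \xrightarrow{R} \Hom_{\C}(Rb, Rb') \cong \Hom_{\B}(QRb, b')$, where the second isomorphism is the adjunction, equals precomposition with the counit $\epsilon_b$. By the Yoneda lemma this is a bijection for all $b'$ if and only if $\epsilon_b$ is an isomorphism, so $R$ is fully faithful if and only if $\epsilon$ is a natural isomorphism.

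For (4) $\Rightarrow$ (2) and (4) $\Rightarrow$ (3), I would give a uniform argument. Given a functor $F : \C \to \D$ inverting the relevant class of morphisms $W$ (either $W_Q$ or the class of units $W_\eta$), define $F_W := F \circ R$. The whiskered unit $F \ast \eta : F \Rightarrow F_W \circ Q$ is the desired natural isomorphism because each component $F(\eta_c)$ is an isomorphism---directly when $W = W_\eta$, and when $W = W_Q$ because the triangle identity $\epsilon_{Qc} \circ Q(\eta_c) = \id_{Qc}$ combined with (4) makes $Q(\eta_c) = \epsilon_{Qc}^{-1}$ an iso, placing $\eta_c$ in $W_Q$. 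For uniqueness: any $G : \B \to \D$ with $G \circ Q \cong F$ satisfies $G \cong G \circ QR \cong F \circ R = F_W$, where the first isomorphism again uses (4).

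The main obstacle is the reverse direction. For (3) $\Rightarrow$ (4), the hypothesis forces $Q$ to invert each $\eta_c$, so the triangle identity gives $\epsilon_{Qc}$ iso for every $c$; combined with the essential surjectivity of $Q$ (which follows from the universal property by corestricting $Q$ onto its essential image and invoking uniqueness of localizations) together with naturality of $\epsilon$, this extends to $\epsilon_b$ being iso for every $b$. For (2) $\Rightarrow$ (4), I would exploit the equivalence $\mathrm{Fun}(\B, \B) \simeq \mathrm{Fun}_{W_Q}(\C, \B)$ supplied by (2) to translate the natural transformation $Q \ast \eta : Q \Rightarrow QRQ$ to a unique $\alpha : \id_{\B} \Rightarrow QR$; the triangle identity together with faithfulness of the whiskering yields $\epsilon \circ \alpha = \id_{\id_\B}$, and the dual identity $\alpha \circ \epsilon = \id_{QR}$---the subtlest step and what I expect to be the main technical difficulty---is verified by applying the same equivalence once more and reducing it to a coherence identity between the natural transformations $Q \ast \eta$ and $\epsilon \ast Q$ at the level of $QRQ \Rightarrow QRQ$, which in turn follows from naturality of $\epsilon$ applied to $Q(\eta_c)$ combined with the triangle identity at $RQc$.
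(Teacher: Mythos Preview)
Your proposal is correct in outline and more self-contained than the paper's proof, but the two take genuinely different routes. The paper does \emph{not} prove the full cycle: it cites \cite[Proposition~3.1]{nlab:reflective_subcategory} for the standard equivalences $(1)\Leftrightarrow(2)\Leftrightarrow(4)$ and only supplies the bridge to $(3)$. Its argument for that bridge is also different from yours: rather than passing through $(4)$, it shows directly that (in the presence of $(1)$ and $(4)$) a functor $F:\C\to\D$ inverts every $\eta_c$ if and only if it inverts every morphism in $W_Q$. One inclusion uses the triangle identity $\epsilon_{Qc}\circ Q(\eta_c)=\id_{Qc}$ together with $(4)$ to place each $\eta_c$ in $W_Q$; the other uses the naturality square for $\eta$ along an arbitrary $g\in W_Q$ to conclude that $F(g)$ is an isomorphism once the vertical legs $F(\eta_c),F(\eta_{c'})$ and the bottom edge $FRQ(g)$ are. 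This establishes that the two classes have the \emph{same} localization, which is a cleaner way to handle $(2)\Leftrightarrow(3)$ than your strategy of proving each of $(3)\Rightarrow(4)$ and $(2)\Rightarrow(4)$ separately.

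Your route has the advantage of being self-contained, but two of your steps are more delicate than you indicate. In $(3)\Rightarrow(4)$, the claim that $Q$ is essentially surjective ``by corestricting onto its essential image and invoking uniqueness of localizations'' requires the \emph{strong} form of the universal property (uniqueness of the factoring functor up to unique isomorphism), which the paper's stated definition does not include; with only the existence clause the argument does not close. In $(2)\Rightarrow(4)$, the step you flag as ``the subtlest'' --- verifying $\alpha\circ\epsilon=\id_{QR}$ --- is indeed where the work lies, and your sketch via naturality of $\epsilon$ at $Q(\eta_c)$ is on the right track but would need to be written out carefully. The paper sidesteps both of these by outsourcing $(2)\Leftrightarrow(4)$ to the literature.
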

\begin{proof}
Much of this is standard, see for example \cite[Proposition 3.1]{nlab:reflective_subcategory}. To see how the others imply the third statement, we show that in the presence of statements (1) and (4), the two localizations must have the same universal property.

Note first that the adjunction axioms imply the commutation of the diagram % https://tikzcd.yichuanshen.de/#N4Igdg9gJgpgziAXAbVABwnAlgFyxMJZABgBpiBdUkANwEMAbAVxiRAEUAKAYwEoQAvqXSZc+QigCM5KrUYs27AEpc+g4SAzY8BIgCYZ1es1aIOPfgNkwoAc3hFQAMwBOEALZIyIHBCTSQAAsYOigkMCYGBmoGOgAjGAYABVEdCRAGGCccECN5U3MAHUKYHDoAfTUhZzdPRADfJAM5EzZimDRsBgJy4FVeAVyQBLAwxGJqkFcPL2pGxGaRsYBaAGZvYwUzYpwYAA8cF3dgLChBmPjElO1xNhcsW0CcqwEgA
$\begin{tikzcd}
Q(c) \arrow[r, "Q(\eta_c)"] \arrow[rr, "\textrm{id}", bend right] & QRQ(c) \arrow[r, "\epsilon_{Q(c)}"] & Q(c)
\end{tikzcd}$ so as the right morphism is an isomorphism by (4), the left one must be too. Hence if a functor sends all those morphisms to isomorphisms which are sent to isomorphisms under $Q$, it must certainly send all units $\eta_c$ to isomorphisms.

Suppose conversely that a functor $F\C\rightarrow\D$ sends all units $\eta_c$ to isomorphisms, and let $g\in\C(c,c')$ be a morphism sent to an isomorphism by $Q$. Note that we can whisker the natural transformation $\eta$ through $F$ and obtain a commutative diagram $% https://tikzcd.yichuanshen.de/#N4Igdg9gJgpgziAXAbVABwnAlgFyxMJZABgBpiBdUkANwEMAbAVxiRADEAKAYwEoQAvqXSZc+QigCM5KrUYs2XbgHJ+QkdjwEiZSbPrNWiDgCUAijzXCQGTeKLS91AwuPtzPVYNkwoAc3giUAAzACcIAFskMhAcCCRpOUNFTj8rEPCoxAAmajikAGZneSNTCzTBazDIhLz4xCKk1w5OAB1WmBw6AH1gFQF0kGqsmPyc4uS3No6u7r4QagY6ACMYBgAFUS0JEAYYYJxvASA
\begin{tikzcd}
F(c) \arrow[r, "F(g)"] \arrow[d, "F(\eta_c)"] & F(c') \arrow[d, "F(\eta_{c'})"] \\
FRQ(c) \arrow[r, "FRQ(g)"]                    & FRQ(c')                        
\end{tikzcd}$ where the vertical arrows are isomorphisms by hypothesis, and the bottom arrow is an isomorphism since $Q(g)$ is. Hence $F(g)$ is an isomorphism. %and we see that $F$ sends all units to isomorphisms if and only if it sends all those morphisms to isomorphisms which are sent to isomorphisms under $Q$, meaning the universal properties of the localizations in (2) and (3) are the same.
\end{proof}

\begin{defn} Let $\mathbf{C^*Cat}$ the 1-category whose objects are locally small $C^*$-categories, and whose morphisms from $\A$ to $\B$ are natural (unitary) isomorphism classes of non-degenerate functors $\A\rightarrow\mathcal{MB}$. \end{defn} 

This is the 1-truncation of a naturally defined bicategory of $C^*$-categories, variants of which are studied in \cite{AntounVoigt}. 

\begin{defn}
Let $\sKHilb: \mathbf{C^*Cat}\rightarrow \mathbf{C^*Cat}$ be the endofunctor that takes every locally small $C^*$-category to its $C^*$-category of small Hilbert modules, and every non-degenerate functor to tensoring with bimodule given by composing with the Yoneda embedding. Let $\mathbf{KHilb}$ be the full subcategory with objects those in the image of $\sKHilb$.
\end{defn}
%\texttt{need to show small bimodules compose to make this work, shouldn't be hard.}
\begin{prop}\label{hilbadjoint}
The functor $\sKHilb$ is left adjoint to the inclusion functor $\mathrm{incl}:\mathbf{KHilb}\rightarrow \mathbf{C^*Cat}$.
\end{prop}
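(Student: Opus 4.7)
The plan is to exhibit, for every locally small $\A$ and every $\B = \sKHilb\mathcal{C}$ in $\mathbf{KHilb}$, a natural bijection
$$\mathbf{C^*Cat}(\A,\sKHilb\mathcal{C}) \;\cong\; \mathbf{C^*Cat}(\sKHilb\A,\sKHilb\mathcal{C}),$$
with the unit of the adjunction being the Yoneda embedding $\siota_\A\colon\A\to\sKHilb\A$. By \Cref{imageyoneda}, $\siota_\A$ lands in compact operators, and viewed as a functor $\A\to\mathcal{M}(\sKHilb\A)\cong\sHilb\A$ (the small analogue of \Cref{MKA}) it is non-degenerate by \Cref{representation}, so it is indeed a morphism in $\mathbf{C^*Cat}$.

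In one direction, given a non-degenerate $F\colon \A\to\sHilb\mathcal{C}$, I would send it to the restriction of the tensor product functor $-\bar\otimes_\A F\colon\sHilb\A\to\sHilb\mathcal{C}$ to $\sKHilb\A$. This is well-typed by \Cref{tensorsmall}, is unital and strong by \Cref{tensorcts}, and is non-degenerate by \Cref{nondegencompo}. Going the other way, a non-degenerate $G\colon\sKHilb\A\to\sHilb\mathcal{C}$ admits, by \Cref{nondegenstrongthm}, a unique unital extension $\bar G\colon\sHilb\A\to\sHilb\mathcal{C}$ which is strong on bounded subsets by \Cref{2topologies}; I would send $G$ to the composite $\bar G\circ\siota_\A$.

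These maps are mutually inverse. Starting from $F$, the extension of $-\bar\otimes_\A F$ is itself by uniqueness, and \Cref{tensorextend} supplies a natural unitary isomorphism $(-\bar\otimes_\A F)\circ\siota_\A\cong F$. Starting from $G$, the small Eilenberg-Watts theorem \Cref{SmallEW} gives a natural unitary isomorphism $\bar G\cong -\bar\otimes_\A(\bar G\circ\siota_\A)$, and restricting to $\sKHilb\A$ recovers $G$ up to natural unitary isomorphism. Naturality in $\A$ and $\mathcal{C}$ follows from the naturality asserted in \Cref{tensorextend} and \Cref{SmallEW}.

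The main difficulty is largely bookkeeping: carefully tracking whether at each step we are working inside $\sHilb\A$, its ideal $\sKHilb\A$, or the multiplier category $\mathcal{M}(\sKHilb\A)\cong\sHilb\A$, and ensuring the right version of Eilenberg-Watts or the multiplier extension result applies. A secondary obstacle, which requires a brief verification, is the small analogue of \Cref{MKA}, $\mathcal{M}(\sKHilb\A)\cong\sHilb\A$, which is not explicitly stated in the paper but follows from the same proof since the arguments of \Cref{idealizerlemma} are insensitive to size and $\sKHilb\A$ is still an essential ideal of $\sHilb\A$.
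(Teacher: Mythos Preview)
Your proposal is correct and follows essentially the same approach as the paper: establish the hom-set bijection $\mathbf{C^*Cat}(\A,\sKHilb\mathcal{C})\cong\mathbf{C^*Cat}(\sKHilb\A,\sKHilb\mathcal{C})$ by invoking the small Eilenberg--Watts theorem. The paper's proof is considerably terser---it simply asserts the correspondence ``by \Cref{SmallEW}''---whereas you unpack the two directions explicitly and correctly flag the need for the small analogue of \Cref{MKA}, which the paper leaves implicit.
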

\begin{proof}
We need to show that for any two locally small $C^*$-categories $\A\in\Ob\mathbf{C^*Cat}$ and $\B\in\Ob\mathbf{KHilb}$ that there is a natural isomorphism of hom-sets $$ \mathbf{C^*Cat}(\A,\B)\cong\mathbf{C^*Cat}(\sKHilb\A,\B).$$ Since $\B$ is in the image of $\sKHilb$ we can write $\B=\sKHilb\C$. But then the first hom-set refers to non-degenerate $\A-\C$ bimodules and the second corresponds to strongly continuous functors from $\sKHilb\A$ to $\sKHilb\mathcal{C}$, and these are in correspondence by \Cref{SmallEW}: this correspondence is easily seen to be natural.
\end{proof}
We hence derive the main result of this subsection:
\begin{thm}\label{MoritaLocalization}
    The functor $\sKHilb$ is the (reflective) localization of the category $\mathbf{C^*Cat}$ at the class of Yoneda inclusions $\siota$, as well as at the class of non-degenerate functors $\A\rightarrow\mathcal{MB}$ inducing Morita equivalence.
\end{thm}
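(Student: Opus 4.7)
The plan is to apply \Cref{reflecloc} to the adjunction from \Cref{hilbadjoint}. Since $\mathbf{KHilb}$ is by construction the full subcategory of $\mathbf{C^*Cat}$ on the image of $\sKHilb$, the inclusion is tautologically fully faithful, so all four equivalent conditions of \Cref{reflecloc} hold. In particular, $\sKHilb$ exhibits $\mathbf{KHilb}$ as the reflective localization of $\mathbf{C^*Cat}$ at the class $W$ of morphisms inverted by $\sKHilb$, and the counit is an isomorphism. What remains is to identify $W$ with the class of Morita equivalences, and then to deduce the iff statement.

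For the inclusion $\{\text{Morita equivs}\}\subseteq W$: if $F:\A\rightarrow\mathcal{MB}$ induces a Morita equivalence, then by \Cref{moritathm} the associated bimodule $E:\A\rightarrow\sHilb\B$ makes $-\bar{\otimes}_{\A}E$ a strong unital equivalence; by \Cref{tensorsmall} both this functor and its quasi-inverse $-\bar{\otimes}_{\B}\widetilde{E}$ preserve smallness, so a small-modules analog goes through. By \Cref{compactdetermine} this restricts to a multiplier equivalence of compacts, which is exactly the statement that $\sKHilb F$ is invertible in $\mathbf{C^*Cat}$.

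For the converse $W\subseteq\{\text{Morita equivs}\}$, I would chase the adjunction bijection to identify the unit $\eta_{\A}:\A\rightarrow\sKHilb\A$ with the Yoneda inclusion $\siota$: under the small Eilenberg-Watts correspondence (\Cref{SmallEW}), the identity of $\sKHilb\A$ corresponds to the identity endofunctor of $\sHilb\A$, which is tensor product with the Yoneda bimodule. By \Cref{yonedafull} together with \Cref{imprimchar}, this bimodule is a Morita bimodule, so $\eta_{\A}$ is itself a Morita equivalence for every $\A$. Given $F:\A\rightarrow\B$ with $\sKHilb F$ an isomorphism in $\mathbf{KHilb}$ (hence in particular a Morita equivalence), the naturality square $\eta_{\B}\circ F=\sKHilb F\circ\eta_{\A}$ in $\mathbf{C^*Cat}$ expresses $F$ as a composite of Morita equivalences; this is again a Morita equivalence since composition in $\mathbf{C^*Cat}$ is tensor product of bimodules, and the tensor product of Morita bimodules is Morita by \Cref{tensorassociative} (composing two strong unital equivalences yields another).

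Finally, the iff statement follows: Morita equivalent $\A$ and $\B$ become isomorphic in $\mathbf{KHilb}$ by the forward inclusion; conversely, an isomorphism $\sKHilb\A\cong\sKHilb\B$ is in particular a Morita equivalence, and combined with the unit Morita equivalences $\eta_{\A},\eta_{\B}$ via transitivity we conclude $\A\sim\B$. The main obstacle I anticipate is careful bookkeeping around the small analogs of \Cref{MKA} and \Cref{moritathm}, specifically the identification $\mathcal{M}\sKHilb\A\cong\sHilb\A$ needed to interpret $\eta_{\A}$ as a Yoneda inclusion, and verifying that smallness is preserved under all the operations (tensor product, conjugation, restriction to compacts) invoked throughout.
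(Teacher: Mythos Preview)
Your approach matches the paper's: apply \Cref{reflecloc} to the adjunction of \Cref{hilbadjoint}, using that the inclusion $\mathbf{KHilb}\hookrightarrow\mathbf{C^*Cat}$ is fully faithful and that the units of the adjunction are the Yoneda embeddings. The paper is terser than you are---it simply invokes criterion (3) of \Cref{reflecloc} for the Yoneda-inclusion statement and criterion (2) for the Morita-equivalence statement, leaving the identification of $W$ with the class of Morita equivalences implicit (it is immediate from \Cref{SmallEW} together with \Cref{equivimprim}).

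One phrasing slip in your argument for $W\subseteq\{\text{Morita equivalences}\}$: the naturality square $\eta_{\B}\circ F=\sKHilb F\circ\eta_{\A}$ does not express $F$ itself as a composite of Morita equivalences; it expresses $\eta_{\B}\circ F$ as one. What you actually need is 2-out-of-3 for Morita equivalences (which holds since they correspond under $-\bar{\otimes}-$ to equivalences of module categories), or more directly: an inverse to $\sKHilb F$ in $\mathbf{KHilb}$ is, by \Cref{SmallEW}, given by tensoring with some bimodule $E'$, whence $E\bar{\otimes}E'\cong\iota_{\A}$ and $E'\bar{\otimes}E\cong\iota_{\B}$, and \Cref{equivimprim} then shows $E$ is an imprimitivity bimodule. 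Either fix is routine, so your outline stands.
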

\begin{proof}
   The functor $\mathrm{incl}$ from \Cref{hilbadjoint} is clearly full and faithful, so the adjunction satisfies criterion (1) in \Cref{reflecloc}. The Yoneda bimodules $\siota$ give the identity on $\sKHilb\A$ upon tensoring, so give us the units of the adjunction. Hence criterion (3) tells us that $\sKHilb$ is the localization at the class of Yoneda embeddings. Criterion (2) gives us the final statement.
\end{proof}
We also derive easily the idempotence of the functor $\sKHilb$.
\begin{cor}
For any locally small $C^*$-category $\A$, the map $$\iota^{\mathsf{sm}}_{\sKHilb\A}: \sKHilb\A\rightarrow\sKHilb(\sKHilb\A)$$ is an isomorphism.
\end{cor}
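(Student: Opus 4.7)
The plan is to obtain this corollary as a purely formal consequence of the reflective localization structure established in \Cref{hilbadjoint} and \Cref{MoritaLocalization}: in any adjunction $L \dashv R$ whose right adjoint is fully faithful, the unit $\eta_X$ is automatically an isomorphism whenever $X$ already lies in the essential image of $R$, and $\sKHilb\A$ manifestly lies in $\mathbf{KHilb}$ by construction. So the entire task reduces to identifying $\iota^{\mathsf{sm}}_{\sKHilb\A}$ with the unit of the adjunction at $\sKHilb\A$ and then running the usual triangle-identity argument.

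First I would invoke the adjunction $\sKHilb \dashv \mathrm{incl}$ of \Cref{hilbadjoint} and recall from the proof of \Cref{MoritaLocalization} that the Yoneda embeddings $\iota^{\mathsf{sm}}_{\mathcal{C}}$ (composed with the canonical inclusion into the multiplier category) are precisely the components of the unit $\eta$ of this adjunction. Applying this to $\mathcal{C} = \sKHilb\A$ identifies the morphism in the statement with $\eta_{\sKHilb\A}$ as a morphism of $\mathbf{C^*Cat}$.

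Next, since $\mathrm{incl}$ is fully faithful, the implication $(1) \Rightarrow (4)$ of \Cref{reflecloc} yields that the counit $\epsilon: \sKHilb \circ \mathrm{incl} \Rightarrow \id_{\mathbf{KHilb}}$ is a natural isomorphism. For the object $\mathcal{B} := \sKHilb\A$ of $\mathbf{KHilb}$, the triangle identity
$$ \mathrm{incl}(\epsilon_{\mathcal{B}}) \circ \eta_{\mathrm{incl}(\mathcal{B})} = \id_{\mathrm{incl}(\mathcal{B})} $$
together with the invertibility of $\mathrm{incl}(\epsilon_{\mathcal{B}})$ (functors preserve isomorphisms) forces $\eta_{\sKHilb\A}$ to be invertible, with inverse $\mathrm{incl}(\epsilon_{\sKHilb\A})$.

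There is no genuine obstacle here, as the argument is entirely formal given the technology in place. The only mild bookkeeping is to verify the identification of $\eta_{\sKHilb\A}$ with $\iota^{\mathsf{sm}}_{\sKHilb\A}$ as morphisms in the $1$-category $\mathbf{C^*Cat}$, whose arrows are unitary isomorphism classes of non-degenerate functors into multiplier categories; this is immediate once one invokes \Cref{MKA} to identify $\mathcal{M}(\sKHilb\mathcal{C})$ with $\sHilb\mathcal{C}$ and unwinds the explicit description of the unit given in the proof of \Cref{MoritaLocalization}.
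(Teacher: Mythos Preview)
Your proposal is correct and follows essentially the same approach as the paper: both identify $\iota^{\mathsf{sm}}_{\sKHilb\A}$ with the unit $\eta_{R(b)}$ of the adjunction at an object $b=\sKHilb\A$ in the image of $R=\mathrm{incl}$, invoke the triangle identity $R(\epsilon_b)\circ\eta_{R(b)}=\id_{R(b)}$, and conclude from the invertibility of the counit (criterion (4) of \Cref{reflecloc}, which holds since $\mathrm{incl}$ is fully faithful) that $\eta_{R(b)}$ is an isomorphism. Your write-up is slightly more explicit about the bookkeeping, but the argument is the same.
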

\begin{proof}
We prove that in general in the set-up of \Cref{reflecloc}, for all $b\in\Ob\B$ the unit $\eta_{R(b)}:R(b)\rightarrow RQR(b)$ is an isomorphism. But the second adjunction axioms states that the diagram $\begin{tikzcd}
R(b) \arrow[r, "\eta_{R(b)}"] \arrow[rr, "\textrm{id}", bend right] & RQR(b) \arrow[r, "R(\epsilon_b)"] & R(b)
\end{tikzcd}$ commutes. But we know $\epsilon_b$ is an isomorphism so $R(\epsilon_b)$ must also be, and hence $\eta_{R(b)}$ is also an isomorphism.
\end{proof}

\printbibliography

\end{document}